\let\c@table\c@figure
\let\ftype@table\ftype@figure
\newlist{hypothenum}{enumerate}{3}
\setlist[hypothenum,1]{label=(\roman*)}
\newcommand{\faded}{\color{gray}}
\setlist[tablenotes]{label=\tnote{\alph*},ref=\alph*,itemsep=\z@,topsep=\z@skip,partopsep=\z@skip,parsep=\z@,itemindent=\z@,labelindent=\tabcolsep,labelsep=.2em,leftmargin=*,align=left,before={\footnotesize}}
\theoremstyle{plain}
\newtheorem*{mainthm}{Main Theorem}
\newtheorem{theorem}{Theorem}[section]
\newtheorem{proposition}[theorem]{Proposition}
\newtheorem{lemma}[theorem]{Lemma}
\newtheorem{corollary}[theorem]{Corollary}
\theoremstyle{definition}
\newtheorem{definition}[theorem]{Definition}
\newtheorem{example}[theorem]{Example}
\theoremstyle{remark}
\newtheorem{remark}[theorem]{Remark}
\numberwithin{equation}{section}
\newcommand{\eps}{\varepsilon}
\newcommand{\setsuch}[2]{\left\{ #1 \; \middle| \; #2 \right\}}
\newcommand{\restr}[2]{{\left. #1 \right|}_{#2}}
\newcommand{\NN}{\mathbb{N}}
\newcommand{\ZZ}{\mathbb{Z}}
\newcommand{\RR}{\mathbb{R}}
\newcommand{\CC}{\mathbb{C}}
\newcommand{\HH}{\mathbb{H}}
\newcommand{\lie}{\mathfrak}
\newcommand{\twice}{{\scriptstyle 2 \cdot}}
\newcommand{\linv}{\textnormal{$\lie{l}$-inv}}
\newcommand{\theor}{\operatorname{Table}}
\DeclareMathOperator{\sgn}{sgn}
\DeclareMathOperator{\charact}{char}
\DeclareMathOperator{\Id}{Id}
\DeclareMathOperator{\SU}{SU}
\newcommand{\fundef}[5]{
\entrymodifiers={+!!<0pt,\fontdimen22\textfont2>}
\xymatrix@R=3pt{\llap{$#1$\;\;} {#2} \ar@{->}[r] & {#3} \\ {#4} \ar@{|->}[r] & {#5}}
} 
\newenvironment{smallermatrix}[1][c]
{\null\,\vcenter\bgroup
  \Let@\restore@math@cr\default@tag
  \baselineskip0pt \lineskip0.4pt \lineskiplimit0pt
  \ialign\bgroup\if#1l\else\hfil\fi$\m@th\scriptstyle##$\if#1r\else\hfil\fi&&\thickspace\hfil
  $\m@th\scriptstyle##$\hfil\crcr
}{%
  \crcr\egroup\egroup\,%
}
\newcommand\poption[1]{\def\temp{#1}\ifx\temp\empty {} \else \text{\fbox{$\temp$}} \fi}
\newcommand{\threeindwithspace}[3]{
  \begin{smallermatrix}[c]
  \mathstrut\IfValueT{#1}{#1} \\
  \IfValueT{#2}{\poption{#2}}{} \\
  \mathstrut\IfValueT{#3}{#3}
  \end{smallermatrix}%
}
\newcommand{\threeind}[3]{
  \def\haut{#1}
  \def\bas{#3}
  \def\matrix{\threeindwithspace{#1}{#2}{#3}}
  \!
  \ifx\haut\empty
    \ifx\bas\empty \smash{\matrix} \else \smash[t]{\matrix} \fi
  \else
    \ifx\bas\empty \smash[b]{\matrix} \else \matrix \fi
  \fi
  \!%
}
\newcommand\labelWithArgument[2]{%
    \usetagform{argumenttag}
    \def\tagargument{#2}\label{#1}
}
\newcommand\eqrefWithArgument[2]{%
\def\myref{\getrefnumber{#1}}
{\textup{(\myref.\mbox{#2})}}
}
\newcommand{\ie}{i.e.\ }
\newcommand{\eg}{e.g.\ }
\newcommand\diagramfontsize\footnotesize
\begin{document}

\title{Representations having vectors fixed by a Levi subgroup}
\author{Ilia Smilga \thanks{The author is supported by the European Research Council (ERC) under the European Union Horizon 2020 research and innovation programme, under the grants ICHAOS, No. 647133 (which covered the bulk of the work on this paper) and the ERC starting grant DiGGeS, No. 715982 (which covered only the revision phase).}}
              
\maketitle

\begin{abstract}
For any semisimple real Lie algebra~$\mathfrak{g}_\mathbb{R}$, we classify the representations of~$\mathfrak{g}_\mathbb{R}$ that have at least one nonzero vector on which the centralizer of a Cartan subspace, also known as the centralizer of a maximal split torus, acts trivially.
In the process, we revisit the notion of $\mathfrak{g}$-standard Young tableaux, introduced by Lakshmibai and studied by Littelmann, that provides a combinatorial model for the characters of the irreducible representations of any classical semisimple Lie algebra~$\mathfrak{g}$. We construct a new version of these objects, which differs from the old one for $\mathfrak{g} = \mathfrak{so}(2r)$ and seems, in some sense, simpler and more natural.
\end{abstract}            

\section{Introduction}
\label{sec:intro}

\subsection*{Version information}

This is the arXiv version of this paper. Another version has been published in the \emph{Journal of Algebra}, with a slightly different title as per the reviewer's request: \emph{Representations having vectors fixed by a Levi subgroup associated to a real form}. I chose to keep the old title on arXiv for continuity reasons only; both versions contain no more than a hint to a possible generalization to Levi subgroups not coming from real forms.

Note that the published version is also much more terse than the preprint, by a factor of about 2/3. Spacing constraints forced me to throw out a lot of details of the proofs there, and more generally to do away with a lot of niceties to the reader.

Also note that the theorem and equation numbering is not consistent across the two versions.

\subsection{Background and motivation}
\label{sec:background}

The present work is motivated by the following geometrical result, proved by the author earlier. Recall that, for a semisimple real Lie group~$G_\RR$, the \emph{restricted Weyl group} is the group $W := N_{G_\RR}(\lie{a})/Z_{G_\RR}(\lie{a})$, where $\lie{a}$ is the Cartan subspace (or maximal split torus) of~$G_\RR$; the \emph{longest element} of~$W$ is the unique element that maps all positive restricted roots to negative restricted roots. Also let $L$ denote the centralizer $Z_{G_\RR}(\lie{a})$ (sometimes also known as $MA$, where $M$ is the centralizer of~$A$ in the maximal compact subgroup of~$G_\RR$).
\begin{theorem}[\cite{Smi16b}]
Let $G_\RR$ be a semisimple real Lie group, $\rho$ a representation of~$G_\RR$ on a finite-dimensional real vector space~$V$. Assume that $\rho$ satisfies the following algebraic condition:
\begin{itemize}
\item[(*)] the longest element $w_0$ of the restricted Weyl group $W$ of~$G_\RR$ acts (via $\rho$) nontrivially on the subspace $V^L$ of vectors of~$V$ that are fixed by all elements of~$L$.
\end{itemize}
Then the representation $\rho$ has the following geometric property:
\begin{itemize}
\item[(**)] The affine group $G_\RR \ltimes_\rho V$ contains a subgroup~$\Gamma$ that is free (of rank at least $2$), has linear part Zariski-dense in~$G_\RR$, and acts properly discontinuously on the affine space corresponding to~$V$.
\end{itemize}
\end{theorem}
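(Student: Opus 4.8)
The plan is to realize $\Gamma$ as a \emph{Schottky group} inside the affine group $G_\RR \ltimes_\rho V$, constructed by an affine ping-pong argument. The generators will have $\RR$-regular (loxodromic) linear parts placed in general position, so that their action on the relevant flag varieties is strongly contracting, while their translational parts will be tuned so that the associated \emph{Margulis invariants} all lie deep inside a single salient cone. Properness will then follow from the fact that every nontrivial element of $\Gamma$ translates by a large amount along a consistent neutral direction, which prevents the orbit of any compact set from accumulating.

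First I would dispatch the linear dynamics, which is essentially classical. Choose an $\RR$-regular element $g_0 \in G_\RR$, i.e.\ one whose Jordan projection lies in the interior of the Weyl chamber of $\lie{a}$. Then $\rho(g_0)$ is loxodromic on $V$: its eigenvalues are of the form $e^{\mu(\log a_0)}$ as $\mu$ runs over the restricted weights of $\rho$, so the positive-weight spaces are expanded, the negative-weight spaces contracted, and the zero-restricted-weight space — which is precisely $V^L$ — is the \emph{neutral space} on which $\rho(g_0)$ acts with eigenvalues of modulus $1$. Taking several conjugates $g_i = h_i g_0 h_i^{-1}$ with the $h_i$ generic produces elements whose attracting and repelling flags are mutually transverse, supplying the combinatorial input for ping-pong; genericity of the $h_i$ simultaneously forces the linear part of $\Gamma$ to be Zariski-dense.

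The heart of the argument is the translational part. To an affine element $\gamma = (g,v)$ with $g$ loxodromic I would attach a \emph{Margulis invariant} $M(\gamma) \in V^L$, namely the component of $v$ along the neutral space in the dynamically adapted decomposition $V = V^{<0}_g \oplus V^0_g \oplus V^{>0}_g$. I need two properties: (a) \emph{additivity up to bounded error}, $M(\gamma_1 \cdots \gamma_n) = \sum_i M(\gamma_i) + \bigo(1)$ whenever the $\gamma_i$ are in Schottky position, which should follow from the exponential contraction of the linear parts; and (b) the ability to \emph{prescribe} $M(\gamma)$ within an open cone of $V^L$ by varying $v$. Hypothesis (*) is exactly what makes (b) possible. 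The behaviour of the invariant under inversion is governed by the $w_0$-action on $V^L$: passing from $g$ to $g^{-1}$ exchanges the positive- and negative-weight spaces and transforms the neutral component by a representative $\tilde w_0 \in N_{G_\RR}(\lie a)$ of the longest element. If $w_0$ acted trivially on $V^L$, the realizable invariants would be constrained to vanish; its nontriviality produces a nonzero space of attainable directions, which can then be driven into a salient cone by a suitable choice of $v$.

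Finally, properness. With generators whose linear parts ping-pong on the flag varieties and whose Margulis invariants lie in a common salient cone $\mathcal{C} \subset V^L$, a standard but delicate estimate should show that for every $\gamma \in \Gamma \setminus \{1\}$ the neutral translation $\|M(\gamma)\|$ grows linearly in word length while the motion transverse to $V^L$ stays uniformly bounded; hence $\gamma K \cap K = \varnothing$ for all but finitely many $\gamma$ and every compact $K$, giving proper discontinuity, and freeness is automatic from the ping-pong. The main obstacle I anticipate is property (a) together with the cone estimate: controlling how the neutral components compose requires uniform contraction estimates on $V$ that are genuinely delicate because $V^L$ may be higher-dimensional and $\rho(g)$ need not act trivially on it, so "additivity" of Margulis invariants must be established as an asymptotic statement with explicit error bounds rather than an exact identity.
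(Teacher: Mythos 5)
Two remarks before the substance: the paper you were given does not prove this theorem at all --- it is quoted from \cite{Smi16b} as motivation --- so the comparison is with the proof in that reference. At the level of architecture your proposal does reconstruct that proof: $\RR$-regular generators in general position combined by ping-pong, a Margulis-type invariant valued in $V^L$ obtained by projecting the translation part onto a neutral subspace, approximate additivity of these invariants for elements in general position, and properness deduced from their growth. That is genuinely the right skeleton.

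However, your account goes wrong at exactly the point that is the content of the theorem, namely how (*) enters, and in two supporting claims. (i) The zero-restricted-weight space is $V^{\lie{a}}$, not $V^L$: in general $V^L \subsetneq V^{\lie{a}}$, because $L = MA$ and the compact part $M$ can act nontrivially on $V^{\lie{a}}$. An $\RR$-regular element can be written $g = h\exp(X)mh^{-1}$ with $X \in \interior(\lie{a}^+)$, $m \in M$, and the invariant must be defined as the projection of $\gamma x - x$ onto the \emph{$L$-fixed} part $\rho(h)V^L$ of the neutral space: only there does $\rho(g)$ act trivially, which is what makes the projection independent of $x$ and of the conjugator $h$ (unique only up to right multiplication by $L$). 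This also dissolves the obstacle you flag at the end --- on $\rho(h)V^L$ the linear part \emph{does} act trivially; your worry stems from the conflation of $V^L$ with $V^{\lie{a}}$. (ii) Hypothesis (*) has nothing to do with the ability to ``prescribe $M(\gamma)$ by varying $v$'': that is possible as soon as $V^L \neq 0$, since the projection of $v$ ranges over all of $V^L$. Its actual role is inversion-compatibility. The correct formula is $M(\gamma^{-1}) = -w_0 \cdot M(\gamma)$ --- the sign you omit is precisely what yields Margulis's classical identity $\alpha(\gamma^{-1}) = \alpha(\gamma)$ in the $\lie{so}(2,1)$ case, where $w_0 = -\Id$ on $\lie{a}$. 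Since $\Gamma$ is stable under inversion, its set of invariants is stable under $u \mapsto -w_0 u$, so the salient-cone strategy requires a nonzero $u_0 \in V^L$ with $w_0 u_0 = -u_0$; as $w_0$ acts on $V^L$ as an involution, such a $u_0$ exists if and only if $w_0$ acts nontrivially on $V^L$, i.e.\ exactly (*). Your sentence ``if $w_0$ acted trivially the realizable invariants would be constrained to vanish'' is false for individual elements and only becomes correct after this inversion analysis, which your sketch never performs. (iii) Properness: the motion ``transverse to $V^L$'' is certainly not uniformly bounded (the linear parts expand exponentially); the correct mechanism is that $M(\gamma)$ is the image of $\gamma x - x$ under a projection that is uniformly bounded over $\gamma \in \Gamma$ (this is where the quantitative general-position estimates are used), whence $\|\gamma x - x\| \geq c\,\|M(\gamma)\|$ for \emph{every} $x$, and properness follows once $\|M(\gamma)\|$ grows with word length.
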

Moreover, it is conjectured that the converse is true, \ie that every representation with the property (**) satisfies the condition (*). The author has found a partial proof~\cite{Smi18} of this converse.

Representations that have property~(**) are called \emph{non-Milnor}, since they provide counterexamples to a conjecture by Milnor~\cite{Mil77}. A weaker version of this conjecture, due to Auslander~\cite{Aus64}, remains open to this day, and provides the main motivation to a large body of work that includes, besides the author's two papers cited above, \cite{AMS02,AMS11,AMS12,DGK20,Dru92,FG83,GT,Mar83,Tom16}
and many others. For a concise statement of these two conjectures and a brief overview of this background, see the introduction to~\cite{Smi16b}. For a more detailed exposition, see the surveys \cite{AbSur} or~\cite{DDGS}.

This theorem naturally raises the problem of explicitly classifying the representations that satisfy (*). In an earlier paper~\cite{LFlSm}, we did this in the special case where $G_\RR$ is split: then (by definition) the Cartan subspace (or maximal split torus) $\lie{a}$ is also a Cartan subalgebra (or maximal torus) of~$G_\RR$; hence its centralizer~$L$ coincides with the Lie group~$A$ that it generates, and $V^L = V^A$ is just the weight space in~$V$ corresponding to the weight~$0$. So \cite{LFlSm} consisted in classifying the representations where $w_0$ acts nontrivially on this zero-weight space.

In the general case, a natural first step towards this problem consists in classifying the representations for which at least the subspace~$V^L$ itself is nontrivial. This latter classification is the goal of the present paper.


\subsection{Basic notations}
\label{sec:notations}

We now introduce some notations necessary to formulate the main theorem, and used throughout the paper. Most of them are standard; we have highlighted with asterisks those that are neither universally accepted nor easily guessable from context, even for a reader familiar with the theory of semisimple Lie algebras and their representations.

\begin{enumerate}
\item Let $\lie{g}$ be a semisimple complex Lie algebra, $\lie{g}_\RR$ some real form of~$\lie{g}$ (so that $\lie{g} = (\lie{g}_\RR)^\CC$).
\item We choose in~$\lie{g}_\RR$ a Cartan subspace~$\lie{a}_\RR$ (an abelian subalgebra of $\lie{g}_\RR$ whose elements are diagonalizable over~$\RR$ and which is maximal for these properties); we set $\lie{a} := (\lie{a}_\RR)^\CC$.
\item We choose in~$\lie{g}$ a Cartan subalgebra~$\lie{h}$ (an abelian subalgebra of $\lie{g}$ whose elements are diagonalizable and which is maximal for these properties) that contains~$\lie{a}$.
\item We denote by~$\lie{l}(\lie{g}_\RR)$, or simply $\lie{l}$ when clear from context, the centralizer of~$\lie{a}$ in~$\lie{g}$. 
\item Let $\Delta$~be the set of roots of~$\lie{g}$ in~$\lie{h}^*$. We shall identify $\lie{h}^*$ with~$\lie{h}$ via the Killing form. We call $\lie{h}_{(\RR)}$ the $\RR$-linear span of~$\Delta$; it is given by the formula $\lie{h}_{(\RR)} = \lie{a}_\RR \oplus i \lie{t}_\RR$, where $\lie{t}_\RR$ is the orthogonal complement of $\lie{a}_\RR$ in~$\lie{h} \cap \lie{g}_\RR$.
\item \label{itm:lex_ord_choice} We choose on~$\lie{h}_{(\RR)}$ a lexicographical ordering that ``puts $\lie{a}_\RR$ first'', \ie such that every vector whose orthogonal projection onto~$\lie{a}_\RR$ is positive is itself positive. (This condition is necessary to make Proposition~\ref{Levi_is_Levi} work.) We call $\Delta^+$ the set of roots in~$\Delta$ that are positive with respect to this ordering, and we let $\Pi = \{\alpha_1, \ldots, \alpha_r\}$ be the set of simple roots in~$\Delta^+$. Let~$\varpi_1, \ldots, \varpi_r$ be the corresponding fundamental weights.
\bgroup \let\oldenum\labelenumi \renewcommand\labelenumi{\oldenum *}
  \item We call $P$ (resp. $Q$) the weight lattice (resp. root lattice), \ie the abelian subgroup of~$\lie{h}^*$ generated by $\varpi_1, \ldots, \varpi_r$ (resp. by~$\Delta$). Elements of~$P$ are called \emph{integral weights}.
\egroup
\item We introduce the dominant Weyl chamber:
\[\lie{h}^+ := \setsuch{X \in \lie{h}_{(\RR)}}{\forall \alpha \in \Pi,\quad \alpha(X) \geq 0}.\]
\bgroup \let\oldenum\labelenumi \renewcommand\labelenumi{\oldenum *}
  \item When $\lie{g}$ is simple, we call $(e_1, \ldots, e_n)$ the vectors called $(\eps_1, \ldots, \eps_n)$ in the appendix to \cite{BouGAL456}, which form a convenient basis of a vector space containing $\lie{h}_{(\RR)}^*$. Throughout the paper, we use the Bourbaki conventions \cite{BouGAL456} for the numbering of simple roots and their expressions in the coordinates $e_i$.
  \item Given an integral weight $\lambda \in P$, we always denote $\lambda_1, \ldots, \lambda_n$ its coordinates in this last basis: 
\begin{equation}
\label{eq:coordinate_definition}
\lambda =: \sum_{i=1}^n \lambda_i e_i.
\end{equation}
\egroup
\item In the sequel, all representations are supposed to be finite-dimensional and (except for a brief discussion at the beginning of the next subsection) complex. Recall (\cite[Theorem~5.5]{Kna96} or \cite[Theorems 9.4 and~9.5]{Hall15}) that to every irreducible representation of~$\lie{g}$, we may associate, in a bijective way, a vector $\lambda \in P \cap \lie{h}^+$ called its \emph{highest weight}. We denote by $\rho_\lambda(\lie{g})$ the irreducible representation of~$\lie{g}$ with highest weight~$\lambda$, and by $V_\lambda(\lie{g})$ the space on which it acts. When clear from context, we will shorten $V_\lambda(\lie{g})$ to~$V_\lambda$.
\item Given a representation $V$ of~$\lie{g}$, we denote by $V^\lie{l} := \setsuch{v \in V}{\forall l \in \lie{l},\;\; l \cdot v = 0}$ the $\lie{l}$-invariant subspace of~$V$. 
\bgroup \let\oldenum\labelenumi \renewcommand\labelenumi{\oldenum *}
  \item We denote by $\lie{sp}_{\twice n} (\CC)$, $\lie{sp}_{\twice n} (\RR)$ and $\lie{sp}_\twice (p,q)$ some Lie algebras that have rank $n$ (or~$p+q$) and a standard representation of dimension $2n$ (or~$2p+2q$). Some authors, such as Bourbaki~\cite{BouGAL456}, denote them respectively by $\lie{sp}_{2n} (\CC)$, $\lie{sp}_{2n} (\RR)$ and $\lie{sp}(2p,2q)$; while other authors, such as Knapp~\cite{Kna96}, denote them respectively by $\lie{sp}_n (\CC)$, $\lie{sp}_n (\RR)$ and $\lie{sp}(p,q)$.
\egroup
\end{enumerate}

\subsection{Statement of main result}
\label{sec:main_result}

Here is the main theorem of this paper. In the process of proving it, we develop some other results that may be interesting on their own: we will describe these in Subsubsection~\ref{sec:intro_dYt}.

This theorem solves the problem raised at the end of Subsection~\ref{sec:background}, after doing the following reductions:
\begin{enumerate}
\item \label{itm:group_to_alg} from representations of the Lie group~$G_\RR$ to representations of its Lie algebra~$\lie{g}_\RR$;
\item \label{itm:Rrep_to_Crep} from real representations to complex representations;
\item \label{itm:Rgrp_to_Cgrp} from representations of~$\lie{g}_\RR$ to representations of its complexification~$\lie{g}$;
\item \label{itm:arb_to_irrep} from arbitrary to irreducible representations;
\item \label{itm:semisimple_to_simple} from the case where $\lie{g}_\RR$ is semisimple to the case where $\lie{g}_\RR$ is simple.
\end{enumerate}
These reductions rely on classical results of Lie theory and representation theory, and happen without any surprises. See also \cite{Smi20} for more details about the steps \ref{itm:group_to_alg} and \ref{itm:Rgrp_to_Cgrp}--\ref{itm:semisimple_to_simple}.

\begin{mainthm}
Let $\lie{g}_\RR$ be a simple real Lie algebra. Then the set
\[\mathcal{M}_{\linv}(\lie{g}_\RR) := \setsuch{\lambda \in P \cap \lie{h}^+}{V_\lambda^{\lie{l}(\lie{g}_\RR)} \neq 0}\]
is equal to the set $\mathcal{M}_{\theor}(\lie{g}_\RR)$, defined as follows:
\begin{hypothenum}
\item \label{itm:main_classical} $\mathcal{M}_{\theor}$ is listed in Table~\ref{tab:conditions_for_classical_algebras} when the complexification~$\lie{g}$ is a classical simple Lie algebra;
\item \label{itm:main_excep_compact} $\mathcal{M}_{\theor} = \{0\}$ when $\lie{g}$ is an exceptional simple Lie algebra, and $\lie{g}_\RR$ is its compact real form;
\item \label{itm:main_excep_noncompact} $\mathcal{M}_{\theor} = Q \cap \lie{h}^+$ when $\lie{g}$ is an exceptional simple Lie algebra, and $\lie{g}_\RR$ is any noncompact real form;
\item \label{itm:main_complex} $\mathcal{M}_{\theor} = Q \cap \lie{h}^+$ if $\lie{g}$ is not simple.
\end{hypothenum}
\end{mainthm}

We recall (see \eg \cite[Theorem~6.94]{Kna96}) that this last case occurs if and only if $\lie{g}_\RR$ is ``already'' complex, \ie if it is obtained from some complex Lie algebra $\lie{g}_\CC$ by restriction of scalars; and in this case we have $\lie{g} = (\lie{g}_\RR)^\CC \simeq \lie{g}_\CC \oplus \lie{g}_\CC$.

\begin{table}[p]
  \caption{\label{tab:conditions_for_classical_algebras} Conditions for $V_\lambda^\lie{l} \neq 0$ for real forms of classical simple Lie algebras. Here we decompose $\lambda = \sum_{i=1}^n \lambda_i e_i$; when $\lie{g}$ is of type $B_r$, $C_r$ or~$D_r$, we adopt the additional convention that $\lambda_i = 0$ for all $i > r$. See also Table~\ref{tab:root_lattice_congruences} for a reminder of the (well-known) explicit equations for $\lie{h}^+$, $P$ and~$Q$ in these coordinates.}
  \centering\bigskip
  \makebox[\textwidth][c]{
  \begin{tabular}[t]{llll}
    $\lie{g}$ & $\lie{g}_\RR$ & Parameter range & $\lambda \in \mathcal{M}_{\theor}$ iff... \\
    \midrule
    \multirow{4}{*}{$\underset{r \geq 1}{A_r}$}
    & $\lie{su}(p,r+1-p)$ & $0 \leq p < \frac{r+1}{2}$ & $\lambda \in Q \cap \lie{h}^+$ and $\lambda_{r+1-2p} \geq 0 \geq \lambda_{2p+1}$ \\
    & $\lie{su}(p,p)$ & $p = \frac{r+1}{2}$ ($r$ odd) & $\lambda \in Q \cap \lie{h}^+$ \\
    & $\lie{sl}_{r+1}(\RR)$ &  & $\lambda \in Q \cap \lie{h}^+$ \\
    & $\lie{sl}_{m}(\HH)$ & $m = \frac{r+1}{2}$ ($r$ odd) & $\lambda \in Q \cap \lie{h}^+$ and $\sum_{i=2}^{m+1} \lambda_i \geq 0 \geq \sum_{i=m}^{2m-1} \lambda_i$ \\
    \midrule
    \multirow{1}{*}{$\underset{r \geq 1}{B_r}$}
    & $\lie{so}(p,2r+1-p)$ & $0 \leq p \leq r$ &
$\begin{cases}
\lambda \in Q \cap \lie{h}^+,\; \lambda_{2p+1} = 0 \text{ and} \\
\text{if } \sum_{i=1}^r \lambda_i \equiv 1 \pmod{2}, \text{ then } \lambda_{2r-2p+1} > 0
\end{cases}$ \\
    \midrule
    \multirow{4}{*}{$\underset{r \geq 1}{C_r}$} 
    & $\lie{sp}_\twice (1, 1)$ & & $\lambda \in Q \cap \lie{h}^+$ and $\lambda_2 \in 2\ZZ$ \\
    & $\lie{sp}_\twice (p, r-p)$ & $\begin{cases} 0 \leq p \leq \frac{r}{2} \\ (p,r) \neq (1,2) \end{cases}$ & $\lambda \in Q \cap \lie{h}^+$ and $\lambda_{4p+1} = 0$ \\
    & $\lie{sp}_{\twice r}(\RR)$ & & $\lambda \in Q \cap \lie{h}^+$ \\
    \midrule
    \multirow{3}{*}{$\underset{r \geq 3}{D_r}$}
    & $\lie{so}(p,2r-p)$ & $0 \leq p \leq r$ & $\lambda \in Q \cap \lie{h}^+$ and $\lambda_{2p+1} = 0$ \\
    & $\lie{so}^*(6)$ & & $\lambda \in Q \cap \lie{h}^+$ and $|\lambda_3| \leq \lambda_1 - \lambda_2$ \\
    & $\lie{so}^*(2r)$ & $r \geq 4$ & $\lambda \in Q \cap \lie{h}^+$ \\
    \bottomrule
  \end{tabular}
  }
\end{table}

\begin{table}[p]
  \caption{\label{tab:root_lattice_congruences}Dominance, integrality and radicality conditions for classical Lie algebras}
  \centering\bigskip
\makebox[\textwidth][c]{
\begin{tabular}{c>{\hspace{1em}}l>{\hspace{1em}}l>{\hspace{1em}}l}
$\lie{g}$ & $\lambda \in \lie{h}^+$ iff... & $\lambda \in P$ iff... & $\lambda \in Q$ iff... \\[1ex] \hline
$A_{n-1}$ & $\begin{cases} \lambda_1 \geq \cdots \geq \lambda_n \\ \sum_{i=1}^n \lambda_i = 0 \end{cases}$
& $\forall i,j,\;\; \lambda_i - \lambda_j \in \ZZ$ & $\forall i,\;\; \lambda_i \in \ZZ$ \\[1ex]
$B_n$ & $\lambda_1 \geq \cdots \geq \lambda_n \geq 0$ & $\forall i,\;\; \lambda_i \in \ZZ$ \;or\; $\forall i,\;\; \lambda_i \in \ZZ + \frac{1}{2}$ & $\forall i,\;\; \lambda_i \in \ZZ$ \\[1ex]
$C_n$ & $\lambda_1 \geq \cdots \geq \lambda_n \geq 0$ & $\forall i,\;\; \lambda_i \in \ZZ$ & $\forall i,\;\; \lambda_i \in \ZZ$ \;and\; $\sum_{i=1}^n \lambda_i \in 2\ZZ$
\\[1ex]
$D_n$ & $\lambda_1 \geq \cdots \geq \lambda_{n-1} \geq |\lambda_n|$ & $\forall i,\;\; \lambda_i \in \ZZ$ \;or\; $\forall i,\;\; \lambda_i \in \ZZ + \frac{1}{2}$ & $\forall i,\;\; \lambda_i \in \ZZ$ \;and\; $\sum_{i=1}^n \lambda_i \in 2\ZZ$
\end{tabular}
}
\end{table}

\begin{remark}
Note that, for algebras $\lie{su}(p,q)$, $\lie{so}(p,q)$ and $\lie{sp}_\twice(p,q)$, the behaviour is qualitatively different depending on whether $p$ lies in the lower or upper half of its range (the whole range, given the assumption $p \leq q$, goes from $0$ to~$\frac{p+q}{2}$). Table~\ref{tab:conditions_for_sx_p_q} shows how the condition for having $V_\lambda^\lie{l} \neq 0$ simplifies in these two subcases.
\end{remark}

\begin{table}
  \caption{\label{tab:conditions_for_sx_p_q}Necessary and sufficient condition on $\lambda \in Q \cap \lie{h}^+$ to have $V_\lambda^\lie{l} \neq 0$, for algebras $\lie{su}(p,q)$, $\lie{so}(p,q)$ and $\lie{sp}_\twice(p,q)$ with $p \leq q$. (This is just a reformulation of part of Table~\ref{tab:conditions_for_classical_algebras}.)}
  \centering\bigskip
  \begin{threeparttable}
  \begin{tabular}[t]{rc@{}c}
    $\lie{g} \quad$ & $p < \frac{p+q}{4}$ & $p \geq \frac{p+q}{4}$ \\
    \midrule
    $\lie{su}(p,q)$ & $\lambda_{2p+1} = \lambda_{2p+2} = \cdots = \lambda_{q-p} = 0$ & $\lambda_{q-p} \geq 0 \geq \lambda_{2p+1}$\tnotex{tnote:su_pp} \\
    \midrule
    $\underset{p+q \text{ odd}}{\lie{so}(p,q)}$ & $\begin{cases}\lambda_{2p+1} = 0\tnotex{tnote:so_p_3p_plus_1} \\ \sum_{i=1}^{2p} \lambda_i \in 2\ZZ\end{cases}$
    & either $\lambda_{q-p} > 0$, or $\sum_{i=1}^{q-p-1} \lambda_i \in 2\ZZ$ \\
    \midrule
    $\lie{sp}_\twice(p,q)$ & $\lambda_{4p+1} = 0$ & true for all $\lambda \in Q \cap \lie{h}^+$\tnotex{tnote:sp_11} \\
    \midrule
    $\underset{p+q \text{ even}}{\lie{so}(p,q)}$ & $\lambda_{2p+1} = 0$ & true for all $\lambda \in Q \cap \lie{h}^+$ \\
    \bottomrule
  \end{tabular}
  \vspace{2mm}
  \begin{tablenotes}
    \item\label{tnote:su_pp} This formula only makes sense for $p < q$. For $q = p$, we have $V_\lambda^\lie{l} \neq 0$ for all $\lambda \in Q \cap \lie{h}^+$. Note that for $q = p+1$, this condition reduces to $\lambda_1 \geq 0 \geq \lambda_n$, which is also tautologically true.
    \item\label{tnote:so_p_3p_plus_1} When $p = \frac{p+q-1}{4}$ (\ie $q = 3p+1$), we have $r = 2p$ and the condition $\lambda_{2p+1} = 0$ becomes tautologically true; only the parity condition remains.
    \item\label{tnote:sp_11} Except for $\lie{sp}_\twice(1,1)$, where the condition is $\lambda_2 \in 2\ZZ$.
  \end{tablenotes}
  \end{threeparttable}
\end{table}


\subsection{Strategy of the proof}
\label{sec:strategy}

We start by treating some trivial cases in Subsection~\ref{sec:trivial}. In Subsection~\ref{sec:Levi}, we make some preliminary remarks about a class of subalgebras that includes~$\lie{l}$: the Levi subalgebras.

The first key idea, suggested to the author by E. B. Vinberg and presented in Subsection~\ref{sec:additivity}, is the following. By using the properties of the so-called Cartan product, we prove (in three lines) that the set $\mathcal{M}_\linv$ is closed under addition. For any given value of~$\lie{g}_\RR$, this reduces the problem to a finite number of computations.

Now in order to compute the dimension of~$V_\lambda^\lie{l}$ given $\lambda$ and~$\lie{l}$, we need so-called \emph{branching rules}, \ie rules giving the decomposition into irreducible representations of the restriction of~$V_\lambda$ from~$\lie{g}$ to~$\lie{l}$. For exceptional Lie algebras, of which there are a finite number, we only need (thanks to the previous paragraph) to do a finite number of computations; so all we need is an algorithmic implementation of branching rules, which is readily available. The case of exceptional Lie algebras is treated in Section~\ref{sec:exceptional}.

For classical Lie algebras, since there are an infinite number of them, we need a conceptual description of branching rules. Such descriptions do exist in the special case of restrictions to Levi subalgebras. Note that the methods of this paper could probably be adapted to obtain a generalization of the Main Theorem to all Levi subalgebras~$\lie{l}$ of~$\lie{g}$, not just those that come from some real form.

The easier case, treated in Section~\ref{sec:Ar}, is when $\lie{g}_\RR$ has a complexification $\lie{g}$ of type~$A_r$ (\ie $\lie{g} = \lie{sl}_{r+1}(\CC)$). Then there is a classical theory (that we recall in Subsection~\ref{sec:Young}) that identifies, for every~$\lambda$, the character of the representation~$V_\lambda(\lie{g})$ with the set of semistandard Young tableaux of shape~$\lambda$. Then the irreducible components of the restriction of~$V_\lambda(\lie{g})$ to a Levi subalgebra $\lie{l} \subset \lie{g}$ are identified with an easily-described subset of these Young tableaux. This reduces the proof of the main theorem for real forms of $A_r$ to a problem of combinatorics, that we solve in Subsections \ref{sec:supq} and~\ref{sec:slmH}.

\subsubsection{Doubled Young tableaux}
\label{sec:intro_dYt}

In order to treat the remaining classical algebras, we need a similar description of the characters of representations of~$\lie{g}$, when $\lie{g}$ is of type~$B_r$, $C_r$ or~$D_r$. Such a description has been found by Littelmann, Lakshmibai and Seshadri, although, as far as the author knows, the details of their proof have never been published. We provide a complete proof, and also present a slight improvement of their construction in the case $D_r$; this proof, and this improvement, may be of interest independently of the remainder of the paper.

More precisely, Littelmann found~\cite{Lit95} a description of the characters of representations (and a Levi branching rule) for an arbitrary semisimple Lie algebra~$\lie{g}$. His construction, that we recall in Subsection~\ref{sec:Littelmann}, starts with the data of some path~$\pi^+$ that connects $0$ to the weight~$\lambda$, and lies entirely within the dominant Weyl chamber~$\lie{h}^+$. He then identifies the character of the representation $V_\lambda$ with a certain finite set of paths $B_{\pi^+}$ (called the \emph{Littelmann path model}) obtained from~$\pi^+$ by iterating some finite set of operations; and the irreducible components of the restriction of~$V_\lambda(\lie{g})$ to a Levi subalgebra $\lie{l} \subset \lie{g}$ to some easily-described subset of these paths.

We need however an even more explicit characterization of the path model, at least for the algebras $B_r$, $C_r$ and~$D_r$. Lakshmibai hinted at such a characterization in~\cite{Lak86}; Littelmann fully developed it in the appendix to~\cite{Lit90}, and hinted at a proof. They gave, for representations of classical Lie algebras, a character formula involving Young tableaux on the alphabet $\{1, \ldots, n, \overline{n}, \ldots, \overline{1}\}$ satisfying some additional properties. Although these results predate the introduction of the path model, the set of these Young tableaux can be reinterpreted as the description of the path model $B_{\pi^+_{0, \mathrm{L}}(\lambda)}$ obtained from the starting path
\begin{equation}
\label{eq:Littelmann_starting_path}
\pi^+_{0, \mathrm{L}}(\lambda) = x_1 \varpi_1 * \cdots * x_r \varpi_r,
\end{equation}
defined (see Definition~\ref{path_notations}) as the concatenation of $r$ linear segments, each of them equal to (a suitable translation of) the linear path going from $0$ to~$x_i \varpi_i$, where $x_1, \ldots, x_r$ denote the coordinates of~$\lambda$ in the basis $\varpi_1, \ldots, \varpi_r$.

However, we slightly modify this construction. After some preliminary work in Subsections \ref{sec:Bruhat} to~\ref{sec:admissible}, we give (and prove) in Subsection~\ref{sec:dYt} a description of the path model $B_{\pi^+_0(\lambda)}$ obtained from the path
\begin{equation}
\label{eq:basic_path_definition}
\pi^+_0(\lambda) := (\lambda_2 - \lambda_1) c_1 * \cdots * (\lambda_r - \lambda_{r-1}) c_{r-1} * |\lambda_r| c_r^{\sgn(\lambda_r)},
\end{equation}
where we set $c_k = e_1 + \cdots + e_k$ for $k = 1, \ldots, r$, $c_r^+ = c_r$ and $c_r^- = c_{r-1} - e_r$. This description also involves some Young tableaux on the alphabet $\{1, \ldots, n, \overline{n}, \ldots, \overline{1}\}$, that we call \emph{$\lie{g}$-standard doubled Young tableaux}: see Definition~\ref{tableau_definition}. (We chose this terminology because every column is ``split into two'' in some sense: see Remark~\ref{why_doubled}). This reduces the proof of the main theorem for $\lie{g}$ of types $B_r$, $C_r$ and~$D_r$ to a problem of combinatorics, which we solve in Section~\ref{sec:BCD_proof}.

When $\lie{g}$ is of type $B_r$ or~$C_r$, or when $\lie{g}$ is of type $D_r$ but we have $x_{r-1} = x_r = 0$, then the two starting paths $\pi^+_{0, \mathrm{L}}(\lambda)$ and $\pi^+_0(\lambda)$ actually coincide; and indeed, in this case our ``$\lie{g}$-standard doubled Young tableaux'' coincide (up to some cosmetic differences) with the ``$G$-standard Young tableaux'' defined in the appendix of~\cite{Lit90}.

However when $\lie{g}$ is of type $D_r$ (and $x_{r-1} > 0$ or $x_r > 0$), the two starting paths differ, and so do the two notions of standard Young tableaux. Our notion is simpler: namely the complicated condition~(3) from \cite[A.3]{Lit90} is replaced by the much simpler condition~\ref{itm:dsYt_Young} from our Definition~\ref{tableau_definition}. The key result that makes this simplification possible is Proposition~\ref{Bruhat_vs_Young}, that allows us to circumvent the problem of the non-transitivity of the ``Bruhat order'' on weights (see the remark preceding Definition~\ref{Bruhat_tuple_definition}.)

In the future, the author has some hope of extending this improved construction to exceptional Lie algebras, by finding, for arbitrary $\lie{g}$ and~$\lambda$, a starting path $\pi^+_0(\lambda)$ that satisfies some sort of generalization of Proposition~\ref{Bruhat_vs_Young}.

\subsection{Acknowledgements}
\label{sec:acknowledgements}

I would like to thank Peter Littelmann for some helpful discussions. In particular, he greatly helped me in navigating the available literature; for this, I would also like to thank the anonymous referee. I also thank the tex.stackexchange community, for their help in typesetting this document; and the IH{\'E}S, where part of the work on this manuscript was carried out, for their amazing working conditions.

\section{Basic remarks}
\label{sec:basic}

We start by establishing some basic properties of the set~$\mathcal{M}_{\linv}$ of integral weights~$\lambda$ such that $V_\lambda^\lie{l} \neq 0$, which will allow us to prove the Main Theorem in some easy cases and lay the groundwork for the proof in the remaining cases.

\subsection{Trivial cases}
\label{sec:trivial}

We start with some trivial remarks.

\begin{proposition}
\label{basic_results}
Let $\lambda \in P \cap \lie{h}^+$ be a dominant integral weight of~$\lie{g}$.
\begin{hypothenum}
\item \label{itm:nontriv_implies_radical} If $V_\lambda^\lie{l} \neq 0$, then necessarily $\lambda \in Q$.
\item \label{itm:main_split} If $\lie{g}_\RR$ is split, quasi-split (\ie $\lie{l}$ is abelian), or complex, then, conversely, $V_\lambda^\lie{l} \neq 0$ for all $\lambda \in Q \cap \lie{h}^+$.
\item \label{itm:main_compact} If $\lie{g}_\RR$ is compact, then $V_\lambda^\lie{l} \neq 0$ if and only if $\lambda = 0$.
\end{hypothenum}
\end{proposition}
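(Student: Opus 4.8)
The plan is to reduce all three parts to a single classical fact about the zero weight space. Since $\lie{a} \subseteq \lie{h}$ and $\lie{h}$ is abelian, we have $\lie{h} \subseteq \lie{l} = Z_{\lie{g}}(\lie{a})$, so that $V_\lambda^{\lie{l}} \subseteq V_\lambda^{\lie{h}}$, and $V_\lambda^{\lie{h}}$ is precisely the zero weight space of $V_\lambda$ (a vector is killed by all of $\lie{h}$ if and only if it has weight~$0$). Everything then rests on the observation that, for dominant $\lambda$, the weight $0$ occurs in $V_\lambda$ if and only if $\lambda \in Q$: every weight of $V_\lambda$ lies in the coset $\lambda + Q$, so $\lambda \in Q$ is necessary; and it is sufficient because $0 = \frac{1}{|W|}\sum_{w \in W} w\lambda$ lies in $\Conv(W\lambda)$ (the average is $W$-invariant, hence $0$ in the semisimple case), so the saturation property of the weights of an irreducible representation (a standard fact; see e.g.\ \cite{BouGAL456} or \cite{Kna96}) forces $0$ to be a weight.

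Granting this, part~\ref{itm:nontriv_implies_radical} is immediate: if $V_\lambda^{\lie{l}} \neq 0$ then a fortiori $V_\lambda^{\lie{h}} \neq 0$, so $0$ is a weight of~$V_\lambda$ and hence $\lambda \in Q$.

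For part~\ref{itm:main_split} I would first show that in each of the three cases $\lie{l} = \lie{h}$, which reduces the claim to the nonvanishing above. If $\lie{g}_\RR$ is split then $\lie{a} = \lie{h}$, so $\lie{l} = Z_{\lie{g}}(\lie{h}) = \lie{h}$. If $\lie{g}_\RR$ is quasi-split, $\lie{l}$ is abelian by hypothesis; but $\lie{l}$, being the centralizer of $\lie{a} \subseteq \lie{h}$, is $\lie{h}$-invariant and equals $\lie{h} \oplus \bigoplus_{\alpha \in \Delta_{\lie{l}}} \lie{g}_\alpha$ with $\Delta_{\lie{l}} = \{\alpha \in \Delta : \alpha|_{\lie{a}} = 0\}$ a symmetric subset, and as soon as some $\lie{g}_\alpha \subseteq \lie{l}$ one also has $\lie{g}_{-\alpha} \subseteq \lie{l}$ and hence the noncommuting pair $\lie{g}_\alpha$ and $H_\alpha \in [\lie{g}_\alpha, \lie{g}_{-\alpha}]$; so abelianness forces $\Delta_{\lie{l}} = \emptyset$ and $\lie{l} = \lie{h}$. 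Finally, if $\lie{g}_\RR$ is complex it is in particular quasi-split, i.e.\ $\lie{l}$ is abelian (\cite{Kna96}), so the same conclusion holds. Once $\lie{l} = \lie{h}$, the first paragraph gives $V_\lambda^{\lie{l}} = V_\lambda^{\lie{h}} \neq 0$ for every $\lambda \in Q \cap \lie{h}^+$.

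For part~\ref{itm:main_compact} I would use that a compact real form has no nonzero $\RR$-diagonalizable element, so $\lie{a}_\RR = 0$, whence $\lie{a} = 0$ and $\lie{l} = Z_{\lie{g}}(0) = \lie{g}$; thus $V_\lambda^{\lie{l}} = V_\lambda^{\lie{g}}$, which is nonzero exactly when $V_\lambda$ is trivial, i.e.\ $\lambda = 0$. All the individual steps are short, and there is no deep obstacle; the two genuinely substantive inputs are the classical description of the weights used in the first paragraph (the ``iff $\lambda \in Q$'' for the weight~$0$) and the structural identification $\lie{l} = \lie{h}$. I expect the complex case to require the most care, since it hinges on knowing that complex semisimple algebras are quasi-split rather than following directly from the ambient definitions.
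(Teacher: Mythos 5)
Your proposal is correct and follows essentially the same route as the paper: part~\ref{itm:nontriv_implies_radical} via $V_\lambda^{\lie{l}} \subseteq V_\lambda^{\lie{h}} = V_\lambda^0$ and the classical criterion for $0$ to be a weight, part~\ref{itm:main_split} by reducing all three cases to the identification $\lie{l} = \lie{h}$, and part~\ref{itm:main_compact} via $\lie{a} = 0$, $\lie{l} = \lie{g}$. The only cosmetic differences are that you re-derive the weight criterion (the paper cites it), you rule out root spaces in the abelian $\lie{l}$ by exhibiting the noncommuting pair $\lie{g}_\alpha$, $H_\alpha$ (the paper invokes maximality of $\lie{h}$ among abelian subalgebras), and you cite quasi-splitness of complex algebras where the paper proves it directly via the multiplication-by-$i$ map $J$ and the identity $\lie{h} = \lie{a} \oplus J\lie{a}$.
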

This settles points \ref{itm:main_complex} and~\ref{itm:main_excep_compact} of the Main Theorem, as well as point~\ref{itm:main_classical} for the groups $\lie{sl}_{r+1}(\RR)$ and~$\lie{sp}_{\twice r}(\RR)$ (that are split) and $\lie{su}(p,p)$ (that is quasi-split).
\needspace{2\baselineskip}
\begin{proof}~
\begin{hypothenum}
\item We always have $\lie{a} \subset \lie{h} \subset \lie{l}$, hence
\[V_\lambda^\lie{l} \subset V_\lambda^\lie{h} = V_\lambda^0,\]
where $V_\lambda^0$ denotes the zero-weight space of~$V_\lambda$. The latter is nontrivial if and only if $0$ is a weight of~$V_\lambda$, which, by the well-known characterization of the set of weights of a representation (see \eg \cite[Theorem~10.1]{Hall15}), occurs if and only if $\lambda \in Q$.
\item We claim that in all three cases, we have $\lie{l} = \lie{h}$, so that the converse also holds.

If $\lie{g}_\RR$ is split, then we actually have $\lie{a} = \lie{h} = \lie{l}$. If $\lie{g}_\RR$ is quasi-split, then $\lie{l}$, being an abelian subalgebra containing $\lie{h}$, must be equal to $\lie{h}$ (by maximality of $\lie{h}$).

Finally, if $\lie{g}_\RR$ is ``complex'', or more precisely obtained by restriction of scalars (from $\CC$ to~$\RR$) from some simple complex Lie algebra $\lie{g}_\CC$, then $\lie{g}_\RR$ is in fact also quasi-split. Indeed, let $J: \lie{g}_\RR \to \lie{g}_\RR$ be the $\RR$-linear map corresponding to multiplication by~$i$ in~$\lie{g}_\CC$; it lifts to a $\CC$-linear map $\lie{g} \to \lie{g}$ that we shall also denote by~$J$. Then it is straightforward to check that $\lie{h} = \lie{a} \oplus J \lie{a}$. Now since $J$ commutes with everything, $\lie{l}$~is also the centralizer of~$\lie{h}$, so it must coincide with~$\lie{h}$.
\item If $\lie{g}_\RR$ is compact, then obviously $\lie{a} = 0$, $\lie{l} = \lie{g}$, and $V_\lambda^\lie{l} \neq 0$ if and only if $V_\lambda$~is the trivial representation, \ie if and only if $\lambda = 0$. \qedhere
\end{hypothenum}
\end{proof}

\begin{center}
\fbox{\begin{minipage}{0.87\textwidth}
For the remainder of the paper, we assume that both the real Lie algebra~$\lie{g}_\RR$ and its complexification~$\lie{g}$ are simple.
\end{minipage}}
\end{center}

\subsection{Levi subalgebras}
\label{sec:Levi}

The goal of this subsection is to describe $\lie{l}$ in purely complex terms, so that we will (almost) not need to care about $\lie{g}_\RR$ any more.


\begin{definition}
Let $\Theta \subset \Pi$ be a set of simple roots. We define the \emph{Levi subalgebra of type~$\Theta$} in~$\lie{g}$ to be
\[\lie{l}(\Theta) := \lie{h} \oplus \bigoplus_{\alpha \in \Delta \cap \langle \Theta \rangle} \lie{g}^\alpha,\]
where $\langle \Theta \rangle$ denotes the linear span of $\Theta$.
\end{definition}

In other terms, $\lie{l}(\Theta)$ is a reductive Lie algebra whose Cartan subalgebra coincides with~$\lie{h}$, and whose root system is the subsystem of~$\Delta$ generated by~$\Theta$. Now the following result is straightforward (and well-known).

\begin{proposition}
\label{Levi_is_Levi}
For every real form~$\lie{g}_\RR$ of~$\lie{g}$, define $\Theta(\lie{g}_\RR) := \Pi \cap \lie{a}^\perp$. Then the subalgebra~$\lie{l}$ corresponding to~$\lie{g}_\RR$ is a Levi subalgebra of~$\lie{g}$, of type~$\Theta(\lie{g}_\RR)$:
\[\lie{l}(\lie{g}_\RR) = \lie{l}(\Theta(\lie{g}_\RR)).\]
\end{proposition}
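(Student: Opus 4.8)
The plan is to compute $\lie{l} = Z_\lie{g}(\lie{a})$ directly from the root space decomposition, reduce the statement to an equality of root subsets, and then resolve that combinatorial equality using the ordering condition of item~\ref{itm:lex_ord_choice}. First I would write $\lie{g} = \lie{h} \oplus \bigoplus_{\alpha \in \Delta} \lie{g}^\alpha$ and observe that for $X \in \lie{g}^\alpha$ and $H \in \lie{a} \subset \lie{h}$ we have $[H,X] = \alpha(H)X$. Hence a root vector $X \in \lie{g}^\alpha$ centralizes~$\lie{a}$ exactly when $\restr{\alpha}{\lie{a}} = 0$, i.e. (under the Killing-form identification of $\lie{h}^*$ with $\lie{h}$) when $\alpha \in \lie{a}^\perp$; since $\lie{h}$ itself centralizes~$\lie{a}$, this yields
\[
\lie{l} = \lie{h} \oplus \bigoplus_{\alpha \in \Delta \cap \lie{a}^\perp} \lie{g}^\alpha.
\]
Comparing with the definition of $\lie{l}(\Theta)$, the proposition becomes equivalent to the equality of root subsets $\Delta \cap \lie{a}^\perp = \Delta \cap \langle \Theta \rangle$, with $\Theta = \Pi \cap \lie{a}^\perp$.

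The inclusion $\Delta \cap \langle \Theta \rangle \subseteq \Delta \cap \lie{a}^\perp$ is immediate, since $\Theta \subset \lie{a}^\perp$ and $\lie{a}^\perp$ is a linear subspace, so $\langle \Theta \rangle \subset \lie{a}^\perp$. The reverse inclusion is the substance of the statement. I would take $\alpha \in \Delta \cap \lie{a}^\perp$, assume without loss of generality that $\alpha \in \Delta^+$ (replacing $\alpha$ by $-\alpha$ otherwise), and write $\alpha = \sum_i n_i \alpha_i$ with all $n_i \geq 0$. Letting $\pi$ denote the orthogonal projection onto~$\lie{a}_\RR$, the hypothesis $\alpha \in \lie{a}^\perp$ reads $\sum_i n_i \pi(\alpha_i) = 0$, and the goal is to force $n_i = 0$ whenever $\pi(\alpha_i) \neq 0$, i.e. whenever $\alpha_i \notin \Theta$.

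The hard part, and the precise point where item~\ref{itm:lex_ord_choice} is needed, is to show that the nonzero vectors $\pi(\alpha_i)$ all lie strictly on one side of a hyperplane in~$\lie{a}_\RR$. Here I would first extract from the ``$\lie{a}_\RR$-first'' condition the geometric fact that every positive root projects into the closed positive half-space: indeed, if $\pi(\alpha_i) \neq 0$ were negative in the induced ordering on~$\lie{a}_\RR$, then $\pi(-\alpha_i)$ would be positive, so by the ordering condition $-\alpha_i$ would be a positive root, contradicting $\alpha_i \in \Delta^+$. Thus each nonzero $\pi(\alpha_i)$ is a positive restricted root. Since there are finitely many restricted roots, I can choose a single regular $\xi \in \lie{a}_\RR$ with $\langle \beta, \xi \rangle > 0$ for every positive restricted root~$\beta$; then $\langle \pi(\alpha_i), \xi \rangle > 0$ for $\alpha_i \notin \Theta$ and $= 0$ for $\alpha_i \in \Theta$. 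Pairing the relation $\sum_i n_i \pi(\alpha_i) = 0$ with~$\xi$ gives $\sum_i n_i \langle \pi(\alpha_i), \xi \rangle = 0$, a sum of non-negative terms, whence $n_i = 0$ for every $\alpha_i \notin \Theta$. Therefore $\alpha \in \langle \Theta \rangle$, completing the reverse inclusion.

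I expect the only genuine obstacle to be the clean justification of this last geometric step, namely translating the lexicographic ``$\lie{a}_\RR$-first'' convention into the statement that simple roots project into the closed positive half-space of~$\lie{a}_\RR$ (with zero projection exactly on~$\Theta$), and the existence of a common regular separating element~$\xi$. Once that is in place, the remainder is routine bookkeeping with the root space decomposition.
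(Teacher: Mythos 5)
Your proposal is correct and takes essentially the same approach as the paper: first the root-space description $\lie{l} = \lie{h} \oplus \bigoplus_{\alpha \in \Delta \cap \lie{a}^\perp} \lie{g}^\alpha$, then the use of the ``$\lie{a}_\RR$-first'' ordering condition to show that every positive root orthogonal to~$\lie{a}$ is a nonnegative combination of simple roots orthogonal to~$\lie{a}$. The paper compresses this second step into a single sentence, and your argument (projections of positive roots are lexicographically nonnegative, then a separating functional~$\xi$, whose existence for finitely many lexicographically positive vectors is standard) is exactly the detail it omits --- it could even be streamlined by noting that in a totally ordered vector space a sum of nonnegative elements vanishes only if each term does.
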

\begin{proof}
It easly to see that
\[\lie{l}(\lie{g}_\RR) = \lie{h} \oplus \bigoplus_{\alpha \in \Delta \cap \lie{a}^\perp} \lie{g}^\alpha.\]
The proposition then follows, provided that $\Delta^+ \cap \lie{a}^\perp$ is a system of positive roots for the root system $\Delta \cap \lie{a}^\perp$. This works because of the way we have chosen the lexicographic order on $\lie{h}_{(\RR)}$ (see item~\ref{itm:lex_ord_choice} in Subsection~\ref{sec:notations}).
\end{proof}

Later we will obtain a purely combinatorial criterion for deciding whether $V^{\lie{l}(\Theta)}_\lambda = 0$ that depends only on the data of~$\Theta$  (Corollaries \ref{characterization_for_sln} and~\ref{combinatorial_characterization}). So the only information about~$\lie{g}_\RR$ that will matter to us is the data of the set $\Theta(\lie{g}_\RR)$.

This set $\Theta(\lie{g}_\RR)$ can be read off the Satake diagram of~$\lie{g}_\RR$ (see \cite{OV90}, Chapter~5, \S{}4, 3\textsuperscript{o} for a definition): it is precisely the set of blackened nodes (called~$\Pi_0$ in~\cite{OV90}). A table of the Satake diagrams of all the simple real Lie algebras is given in \cite{OV90}, Reference Chapter, Table~9.

\subsection{Additivity property}
\label{sec:additivity}

We finish this section by proving Proposition~\ref{closed_under_addition}, which will considerably simplify our task for proving that $\mathcal{M}_{\linv} \supset \mathcal{M}_{\theor}$. Indeed it will now suffice to check that $\mathcal{M}_{\linv}$ contains a basis of the monoid $\mathcal{M}_{\theor}$, which, for any given group, is only a finite computation.

Let $G$ be a simply-connected complex Lie group with Lie algebra $\lie{g}$ and $N$ a maximal unipotent subgroup of~$G$.
Let $\CC[G/N]$ denote the space of regular functions on the variety~$G/N$.
Pointwise multiplication of functions is $G$-equivariant and makes $\CC[G/N]$ into a $\CC$-algebra without zero divisors (because the variety~$G/N$ is irreducible).

\begin{theorem}[{\cite[(3.20)--(3.21)]{VinPopBook}}]\label{thm:Vinberg}
  Each finite-dimensional irreducible representation of~$G$ (or equivalently of its Lie algebra~$\lie{g}$) occurs exactly once as a direct summand of the representation $\CC[G/N]$.
  The $\CC$-algebra $\CC[G/N]$ is graded by the highest weight $\lambda$, in the sense that the product of a vector in $V_\lambda$ by a vector in $V_\mu$ lies in $V_{\lambda+\mu}$ (where $V_\lambda$ stands here for the subrepresentation of $\CC[G/N]$ with highest weight $\lambda$).
\end{theorem}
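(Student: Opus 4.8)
The plan is to deduce both statements from the algebraic Peter--Weyl decomposition of the coordinate ring of~$G$, combined with the one-dimensionality of the space of highest-weight vectors in an irreducible representation. Write $B = TN$ for the Borel subgroup, where $T \subset G$ is the maximal torus with Lie algebra~$\lie{h}$ and $N$ is the chosen maximal unipotent subgroup, with Lie algebra $\lie{n} = \bigoplus_{\alpha \in \Delta^+} \lie{g}^\alpha$. Regard $\CC[G]$ as a module under $G \times G$ via the left and right regular representations. The first (and only substantial) input is the decomposition
\[
\CC[G] \;\cong\; \bigoplus_{\lambda \in P \cap \lie{h}^+} V_\lambda \otimes V_\lambda^*,
\]
on which $G \times G$ acts through $V_\lambda$ on the left factor and through $V_\lambda^*$ on the right factor; this follows from complete reducibility of the regular representation of the reductive group~$G$ together with the matrix-coefficient description of each isotypic component.

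First I would prove the multiplicity-one statement. By definition $\CC[G/N]$ is the subalgebra of functions on~$G$ invariant under right translation by~$N$, that is, $\CC[G/N] = \CC[G]^{N}$ for the right action. Taking right $N$-invariants in the displayed decomposition gives
\[
\CC[G/N] \;\cong\; \bigoplus_{\lambda} V_\lambda \otimes (V_\lambda^*)^N
\]
as a left $G$-module. Now the space $W^N$ of $N$-invariants of any representation~$W$ is exactly its space of highest-weight vectors: since $N$ is connected unipotent, $N$-invariance is equivalent to annihilation by~$\lie{n}$. For the irreducible $V_\lambda^*$ this is a single line, its highest-weight line. Hence each summand $V_\lambda \otimes (V_\lambda^*)^N$ is isomorphic to~$V_\lambda$, and every irreducible representation occurs exactly once.

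For the grading, I would use that right translation by~$T$ descends to $G/N$ (as $N$ is normal in $B = TN$ and acts trivially on the right on the cosets), giving a right $T$-action on $\CC[G/N]$ that commutes with the left $G$-action and acts by algebra automorphisms. On the summand indexed by~$\lambda$ this action is scalar: it acts on the line $(V_\lambda^*)^N$ through its $T$-weight, namely the highest weight $-w_0 \lambda$ of $V_\lambda^*$, where $w_0$ is the longest element of the Weyl group of $(\lie{g}, \lie{h})$. Thus $\CC[G/N] = \bigoplus_\lambda V_\lambda$ is a $T$-weight grading in which $V_\lambda$ is homogeneous of weight $-w_0\lambda$. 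Because multiplication intertwines the right $T$-action, the product of two homogeneous elements is homogeneous with the sum of their weights; and since $\lambda \mapsto -w_0\lambda$ is linear, the weight of $V_\lambda \cdot V_\mu$ is $-w_0\lambda - w_0\mu = -w_0(\lambda + \mu)$, which is precisely the weight of the summand~$V_{\lambda+\mu}$. This yields $V_\lambda \cdot V_\mu \subseteq V_{\lambda+\mu}$.

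The main obstacle is really the first input, the Peter--Weyl-type decomposition of $\CC[G]$: this packages complete reducibility of the regular representation together with the identification of the $\lambda$-isotypic component with $V_\lambda \otimes V_\lambda^*$ via matrix coefficients, and it is where essentially all the representation-theoretic content sits. Beyond that, the only point demanding care is the bookkeeping of conventions --- which unipotent subgroup, left versus right regular action, and the passage to the dual --- so that the homogeneity weight is correctly identified as the linear (hence additive) function $-w_0\lambda$ of~$\lambda$; the involution $\lambda \mapsto -w_0\lambda$ merely relabels the monoid of dominant weights and does not affect additivity.
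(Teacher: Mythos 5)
Your proposal is correct. Note that the paper does not prove this statement at all: it is imported wholesale from Vinberg--Popov (the citation to (3.20)--(3.21) of that book), and used as a black box to get the Cartan product. Your argument --- algebraic Peter--Weyl for $\CC[G]$, passage to right $N$-invariants $\CC[G/N]=\CC[G]^N$ with $(V_\lambda^*)^N$ a single highest-weight line, and the grading read off from the residual right $T$-action with weight $-w_0\lambda$ depending additively on $\lambda$ --- is the standard proof of that cited result, and the two points that need care (the identification $\CC[G/N]=\CC[G]^N$ and the left/right convention bookkeeping) are both handled correctly.
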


For given $\lambda$ and $\mu$, we call \emph{Cartan product} the induced bilinear map $\odot: V_\lambda \times V_\mu\to V_{\lambda+\mu}$.
Given $u\in V_\lambda$ and $v\in V_\mu$, this defines $u\odot v\in V_{\lambda+\mu}$ as the projection of $u\otimes v\in V_\lambda\otimes V_\mu=V_{\lambda+\mu}\oplus\dots$.
Since $\CC[G/N]$ has no zero divisor, $u\odot v\neq 0$ whenever $u\neq 0$ and $v\neq 0$.  We deduce the following.

\begin{proposition}
\label{closed_under_addition}
The set~$\mathcal{M}_{\linv}$ is a submonoid of the additive monoid~$Q \cap \lie{h}^+$, \ie is closed under addition.
\end{proposition}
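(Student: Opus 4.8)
The plan is to exploit the Cartan product $\odot$ together with the two features recorded just above its definition: it never vanishes on nonzero inputs (because $\CC[G/N]$ has no zero divisors), and it is a morphism of $\lie{g}$-modules (because pointwise multiplication on $\CC[G/N]$ is $G$-equivariant). Suppose $\lambda, \mu \in \mathcal{M}_{\linv}$, so that we may pick nonzero vectors $u \in V_\lambda^\lie{l}$ and $v \in V_\mu^\lie{l}$. Since $\lambda + \mu$ is again dominant and integral, it lies in the domain $P \cap \lie{h}^+$, and the natural candidate for a nonzero $\lie{l}$-invariant vector witnessing $\lambda + \mu \in \mathcal{M}_{\linv}$ is the Cartan product $u \odot v$, which is automatically nonzero. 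The whole content of the proof is then to check that $u \odot v$ is $\lie{l}$-invariant.

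For this I would differentiate the $G$-equivariance of the multiplication map at the identity, obtaining the Leibniz rule
\[
X \cdot (u \odot v) = (X \cdot u) \odot v + u \odot (X \cdot v) \qquad \text{for all } X \in \lie{g},
\]
valid because $\odot$ is induced from a $G$-equivariant bilinear map. Applying this with $X = l \in \lie{l}$ and using $l \cdot u = 0 = l \cdot v$, both terms on the right-hand side vanish, so $l \cdot (u \odot v) = 0$. Hence $u \odot v$ is a nonzero element of $V_{\lambda+\mu}^\lie{l}$, which gives $\lambda + \mu \in \mathcal{M}_{\linv}$: the set is closed under addition.

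It remains to identify the neutral element and the ambient monoid. The weight $0$ lies in $\mathcal{M}_{\linv}$ because $V_0$ is the trivial representation, on which $\lie{l}$ acts by zero, so $V_0^\lie{l} = V_0 \neq 0$; this supplies the identity. Finally, by Proposition~\ref{basic_results}\ref{itm:nontriv_implies_radical} every $\lambda \in \mathcal{M}_{\linv}$ satisfies $\lambda \in Q$, and by definition $\lambda \in \lie{h}^+$, so $\mathcal{M}_{\linv} \subseteq Q \cap \lie{h}^+$; thus $\mathcal{M}_{\linv}$ is indeed a submonoid of $Q \cap \lie{h}^+$.

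I expect the only substantive point to be the Leibniz rule, \ie the fact that $\odot$ respects the $\lie{g}$-action so that the product of two $\lie{l}$-invariants is again $\lie{l}$-invariant; this is precisely where the $G$-equivariance of multiplication on $\CC[G/N]$ enters, and everything else is bookkeeping. One could alternatively phrase the same argument abstractly, by noting that $\odot \colon V_\lambda \otimes V_\mu \to V_{\lambda+\mu}$ is a $\lie{g}$-module homomorphism and that it carries $V_\lambda^\lie{l} \otimes V_\mu^\lie{l} \subseteq (V_\lambda \otimes V_\mu)^\lie{l}$ into $V_{\lambda+\mu}^\lie{l}$, thereby avoiding any explicit differentiation.
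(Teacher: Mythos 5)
Your proof is correct and follows essentially the same route as the paper: both take nonzero $u \in V_{\lambda}^{\lie{l}}$, $v \in V_{\mu}^{\lie{l}}$, form the Cartan product $u \odot v \in V_{\lambda+\mu}$, and use the absence of zero divisors in $\CC[G/N]$ for nonvanishing and the $G$-equivariance of multiplication for $\lie{l}$-invariance. The paper merely asserts the invariance in one line, whereas you spell it out via the Leibniz rule (or equivalently via the equivariant projection $V_\lambda \otimes V_\mu \to V_{\lambda+\mu}$); this is a faithful elaboration of the same argument, not a different proof.
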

\begin{proof}
Let $\lambda_1$ and~$\lambda_2$ be two elements of this set. Choose any two nonzero vectors $u_1$ and~$u_2$ in $V_{\lambda_1}^{\lie{l}}$ and~$V_{\lambda_2}^{\lie{l}}$ respectively. Then the vector $u_1 \odot u_2$ is in $V_{\lambda_1 + \lambda_2}$, is invariant by~$\lie{l}$, and is still nonzero.
\end{proof}

\section{Exceptional Lie algebras}
\label{sec:exceptional}

We are now ready to prove the Main Theorem for all exceptional simple real Lie algebras. Proposition~\ref{basic_results}~\ref{itm:main_compact} takes care of the compact real forms of $E_6$, $E_7$, $E_8$, $F_4$ and~$G_2$. Proposition~\ref{basic_results}~\ref{itm:main_split} takes care of their split real forms, namely $EI$, $EV$, $EVIII$, $FI$ and~$G$; and of the quasi-split real form $EII$.

For the remaining noncompact real forms of exceptional simple Lie algebras, Proposition~\ref{closed_under_addition} together with Proposition~\ref{basic_results}~\ref{itm:nontriv_implies_radical} show that it suffices to verify that $V_\lambda^{\lie{l}} \neq 0$ for all $\lambda \in Q \cap \lie{h}^+$ that are primitive (\ie not expressible as the sum of two nonzero elements of $Q \cap \lie{h}^+$).

The (finite) list of primitive elements of~$Q \cap \lie{h}^+$ for each exceptionnal simple~$\lie{g}$ can easily be deduced from the equations defining $Q$ in the basis $(\varpi_1, \ldots, \varpi_r)$, which are listed for example in Table~A.1 in the appendix to~\cite{LFlSm} (see the latest arXiv version, as the journal version was published without the appendix). For each of these weights~$\lambda$, we computed the dimension of $V_\lambda^{\lie{l}}$, or in other terms the multiplicity of the trivial representation in the restriction of~$V_\lambda$ to~$\lie{l}$, using branching rules implemented in the software LiE \cite{LiE}. The results are listed in Table~\ref{tab:dimensions_excep_noncompact}. (Note that we saved ourselves some work by taking advantage of the outer automorphism of~$E_6$.) We observe that all these dimensions are indeed nonzero.

\begin{table}[p] 
\caption{\label{tab:dimensions_excep_noncompact} Dimensions of $V_\lambda^{\lie{l}}$ for primitive $\lambda \in Q \cap \lie{h}^+$, in real forms of exceptional simple Lie algebras that are neither compact, nor split, nor quasi-split.}
\centering\bigskip
\newcommand{\ditto}{\faded \hspace{2.5em} same \hspace{4em} as \hspace{4em} above \hspace{4.5em}}
\makebox[\textwidth][c]{
\begin{tabular}{cc>{\hspace{1em}}cccccc}
$\lie{g}$ & \multicolumn{1}{c}{\parbox{2.8cm}{\centering Coordinates of $\lambda$\\ in $(\varpi_i)$ basis}} & \multicolumn{1}{c}{\parbox{2.8cm}{\centering Coordinates of $\lambda$\\ in $(e_i)$ basis}} & $\dim V_\lambda$ && \multicolumn{1}{c}{$\mathclap{\dim V_\lambda^{\lie{l}(EIII)}}$} && \multicolumn{1}{c}{$\mathclap{\dim V_\lambda^{\lie{l}(EIV)}}$} \\ \midrule

\multirow{14}{*}{$E_6$} & $(0,0,0,0,0,3)$	& $(0,0,0,0,3,-1,-1,1)$	& 3\,003	&& 2 && 1 \\*
& \faded $(3,0,0,0,0,0)$	& \faded $(0,0,0,0,0,-2,-2,2)$	& \multicolumn{5}{r}{\ditto} \\ 
& $(0,0,0,0,1,1)$	& $(0,0,0,1,2,-1,-1,1)$	& 5\,824	&& 8 && 2 \\*
& \faded $(1,0,1,0,0,0)$	& \faded $\left(-\frac{1}{2},\frac{1}{2},\frac{1}{2},\frac{1}{2},\frac{1}{2},-\frac{3}{2},-\frac{3}{2},\frac{3}{2}\right)$	& \multicolumn{5}{r}{\ditto} \\ 
& $(0,0,0,0,3,0)$	& $(0,0,0,3,3,-2,-2,2)$	& 1\,559\,376	&& 25 && 1 \\*
& \faded $(0,0,3,0,0,0)$	& \faded $\left(-\frac{3}{2},\frac{3}{2},\frac{3}{2},\frac{3}{2},\frac{3}{2},-\frac{5}{2},-\frac{5}{2},\frac{5}{2}\right)$	& \multicolumn{5}{r}{\ditto} \\ 
& $(0,0,0,1,0,0)$	& $(0,0,1,1,1,-1,-1,1)$	& 2\,925	&& 8 && 2 \\ 
& $(0,0,1,0,0,2)$	& $\left(-\frac{1}{2},\frac{1}{2},\frac{1}{2},\frac{1}{2},\frac{5}{2},-\frac{3}{2},-\frac{3}{2},\frac{3}{2}\right)$	& 78\,975	&& 20 && 3 \\*
& \faded $(2,0,0,0,1,0)$	& \faded $(0,0,0,1,1,-2,-2,2)$	& \multicolumn{5}{r}{\ditto} \\ 
& $(0,0,1,0,1,0)$	& $\left(-\frac{1}{2},\frac{1}{2},\frac{1}{2},\frac{3}{2},\frac{3}{2},-\frac{3}{2},-\frac{3}{2},\frac{3}{2}\right)$	& 70\,070	&& 25 && 3 \\ 
& $(0,0,2,0,0,1)$	& $(-1,1,1,1,2,-2,-2,2)$	& 600\,600	&& 41 && 3 \\*
& \faded $(1,0,0,0,2,0)$	& \faded $(0,0,0,2,2,-2,-2,2)$	& \multicolumn{5}{r}{\ditto} \\
& $(0,1,0,0,0,0)$	& $\left(\frac{1}{2},\frac{1}{2},\frac{1}{2},\frac{1}{2},\frac{1}{2},-\frac{1}{2},-\frac{1}{2},\frac{1}{2}\right)$	& 78	&& 3 && 2 \\*
& $(1,0,0,0,0,1)$	& $(0,0,0,0,1,-1,-1,1)$	& 650	&& 6 && 3 \\

& & & & \multicolumn{1}{c}{$\mathclap{\dim V_\lambda^{\lie{l}(EVI)}}$} && \multicolumn{1}{c}{$\mathclap{\dim V_\lambda^{\lie{l}(EVII)}}$} \\ \cmidrule{5-8}

\multirow{10}{*}{$E_7$} & $(0,0,0,0,0,0,2)$	& $(0,0,0,0,0,2,-1,1)$	& 1\,463	& 8 && 4 \\*
& $(0,0,0,0,0,1,0)$	& $(0,0,0,0,1,1,-1,1)$	& 1\,539	& 12 && 6 \\
& $(0,0,0,0,1,0,1)$	& $(0,0,0,1,1,2,-2,2)$	& 980\,343	& 360 && 48 \\ 
& $(0,0,0,0,2,0,0)$	& $(0,0,0,2,2,2,-3,3)$	& 109\,120\,648	& 4\,900 && ~155~ \\ 
& $(0,0,0,1,0,0,0)$	& $(0,0,1,1,1,1,-2,2)$	& 365\,750	& 200 && 30 \\ 
& $(0,0,1,0,0,0,0)$	& $\left(-\frac{1}{2},\frac{1}{2},\frac{1}{2},\frac{1}{2},\frac{1}{2},\frac{1}{2},-\frac{3}{2},\frac{3}{2}\right)$	& 8\,645	& 26 && 9 \\ 
& $(0,1,0,0,0,0,1)$	& $\left(\frac{1}{2},\frac{1}{2},\frac{1}{2},\frac{1}{2},\frac{1}{2},\frac{3}{2},-\frac{3}{2},\frac{3}{2}\right)$	& 40\,755	& 60 && 16 \\ 
& $(0,1,0,0,1,0,0)$	& $\left(\frac{1}{2},\frac{1}{2},\frac{1}{2},\frac{3}{2},\frac{3}{2},\frac{3}{2},-\frac{5}{2},\frac{5}{2}\right)$	& 11\,316\,305	& 1\,553 && 103 \\ 
& $(0,2,0,0,0,0,0)$	& $(1,1,1,1,1,1,-2,2)$	& 253\,935	& 111 && 15 \\*
& $(1,0,0,0,0,0,0)$	& $(0,0,0,0,0,0,-1,1)$	& 133	& 4 && 3 \\

& & & && \multicolumn{1}{c}{$\mathclap{\dim V_\lambda^{\lie{l}(EIX)}}$} \\ \cmidrule{5-7}

\multirow{8}{*}{$E_8$} & $(0,0,0,0,0,0,0,1)$	& $(0,0,0,0,0,0,1,1)$	& 248	&& 4 \\*
& $(0,0,0,0,0,0,1,0)$	& $(0,0,0,0,0,1,1,2)$	& 30\,380	&& 26 \\ 
& $(0,0,0,0,0,1,0,0)$	& $(0,0,0,0,1,1,1,3)$	& 2\,450\,240	&& 188 \\ 
& $(0,0,0,0,1,0,0,0)$	& $(0,0,0,1,1,1,1,4)$	& 146\,325\,270	&& 1\,383 \\ 
& $(0,0,0,1,0,0,0,0)$	& $(0,0,1,1,1,1,1,5)$	& 6\,899\,079\,264	&& 10\,488 \\ 
& $(0,0,1,0,0,0,0,0)$	& $\left(-\frac{1}{2},\frac{1}{2},\frac{1}{2},\frac{1}{2},\frac{1}{2},\frac{1}{2},\frac{1}{2},\frac{7}{2}\right)$	& 6\,696\,000	&& 276 \\ 
& $(0,1,0,0,0,0,0,0)$	& $\left(\frac{1}{2},\frac{1}{2},\frac{1}{2},\frac{1}{2},\frac{1}{2},\frac{1}{2},\frac{1}{2},\frac{5}{2}\right)$	& 147\,250	&& 43 \\ 
& $(1,0,0,0,0,0,0,0)$	& $(0,0,0,0,0,0,0,2)$	& 3\,875	&& 10 \\

& & & & \multicolumn{1}{c}{$\mathclap{\dim V_\lambda^{\lie{l}(FII)}}$} \\ \cmidrule{5-5}

\multirow{4}{*}{$F_4$} & $(0,0,0,1)$	& $(1,0,0,0)$	& 26	& 1 \\*
& $(0,0,1,0)$	& $\left(\frac{3}{2},\frac{1}{2},\frac{1}{2},\frac{1}{2}\right)$	& 273	& 1 \\ 
& $(0,1,0,0)$	& $(2,1,1,0)$	& 1\,274	& 1 \\ 
& $(1,0,0,0)$	& $(1,1,0,0)$	& 52	& 1
\end{tabular}
}
\end{table}

\section{Type $A_r$}
\label{sec:Ar}

We now prove the Main Theorem for $\lie{g} = \lie{sl}_n(\CC)$, which has rank $r = n-1$; for the duration of this section, $n$ is some integer larger than or equal to~$2$.

We will start, in Subsection~\ref{sec:Young}, by establishing some notations and terminology about Young tableaux. We will then treat the case $\lie{g}_\RR = \lie{su}(p,q)$ in Subsection~\ref{sec:supq}, and the case $\lie{g}_\RR = \lie{sl}_m(\HH)$ in Subsection~\ref{sec:slmH}. (For the real form $\lie{g}_\RR = \lie{sl}_n(\RR)$, which is split, the Main Theorem follows from Proposition~\ref{basic_results}.\ref{itm:main_split}.) We will put the pieces together in the brief subsection~\ref{sec:An_conclusion}.

\subsection{Young tableaux: notations and definitions}
\label{sec:Young}

We start by establishing the conventions (Definition~\ref{Young_conventions}) and notations (Definition~\ref{Young_notations}) for the basic manipulation of Young tableaux and diagrams. They will also serve us in the Section~\ref{sec:BCD}.

\begin{definition}
\label{Young_conventions}
Let $n \geq 0$. A \emph{Young diagram of order $n$} is a top- and left-aligned Young diagram with at most $n$~rows. The \emph{shape} of a Young diagram~$\mathcal{P}$ is the $n$-tuple $(\#_1 \mathcal{P}, \ldots, \#_n \mathcal{P})$, where $\#_i \mathcal{P}$ stands for the length of the $i$-th row of~$\mathcal{P}$ (consistently with the next definition); we will often identify the diagram with this tuple.

Let $\mathcal{P}, \mathcal{Q}$ be two Young diagrams. We say that $\mathcal{Q}$ is \emph{contained} in~$\mathcal{P}$, denoted by $\mathcal{Q} \subset \mathcal{P}$, if we have $\#_i \mathcal{Q} \leq \#_i \mathcal{P}$ for all~$i$. If this is the case, we define the \emph{skew diagram} $\mathcal{P}/\mathcal{Q}$ to be the diagram comprising all the boxes that are in~$\mathcal{P}$ but not in~$\mathcal{Q}$.

Fix some ordered set~$\mathcal{A}$. A \emph{Young tableau on the alphabet~$\mathcal{A}$}, denoted for example by~$\mathcal{T}$, is a Young diagram~$\mathcal{P}$ in which each box is filled with an element of~$\mathcal{A}$; we then say that $\mathcal{T}$ is an \emph{$\mathcal{A}$-filling} of~$\mathcal{P}$. We define similarly a \emph{skew tableau on the alphabet~$\mathcal{A}$}. For $n \geq 0$, a \emph{Young tableau of order~$n$} is a $\{1, \ldots, n\}$-filling of a Young diagram of order~$n$.

We say that a Young tableau, or skew tableau, is \emph{semistandard} if the values written in its boxes form a strictly increasing sequence along each column (from top to bottom), and a nondecreasing sequence along each row (from left to right).
\end{definition}
\begin{definition}
\label{Young_notations}
If $\mathcal{P}$ is any diagram or tableau, we introduce the following notations (see Figure~\ref{fig:Young_tableau_example} for an illustration):
\begin{itemize}
\item For $i \in \NN$, we denote by $\threeind{}{}{i} \mathcal{P}$ the $i$-th row of~$\mathcal{P}$ (from the top). 
\item For $j \in \NN$, we denote by $\threeind{j}{}{} \mathcal{P}$ the $j$-th column of~$\mathcal{P}$ (from the left).
\item For $I \subset \NN$, we denote by $\threeind{}{}{I} \mathcal{P}$ (resp. $\threeind{I}{}{} \mathcal{P}$) the subtableau or subdiagram comprising all the rows (resp. columns) of~$\mathcal{P}$ indexed by~$I$.
\item We denote by $\# \mathcal{P}$ the total number of boxes in~$\mathcal{P}$.
\end{itemize}
If $\mathcal{T}$ is any tableau on the alphabet~$\mathcal{A}$, we introduce the following notations:
\begin{itemize}
\item For $s \in \mathcal{A}$ (resp. $S \subset \mathcal{A}$), we denote by $\threeind{}{s}{} \mathcal{T}$ (resp. $\threeind{}{S}{} \mathcal{T}$) the subtableau of~$\mathcal{T}$ comprising only the boxes containing the symbol~$s$ (resp. symbols from~$S$).
\item We denote by~$\square \mathcal{T}$ the underlying diagram of~$\mathcal{T}$, \ie the diagram obtained by erasing all the symbols from all the boxes.
\end{itemize}

\begin{figure}
\centering
\begin{align*}
\mathcal{T} &=
  \smash[b]{\begin{tikzpicture}[baseline={([yshift=-.5ex]current bounding box.center)}]
  \tgyoung(0cm,0cm,::::<\threeind{4}{}{} \mathcal{T}>,::::<\raisebox{.3ex}{$\downarrow$}>,112224,2333::<\longleftarrow \threeind{}{}{2} \mathcal{T}>,4,5,:~,:~)  
  \end{tikzpicture}.} &
\square \mathcal{T} &=
  \begin{tikzpicture}[baseline={([yshift=-.5ex]current bounding box.center)}]
  \tgyoung(0cm,0cm,;;;;;;,;;;;,;,;)
  \end{tikzpicture}
= (6, 4, 1, 1, 0); \\
\threeind{4}{}{2} \mathcal{T} &= 3; & \# \mathcal{T} &= 6+4+1+1 = 12. \\
\threeind{}{2}{} \mathcal{T} &= 
  \begin{tikzpicture}[baseline={([yshift=-.5ex]current bounding box.center)}]
  \tgyoung(0cm,0cm,::;222,2)
  \end{tikzpicture}; &
\# \threeind{}{2}{} \mathcal{T} &= 4;
\end{align*}
\DeclareRobustCommand\examplerowname{\threeind{}{}{2} \mathcal{T}}
\DeclareRobustCommand\examplecolname{\threeind{4}{}{} \mathcal{T}}
\DeclareRobustCommand\examplerow{
  \begin{tikzpicture}[baseline={([yshift=-.5ex]current bounding box.center)}]
  \tgyoung(0cm,0cm,2333)
  \end{tikzpicture}}
\DeclareRobustCommand\examplecol{
  \begin{tikzpicture}[baseline={([yshift=-.5ex]current bounding box.center)}]
  \tgyoung(0cm,0cm,2,3)
  \end{tikzpicture}}
\caption{\label{fig:Young_tableau_example} Example of a semistandard Young tableau~$\mathcal{T}$. It has some order $n \geq 5$; the shape of its underlying diagram $\square \mathcal{T}$ is given under the assumption that $n = 5$. The $2$-d row of the tableau~$\mathcal{T}$ is $\examplerowname = \examplerow$, and its $4$-th column is $\examplecolname = \examplecol$.}
\end{figure}

These notations can of course be combined \emph{ad libitum}. For example, $\# \threeind{[1,x]}{[p,q]}{[1,y]} \mathcal{T}$ stands for the total number of occurrences of symbols lying between $p$ and $q$ in the top left $x$-by-$y$ rectangle of~$\mathcal{T}$. We also convene that $\threeind{i}{}{j} \mathcal{T}$ stands (by slight notation abuse) for the symbol that fills the $(i, j)$-th box of~$\mathcal{T}$.
\end{definition}

The following simple (and well-known) trick provides a useful point of view for studying semistandard Young tableaux. Define the \emph{thickness} of a skew diagram $\mathcal{P}/\mathcal{Q}$ as the largest height of one of its columns:
\begin{equation}
\textnormal{thickness}(\mathcal{P}/\mathcal{Q}) := \max_j \#^j \left(\mathcal{P}/\mathcal{Q}\right) = \max_j \left(\#^j \mathcal{P} - \#^j \mathcal{Q}\right).
\end{equation}

\begin{proposition}[Horizontal strip decomposition]
\label{layering_trick}
The set of semistandard Young tableaux of order~$n$ with underlying diagram~$\mathcal{P}$ is in bijection with the set of nested sequences of Young diagrams
\[\emptyset = \mathcal{P}_0 \subset \mathcal{P}_1 \subset \cdots \subset \mathcal{P}_n = \mathcal{P}\]
with the property that, for each $s = 1, \ldots, n$, the skew diagram $\mathcal{P}_s/\mathcal{P}_{s-1}$ is a \emph{horizontal strip}, \ie has thickness at most~$1$.
\end{proposition}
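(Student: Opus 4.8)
The plan is to exhibit the bijection explicitly and then check that each of its two directions is well-defined, the verifications in both directions coming down to translating the two semistandardness conditions into the two geometric conditions on the filtration. Given a semistandard Young tableau $\mathcal{T}$ of order~$n$ with $\square \mathcal{T} = \mathcal{P}$, I would associate the sequence defined by $\mathcal{P}_s := \square \threeind{}{[1,s]}{} \mathcal{T}$, i.e.\ the subdiagram consisting of those boxes of~$\mathcal{T}$ whose symbol lies in $\{1, \ldots, s\}$. Conversely, given a nested sequence $\emptyset = \mathcal{P}_0 \subset \cdots \subset \mathcal{P}_n = \mathcal{P}$ of the stated form, I would define $\mathcal{T}$ by writing the symbol~$s$ in every box of $\mathcal{P}_s/\mathcal{P}_{s-1}$. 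These two assignments are visibly mutually inverse, since a box of~$\mathcal{T}$ holds the symbol~$s$ if and only if it belongs to $\mathcal{P}_s$ but not to~$\mathcal{P}_{s-1}$; so the only real content is that each map lands in its intended target.

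For the forward map I would first check that each $\mathcal{P}_s$ is a genuine Young diagram. The set of boxes with entry $\leq s$ is closed under moving one step left (entries are nondecreasing along rows) and one step up (entries strictly increase down columns), hence it is top- and left-justified, i.e.\ a Young diagram; the inclusions $\mathcal{P}_{s-1} \subset \mathcal{P}_s$ are then immediate. For the horizontal-strip condition I would use that the symbol~$s$ fills exactly $\mathcal{P}_s/\mathcal{P}_{s-1}$ and, by strict increase down columns, can occur at most once in each column; thus every column of the skew diagram has height at most~$1$, which is the definition of thickness at most~$1$.

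For the reverse map I would recover the two monotonicity conditions from the filtration. Weak increase along rows: if $\threeind{i}{}{j}\mathcal{T} = s$ and $\threeind{i}{}{j+1}\mathcal{T} = t$ with $s > t$, then $t \leq s-1$ forces $(i,j+1) \in \mathcal{P}_t \subseteq \mathcal{P}_{s-1}$, and since $\mathcal{P}_{s-1}$ is a Young diagram it must also contain the box $(i,j)$ lying to its left, contradicting $(i,j) \in \mathcal{P}_s/\mathcal{P}_{s-1}$; hence $s \leq t$. Strict increase down columns: if $\threeind{i}{}{j}\mathcal{T} = s$ and $\threeind{i+1}{}{j}\mathcal{T} = t$, the case $s = t$ is ruled out by the horizontal-strip hypothesis (it would place two boxes of $\mathcal{P}_s/\mathcal{P}_{s-1}$ in the same column), and the case $s > t$ is ruled out exactly as above, now using that the Young diagram $\mathcal{P}_{s-1}$ contains $(i+1,j)$ and hence the box $(i,j)$ directly above it; therefore $s < t$. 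I expect no serious obstacle: once the explicit maps are fixed, every implication is a one-line argument, and the only point needing care is bookkeeping which monotonicity (weak along rows versus strict down columns) matches which geometric feature (level sets being Young diagrams versus skew diagrams being thin).
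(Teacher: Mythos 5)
Your proposal is correct and follows exactly the same route as the paper: the same two mutually inverse maps ($\mathcal{P}_s := \square \threeind{}{[1,s]}{} \mathcal{T}$ in one direction, filling each skew layer $\mathcal{P}_s/\mathcal{P}_{s-1}$ with the symbol~$s$ in the other), with your write-up merely spelling out the well-definedness checks that the paper dismisses as straightforward. The only nitpick is notational: you use $\threeind{i}{}{j}\mathcal{T}$ with the first index as the row, whereas the paper's convention puts the column index on top, but your argument is internally consistent and unaffected.
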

\begin{proof}
Given a semistandard Young tableau $\mathcal{T}$, we associate to it the sequence whose $s$-th term is the diagram comprising the boxes of~$\mathcal{T}$ filled with symbols not exceeding~$s$, \ie formally we set $\mathcal{P}_s := \square \threeind{}{[1,s]}{} \mathcal{T}$.

Conversely, given a sequence with the required properties, we associate to it the tableau~$\mathcal{T}$ obtained by filling, for each~$s = 1, \ldots, n$, all the boxes that appear in the skew tableau $\mathcal{P}_s/\mathcal{P}_{s-1}$ with the symbol~$s$.

It is then straightforward to check that these two maps are well-defined and are reciprocal bijections.
\end{proof}

It is well-known that Young tableaux of order~$n$ are closely related with irreducible representations of~$\lie{g} = \lie{sl}_n(\CC)$: see Proposition~\ref{classical_character_formula} below. More specifically, we can describe the branching rule from $\lie{sl}_n(\CC)$ to a Levi subalgebra in terms of Young tableaux; this is the content of Proposition~\ref{An_Levi_branching_rule} below. In order to state these two propositions, we need a few more definitions.

\begin{definition}[Passing from diagrams and tableaux to weights, in type $A_r$.]
\label{sln_tableau_concepts}
Note that these definitions are only valid for this section. In Section~\ref{sec:BCD}, when $\lie{g}$ will be of type $B_r$, $C_r$ or~$D_r$, we will need to slightly modify them: see Definition~\ref{BCD_tableaux_and_weights}.
\begin{hypothenum}
\item \label{itm:offset_and_shape} Let $\mathcal{P} = (\#_1 \mathcal{P}, \ldots, \#_n \mathcal{P})$ be a Young diagram of order~$n$. We define its \emph{offset} $a(\mathcal{P})$ as its average row length:
\begin{equation}
a(\mathcal{P}) := \frac{1}{n}\# \mathcal{P} = \frac{1}{n} \sum_{i=1}^n \#_i \mathcal{P},
\end{equation}
and its \emph{$\lie{sl}_n$-shape}~$\lambda$ as the orthogonal projection of the vector $\sum \#_i \mathcal{P} e_i$ onto the Cartan subspace~$\lie{h}$ of~$\lie{sl}_n(\CC)$: in other terms, $\lambda = \sum \lambda_i e_i$ with
\begin{equation}
\label{eq:lambda_i_and_P_i}
\forall i = 1, \ldots, n,\quad \lambda_i := \#_i \mathcal{P} - a(\mathcal{P}).
\end{equation}
We observe (compare Table~\ref{tab:root_lattice_congruences}) that this $\lambda$ is always an element of $P \cap \lie{h}^+$. Given some $\lambda \in P \cap \lie{h}^+$, the \emph{reduced} Young diagram of $\lie{sl}_n$-shape~$\lambda$ is the one whose $n$-th row has length~$0$, or equivalently whose offset is equal to~$-\lambda_n$.
\item \label{itm:total_weight} We define the \emph{total weight} $\nu(\mathcal{T})$ of a Young tableau or skew tableau~$\mathcal{T}$ as
\begin{equation}
\label{eq:An_total_weight_definition}
\nu(\mathcal{T}) := \sum_{i, j} \nu\left(\threeind{j}{}{i} \mathcal{T}\right),
\end{equation}
where, for all $s = 1, \ldots, n$, we define $\nu(s)$ as the orthogonal projection of $e_s$ onto~$\lie{h}$:
\begin{equation}
\label{eq:An_symbol_interpretation}
\nu(s) := e_s - \frac{1}{n} \sum_{i=1}^n e_i.
\end{equation}
\item \label{itm:dominance} Given a linear form $\alpha \in \lie{h}^*$, we say that a Young tableau~$\mathcal{T}$ is \emph{$\alpha$-dominant} (resp. \emph{$\alpha$-codominant}) if, whenever we cut~$\mathcal{T}$ between two columns, the total weight of the right part (resp. of the left part) has nonnegative (resp. nonpositive) image by~$\alpha$. In other terms:
\begin{align*}
\mathcal{T} \text{ is $\alpha$-dominant} \quad&:\iff\quad \forall j \geq 0,\quad \alpha \left( \nu \left( \threeind{[j+1,N]}{}{} \mathcal{T} \right) \right) \geq 0; \\
\mathcal{T} \text{ is $\alpha$-codominant} \quad&:\iff\quad \forall j \geq 0,\quad \alpha \left( \nu \left( \threeind{[1,j]}{}{} \mathcal{T} \right) \right) \leq 0,
\end{align*}
where $N = \#_1 \mathcal{T}$ is the width of~$\mathcal{T}$.

In this paper, we will usually consider tableaux of total weight~$0$, for which these two properties are obviously equivalent. Dominance is the most natural property in general, but we will find it more convenient to use codominance.

For a subset $\Theta \subset \Pi$, we say that $\mathcal{T}$ is \emph{$\Theta$-(co)dominant} if it is $\alpha$-(co)dominant for all $\alpha \in \Theta$.
\end{hypothenum}
\end{definition}

We then have the following classical character formula. It is given only for general context; we will not use it directly in the sequel. Recall that the \emph{character} of a representation~$V$ is the formal sum
\begin{equation}
\label{eq:char_defn}
\charact(V) := \sum_{\mu \in \lie{h}^*} \left( \dim V^\mu \right) e^\mu,
\end{equation}
where $V^\mu$ stands for the weight space in~$V$ corresponding to the weight~$\mu$.
\begin{proposition}[Character formula with Young tableaux]
\label{classical_character_formula}
Let $\lambda \in P \cap \lie{h}^+$ be a dominant integral weight of~$\lie{g} = \lie{sl}_n(\CC)$. Then the character of the representation with highest weight~$\lambda$ is given by:
\[\charact(V_\lambda) = \sum_{\mathcal{T}} e^{\nu(\mathcal{T})},\]
where $\mathcal{T}$ runs over all reduced semistandard Young tableaux of order~$n$ and of $\lie{sl}_n$-shape~$\lambda$.
\end{proposition}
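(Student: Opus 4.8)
The plan is to prove the identity first at the level of~$\lie{gl}_n$, where both the combinatorics and the relevant branching rules are cleanest, and only then to descend to~$\lie{sl}_n$ by orthogonal projection onto~$\lie{h}$. Concretely, for an honest partition (Young diagram with at most~$n$ rows) $\mathcal{P}$, assign to each order-$n$ semistandard tableau~$\mathcal{T}$ of shape~$\mathcal{P}$ its unprojected $\lie{gl}_n$-weight $\sum_s \bigl(\# \threeind{}{s}{} \mathcal{T}\bigr)\, e_s$, and establish the \emph{gl-version}
\[
\sum_{\mathcal{T}} e^{\sum_s (\# \threeind{}{s}{} \mathcal{T})\, e_s} = \charact V_{\mathcal{P}}(\lie{gl}_n),
\]
the sum running over all order-$n$ semistandard fillings of~$\mathcal{P}$. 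Since $\nu(s)$ is, by Definition~\ref{sln_tableau_concepts}\ref{itm:total_weight} and equation~\eqref{eq:An_symbol_interpretation}, exactly the orthogonal projection of~$e_s$ onto~$\lie{h}$, applying this projection turns the left-hand side into $\sum_{\mathcal{T}} e^{\nu(\mathcal{T})}$ and, because the $\lie{gl}_n$-irreducible $V_{\mathcal{P}}$ restricts irreducibly to $\lie{sl}_n \subset \lie{gl}_n$ (the center acting by a scalar), turns the right-hand side into $\charact V_\lambda(\lie{sl}_n)$ for $\lambda$ the $\lie{sl}_n$-shape of~$\mathcal{P}$. This is precisely the claim.

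To prove the gl-version I would induct on~$n$, the case $n=1$ being immediate. For the inductive step, separate in each tableau the boxes containing the symbol~$n$ from the rest. Because entries strictly increase down columns and weakly increase along rows, the boxes filled with~$n$ form a horizontal strip; erasing them leaves an order-$(n-1)$ semistandard tableau of some shape $\mu \subset \mathcal{P}$ with $\mathcal{P}/\mu$ a horizontal strip. This is exactly the outermost step of the horizontal strip decomposition of Proposition~\ref{layering_trick}, and conversely any pair consisting of an order-$(n-1)$ tableau of shape~$\mu$ and a horizontal strip $\mathcal{P}/\mu$ reassembles into a unique such~$\mathcal{T}$, of $\lie{gl}_n$-weight equal to the weight of the smaller tableau plus $(\# \mathcal{P} - \# \mu)\, e_n$. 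Summing and invoking the induction hypothesis for~$\lie{gl}_{n-1}$ gives
\[
\sum_{\mathcal{T}} e^{\sum_s (\# \threeind{}{s}{} \mathcal{T})\, e_s} = \sum_{\mu} e^{(\# \mathcal{P} - \# \mu)\, e_n}\, \charact V_\mu(\lie{gl}_{n-1}),
\]
where $\mu$ ranges over the diagrams with $\mathcal{P}/\mu$ a horizontal strip, i.e.\ those interlacing~$\mathcal{P}$. The right-hand side is exactly the classical $\lie{gl}_n \downarrow \lie{gl}_{n-1}$ branching rule applied to $\charact V_{\mathcal{P}}(\lie{gl}_n)$, which is multiplicity-free with summands indexed by the interlacing~$\mu$ and with the last weight coordinate recording the number of erased boxes. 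This closes the induction.

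It then remains to handle the normalization of the final reduction. The $\lie{sl}_n$-shape~$\lambda$ does not determine~$\mathcal{P}$ uniquely: two diagrams differing by a column of full height~$n$ have the same projection to~$\lie{h}$, and such a full column has $\lie{gl}_n$-weight $e_1 + \cdots + e_n$, whose projection $\nu(1) + \cdots + \nu(n)$ vanishes. Hence the projected identity is independent of the chosen representative, and selecting the \emph{reduced} diagram (empty $n$-th row, as singled out in Definition~\ref{sln_tableau_concepts}\ref{itm:offset_and_shape}) merely fixes a canonical finite index set with no full columns; the bijection ``add or delete a full left column'' shows that $\sum_{\mathcal{T}} e^{\nu(\mathcal{T})}$ is unchanged, so the formula holds as stated for reduced tableaux.

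The genuinely standard ingredient here is the $\lie{gl}_n \downarrow \lie{gl}_{n-1}$ branching rule. If one prefers a self-contained combinatorial argument that avoids quoting it, I would instead prove $\sum_{\mathcal{T}} e^{\mathrm{wt}} = s_{\mathcal{P}}$ directly---via the Lindström--Gessel--Viennot non-intersecting-lattice-path interpretation of the bialternant (Weyl) character formula, or through Jacobi--Trudi---and then invoke $s_{\mathcal{P}} = \charact V_{\mathcal{P}}(\lie{gl}_n)$. Either way, the only point demanding care is the passage between~$\lie{gl}_n$ and~$\lie{sl}_n$: one must verify that $\nu$ is the correct projection, that the offset $a(\mathcal{P})$ and the reduced normalization are mutually consistent, and that the interlacing condition of the branching rule matches the horizontal-strip condition of Proposition~\ref{layering_trick}. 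None of these is deep, but they are where the conventions of Definition~\ref{sln_tableau_concepts} have to be aligned precisely, and I expect this bookkeeping, rather than the combinatorial core, to be the main obstacle.
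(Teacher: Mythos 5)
Your proof is correct, but it does not follow the paper's route: the paper offers no proof of this proposition at all, only a citation to \cite{FulHar} (Proposition~15.15 and the discussion following it), where the identification of $\charact(V_\lambda)$ with the tableau sum comes out of Weyl's construction via Schur functors; the proposition is stated ``only for general context'' and is never used directly in the sequel (the paper also remarks later that it could be recovered as a degenerate case of its type-$B$, $C$, $D$ machinery, see Remark~\ref{why_doubled}). What you give instead is the classical Gelfand--Tsetlin-style argument: prove the $\lie{gl}_n$ character formula by induction on~$n$, matching the outermost layer of the horizontal strip decomposition (the paper's own Proposition~\ref{layering_trick}) against the multiplicity-free $\lie{gl}_n \downarrow \lie{gl}_{n-1}$ branching rule, with the $e_n$-weight on the summand $V_\mu$ pinned down as $\#\mathcal{P} - \#\mu$ (since $E_{nn}$ commutes with $\lie{gl}_{n-1}$ and so acts by a scalar on each summand); then descend to $\lie{sl}_n$ by orthogonal projection, full-height columns being forced fillings of projected weight zero, exactly as in Remark~\ref{Young_combinatorial_reformulation}~\ref{itm:full_columns}. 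Your approach buys a proof that is self-contained modulo the $\lie{gl}$ branching rule, reuses the paper's combinatorial lemma, and structurally parallels the layer-by-layer strategy the paper deploys in types $B$, $C$ and~$D$; the citation buys brevity for a statement the paper never needs. The one point to watch is circularity: the branching rule you invoke must be established independently of the tableau character formula (e.g.\ via the Weyl character formula and bialternant determinant identities, or via Gelfand--Tsetlin bases), which is indeed how it is classically proven, and your fallback routes (Lindstr\"om--Gessel--Viennot, Jacobi--Trudi) avoid the issue entirely.
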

For a proof, see \eg \cite{FulHar}, Proposition~15.15 together with the discussion that follows its proof.

We also have the following (closely related) classical branching rule, on which we will rely in the sequel.
\begin{proposition}[Branching rule with Young tableaux]
\label{An_Levi_branching_rule}
Let $\Theta \subset \Pi$ be a set of simple roots of $\lie{g} = \lie{sl}_n(\CC)$, and let $\lambda \in P \cap \lie{h}^+$ be a dominant integral weight. Then we have
\[\restr{V_\lambda(\lie{g})}{\lie{l}(\Theta)} = \bigoplus_\mathcal{T} V_{\nu(\mathcal{T})}(\lie{l}(\Theta)),\]
where $\mathcal{T}$ runs over all reduced $\Theta$-dominant semistandard Young tableaux of order~$n$ and of $\lie{sl}_n$-shape $\lambda$.
\end{proposition}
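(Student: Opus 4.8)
The plan is to reduce the branching rule to an identity of characters and then establish that identity by a weight\nobreakdash-preserving bijection of tableaux. The irreducible characters of the reductive Lie algebra $\lie{l}(\Theta)$ are linearly independent, so it suffices to check that both sides of the claimed decomposition have the same character. Since $\lie{h} \subset \lie{l}(\Theta)$, restricting $V_\lambda(\lie{g})$ to $\lie{l}(\Theta)$ leaves the $\lie{h}$\nobreakdash-weight decomposition untouched; hence by Proposition~\ref{classical_character_formula} the left\nobreakdash-hand side has character $\charact\bigl(\restr{V_\lambda(\lie{g})}{\lie{l}(\Theta)}\bigr) = \sum_{\mathcal{T}} e^{\nu(\mathcal{T})}$, the sum ranging over all reduced semistandard Young tableaux of order~$n$ and $\lie{sl}_n$-shape~$\lambda$. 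Everything therefore comes down to proving
\[
\sum_{\mathcal{T}} e^{\nu(\mathcal{T})} \;=\; \sum_{\mathcal{T}_0} \charact\bigl(V_{\nu(\mathcal{T}_0)}(\lie{l}(\Theta))\bigr),
\]
where $\mathcal{T}_0$ runs over the reduced $\Theta$-dominant tableaux. Note that $\Theta$-dominance, specialized to the cut $j = 0$, forces $\alpha(\nu(\mathcal{T}_0)) \geq 0$ for all $\alpha \in \Theta$, so $\nu(\mathcal{T}_0)$ is $\lie{l}(\Theta)$-dominant and each summand on the right is a genuine irreducible character.

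First I would make the shape of $\lie{l}(\Theta)$ explicit. Writing $\alpha_i = e_i - e_{i+1}$, the set $\Theta$ cuts $\{1, \dots, n\}$ into consecutive intervals (\emph{blocks}) $I_1, \dots, I_k$, two neighbours $i, i+1$ lying in the same block exactly when $\alpha_i \in \Theta$; accordingly $\lie{l}(\Theta)$ is, up to its centre, the sum $\lie{gl}_{I_1} \oplus \dots \oplus \lie{gl}_{I_k}$. Consequently $V_{\nu(\mathcal{T}_0)}(\lie{l}(\Theta))$ is an outer tensor product of block irreducibles, and I would work throughout with $\lie{gl}$-characters, \ie with the Laurent polynomials in the variables $x_i := e^{e_i}$, projecting onto $\lie{h}$ only at the very end (this removes all offset/centre bookkeeping, since the total content of any tableau of shape $\lambda$ is fixed). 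In this language the left\nobreakdash-hand side is the Schur polynomial $s_\lambda(x_1, \dots, x_n)$, while applying Proposition~\ref{classical_character_formula} inside each block gives
\[
\charact\bigl(V_{\nu(\mathcal{T}_0)}(\lie{l}(\Theta))\bigr) \;=\; \prod_{j=1}^{k} s_{\mu^j}\bigl(x_i : i \in I_j\bigr),
\]
where $\mu^j$ denotes the partition formed by the multiplicities of the symbols of $I_j$ in $\mathcal{T}_0$ (these are automatically weakly decreasing, by $\Theta$-dominance), and each $s_{\mu^j}$ is itself a sum $\sum_{\mathcal{S}_j} \prod x$ over semistandard tableaux $\mathcal{S}_j$ on the alphabet $I_j$ of shape $\mu^j$.

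The identity to prove is then the purely combinatorial Schur/Littlewood--Richardson statement, and I would attack it by constructing a weight\nobreakdash-preserving bijection between the set of all reduced semistandard tableaux $\mathcal{T}$ of shape $\lambda$ and the set of tuples $(\mathcal{T}_0; \mathcal{S}_1, \dots, \mathcal{S}_k)$, where $\mathcal{T}_0$ is a $\Theta$-dominant tableau of shape $\lambda$ and each $\mathcal{S}_j$ is a semistandard tableau on $I_j$ of the shape $\mu^j$ prescribed by $\mathcal{T}_0$. The map sends $\mathcal{T}$ to the pair consisting of its $\lie{l}(\Theta)$-highest\nobreakdash-weight representative $\mathcal{T}_0$ and, in each block, the rectification $\mathcal{S}_j$ of the skew tableau formed by the boxes of $\mathcal{T}$ carrying symbols from $I_j$ (this region is a genuine skew shape because, by semistandardness, the boxes with entries below any threshold form a Young subdiagram). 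Since rectification preserves content alphabet by alphabet, the weights match, $\nu(\mathcal{T}) = \sum_j \nu(\mathcal{S}_j)$.

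The main obstacle is to prove that this splitting map is a well\nobreakdash-defined bijection: one must show that the $\Theta$-dominant tableaux are precisely the distinguished representatives singled out by block rectification, and that for fixed $\mathcal{T}_0$ the fibre is in bijection with the product $\prod_j \{\,\mathcal{S}_j \text{ of shape } \mu^j\,\}$. When $\Theta$ is a single block this is exactly the classical Littlewood--Richardson rule, with $\Theta$-dominance being the lattice\nobreakdash-word (Yamanouchi) condition read columnwise from the right; the general case follows by treating the blocks independently, the blocks being mutually non\nobreakdash-interacting. I expect this step to rest on the Schützenberger jeu\nobreakdash-de\nobreakdash-taquin theory of rectification -- equivalently, on the fact that the $\lie{sl}_n$-crystal carried by the tableaux of shape $\lambda$ restricts to a disjoint union of $\lie{l}(\Theta)$-crystals, each with a unique $\Theta$-dominant highest\nobreakdash-weight element -- which I would either invoke as classical or reprove from scratch using the horizontal strip decomposition of Proposition~\ref{layering_trick}.
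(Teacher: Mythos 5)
Your proof is essentially correct, but it takes a genuinely different route from the paper, which in fact gives no proof of Proposition~\ref{An_Levi_branching_rule} at all: it cites the statement as \cite[Theorem~2.2.(b)]{Lit90}, notes that it can be deduced from Littelmann's Restriction Rule (\cite{Lit95}, restated as Proposition~\ref{Vl_in_terms_of_paths}) combined with the $\lie{sl}_n$-analogue of the path--tableau dictionary of Proposition~\ref{BCD_path_model_description}, and records that the result predates Littelmann (see~\cite{McDo}). Your route is the classical, pre-Littelmann one: reduce to an equality of characters --- legitimate here, since $\lie{l}(\Theta) \supset \lie{h}$ guarantees complete reducibility of the restriction and linear independence of the irreducible $\lie{l}(\Theta)$-characters --- then recognize both sides as symmetric-function expressions (the Schur polynomial $s_\lambda$ versus a sum over $\Theta$-dominant tableaux of products of block Schur polynomials), and prove the resulting identity by Littlewood--Richardson/jeu-de-taquin theory, equivalently by the fact that block-wise rectification is an isomorphism of $\lie{l}(\Theta)$-crystals onto the product of the block crystals. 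Your intermediate observations are sound: $\Theta$-dominance at the cut $j=0$ makes $\nu(\mathcal{T}_0)$ dominant for $\lie{l}(\Theta)$; the block regions of a semistandard tableau are genuine skew shapes; and the paper's column-cut definition of $\Theta$-dominance is equivalent to the lattice-word (crystal highest-weight) condition for the column reading word --- this last equivalence is the one convention-dependent point, though it follows quickly because a partial column can only contribute an $i$ before the corresponding $i+1$. As for what each approach buys: the paper's citation keeps type $A$ uniform with the path-model machinery it must develop anyway for types $B$, $C$, $D$ (where the analogous statement \emph{is} proved, via Proposition~\ref{BCD_path_model_description}), whereas your argument avoids the path model entirely at the cost of importing rectification/crystal or Littlewood--Richardson theory, which the paper never sets up. The only soft spot is your closing suggestion that this combinatorial core could be reproved ``from scratch'' from Proposition~\ref{layering_trick}; that amounts to reproving the Littlewood--Richardson rule and is not routine, so your proof should be regarded as resting on the classical results you invoke --- which is perfectly acceptable for a statement that is itself classical.
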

This is stated in this form in \cite[Theorem~2.2.(b)]{Lit90}, and can be deduced from Littelmann's more general branching rule (\cite[Restriction Rule]{Lit95}, restated here as Proposition~\ref{Vl_in_terms_of_paths}) by using the $\lie{sl}_n$-analog of Proposition~\ref{BCD_path_model_description} (that links Littelmann paths with Young tableaux). It was however certainly known before Littelmann, as one of the multiple avatars of the Littlewood-Richardson rule; see~\cite{McDo}.

Now of course $V_\lambda^{\lie{l}}$ is obtained by selecting, in this decomposition, the summands isomorphic to the trivial representation, \ie such that $\nu(\mathcal{T}) = 0$. So we obtain a criterion for the nontriviality of~$V_\lambda^{\lie{l}}$, namely Corollary~\ref{characterization_for_sln} below. We will however start by introducing one more definition and a couple of remarks, so as to state this criterion in a purely combinatorial way.

\begin{definition}
We say that a Young tableau or skew tableau~$\mathcal{T}$ on an alphabet~$\mathcal{A}$ is \emph{balanced} (with respect to~$\mathcal{A}$) if each symbol from~$\mathcal{A}$ occurs the same number of times:
\begin{equation}
\forall s \in \mathcal{A},\quad \# \threeind{}{s}{} \mathcal{T} = \frac{1}{\# \mathcal{A}} \# \mathcal{T}.
\end{equation}
Clearly a Young tableau~$\mathcal{T}$ of order~$n$ is then balanced if and only if it has total weight~$0$. Moreover, by construction its total number of boxes is then $n a$, where $a$ is the offset of the diagram underlying~$\mathcal{T}$; so $\mathcal{T}$ is balanced if and only if each symbol occurs exactly $a$ times:
\begin{equation}
\forall s = 1, \ldots, n,\quad \# \threeind{}{s}{} \mathcal{T} = a.
\end{equation}
In particular all balanced Young tableaux have integer offset.
\end{definition}

\begin{remark}~
\label{Young_combinatorial_reformulation}
\begin{hypothenum}
\item \label{itm:full_columns} Note that any semistandard Young tableau~$\mathcal{T}$ of order~$n$ is obtained from a reduced Young tableau with the same $\lie{sl}_n$-shape by prepending some number of columns of height~$n$, and then there is no choice but to fill each of these columns with all the symbols from $1$ to~$n$ in order. These columns have in particular zero total weight, so that they are ``invisible'' when computing total weight or checking dominance. This explains why we no longer require $\mathcal{P}$ to be reduced in Corollary~\ref{characterization_for_sln}.
\item \label{itm:codominance} Every simple root $\alpha \in \Pi$ is of the form $\alpha = \alpha_i = e_i - e_{i+1}$, for some $i = 1, \ldots, n-1$. Then $\mathcal{T}$ is $\alpha_i$-codominant if and only if, for any~$j$, there are at least as many symbols $i+1$ as symbols $i$ among the first $j$ columns of~$\mathcal{T}$:
\begin{equation}
\label{eq:codominance_Ar}
\forall j \geq 0,\quad \# \threeind{[1,j]}{i+1}{} \mathcal{T} \geq \# \threeind{[1,j]}{i}{} \mathcal{T}.
\end{equation}
\end{hypothenum}
\end{remark}

\begin{corollary}
\label{characterization_for_sln}
Let $\Theta \subset \Pi$ be a set of simple roots of $\lie{g} = \lie{sl}_n(\CC)$. Let $\lambda \in P \cap \lie{h}^+$ be a dominant integral weight, and let $\mathcal{P}$ be any Young diagram with $\lie{sl}_n$-shape $\lambda$.

Then $V^{\lie{l}(\Theta)}_\lambda \neq 0$ if and only if $\mathcal{P}$ admits a $\Theta$-codominant balanced semistandard $\{1, \ldots, n\}$-filling.
\end{corollary}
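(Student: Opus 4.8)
The plan is to combine the branching rule (Proposition~\ref{An_Levi_branching_rule}) with the combinatorial reformulations collected in Remark~\ref{Young_combinatorial_reformulation}, translating the representation-theoretic statement $V_\lambda^{\lie{l}(\Theta)} \neq 0$ entirely into the language of tableaux. By that branching rule, $\restr{V_\lambda(\lie{g})}{\lie{l}(\Theta)}$ decomposes as a sum of $V_{\nu(\mathcal{T})}(\lie{l}(\Theta))$ over reduced $\Theta$-dominant semistandard Young tableaux $\mathcal{T}$ of $\lie{sl}_n$-shape~$\lambda$; and $V_\lambda^{\lie{l}(\Theta)}$ is the sum of those summands on which $\lie{l}(\Theta)$ acts trivially, namely the $V_0(\lie{l}(\Theta))$, which occur precisely when $\nu(\mathcal{T}) = 0$. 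So $V_\lambda^{\lie{l}(\Theta)} \neq 0$ if and only if there exists a reduced $\Theta$-dominant semistandard tableau $\mathcal{T}$ of $\lie{sl}_n$-shape~$\lambda$ with $\nu(\mathcal{T}) = 0$.

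The remaining work is to reconcile three discrepancies between this condition and the statement of the corollary. First, the branching rule produces \emph{reduced} tableaux, whereas the corollary allows $\mathcal{P}$ to be \emph{any} diagram of $\lie{sl}_n$-shape~$\lambda$; this is handled by Remark~\ref{Young_combinatorial_reformulation}~\ref{itm:full_columns}, which explains that an arbitrary filling differs from a reduced one only by prepended full columns of height~$n$, each of zero total weight, so that prepending or deleting them affects neither $\nu(\mathcal{T})$ nor $\Theta$-dominance. Second, the branching rule uses \emph{dominance} while the corollary uses \emph{codominance}; but Definition~\ref{sln_tableau_concepts}~\ref{itm:dominance} notes that for tableaux of total weight~$0$ these two notions coincide, and $\nu(\mathcal{T}) = 0$ is exactly the total-weight-zero hypothesis. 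Third, the condition $\nu(\mathcal{T}) = 0$ is, by the definition of \emph{balanced} and the sentence ``$\mathcal{T}$ is then balanced if and only if it has total weight~$0$'', identical to asking that $\mathcal{T}$ be balanced. Threading these equivalences together converts ``there exists a reduced $\Theta$-dominant semistandard tableau with $\nu(\mathcal{T}) = 0$'' into ``$\mathcal{P}$ admits a $\Theta$-codominant balanced semistandard $\{1,\ldots,n\}$-filling'', which is the claim.

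Concretely, I would first record the equivalence $V_\lambda^{\lie{l}(\Theta)} \neq 0 \iff \exists\,\mathcal{T}$ reduced semistandard $\Theta$-dominant of $\lie{sl}_n$-shape~$\lambda$ with $\nu(\mathcal{T}) = 0$, citing Proposition~\ref{An_Levi_branching_rule} and the fact that the trivial representation $V_0(\lie{l}(\Theta))$ appears exactly for the summands with $\nu(\mathcal{T}) = 0$. Then I would invoke Remark~\ref{Young_combinatorial_reformulation}~\ref{itm:full_columns} to drop the ``reduced'' requirement and replace ``$\lie{sl}_n$-shape $\lambda$'' by ``underlying diagram any $\mathcal{P}$ of $\lie{sl}_n$-shape $\lambda$'', observing that the map adding or removing full columns is a total-weight- and dominance-preserving bijection between fillings of $\mathcal{P}$ and fillings of the reduced diagram. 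Finally I would substitute ``balanced'' for ``$\nu(\mathcal{T}) = 0$'' and ``$\Theta$-codominant'' for ``$\Theta$-dominant'', each justified by the quoted remarks, to land on the stated form. None of these steps presents a genuine obstacle: the mathematical content is entirely carried by the already-proved branching rule, and the proof is a sequence of bookkeeping translations. If there is any subtle point to be careful about, it is the passage between an arbitrary diagram $\mathcal{P}$ and its reduced form in Remark~\ref{Young_combinatorial_reformulation}~\ref{itm:full_columns}, where one must confirm that the prepended height-$n$ columns are forced to be filled with $1, \ldots, n$ in order and hence contribute nothing to total weight or to any codominance inequality.
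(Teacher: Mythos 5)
Your proposal is correct and matches the paper's own (implicit) proof: the corollary is presented there as an immediate consequence of Proposition~\ref{An_Levi_branching_rule} by selecting the summands with $\nu(\mathcal{T}) = 0$, together with exactly the three bookkeeping translations you identify (balanced $\Leftrightarrow$ total weight zero, dominance $\Leftrightarrow$ codominance for weight-zero tableaux, and Remark~\ref{Young_combinatorial_reformulation}~\ref{itm:full_columns} to pass from reduced diagrams to arbitrary diagrams of the same $\lie{sl}_n$-shape). Nothing is missing; your version simply writes out what the paper leaves as a short discussion preceding the statement.
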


Thus we have reduced the proof of the Main Theorem for $\lie{g} = \lie{sl}_n(\CC)$ to a purely combinatorial problem. We will now classify the diagrams that admit such a filling, first for $\Theta = \Theta(\lie{su}(p,q))$ and then for $\Theta = \Theta(\lie{sl}_m(\HH))$.

\subsection{The case $\lie{g}_\RR = \lie{su}(p, n-p)$}
\label{sec:supq}

For the duration of this subsection, we fix some $p \leq \frac{n}{2}$, and we assume that $\lie{g}_\RR = \lie{su}(p, n-p)$.

Let us then describe $\Theta(\lie{g}_\RR)$. We introduce, for the whole remaining duration of the paper, the following notation shortcuts:
\begin{align}
\label{eq:Theta_interval_definition} \Pi_{[x,y]} &:= \{\alpha_x,\; \alpha_{x+1},\; \ldots,\; \alpha_y\} \subset \Pi = \Pi_{[1,r]}; \\
\label{eq:Theta_odd_definition} \Pi_{\operatorname{odd}} &:= \setsuch{\alpha_i \in \Pi}{i \text{ is odd}},
\end{align}
with the convention $\Pi_{[x,x-1]} = \emptyset$ for all~$x$. (Recall that $r$ represents the rank of~$\lie{g}$; in this section, we have $\lie{g} = \lie{sl}_n(\CC)$ so $r = n-1$.) From \cite{OV90}, Reference Chapter, Table~9, we then get
\[\Theta(\lie{su}(p, n-p)) = \begin{cases}
\Pi_{[p+1,\; n-p-1]} &\text{if } p < \frac{n}{2}; \\
\emptyset &\text{if } p = \frac{n}{2}.
\end{cases}\]

We can in fact reduce ourselves to considering sets $\Theta$ of the form $\Pi_{[1,k-1]}$ (see the final proof in Subsection~\ref{sec:An_conclusion} for details). It remains to prove the following combinatorial result, which is the goal of this subsection.

\begin{proposition}
\label{supq_combinatorics}
Let $\mathcal{P}$ be a Young diagram of order~$n$, and let $k \in \{1, \ldots, n\}$. Then $\mathcal{P}$ has a $\Pi_{[1,k-1]}$-codominant balanced semistandard $\{1, \ldots, n\}$-filling if and only if the offset~$a$ of~$\mathcal{P}$ is integer, and satisfies the inequalities
\begin{equation}
\label{eq:su_pq_diagram_inequalities}
\#_k \mathcal{P} \geq a \geq \#_{n-k+1} \mathcal{P}.
\end{equation}
\end{proposition}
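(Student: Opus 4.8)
The plan is to reformulate the codominance requirement into a rigid structural (``confinement'') statement about where the symbols $1, \ldots, k$ can sit, and to read off both inequalities from this structure. Integrality of $a$ is forced by balancedness, as already noted in the definition of a balanced tableau; so the work is to handle the two inequalities in \eqref{eq:su_pq_diagram_inequalities}. Throughout I use the prefix-count form of codominance from Remark~\ref{Young_combinatorial_reformulation}\ref{itm:codominance}: a balanced semistandard filling $\mathcal{T}$ is $\Pi_{[1,k-1]}$-codominant if and only if $\# \threeind{[1,j]}{i}{} \mathcal{T} \le \# \threeind{[1,j]}{i+1}{} \mathcal{T}$ for all $j$ and all $i = 1, \ldots, k-1$.

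\emph{Necessity.} Since $\mathcal{T}$ is semistandard, the symbol $1$ can occur only in the first row, and being leftmost there it fills exactly the boxes $(1,1), \ldots, (1,a)$ (there are $a$ of them, as $\mathcal{T}$ is balanced). Feeding this into the codominance chain $\# \threeind{[1,a]}{1}{} \mathcal{T} \le \# \threeind{[1,a]}{2}{} \mathcal{T} \le \cdots \le \# \threeind{[1,a]}{k}{} \mathcal{T}$ and using that each symbol occurs exactly $a$ times, an induction on $i$ shows that all occurrences of each of $1, \ldots, k$ lie in the first $a$ columns. As symbol $s$ can appear only in rows $\le s$, the $ka$ boxes carrying these symbols all sit inside the $k \times a$ top-left rectangle $B$; counting then forces $B \subseteq \mathcal{P}$, hence $\#_k \mathcal{P} \ge a$, and forces $B$ to be filled exactly by these symbols, each column of $B$ being $(1, 2, \ldots, k)$. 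Consequently the complementary skew region $R := \mathcal{P}/B$ is filled using only the $n-k$ symbols $k+1, \ldots, n$; since its columns are strictly increasing, every column of $R$ has height at most $n-k$, and examining the column immediately to the right of $B$ shows this is exactly the condition $\#_{n-k+1} \mathcal{P} \le a$. Thus both inequalities fall out of the single confinement picture, with no need for a separate ``bottom'' argument.

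\emph{Sufficiency.} Conversely, assume $a \in \ZZ$ and $\#_k \mathcal{P} \ge a \ge \#_{n-k+1} \mathcal{P}$. I build the required filling by reversing the recipe: fill the rectangle $B$ (which lies in $\mathcal{P}$ because $\#_k \mathcal{P} \ge a$) with columns $(1, \ldots, k)$, and fill $R = \mathcal{P}/B$ with a semistandard arrangement of the symbols $k+1, \ldots, n$, each used exactly $a$ times. The resulting tableau is automatically balanced, glues into a global semistandard tableau across the interface (each column reads $1, 2, \ldots, k$ and then continues with symbols $\ge k+1$), and is $\Pi_{[1,k-1]}$-codominant because, once $1, \ldots, k$ are confined to $B$, all the relevant prefix-count inequalities hold with equality. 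The one nontrivial input is the existence of a semistandard filling of the skew shape $R$ with the flat content $(a, \ldots, a)$ on $n-k$ symbols, and this is precisely where $\#_{n-k+1} \mathcal{P} \le a$ enters: it guarantees that no column of $R$ has height exceeding $n-k$.

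\emph{Main obstacle.} The crux is this last existence statement for the skew region $R$. I expect to prove it by a horizontal-strip (layering) argument in the spirit of Proposition~\ref{layering_trick}: exhibit a chain of diagrams interpolating $B \subset \cdots \subset \mathcal{P}$ in which each successive skew layer is a horizontal strip of size $a$, the column-height bound being exactly what allows each step to be performed without repeating a value in a column. Equivalently, one may invoke positivity of the relevant skew Kostka number, noting that the flat content is dominated by the shape precisely under the derived inequalities. Checking that this construction, glued to the forced rectangle $B$, yields a valid global semistandard filling is the only delicate point; the remainder is bookkeeping with the prefix counts.
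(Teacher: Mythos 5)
Your necessity direction is complete and correct, and it is in substance the paper's own argument: the confinement of all occurrences of $1,\ldots,k$ to the top-left $k\times a$ rectangle $B$ is exactly what the paper obtains by combining Lemma~\ref{dominance_decomposition} (cutting at the index $k$, where $\alpha_k\notin\Theta$) with Lemma~\ref{rectangle_characterization}, and your reading of the two inequalities off the rectangle and off the column immediately to its right is the same computation that closes the paper's proof.

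The sufficiency direction, however, has a genuine gap, located exactly where you flag it. You reduce everything to the claim that the skew region $R=\mathcal{P}/B$, which has $(n-k)a$ boxes and (thanks to $\#_{n-k+1}\mathcal{P}\le a$) thickness at most $n-k$, admits a semistandard filling in which each of the symbols $k+1,\ldots,n$ occurs exactly $a$ times; this claim is asserted and sketched, not proved. Neither of your fallback routes closes it as stated. The Kostka-number route is not citable: for \emph{skew} shapes there is no off-the-shelf theorem of the form ``flat content dominated by the shape implies a semistandard filling exists,'' and formulating and proving the correct positivity statement is precisely the missing work. The horizontal-strip route is the right one, but it requires an actual construction: one must remove from $\mathcal{P}$ a horizontal strip of size \emph{exactly} $a$ (at most one box per column, none inside $B$) so that the remaining skew shape over $B$ has thickness at most $n-k-1$; the obvious move of deleting the bottom box of every nonzero column of $R$ removes the wrong number of boxes whenever the number of nonzero columns differs from $a$, and an arbitrary choice of $a$ columns can leave a column of full height or break the Young-diagram condition. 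This is exactly the paper's Lemma~\ref{thin_skew_tableaux_admit_nice_filling}: its proof uses the pigeonhole bounds $\# X\le a\le \# Y$ (where $X$ is the set of columns of $R$ of full height $n-k$ and $Y$ the set of nonzero columns), then decrements the columns of $X$ together with the largest $a-\# X$ indices in $Y\setminus X$, and verifies that the result is still a Young diagram with the two properties needed for the induction on the number of symbols to go through. Until you supply this lemma (or an equivalent), your argument establishes necessity but only reduces sufficiency to an unproved combinatorial statement.
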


The proof relies on the following ``divide-and-conquer'' strategy, which is a straightforward application of the ``horizontal strip decomposition'' trick (Proposition~\ref{layering_trick}). It will also be useful in the next subsection.
\begin{lemma}
\label{dominance_decomposition}
Let $\Theta \subset \Pi$, and suppose that $k \in \{1, \ldots, n-1\}$ is such that $\alpha_k = e_k - e_{k+1} \not\in \Theta$. Then a Young diagram~$\mathcal{P}$ admits a $\Theta$-codominant, balanced, semistandard $\{1, \ldots, n\}$-filling if and only if there exists a diagram $\mathcal{Q} \subset \mathcal{P}$ such that:
\begin{itemize}
\item the Young diagram $\mathcal{Q}$ admits a $(\Theta \cap \Pi_{[1,k-1]})$-codominant, balanced, semistandard $\{1, \ldots, k\}$-filling;
\item the skew diagram $\mathcal{P}/\mathcal{Q}$ admits a $(\Theta \cap \Pi_{[k+1,n-1]})$-codominant, balanced, semistandard $\{k+1, \ldots, n\}$-filling;
\item the offset of~$\mathcal{Q}$ (as a diagram of order~$k$) coincides with the offset of~$\mathcal{P}$, \ie
\[\# \mathcal{Q} = \frac{k}{n} \# \mathcal{P}.\]
\end{itemize}
\end{lemma}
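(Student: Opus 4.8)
The plan is to exploit the ``horizontal strip decomposition'' (Proposition~\ref{layering_trick}) to cut a semistandard $\{1,\ldots,n\}$-filling of~$\mathcal{P}$ exactly at the threshold between symbol~$k$ and symbol~$k+1$. Concretely, given a $\Theta$-codominant balanced semistandard filling~$\mathcal{T}$ of~$\mathcal{P}$, I would set $\mathcal{Q} := \square \threeind{}{[1,k]}{} \mathcal{T}$, the subdiagram of boxes carrying a symbol at most~$k$. Proposition~\ref{layering_trick} guarantees that $\mathcal{Q} \subset \mathcal{P}$ is a genuine Young diagram. The restriction $\restr{\mathcal{T}}{\mathcal{Q}}$ is then a semistandard $\{1,\ldots,k\}$-filling of~$\mathcal{Q}$, while the complementary skew tableau $\mathcal{T}$ on $\mathcal{P}/\mathcal{Q}$ uses only the symbols $\{k+1,\ldots,n\}$ and is semistandard as a skew tableau. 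Conversely, given the two pieces, I would glue a $\{1,\ldots,k\}$-filling of~$\mathcal{Q}$ and a $\{k+1,\ldots,n\}$-filling of~$\mathcal{P}/\mathcal{Q}$ into a single filling of~$\mathcal{P}$; semistandardness of the glued object is automatic since every symbol in the lower part strictly exceeds every symbol in the upper part, so columns stay strictly increasing and rows nondecreasing across the cut.

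Next I would verify that the three bulleted conditions translate exactly into the balancedness and codominance of the glued tableau. For the offset condition: since the filling is balanced, each of the symbols $1,\ldots,k$ occurs exactly $a$ times, so $\#\mathcal{Q} = ka = \frac{k}{n}\#\mathcal{P}$, and conversely this count forces each of the $k$ symbols in the $\{1,\ldots,k\}$-filling and each of the $n-k$ symbols in the $\{k+1,\ldots,n\}$-filling to occur the common number $a$ of times, which is precisely the two balancedness requirements (here I would note that balancedness of~$\mathcal{Q}$ with respect to $\{1,\ldots,k\}$ means offset $a$, matching the stated offset condition). For codominance: by Remark~\ref{Young_combinatorial_reformulation}\ref{itm:codominance}, $\alpha_i$-codominance for $i \leq k-1$ only involves the symbols $i, i+1 \leq k$, hence depends only on the upper piece and its columns, giving $\Theta \cap \Pi_{[1,k-1]}$-codominance of the $\{1,\ldots,k\}$-filling; symmetrically, $\alpha_i$-codominance for $i \geq k+1$ involves only symbols $\geq k+1$ and is a condition on the skew piece, giving $\Theta \cap \Pi_{[k+1,n-1]}$-codominance. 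The hypothesis $\alpha_k \notin \Theta$ is exactly what lets me discard the one codominance condition that would have straddled the cut, so no constraint is lost or spuriously imposed.

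The main subtlety I expect to confront is the bookkeeping around column indices under codominance. When I split the tableau and read the skew piece $\mathcal{P}/\mathcal{Q}$ as a standalone skew tableau on $\{k+1,\ldots,n\}$, the count $\# \threeind{[1,j]}{i}{}(\mathcal{P}/\mathcal{Q})$ over its first $j$ columns must agree with the corresponding count in the full tableau~$\mathcal{T}$; this is immediate once one observes that the columns of $\mathcal{P}/\mathcal{Q}$ are initial (top) segments of the columns of~$\mathcal{T}$ in the same positions, so ``first $j$ columns'' means the same thing in both objects and the counts of any symbol $\geq k+1$ coincide. I would also record the small index translation that the codominance condition on the lower piece, though naturally phrased for $\alpha_i$ with $k+1 \leq i \leq n-1$, matches $\Theta \cap \Pi_{[k+1,n-1]}$ verbatim, so no reindexing of the alphabet is actually needed for the statement. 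With these observations the two constructions are manifestly mutually inverse, and the equivalence follows.
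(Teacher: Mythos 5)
Your proof is correct and takes essentially the same route as the paper: the paper gives no separate proof of this lemma, describing it precisely as ``a straightforward application of the horizontal strip decomposition trick (Proposition~\ref{layering_trick})'', which is exactly the cut-at-the-threshold-between-$k$-and-$k+1$ argument you spell out, including the translation of balancedness via the offset condition and the use of $\alpha_k \notin \Theta$ to discard the one codominance constraint straddling the cut. (One cosmetic slip: the columns of $\mathcal{P}/\mathcal{Q}$ are \emph{bottom} segments of the corresponding columns of~$\mathcal{T}$, not top ones, but since they occupy the same column positions this does not affect your counting argument.)
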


In our case, $\Theta \cap \Pi_{[1,k-1]}$ is the whole set $\Pi_{[1,k-1]}$ and $\Theta \cap \Pi_{[k+1,n-1]}$ is empty. It remains to characterize Young diagrams~$\mathcal{Q}$ and skew diagrams~$\mathcal{P}/\mathcal{Q}$ having these properties; this is respectively the object of the following two lemmas.

\begin{lemma}
\label{rectangle_characterization}
For $k \geq 0$ and $a \geq 0$, define the tableau $\mathcal{R}_k^a$ that is shaped like a rectangle with $k$ rows of length~$a$, with, for each $s = 1, \ldots, k$, the $s$-th row filled with the symbol~$s$.

Let $k \geq 1$. Then the only $\Pi_{[1,k-1]}$-codominant balanced semistandard Young tableaux of order~$k$ are the rectangles $\mathcal{R}_k^a$, for all (integer) offsets $a \geq 0$.
\end{lemma}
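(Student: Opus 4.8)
The plan is to prove the two inclusions separately, with the nontrivial content entirely in the converse. For the easy direction, I would simply check that every $\mathcal{R}_k^a$ has the advertised properties: it is semistandard (its rows are constant, hence nondecreasing, and each of its columns reads $1 < 2 < \cdots < k$, hence strictly increasing), it has order~$k$, and since each symbol~$s$ occurs exactly once in every column it is balanced with integer offset~$a$. In particular it has total weight~$0$, so it is automatically $\alpha$-(co)dominant for every~$\alpha$, and a fortiori $\Pi_{[1,k-1]}$-codominant.

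For the converse, let $\mathcal{T}$ be a $\Pi_{[1,k-1]}$-codominant balanced semistandard Young tableau of order~$k$, with offset~$a$ (an integer, since $\mathcal{T}$ is balanced). If $a = 0$ then $\mathcal{T}$ is empty, \ie $\mathcal{T} = \mathcal{R}_k^0$, so I may assume $a \geq 1$. I would first locate the symbols~$1$: since $1$ is minimal and columns strictly increase, every~$1$ sits at the top of its column, \ie in the first row; since the first row is nondecreasing, the $1$'s form its initial segment; and balancedness gives exactly $a$ of them. Hence the $1$'s occupy precisely columns $1, \ldots, a$ of the first row, so that $\# \threeind{[1,a]}{1}{} \mathcal{T} = a$.

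The key step is then to evaluate the codominance inequalities of Remark~\ref{Young_combinatorial_reformulation}\ref{itm:codominance} at the prefix $j = a$. Applying \eqref{eq:codominance_Ar} to each consecutive pair $(i, i+1)$ for $i = 1, \ldots, k-1$ yields the chain
\[
a = \# \threeind{[1,a]}{1}{} \mathcal{T} \leq \# \threeind{[1,a]}{2}{} \mathcal{T} \leq \cdots \leq \# \threeind{[1,a]}{k}{} \mathcal{T}.
\]
On the other hand, balancedness forces $\# \threeind{[1,a]}{i}{} \mathcal{T} \leq \# \threeind{}{i}{} \mathcal{T} = a$ for every~$i$. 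Combining these two facts gives $\# \threeind{[1,a]}{i}{} \mathcal{T} = a$ for all $i = 1, \ldots, k$; summing over~$i$ shows that the first~$a$ columns already contain all $ka$ boxes of~$\mathcal{T}$. Therefore $\mathcal{T}$ has exactly $a$ columns, each necessarily of full height~$k$; and a column of height~$k$ in a semistandard tableau of order~$k$ is forced to read $1, 2, \ldots, k$ from top to bottom. This means $\mathcal{T} = \mathcal{R}_k^a$, completing the proof.

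The main obstacle is this last forcing step: the codominance inequalities by themselves only push each symbol's mass ``to the right'' one index at a time, and would be compatible with many non-rectangular shapes. What collapses everything into the first~$a$ columns is the interaction between these inequalities and the \emph{global} balancedness constraint (each symbol appearing exactly~$a$ times), anchored by the elementary observation that the $a$ copies of~$1$ already saturate the first~$a$ columns. I expect that identifying precisely this anchoring-plus-saturation mechanism, rather than the bookkeeping around it, is the crux.
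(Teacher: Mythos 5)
Your proof of the hard inclusion is correct, and it takes a somewhat different route from the paper's. The paper argues by induction on~$k$: it restricts $\mathcal{T}$ to the symbols $\{1,\ldots,k-1\}$, observes that $\threeind{}{[1,k-1]}{} \mathcal{T}$ is a balanced, $\Pi_{[1,k-2]}$-codominant semistandard tableau of order~$k-1$, concludes from the induction hypothesis that it equals $\mathcal{R}_{k-1}^a$, and then invokes the single codominance inequality for $\alpha_{k-1}$ at the cutoff $j=a$ to force the $a$ copies of the symbol~$k$ into row~$k$ of the first $a$ columns. You unroll that induction into a one-shot argument: anchor at the $1$'s, which must occupy exactly the first $a$ boxes of row~$1$; chain all the inequalities \eqref{eq:codominance_Ar} at the single cutoff $j=a$ to get $\# \threeind{[1,a]}{i}{} \mathcal{T} \geq a$ for every~$i$; cap each count at~$a$ by balancedness; and finish by counting boxes. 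The mechanism (codominance at $j=a$ played against global balancedness) is the same in both proofs, but your version dispenses with the induction and with the small verification that the restricted tableau inherits all three hypotheses at order~$k-1$, while the paper's version is more modular, using only one codominance inequality per step.

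There is, however, a genuine (if trivially repairable) slip in your easy direction, which the paper leaves implicit: you justify codominance of $\mathcal{R}_k^a$ by saying it ``has total weight~$0$, so it is automatically $\alpha$-(co)dominant for every~$\alpha$''. That inference is false: total weight~$0$ (\ie balancedness) only makes dominance and codominance equivalent \emph{to each other}; it does not make either one hold. Indeed, the entire content of the lemma is that balanced semistandard tableaux generally fail codominance --- for instance the order-$2$ tableau with rows $(1,1,2)$ and $(2)$ is balanced but violates \eqref{eq:codominance_Ar} at $j=2$. The correct justification is local: every column of $\mathcal{R}_k^a$ contains each of the symbols $1,\ldots,k$ exactly once and hence has weight zero, so every prefix $\threeind{[1,j]}{}{} \mathcal{R}_k^a$ has weight zero and all the codominance inequalities hold with equality.
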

Note that, in the light of Corollary~\ref{characterization_for_sln}, this is equivalent to the (trivial) statement that $V^\lie{g}_\lambda(\lie{g}) \neq 0$ if and only if $\lambda = 0$ (for $\lie{g} = \lie{sl}_k(\CC)$). We nevertheless give the combinatorial proof.
\begin{proof}
We prove this by induction on~$k$. For $k = 1$, this is obvious. Now assume this is true for all values $k' < k$, and let $\mathcal{T}$ be a tableau satisfying these properties. Let $a$ be its offset, so that each symbol occurs exactly $a$ times.

The tableau $\threeind{}{[1,k-1]}{} \mathcal{T}$ is a Young tableau of order~$k-1$, is still balanced, and is $\Pi_{[1,k-2]}$-codominant; so by the induction hypothesis, it must be equal to~$\mathcal{R}_{k-1}^a$. This implies that the tableau $\threeind{[1,a]}{}{} \mathcal{T}$ (obtained by truncating~$\mathcal{T}$ after the $a$-th column) contains exactly $a$ times the symbol~$k-1$. In order to be $\alpha_{k-1}$-codominant, it must also contain at least $a$ times the symbol~$k$. This can only happen if the $k$-th row of $\threeind{[1,a]}{}{} \mathcal{T}$ has length at least~$a$, and is filled with the symbol~$k$. This forces $\threeind{[1,a]}{}{} \mathcal{T} = \mathcal{R}_k^a$, hence $\mathcal{T} = \mathcal{R}_k^a$ as well.
\end{proof}

\begin{lemma}
\label{thin_skew_tableaux_admit_nice_filling}
Let $m \geq 0$, and let $\mathcal{P}/\mathcal{Q}$ be a skew diagram. Then it admits a balanced semistandard $\{1, \ldots, m\}$-filling if and only if it has thickness at most~$m$ and its number of boxes is divisible by~$m$.
\end{lemma}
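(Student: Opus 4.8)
The plan is to run everything through the horizontal-strip decomposition (Proposition~\ref{layering_trick}, in its evident skew-diagram analogue): a semistandard $\{1,\ldots,m\}$-filling of $\mathcal{P}/\mathcal{Q}$ is the same datum as a chain $\mathcal{Q} = \mathcal{P}_0 \subset \mathcal{P}_1 \subset \cdots \subset \mathcal{P}_m = \mathcal{P}$ of Young diagrams in which each step $\mathcal{P}_s/\mathcal{P}_{s-1}$ is a horizontal strip (the symbol $s$ filling that strip), and the extra requirement that the filling be \emph{balanced} is precisely that each of these $m$ strips has the common size $a := \#(\mathcal{P}/\mathcal{Q})/m$. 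Necessity of the two conditions is then immediate: in a semistandard filling each column carries strictly increasing entries, so a column of height $>m$ cannot be filled from an alphabet of size $m$, whence thickness at most~$m$; and balancedness forces each of the $m$ symbols to occur $\#(\mathcal{P}/\mathcal{Q})/m$ times, so $m \mid \#(\mathcal{P}/\mathcal{Q})$.

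For sufficiency I would induct on~$m$, the case $m \leq 1$ being immediate (thickness at most~$1$ means $\mathcal{P}/\mathcal{Q}$ is itself a horizontal strip, filled with the single symbol~$1$). The inductive step reduces to the following \emph{peeling claim}: if $\mathcal{P}/\mathcal{Q}$ has thickness at most~$m$ and $\#(\mathcal{P}/\mathcal{Q}) = ma$, then there is a Young diagram $\mathcal{P}'$ with $\mathcal{Q} \subseteq \mathcal{P}' \subseteq \mathcal{P}$ such that $\mathcal{P}/\mathcal{P}'$ is a horizontal strip of size exactly~$a$ while $\mathcal{P}'/\mathcal{Q}$ has thickness at most~$m-1$. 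Granting this, one places the symbol~$m$ on $\mathcal{P}/\mathcal{P}'$ and applies the induction hypothesis to $\mathcal{P}'/\mathcal{Q}$ (which has $\#(\mathcal{P}'/\mathcal{Q}) = (m-1)a$ and thickness at most $m-1$) to obtain a balanced semistandard $\{1,\ldots,m-1\}$-filling; the column-strictness and row-weakness across the junction hold by the horizontal-strip property exactly as in Proposition~\ref{layering_trick}, and each symbol ends up occurring~$a$ times.

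To establish the peeling claim I would work entirely with column heights. Write $P_j = \#^j\mathcal{P}$, $Q_j = \#^j\mathcal{Q}$ and $c_j = P_j - Q_j$, so that $\max_j c_j \leq m$ and $\sum_j c_j = ma$. Choosing $\mathcal{P}'$ amounts to choosing $\delta_j := P_j - \#^j\mathcal{P}' \in \{0,1\}$ subject to three constraints: $P_j - \delta_j$ is nonincreasing in~$j$ (so $\mathcal{P}'$ is a Young diagram); $c_j - \delta_j \leq m-1$ (the thickness drop), which forces $\delta_j = 1$ on $M := \{j : c_j = m\}$ and leaves $\delta_j$ free where $c_j \leq m-1$; and $\sum_j \delta_j = a$. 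The Young condition says exactly that within each maximal block of columns of equal height $P_j$ the cut columns ($\delta_j = 1$) form a right suffix. Two counting facts make this solvable. First, $|M| \leq a$, since the $|M|$ columns of height~$m$ already account for $m|M| \leq \sum_j c_j = ma$ boxes. Second, the number of nonempty columns is at least~$a$, since $ma = \sum_j c_j \leq m \cdot \#\{j : c_j \geq 1\}$. Moreover $c_j = P_j - Q_j$ is nondecreasing within each block (as $Q_j$ is nonincreasing), so the mandatory cuts~$M$ already sit as right suffixes of their blocks and respect the suffix rule; one may then enlarge the cut set one nonempty column at a time, extending block-suffixes leftward, to hit any total between $|M|$ and $\#\{j : c_j \geq 1\}$, in particular exactly~$a$.

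The main obstacle is the peeling claim itself: the tension between wanting a strip of size \emph{exactly}~$a$, keeping the remainder a genuine (nonincreasing-in-column-height) Young diagram, and simultaneously lowering the thickness. The column-height bookkeeping above is what resolves it cleanly, the suffix-within-blocks rule together with the two inequalities $|M| \leq a \leq \#\{j : c_j \geq 1\}$ being exactly what guarantees a legitimate choice of the~$\delta_j$.
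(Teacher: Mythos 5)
Your proof is correct and follows essentially the same route as the paper's: both reduce to Proposition~\ref{layering_trick} and induct on $m$ by peeling off a horizontal strip of exactly $a$ boxes whose removal drops the thickness to $m-1$, with your pigeonhole bounds $|M| \leq a \leq \#\{j : c_j \geq 1\}$ being exactly the paper's $\#X \leq a \leq \#Y$. The only (cosmetic) difference is how the strip is produced: the paper cuts all full columns plus the rightmost $a - \#X$ nonempty non-full columns and checks the Young condition by case distinction, whereas you grow the cut set one block-suffix at a time; both verifications rest on the same observation that within a block of constant $\#^j\mathcal{P}$ the skew-column heights are nondecreasing.
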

\begin{proof}
The ``only if'' part is obvious. Conversely, let $\mathcal{P}/\mathcal{Q}$ be a skew diagram of thickness at most~$m$ and containing $m a$ boxes, for some integer $a \geq 0$. By Proposition~\ref{layering_trick}, it suffices to find a Young diagram $\mathcal{P}'$ with the following properties:
\begin{hypothenum}
\item $\mathcal{Q} \subset \mathcal{P}' \subset \mathcal{P}$;
\item $\mathcal{P}/\mathcal{P}'$ contains exactly $a$ boxes, and has thickness at most~$1$;
\item $\mathcal{P}'/\mathcal{Q}$ contains exactly $(m-1)a$ boxes, and has thickness at most~$m-1$.
\end{hypothenum}
We may then conclude by induction on~$m$, filling all boxes of~$\mathcal{P}'/\mathcal{Q}$ with the symbols from $1$ to~$m-1$ in a balanced and semistandard way, and filling the remaining boxes, namely $\mathcal{P}/\mathcal{P}'$, with the symbol~$m$.

Denote by~$X$ (resp.~$Y$) the set of indices~$j$ such that the height of the $j$-th column of the skew diagram $\mathcal{P}/\mathcal{Q}$ is exactly~$m$ (resp. is nonzero). By the pigeonhole principle, we then have
\[\# X \leq a \leq \# Y.\]
We now define $\mathcal{P}'$ by specifying its column heights $\#^j \mathcal{P}'$:
\begin{itemize}
\item whenever $j$ is in~$X$ or is among the largest $(a - \# X)$ values in~$Y \setminus X$, we set $\#^j \mathcal{P}' := \#^j \mathcal{P} - 1$;
\item whenever $j$ is among the remaining values in~$Y \setminus X$ or outside of~$Y$, we set $\#^j \mathcal{P}' := \#^j \mathcal{P}$.
\end{itemize}
By case distinction, it is straightforward to verify that these column heights do indeed define a valid Young diagram, \ie that they form a nonincreasing sequence. As for the properties (i) through~(iii) above, $\mathcal{P}'$ then satisfies them by construction.
\end{proof}

We are now ready to prove the proposition.

\begin{proof}[Proof of Proposition~\ref{supq_combinatorics}]
Let $\mathcal{P}$ be any Young diagram with $\lie{sl}_n$-shape~$\lambda$, and let $a$ be its offset. Plugging Lemmas \ref{rectangle_characterization} and~\ref{thin_skew_tableaux_admit_nice_filling} into Lemma~\ref{dominance_decomposition}, we now see that $\mathcal{P}$ has a filling with the required properties if and only if $a$~is integer and:
\begin{equation}
\label{eq:slaloms_between_rectangles}
\begin{cases}
\text{the rectangular diagram $\square \mathcal{R}_k^a$ is contained in~$\mathcal{P}$;} \\
\text{the skew diagram $\mathcal{P}/\square \mathcal{R}_k^a$ has thickness at most $n-k$.}
\end{cases}
\end{equation}

It remains to check that the condition~\eqref{eq:slaloms_between_rectangles} is equivalent to the inequalities~\eqref{eq:su_pq_diagram_inequalities}, namely $\#_k \mathcal{P} \geq a \geq \#_{n-k+1}\mathcal{P}$. Indeed we have, on the one hand:
\[\square\mathcal{R}_k^a \subset \mathcal{P}
\;\iff\; \#^a \mathcal{P} \geq k
\;\iff\; \#_k \mathcal{P} \geq a,\]
and on the other hand:
\begin{align*}
\forall j,\quad \#^j \mathcal{P} - \#^j \square\mathcal{R}_k^a \leq n-k
&\;\iff\; \forall j > a,\quad \#^j \mathcal{P} \leq n-k \\
&\;\iff\; \#^{a+1}\mathcal{P} \leq n-k \\
&\;\iff\; \#_{n-k+1}\mathcal{P} \leq a. \qedhere
\end{align*}
\end{proof}

\subsection{The case $\lie{sl}_m(\HH)$}
\label{sec:slmH}

Fix some $m \geq 1$. For the duration of this subsection, we assume that $n = 2m$ and that $\lie{g}_\RR = \lie{sl}_m(\HH)$.

From \cite{OV90}, Reference Chapter, Table~9, we then get
\[\Theta(\lie{sl}_m(\HH)) = \Pi_{\operatorname{odd}} = \{e_1 - e_2,\; e_3 - e_4,\; \ldots,\; e_{2m-1} - e_{2m}\}.\]

The Main Theorem for this $\lie{g}_\RR$ then follows, by Corollary~\ref{characterization_for_sln} (see the final proof in Subsection~\ref{sec:An_conclusion} for details), from the following combinatorial result. This subsection is dedicated to proving it.

\begin{proposition}
\label{slmH_combinatorics}
Let $\mathcal{P}$ be a Young diagram of order~$n$. Then it admits a $\Pi_{\operatorname{odd}}$-codominant balanced semistandard $\{1, \ldots, n\}$-filling if and only if its offset~$a$ is integer, and it satisfies the two inequalities
\begin{equation}
\labelWithArgument{eq:slmH_lambda_condition}{$m$}
\begin{cases}
\displaystyle -p_1 + \sum_{i=2}^{m+1} p_i - \sum_{i=m+2}^{2m} p_i \geq 0 \\
\displaystyle -\sum_{i=1}^{m-1} p_i + \sum_{i=m}^{2m-1} p_i - p_{2m} \leq 0,
\end{cases}
\end{equation}
with the notation shortcut $p_i := \#_i \mathcal{P}$.
\end{proposition}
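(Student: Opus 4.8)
The plan is to work entirely on the combinatorial side, characterizing the Young diagrams $\mathcal{P}$ of order $n=2m$ that admit a $\Pi_{\operatorname{odd}}$-codominant balanced semistandard $\{1,\dots,n\}$-filling. First I would record two preliminary simplifications. By Remark~\ref{Young_combinatorial_reformulation}\ref{itm:codominance}, $\Pi_{\operatorname{odd}}$-codominance means exactly that for every odd $i$ and every $j$ the first $j$ columns contain at least as many symbols $i+1$ as symbols $i$; and since the complement of $\Theta=\Pi_{\operatorname{odd}}$ inside $\Pi$ is the set of \emph{even} roots $\alpha_2,\alpha_4,\dots,\alpha_{2m-2}$, the codominance constraints decouple into $m$ independent ``ballot'' conditions, one per consecutive pair $(2t-1,2t)$. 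On the other hand, using $\sum_{i=1}^{2m}p_i=2ma$, I would rewrite the two inequalities of \eqref{eq:slmH_lambda_condition} in the equivalent, more transparent form
\[\sum_{i=2}^{m+1} p_i \;\ge\; ma \qquad\text{and}\qquad \sum_{i=m}^{2m-1} p_i \;\le\; ma,\]
i.e.\ the ``upper-middle'' and ``lower-middle'' blocks of $m$ rows are respectively large and small enough.

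Next I would reduce to a statement about pairs. Applying the horizontal-strip decomposition (Proposition~\ref{layering_trick}), a balanced filling corresponds to a chain $\emptyset=\mathcal{P}_0\subset\mathcal{P}_1\subset\cdots\subset\mathcal{P}_{2m}=\mathcal{P}$ in which each step is a horizontal strip of exactly $a$ boxes, and the ballot condition for the pair $(2t-1,2t)$ becomes a column-prefix inequality between the strips $\mathcal{P}_{2t-1}/\mathcal{P}_{2t-2}$ and $\mathcal{P}_{2t}/\mathcal{P}_{2t-1}$. Equivalently, iterating Lemma~\ref{dominance_decomposition} at the even roots (in a skew-diagram version, which holds by the same horizontal-strip argument) groups these steps into $m$ two-symbol layers $\mathcal{Q}_t/\mathcal{Q}_{t-1}$, each of which must carry a balanced codominant semistandard filling on two symbols. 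I would then prove a two-symbol-layer lemma, the analog of Lemmas~\ref{rectangle_characterization} and~\ref{thin_skew_tableaux_admit_nice_filling}, characterizing which skew diagrams admit such a filling: thickness at most $2$, an even number of boxes, and a ballot condition allowing the height-$1$ columns to be two-colored so that, scanning from the left, symbol $2t$ never falls behind symbol $2t-1$. A clean by-product, proved directly, is that the \emph{first} pair forces a $2\times a$ rectangle in the top-left corner: symbol $1$ occupies precisely the cells $(1,1),\dots,(1,a)$, and codominance then pins all $a$ copies of symbol $2$ to $(2,1),\dots,(2,a)$, so $\mathcal{Q}_1=\square\mathcal{R}_2^{a}$.

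The final step is to show that such a chain exists if and only if the two displayed inequalities hold. For sufficiency I would build the chain explicitly: peel off the forced rectangle $\square\mathcal{R}_2^{a}$ and fill the remaining skew region layer by layer, realizing each pair as vertical dominoes wherever a column has height $2$ in that layer and placing, among the single cells, the larger symbol in the earlier (and necessarily lower) columns so that the ballot order holds; the two inequalities are exactly what guarantees that this choice is available at every stage and that the procedure neither runs out of room at the top nor overflows at the bottom. For necessity I would argue by induction on $m$, removing the forced rectangle $\square\mathcal{R}_2^{a}$ and recursing on $\mathcal{P}/\square\mathcal{R}_2^{a}$ with the alphabet $\{3,\dots,2m\}$; the confinement of each symbol $s$ to rows $\le s$ (from semistandardness) together with the leftward pressure that codominance exerts on the larger symbol of each pair forces enough of the large symbols into the upper rows to yield $\sum_{i=2}^{m+1}p_i\ge ma$, and the reflection $s\leftrightarrow 2m+1-s$ (which rotates the tableau, exchanges dominance with codominance, and permutes the pairs) yields the dual bound $\sum_{i=m}^{2m-1}p_i\le ma$.

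The main obstacle I anticipate is precisely this combination step. The layer boundaries $\mathcal{Q}_t$ are interdependent, and after the first rectangular peel the remaining regions are genuinely skew, so making the induction go through will require formulating and proving a skew-diagram generalization of the proposition (with a staircase-type inner shape) and checking that the per-pair ballot conditions aggregate to exactly the two row-sum inequalities, with no additional constraint. I would expect the necessity direction, where the column-wise codominance must be converted into control over the row distribution, to be the delicate part.
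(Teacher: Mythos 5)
Your preliminary reductions are sound and match the paper's own: the decoupling of $\Pi_{\operatorname{odd}}$-codominance into per-pair ballot conditions, the divide-and-conquer reduction via Lemma~\ref{dominance_decomposition}, and your proposed two-symbol-layer lemma, which is exactly the paper's Lemma~\ref{conditions_for_nice_filling_of_skew_diagram} (thickness at most~$2$, even size, no-majority-bridge condition). Your forced-rectangle observation for the first pair ($\mathcal{Q}_1 = \square\mathcal{R}_2^a$) is also correct. The first genuine gap is in how you then set up the induction: by peeling the \emph{first} pair you are left with the skew diagram $\mathcal{P}/\square\mathcal{R}_2^a$, to which the proposition does not apply; and after this first peel no further layer is forced to be a rectangle (symbol~$3$ may occupy cells in rows $1$, $2$ or~$3$), so the inner shapes arising in the recursion are genuinely arbitrary, and the ``skew-diagram generalization with staircase inner shape'' your induction needs is neither formulated nor proved --- you flag this yourself. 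The paper avoids the issue entirely by cutting at the \emph{other} end: it applies Lemma~\ref{dominance_decomposition} at $k = 2m-2$, so that the two-symbol layer (symbols $2m-1$, $2m$) is the outer skew part, handled by Lemma~\ref{conditions_for_nice_filling_of_skew_diagram}, while the inner object $\mathcal{Q}$ remains an honest Young diagram of order $2m-2$, to which the induction hypothesis applies verbatim.

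The second, deeper gap is in your final step, in both directions. Sufficiency requires showing that if $\mathcal{P}$ satisfies \eqref{eq:slmH_lambda_condition} then there \emph{exists} an intermediate Young diagram $\mathcal{Q}$ of order $n-2$ with exactly $\frac{n-2}{n}\#\mathcal{P}$ boxes, satisfying the $(m-1)$-inequalities together with the thickness-$2$ and bridge conditions; your greedy description (``dominoes plus singles, larger symbol earlier'') simply asserts that ``the two inequalities are exactly what guarantees that this choice is available at every stage,'' which \emph{is} the statement to be proven. This is where the paper works hardest (Lemma~\ref{induction_step}): it shows that the set of admissible pairs $(\mathcal{P}, \mathcal{Q})$ is a monoid (Lemma~\ref{good_pairs_closed_under_addition}, by rewriting the bridge condition as a system of homogeneous linear inequalities), explicitly computes the basis of the monoid $\mathscr{M}_{\eqref{eq:slmH_lambda_condition}} \cap \mathscr{M}_{2|\#}$, constructs for each basis element two diagrams $\mathcal{Q}^{\pm}$ (Lemma~\ref{horrible_technical_part} and Table~\ref{tab:table_of_Ps_and_Qs}) whose sizes are the even floor and ceiling of $\frac{n-2}{n}\#\mathcal{P}$, and then interpolates between the two choices summand by summand to hit the exact box count. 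Nothing in your proposal substitutes for this machinery. On the necessity side, your reflection $s \leftrightarrow 2m+1-s$ is salvageable but wrong as stated: rotating a filled tableau by $180^\circ$ and negating symbols does not produce a tableau of Young-diagram shape; the correct involution complements the chain of diagrams $\mathcal{P}_0 \subset \cdots \subset \mathcal{P}_{2m}$ inside a bounding $2m \times p_1$ rectangle, which indeed swaps the two inequalities of \eqref{eq:slmH_lambda_condition} and preserves the filling property (a symmetry the paper does not exploit, and a nice observation once made rigorous). As it stands, though, the proposal identifies the right scaffolding but omits the central existence argument: it is a plan, not a proof.
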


The proof, like the proof of Proposition~\ref{supq_combinatorics}, relies on Lemma~\ref{dominance_decomposition}. However now the situation is more complex: while for $\Theta = \Pi_{[1,k-1]}$ we had a single ``cutting point'' (namely~$k$), here we will ``cut'' at all the even indices at the same time. More rigorously, we will use $2m-2$ as the cutting point, and then proceed by induction on~$m$. Overall, the proof is much more technical than in the previous subsection.

The proof relies on two big lemmas:
\begin{itemize}
\item Subsubsection~\ref{sec:skew_thickness_two} is dedicated to proving Lemma~\ref{conditions_for_nice_filling_of_skew_diagram}, which, roughly, gives a condition for the existence of a suitable filling of the ``bottom'' skew tableau, namely $\threeind{}{\{2m-1,2m\}}{}\mathcal{T}$.
\item Subsubsection~\ref{sec:induction_step} is dedicated to proving Lemma~\ref{induction_step}, which, roughly, deduces the result from the induction hypothesis and from this characterization.
\end{itemize}

For a more detailed explanation of how these lemmas fit together, see the schematic given in the final proof (Subsubsection~\ref{sec:slmH_conclusion}).

\subsubsection{Skew tableaux of thickness 2}
\label{sec:skew_thickness_two}

We now give the criterion for the existence of an $\alpha_{2m-1}$-codominant balanced semistandard $\{2m-1, 2m\}$-filling of a skew tableau. Clearly we lose no generality by considering the alphabet $\{1, 2\}$ instead. In order to give this criterion, we first need a definition.

\begin{definition}
\label{bridge_definition}
Let $\mathcal{P}$ be a Young diagram of order~$n$, and let $\mathcal{Q}$ be a Young diagram contained in~$\mathcal{P}$. The \emph{bridge at height~$i$} in $\mathcal{P}/\mathcal{Q}$ is the rectangle formed by all columns~$j$ such that $\#^j \mathcal{Q} = i-1$ and $\#^j \mathcal{P} = i$ (see Figure~\ref{fig:bridges_example}). For each $i = 1, \ldots, n$, we then denote by $b_i$ the length of the bridge at height~$i$.
\end{definition}

\begin{figure}
\[\begin{tikzpicture}[x=13pt,y=13pt] 
\Yfillcolor{black!20}
\tgyoung(0cm,0cm,:::::::::::::_2,,:::::::_3,::::_2,::_1);
\Yfillopacity{0}
\Ylinecolor{black!30}
\tgyoung(0cm,0cm,;;;;;;;;;;;;;;;,;;;;;;;;;;;;;,;;;;;;;;;;,;;;;;;,;;;;);
\Ylinecolor{black}
\tgyoung(0cm,0cm,::::::::::::;111,::::::::::::;2,:::::::;112,:::;122,1122,22);
\draw[thick, <->](13,-0.5)--(15,-0.5) node[midway,below] {\footnotesize $b_1 = 2$};
\node[below] at (12,-1) {\footnotesize $b_2 = 0$};
\draw[thick, <->](7,-2.5)--(10,-2.5) node[midway,below] {\footnotesize $b_3 = 3$};
\draw[thick, <->](4,-3.5)--(6,-3.5) node[midway,below] {\footnotesize $b_4 = 2$};
\draw[thick, <->](2,-4.5)--(3,-4.5) node[pos=0,below right] {\footnotesize $b_5 = 1$};
\node[below] at (0,-5) {\footnotesize $b_6 = 0$};
\end{tikzpicture}\]
\caption{\label{fig:bridges_example}Example of a skew diagram of thickness~$2$, with all bridges shaded and the bridge lengths $b_i$ written (see Definition~\ref{bridge_definition}). This diagram is also given with a $\{1, 2\}$-filling that satisfies Lemma~\ref{conditions_for_nice_filling_of_skew_diagram}.}
\end{figure}

\begin{lemma}
\label{conditions_for_nice_filling_of_skew_diagram}
Let $\mathcal{P}$ be a Young diagram of order~$n$, and let $\mathcal{Q}$ be a Young diagram contained in~$\mathcal{P}$. Then the skew diagram $\mathcal{P}/\mathcal{Q}$ admits an $\alpha_1$-codominant balanced semistandard $\{1, 2\}$-filling if and only if:
\begin{hypothenum}
\item \label{itm:balanced_implies_even} its total number of boxes $\# \mathcal{P}/\mathcal{Q}$ is even;
\item \label{itm:thickness_2} it has thickness at most~$2$;
\item \label{itm:no_majority_bridge} if we count the total number of boxes in all the bridges, no single bridge contains a majority of them:
\begin{equation}
\label{eq:b_no_majority_bridge}
\forall i = 1, \ldots, n,\quad b_i \leq \frac{1}{2} \sum_{j=1}^n b_j.
\end{equation}
\end{hypothenum}

\end{lemma}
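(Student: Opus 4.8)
The plan is to prove both directions via the horizontal strip decomposition (Proposition~\ref{layering_trick}), which for a $\{1,2\}$-filling reduces the data to a single intermediate diagram $\mathcal{P}'$ with $\mathcal{Q} \subset \mathcal{P}' \subset \mathcal{P}$, where $\mathcal{P}'/\mathcal{Q}$ is the set of boxes labelled~$1$ and $\mathcal{P}/\mathcal{P}'$ is the set of boxes labelled~$2$. Both skew shapes must be horizontal strips (thickness~$\leq 1$), and balancedness forces $\#(\mathcal{P}'/\mathcal{Q}) = \#(\mathcal{P}/\mathcal{P}') = \frac{1}{2}\#(\mathcal{P}/\mathcal{Q})$. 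The codominance condition~\eqref{eq:codominance_Ar} for $\alpha_1$ says that, reading the columns left to right, the running count of $2$'s never falls behind the running count of $1$'s.

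\medskip

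The ``only if'' direction is routine: given such a filling, (i) follows from balancedness, (ii) because a column can hold at most one~$1$ and one~$2$. For~(iii), I would fix a height~$i$ and observe that a box in the bridge at height~$i$ lies in column~$j$ with $\#^j\mathcal{Q} = i-1$, $\#^j\mathcal{P} = i$, so it is the unique ``new'' box of that column and must be filled with either~$1$ or~$2$; counting how bridge boxes distribute between the two symbols, together with the semistandardness constraint that within a single column a~$1$ sits strictly above a~$2$, shows that each bridge's boxes are split essentially evenly between the two labels, whence no bridge can hold a strict majority of all bridge boxes without violating balancedness. This last point is where I expect to have to argue carefully: I would phrase it by noting that the non-bridge boxes of $\mathcal{P}/\mathcal{Q}$ come in vertically-stacked pairs (one box of each label in a thickness-$2$ column), so balancedness is automatic off the bridges, and the constraint reduces exactly to balancing the labels \emph{within} the union of bridges, which is precisely~\eqref{eq:b_no_majority_bridge}.

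\medskip

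The ``if'' direction is the substantial one, and the expected main obstacle. Here I would construct the filling explicitly. The key reformulation is that the bridges are exactly the columns where $\mathcal{P}/\mathcal{Q}$ has a box sitting atop $\mathcal{Q}$ with no box below it within the strip in the next row down; more usefully, a thickness-$2$ skew diagram decomposes column-by-column into columns of height~$2$ (which must receive one~$1$ on top and one~$2$ below, contributing neutrally to both the balance and the running codominance count) and columns of height~$1$, which are precisely the bridge boxes and are the only boxes where a genuine choice of label is made. So the whole problem collapses to assigning labels $1$ or~$2$ to the bridge boxes so that (a) the total numbers of $1$'s and $2$'s agree, and (b) in every left-prefix of columns the number of $2$'s is at least the number of $1$'s. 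Condition~(a) is forced by~(i) together with the height-$2$ columns contributing equally; condition~(b) is a prefix-balancing (ballot-type) constraint.

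\medskip

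To realize such an assignment I would invoke the hypothesis~\eqref{eq:b_no_majority_bridge}: because no bridge holds more than half of the $\sum_j b_j$ bridge boxes, I can pair up bridge boxes so that paired boxes lie in distinct bridges, labelling one of each pair~$2$ and the other~$1$, and order the pairs so as to respect the ballot (prefix) condition. Concretely, I would argue that the ``no-majority'' condition is exactly the classical necessary and sufficient condition for the existence of a perfect matching / a valid lattice-path-type labelling that keeps every prefix nonnegative; one clean way is to process columns from left to right and greedily assign~$2$ to each bridge box unless doing so would make it impossible to later balance (which, by a counting argument using~\eqref{eq:b_no_majority_bridge}, never happens). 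Care is needed to check that the greedy choice respects semistandardness vertically, i.e.\ that a bridge box labelled~$1$ never ends up below a box labelled~$2$ in the same column; but since bridge boxes are the solitary occupant of their column's new row, this is vacuous, and only the horizontal-strip condition between consecutive rows must be checked, which holds by the thickness-$2$ structure. The delicate bookkeeping — verifying that the greedy / matching assignment simultaneously achieves global balance, every prefix inequality, and the semistandard row/column monotonicity — is the crux, and I would isolate it as the technical heart of the argument, likely phrasing the prefix condition as a statement about a single real-valued ``deficit'' function that stays within $[0, b_{\max}]$ thanks to~\eqref{eq:b_no_majority_bridge}.
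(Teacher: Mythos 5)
Your reduction to a labelling problem on the bridge boxes is the right first move (it is also how the paper begins), but it silently drops the constraint that carries the entire content of the lemma. The boxes of one bridge are horizontally adjacent boxes of a single row, so semistandardness is not merely a vertical condition: along each bridge the labels must be nondecreasing from left to right, i.e.\ all the $1$'s of a bridge must precede all of its $2$'s. Your assertion that, bridge boxes being the sole skew box of their column, semistandardness imposes nothing further on their labels is therefore false, and with it your reduced problem --- (a) equally many $1$'s and $2$'s, (b) every column prefix contains at least as many $2$'s as $1$'s --- becomes solvable whenever the number of bridge boxes is even: simply label the leftmost half $2$ and the rightmost half $1$. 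In other words, your argument would prove the lemma with hypothesis~\ref{itm:no_majority_bridge} deleted, which is false. Concretely, take $\mathcal{P}$ a single row of four boxes and $\mathcal{Q} = \emptyset$: then \ref{itm:balanced_implies_even} and \ref{itm:thickness_2} hold while \eqref{eq:b_no_majority_bridge} fails ($b_1 = 4 > 2$), and the only balanced semistandard filling is $1,1,2,2$, which is not $\alpha_1$-codominant. The same oversight breaks your ``only if'' step for \ref{itm:no_majority_bridge}: it is not true that every bridge is ``split essentially evenly'' (for two bridges of length~$2$, the unique valid filling puts only $2$'s in the left bridge and only $1$'s in the right one), and \eqref{eq:b_no_majority_bridge} cannot be a consequence of balancedness and semistandardness alone --- the four-box example satisfies both --- so codominance must enter the necessity argument, which it never does in your sketch.

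For comparison, the paper's bookkeeping runs as follows: a semistandard filling is parametrized by the number $c_i$, $0 \leq c_i \leq b_i$, of $1$'s heading the bridge at height~$i$; codominance is then equivalent to the linear system \eqref{eq:c_dominance_condition} (the prefix minimum inside bridge~$i$ is attained right after its $c_i$ ones), and balancedness to \eqref{eq:c_balancedness_condition}. Necessity of \eqref{eq:b_no_majority_bridge} follows by combining each codominance inequality with the mirror inequality satisfied by $c'_i := b_i - c_i$; sufficiency comes from an explicit filling close to the one you propose --- $2$'s on the leftmost half of all bridge boxes, $1$'s on the rightmost half --- except that in the single bridge straddling the midpoint the two blocks must be swapped to restore row-monotonicity, and it is exactly the prefix inequality at that swapped bridge that consumes hypothesis \eqref{eq:b_no_majority_bridge}. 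Your pairing/greedy idea can be repaired, but only after the row constraint within bridges is reinstated, at which point it essentially becomes this construction.
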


\begin{proof}
Let $\mathcal{T}$ be any $\{1, 2\}$-filling of the skew-tableau $\mathcal{P}/\mathcal{Q}$. First of all, note that this filling is semistandard if and only if it satisfies the following properties:
\begin{itemize}
\item No columns of height more than $2$ exist (this is condition~\ref{itm:thickness_2}).
\item Each column of height~$2$ is filled with the symbols $1$ and~$2$ in that order.
\item For every $i$, there exists a number $c_i$ such that
\begin{equation}
\label{eq:c_bounds}
0 \leq c_i \leq b_i,
\end{equation}
with the $i$-th bridge of~$\mathcal{T}$ having the first $c_i$ boxes filled with~$1$ and the last $b_i - c_i$ boxes filled with~$2$.
\end{itemize}

%

Assume now that $\mathcal{T}$ is semistandard. Recall that $\alpha_1$-codominance (resp. balancedness) of~$\mathcal{T}$ means that the difference $\#\threeind{[1,j]}{2}{}\mathcal{T} - \#\threeind{[1,j]}{1}{}\mathcal{T}$ between the number of $2$'s and the number of $1$'s in the first $j$ columns is nonnegative for every $j = 1, \ldots, \#_1 \mathcal{P}$ (resp. is zero for $j = \#_1 \mathcal{P}$). Clearly columns of heights $0$ and~$2$ make no contribution to this difference, so it suffices to focus on the bridges. Within the bridge at height~$i$, this difference attains its minimum at the $c_i$-th column. Hence a semistandard filling~$\mathcal{T}$ is $\alpha_1$-codominant if and only if it satisfies
\begin{equation}
\label{eq:c_dominance_condition}
\forall i = 1, \ldots, n,\quad \sum_{j=1}^i c_j \leq \frac{1}{2} \left( c_i + \sum_{j=1}^{i-1} b_j \right)
\end{equation}
and balanced if and only if it satisfies
\begin{equation}
\label{eq:c_balancedness_condition}
\sum_{i=1}^n c_i = \frac{1}{2} \sum_{i=1}^n b_i.
\end{equation}

Finally, observe that, when condition~\ref{itm:thickness_2} holds, the total number of boxes in~$\mathcal{P}/\mathcal{Q}$ has the same parity as the sum $\sum_{i=1}^n b_i$: indeed, the difference between these two numbers simply counts all the boxes in columns of height~$2$.

The conclusion now follows from the following lemma.
\end{proof}

\begin{lemma}
Given a tuple of integers~$(b_1, \ldots, b_n)$, there exists a tuple of integers $(c_1, \ldots, c_n)$ satisfying conditions \eqref{eq:c_bounds}, \eqref{eq:c_dominance_condition} and~\eqref{eq:c_balancedness_condition} if and only if the $b_i$ have even sum and satisfy the system~\eqref{eq:b_no_majority_bridge}.
\end{lemma}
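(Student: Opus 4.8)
The plan is to translate the three conditions into statements about prefix sums and then settle each direction by elementary counting. Set $B := \sum_{i=1}^n b_i$, and for a candidate tuple write $S_i := \sum_{j=1}^i c_j$ and $T_i := \sum_{j=1}^i b_j$, with $S_0 = T_0 = 0$. After writing bridges $1, \dots, i-1$ in full and then the first $c_i$ boxes (the $1$'s) of bridge~$i$, the number of $2$'s placed is $T_{i-1} - S_{i-1}$ and the number of $1$'s is $S_i = S_{i-1} + c_i$, and this is exactly the point where the running difference (number of $2$'s minus number of $1$'s) attains its local minimum. A one-line rearrangement then shows that \eqref{eq:c_dominance_condition} is equivalent to requiring $T_{i-1} - S_{i-1} \ge S_i$ for every~$i$, while \eqref{eq:c_balancedness_condition} simply reads $S_n = \frac{1}{2} B$. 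In particular \eqref{eq:c_balancedness_condition} forces $B$ to be even, giving one half of the claimed necessary condition.

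For the other half I would fix an index~$k$ and combine this local bound with a global count. On one hand, codominance just after the $1$'s of bridge~$k$ gives $T_{k-1} - S_{k-1} \ge S_{k-1} + c_k$. On the other hand, the $b_k - c_k$ twos of bridge~$k$ are among the $2$'s not yet written at that moment, of which there remain $\frac{1}{2} B - (T_{k-1} - S_{k-1})$; hence $b_k - c_k \le \frac{1}{2} B - (T_{k-1} - S_{k-1})$. Substituting the first inequality into the second yields $b_k - c_k \le \frac{1}{2} B - S_{k-1} - c_k$, so that $b_k \le \frac{1}{2} B - S_{k-1} \le \frac{1}{2} B$, which is precisely~\eqref{eq:b_no_majority_bridge}.

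Conversely, assume $B$ is even and $b_i \le \frac{1}{2} B$ for all~$i$; I would build $(c_i)$ greedily by pushing all $2$'s as far left as possible, equivalently all $1$'s as far right as possible. For $B > 0$ (the case $B = 0$ being trivial, with all $c_i = 0$), let $j$ be the unique index with $\sum_{i>j} b_i < \frac{1}{2} B \le \sum_{i \ge j} b_i$ (it exists, and $j \ge 2$, precisely because $b_1 \le \frac{1}{2} B$), and set $c_i := b_i$ for $i > j$, $c_j := \frac{1}{2} B - \sum_{i>j} b_i$, and $c_i := 0$ for $i < j$. Then $0 \le c_i \le b_i$ and $\sum_i c_i = \frac{1}{2} B$ by construction, so \eqref{eq:c_bounds} and \eqref{eq:c_balancedness_condition} hold. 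To check \eqref{eq:c_dominance_condition} I would follow the running difference along the columns: it increases through the all-$2$ bridges $i < j$, dips during the $c_j$ ones of bridge~$j$ down to $T_{j-1} - c_j = \frac{1}{2} B - b_j$, rises through that bridge's $2$'s, and then decreases monotonically through the all-$1$ bridges $i > j$ to the value~$0$ at the last column. Its minimum is thus $\min(\frac{1}{2} B - b_j,\, 0) = 0 \ge 0$, nonnegative exactly because $b_j \le \frac{1}{2} B$.

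The one genuinely nonlocal step, and the place where I expect to spend the most care, is the necessity of \eqref{eq:b_no_majority_bridge}: it does not follow from any single codominance inequality but from pairing the local bound with the global balance, as above. The sufficiency is then a routine verification once one has the idea of front-loading the $2$'s; the only subtlety there is to locate the crossing bridge~$j$ correctly and to confirm that the candidate minima of the running difference are exactly $\frac{1}{2} B - b_j$ and~$0$.
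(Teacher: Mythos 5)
Your proposal is correct and follows essentially the same route as the paper: your necessity argument (dominance at index $k$ combined with the global count of $2$'s forced by \eqref{eq:c_bounds} and \eqref{eq:c_balancedness_condition}) is the paper's manipulation with the reversed tuple $c'_i = b_i - c_i$ rewritten in prefix-sum language, and your greedy construction for sufficiency --- all $2$'s pushed left, with the crossing bridge adjusted --- is exactly the paper's three-case definition of $(c_1, \ldots, c_n)$. The only difference is presentational: you verify dominance by tracking the running difference directly, where the paper argues case-by-case with the equivalence of \eqref{eq:c_dominance_condition} and \eqref{eq:c_prime_dominance_condition}; both verifications are sound.
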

\begin{proof}
Suppose first that such a tuple $(c_1, \ldots, c_n)$ exists. Then \eqref{eq:c_balancedness_condition} directly implies that $\sum_{i=1}^n b_i$ is even. Furthermore, subtracting \eqref{eq:c_dominance_condition} from \eqref{eq:c_balancedness_condition}, we obtain, for all $i = 1, \ldots, n$:
\[
\sum_{j=i+1}^n c_j \geq \frac{1}{2} \left( c'_i + \sum_{j=i+1}^n b_j \right),
\]
where, for all $i$, we set $c'_i := b_i - c_i$. Subtracting both sides from twice the right-hand side, we see that the tuple $(c'_1, \ldots, c'_n)$ then satisfies a condition similar to~\eqref{eq:c_dominance_condition}, but with the order of the bridges reversed:
\begin{equation}
\label{eq:c_prime_dominance_condition}
\forall i = 1, \ldots, n,\quad \sum_{j=i}^n c'_j \leq \frac{1}{2} \left( c'_i + \sum_{j=i+1}^n b_j \right).
\end{equation}
Finally, by adding together \eqref{eq:c_dominance_condition} and \eqref{eq:c_prime_dominance_condition}, we obtain, for all $i = 1, \ldots, n$:
\begin{equation}
\label{eq:b_no_majority_bridge_developed}
\left( \sum_{j=1}^{i-1} c_j \right) + b_i + \left( \sum_{j=i+1}^n c'_j \right) \;\leq\; \frac{1}{2} \sum_{j=1}^n b_j.
\end{equation}
Since the left-hand side is not less than~$b_i$ (the other terms are all nonnegative), \eqref{eq:b_no_majority_bridge} follows.

Conversely, suppose that the tuple $(b_1, \ldots, b_n)$ has an even sum, that we shall denote by~$b$, and satisfies the system~\eqref{eq:b_no_majority_bridge}. Then consider the tuple $(c_1, \ldots, c_n)$ defined as follows:
\begin{equation}
c_i := \begin{cases}
0 &\text{ if } \sum_{j=1}^i b_j \leq \frac{1}{2}b; \\
b_i &\text{ if } \sum_{j=1}^{i-1} b_j \geq \frac{1}{2}b; \\
\frac{1}{2}b - \sum_{j=i+1}^n b_j &\text{ if } \sum_{j=1}^{i-1} b_j < \frac{1}{2}b < \sum_{j=1}^i b_j.
\end{cases}
\end{equation}
Informally, this corresponds to filling with $2$'s the leftmost $\frac{1}{2}b$ of all the boxes contained in bridges, and with $1$'s the rightmost $\frac{1}{2}b$ of them; and then, if the cut-off point happens to be inside a bridge (which would break row-standardness), we swap the $1$'s and the $2$'s within that bridge (see Figure~\ref{fig:bridges_example} for an example). Clearly this tuple satisfies~\eqref{eq:c_bounds} and~\eqref{eq:c_balancedness_condition}. Moreover:
\begin{itemize}
\item for all $i$ such that $\sum_{j=1}^i b_j \leq \frac{1}{2}b$, clearly $(c_1, \ldots, c_n)$ satisfies the condition~\eqref{eq:c_dominance_condition};
\item for all $i$ such that $\sum_{j=1}^{i-1} b_j \geq \frac{1}{2}b$, clearly $(c_1, \ldots, c_n)$ satisfies the condition~\eqref{eq:c_prime_dominance_condition}, which (given \eqref{eq:c_bounds} and~\eqref{eq:c_balancedness_condition}) is equivalent to~\eqref{eq:c_dominance_condition};
\item for the index $i$ such that $\sum_{j=1}^{i-1} b_j < \frac{1}{2}b < \sum_{j=1}^i b_j$ (if it exists), the condition~\eqref{eq:b_no_majority_bridge} implies the condition~\eqref{eq:b_no_majority_bridge_developed}, since all the additional terms on the left-hand side vanish. Now \eqref{eq:b_no_majority_bridge_developed}, being the sum of the two equivalent inequalities~\eqref{eq:c_dominance_condition} and~\eqref{eq:c_prime_dominance_condition}, is equivalent to both. \qedhere
\end{itemize}
\end{proof}

\subsubsection{The induction step}
\label{sec:induction_step}

This subsubsection is dedicated to proving the following result, which, when combined with Lemma~\ref{conditions_for_nice_filling_of_skew_diagram} from the previous subsubsection, provides the induction step for the proof of Proposition~\ref{slmH_combinatorics}. More precisely, it provides the equivalence (D) in the outline given in the final proof (Subsubsection~\ref{sec:slmH_conclusion}).

In this whole subsubsection, we assume that $m$ is an integer greater or equal than~$2$, and $n = 2m$.

\begin{lemma}
\label{induction_step}
Let $\mathcal{P}$ be a Young diagram of order $n = 2m \geq 4$. Then $\mathcal{P}$ satisfies the inequalities~\eqrefWithArgument{eq:slmH_lambda_condition}{$m$} and has integer offset (\ie $\# \mathcal{P}$ is divisible by~$n$) if and only if there exists a Young diagram $\mathcal{Q}$ of order~$n-2$ with the following properties:
\begin{hypothenum}
\item \label{itm:slmH_mu_condition} $\mathcal{Q}$ satsifies the system~\eqrefWithArgument{eq:slmH_lambda_condition}{$m-1$}, \ie the system \eqrefWithArgument{eq:slmH_lambda_condition}{$m$} with $m$ replaced by~$m-1$, which is explicitly:
\begin{equation}
\begin{cases}
\displaystyle -q_1 + \sum_{i=2}^{m} q_i - \sum_{i=m+1}^{2m-2} q_i \geq 0 \\
\displaystyle -\sum_{i=1}^{m-2} q_i + \sum_{i=m-1}^{2m-3} q_i - q_{2m-2} \leq 0,
\end{cases}
\tag{\ref{eq:slmH_lambda_condition}.$m-1$}
\end{equation}
with the notation shortcut $q_i := \#_i \mathcal{Q}$;
\item \label{itm:skew_diagram_even} the difference $\# \mathcal{P} - \# \mathcal{Q}$ is even;
\item \label{itm:thickness_2_bis} $\mathcal{Q} \subset \mathcal{P}$, and the skew diagram $\mathcal{P}/\mathcal{Q}$ has thickness at most~$2$;
\item \label{itm:no_majority_bridge_bis} no bridge in $\mathcal{P}/\mathcal{Q}$ contains more boxes than all the remaining bridges combined;
\item \label{itm:proportional_size} $\# \mathcal{Q} = \frac{n-2}{n} \# \mathcal{P}$.
\end{hypothenum}
\end{lemma}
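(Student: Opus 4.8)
The plan is first to rewrite the system \eqrefWithArgument{eq:slmH_lambda_condition}{$m$} in a more transparent form. Writing $a := \tfrac1n \#\mathcal{P}$ for the offset and using $\sum_{i=1}^{2m}\#_i\mathcal{P} = 2ma$, elementary manipulation turns the two inequalities into
\[
\#_1\mathcal{P} + \sum_{i=m+2}^{2m} \#_i\mathcal{P} \leq ma
\qquad\text{and}\qquad
\sum_{i=m}^{2m-1} \#_i\mathcal{P} \leq ma,
\]
each asserting that a distinguished family of $m$ rows has average length at most~$a$. The two are mirror images under $i \mapsto 2m+1-i$, so for the inequality parts it will be enough to treat one and argue the other symmetrically. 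The same rewriting applies to \eqrefWithArgument{eq:slmH_lambda_condition}{$m-1$} for~$\mathcal{Q}$. I also record the standard fact that, for $\mathcal{Q} \subset \mathcal{P}$, the skew diagram $\mathcal{P}/\mathcal{Q}$ has thickness at most~$2$ exactly when $\#_{i+2}\mathcal{P} \leq \#_i\mathcal{Q}$ for all~$i$; together with $\#_i\mathcal{Q} \leq \#_i\mathcal{P}$ this will be the working form of~\ref{itm:thickness_2_bis}.

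For the implication ``$\Leftarrow$'', integer offset is immediate: by~\ref{itm:proportional_size} one has $\#\mathcal{P} - \#\mathcal{Q} = \tfrac2n\#\mathcal{P}$, which by~\ref{itm:skew_diagram_even} is even, forcing $n \mid \#\mathcal{P}$. For the inequalities I set $r_i := \#_i\mathcal{P} - \#_i\mathcal{Q} \geq 0$, so that $\sum_i r_i = 2a$, and substitute into the averaged form. The contribution of the $\#_i\mathcal{Q}$ is controlled directly by the system for~$\mathcal{Q}$, reducing everything to an upper bound on the number of removed boxes lying in the relevant row-range. This is exactly where~\ref{itm:no_majority_bridge_bis} is needed: decomposing $r_i = b_i + e_i + e_{i+1}$, where $b_i$ is the length of the bridge at height~$i$ and $e_i$ counts the thickness-two columns topping out at height~$i$, the no-majority hypothesis prevents the removed boxes from piling up in the ``top-plus-bottom'' range, and a careful count---using that an order-$(n-2)$ diagram $\mathcal{Q}$ only admits bridges at heights $\leq 2m-1$---produces precisely the slack required.

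For ``$\Rightarrow$'' I must construct~$\mathcal{Q}$. Because $\mathcal{Q}$ has order $n-2$, the order and thickness constraints force the full removal of rows $2m-1$ and~$2m$ of~$\mathcal{P}$; the remaining $2a - \#_{2m-1}\mathcal{P} - \#_{2m}\mathcal{P}$ boxes must be stripped from the bottoms of the taller columns without any column losing more than two boxes in total. I plan to perform these removals greedily, spreading the single-box removals as evenly across the available heights as possible so that~\ref{itm:no_majority_bridge_bis} holds, and then to check that the resulting $\mathcal{Q}$ inherits the system for $m-1$ from the averaged inequalities for~$\mathcal{P}$; conditions \ref{itm:skew_diagram_even}, \ref{itm:thickness_2_bis} and~\ref{itm:proportional_size} then hold by construction.

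The principal obstacle is this forward construction, since \ref{itm:slmH_mu_condition} and~\ref{itm:no_majority_bridge_bis} pull against each other: balancing the bridges tends to perturb the very row-sums that govern~\ref{itm:slmH_mu_condition}. I expect the cleanest treatment is to encode the admissible removals as a feasibility problem in the column-decrement variables $d_j := \#^j\mathcal{P} - \#^j\mathcal{Q} \in \{0,1,2\}$ and to identify the averaged inequalities for~$\mathcal{P}$ as exactly its feasibility conditions, so that the two implications become the two halves of a single min--max statement. A small worked example---a skew diagram whose only bridge sits at height~$1$---shows that omitting~\ref{itm:no_majority_bridge_bis} genuinely breaks the equivalence, which pins down where the hypothesis must enter.
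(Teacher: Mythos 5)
The existence half (given that $\mathcal{P}$ satisfies \eqrefWithArgument{eq:slmH_lambda_condition}{$m$} and has integer offset, construct $\mathcal{Q}$) is not a proof in your proposal but a plan, and it is precisely the hard half of the lemma. You correctly identify the crux --- conditions \ref{itm:slmH_mu_condition} and \ref{itm:no_majority_bridge_bis} pull against each other --- but you then only say you ``plan to'' remove boxes greedily and ``expect'' a min--max/feasibility formulation to work; neither the greedy rule nor the duality argument is specified, let alone verified. There are concrete obstructions to waving this through. First, the problem is not a plain linear program over the reals: condition~\ref{itm:skew_diagram_even} is a parity constraint and $\mathcal{Q}$ must be an integral diagram, so even a proof that the relevant polyhedron is nonempty would not by itself produce an admissible $\mathcal{Q}$. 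Second, your statement that the averaged inequalities for $\mathcal{P}$ ``are exactly the feasibility conditions'' of the system in the column decrements $d_j$ is a restatement of the lemma, not an argument for it. The paper resolves this tension by a genuinely different mechanism: it proves (Lemma~\ref{good_pairs_closed_under_addition}) that conditions \ref{itm:slmH_mu_condition}--\ref{itm:no_majority_bridge_bis} cut out a submonoid of $\mathscr{M}^{(n)} \oplus \mathscr{M}^{(n-2)}$, explicitly computes the basis $\mathscr{B}'$ of the enlarged monoid $\mathscr{M}_{\eqref{eq:slmH_lambda_condition}} \cap \mathscr{M}_{2|\#}$, exhibits by hand, for each basis element, two partners $\mathcal{Q}^\pm$ whose (even) sizes straddle $\frac{n-2}{n}\#\mathcal{P}$ (Lemma~\ref{horrible_technical_part} and Table~\ref{tab:table_of_Ps_and_Qs}), and then hits the exact size \ref{itm:proportional_size} by a discrete intermediate-value argument, sliding summand by summand from the $\mathcal{Q}^+$'s to the $\mathcal{Q}^-$'s. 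This simultaneously handles integrality, the parity condition, and the conflict you point out; nothing in your proposal substitutes for it.

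Your other half (deducing \eqrefWithArgument{eq:slmH_lambda_condition}{$m$} from the existence of $\mathcal{Q}$) is in the right spirit and close to what the paper does: the paper linearizes condition~\ref{itm:no_majority_bridge_bis} into the system~\eqref{eq:no_majority_bridge}, expands the absolute values, and from one well-chosen inequality of the expansion shows that each side of \eqrefWithArgument{eq:slmH_lambda_condition}{$m$} for $\mathcal{P}$ dominates the corresponding side of \eqrefWithArgument{eq:slmH_lambda_condition}{$m-1$} for $\mathcal{Q}$. Your substitution $r_i = \#_i\mathcal{P} - \#_i\mathcal{Q}$ and the bridge/two-column decomposition could be completed along those lines, but ``a careful count produces precisely the slack required'' is not yet that count. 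As it stands, neither direction is established, and the essential content of the existence direction is missing.
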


The proof will require some preliminary work. Proving the ``if'' part will simply be a matter of rewriting the conditions on $\mathcal{P}$ and~$\mathcal{Q}$ as a system of inequalities, and then suitably combining some well-chosen inequalities from this system; the main difficulty lies in proving the ``only if'' part. Very roughly, the idea is to find such a $\mathcal{Q}$ for a few basic values of~$\mathcal{P}$, and then take advantage of additivity. However it will not quite work like this; a slight adaptation will be needed. We will present a more detailed outline after introducing a few basic definitions and notations.

\begin{definition}
An element of a commutative monoid is \emph{primitive} if it is not the sum of two nonzero elements of the monoid. Clearly, a subset of a monoid is a generating set if and only if it contains all the nonzero primitive elements. The set of nonzero primitive elements is called the \emph{basis} of the monoid.
\end{definition}

\begin{definition}
\label{Young_monoid_definition}
For each integer $x \geq 0$, we denote by~$\mathscr{M}^{(x)}$ the monoid of all the Young diagrams of order~$x$, with the addition operation defined by adding the numbers of boxes row-wise:
\begin{equation}
\forall i = 1, \ldots, x,\quad \#_i(\mathcal{P} + \mathcal{Q}) := \#_i \mathcal{P} + \#_i \mathcal{Q}.
\end{equation}
When $x = n$, we will usually omit the index, \ie we set $\mathscr{M} := \mathscr{M}^{(n)} = \mathscr{M}^{(2m)}$. We denote by~$\mathscr{M}_{\eqref{eq:slmH_lambda_condition}}$ the submonoid of~$\mathscr{M}$ determined by the system of linear inequalities~\eqrefWithArgument{eq:slmH_lambda_condition}{$m$}. For each integer $k > 0$, we denote by~$\mathscr{M}_{k | \#}$ the submonoid of diagrams whose number of boxes is divisible by~$k$:
\begin{equation}
\mathscr{M}_{k | \#} := \setsuch{\mathcal{P} \in \mathscr{M}}{\# \mathcal{P} \in k\ZZ}.
\end{equation}
For each $i = 0, \ldots, n$, we define $\mathcal{C}_i \in \mathscr{M}$ to be the diagram consisting of a single column of height~$i$. Thus $\mathcal{C}_0 = 0$ is the empty diagram, and $(\mathcal{C}_1, \ldots, \mathcal{C}_n)$ is a basis of the additive monoid~$\mathscr{M}$. (Note that the $\lie{sl}_n$-shape of~$\mathcal{C}_i$ is precisely $\varpi_i$ for $i = 1, \ldots, n-1$, and is zero for $i = n$.)
\end{definition}

In this terminology, in order to prove the ``only if'' part of Lemma~\ref{induction_step}, we need to construct, for every diagram~$\mathcal{P}$ lying in the monoid $\mathscr{M}_{\eqref{eq:slmH_lambda_condition}} \cap \mathscr{M}_{n | \#}$, a diagram~$\mathcal{Q}$ such that the pair $(\mathcal{P}, \mathcal{Q})$ satisfies the conditions \ref{itm:slmH_mu_condition} through~\ref{itm:proportional_size}. We shall soon see (in Lemma~\ref{good_pairs_closed_under_addition}, combined with the remark that follows it) that the set of such pairs is closed under addition; so it ``suffices'' to construct such diagrams~$\mathcal{Q}$ for the primitive elements~$\mathcal{P}$ of the monoid $\mathscr{M}_{\eqref{eq:slmH_lambda_condition}} \cap \mathscr{M}_{n | \#}$.

Unfortunately, the basis of the monoid $\mathscr{M}_{\eqref{eq:slmH_lambda_condition}} \cap \mathscr{M}_{n | \#}$ admits no simple description (for general~$n$). To bypass this difficulty, we extend our field of consideration to the (larger) monoid $\mathscr{M}_{\eqref{eq:slmH_lambda_condition}} \cap \mathscr{M}_{2 | \#}$, whose basis, on the contrary, can be readily described. The price to pay is that condition~\ref{itm:proportional_size} becomes impossible to satisfy: it could force $\mathcal{Q}$ to have a non-integer number of boxes.

We solve this difficulty by replacing the equation~\ref{itm:proportional_size} by a pair of inequalities: we construct, for every such~$\mathcal{P}$, two different diagrams $\mathcal{Q}^\pm$, that satisfy conditions \ref{itm:slmH_mu_condition} through~\ref{itm:no_majority_bridge_bis}, but whose (integer, and even) numbers of boxes bound $\frac{n-2}{n} \# \mathcal{P}$ from above and from below. This is the content of Lemma~\ref{horrible_technical_part} below.

\begin{lemma}
\label{good_pairs_closed_under_addition}
The set of pairs $(\mathcal{P}, \mathcal{Q})$ satisfying the conditions \ref{itm:slmH_mu_condition} through~\ref{itm:no_majority_bridge_bis} is a submonoid of $\mathscr{M}^{(n)} \oplus \mathscr{M}^{(n-2)}$.
\end{lemma}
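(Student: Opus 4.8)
The ambient monoid here is the set of pairs, with addition performed componentwise and row by row, so the basic quantities behave predictably: each row length $\#_i\mathcal{P}$ and the total box count $\#\mathcal{P}$ are additive, and the containment $\mathcal{Q}\subset\mathcal{P}$ (being the family of inequalities $\#_i\mathcal{Q}\le\#_i\mathcal{P}$) is stable under addition. The plan is to verify the four defining conditions one at a time, in the order \ref{itm:slmH_mu_condition}, \ref{itm:skew_diagram_even}, \ref{itm:thickness_2_bis}, \ref{itm:no_majority_bridge_bis}; the first three are ``soft'' and the real content sits entirely in \ref{itm:no_majority_bridge_bis}.

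Condition \ref{itm:slmH_mu_condition} is the system \eqrefWithArgument{eq:slmH_lambda_condition}{$m-1$}, a family of \emph{homogeneous} linear inequalities in the row lengths $\#_i\mathcal{Q}$; since these add, each inequality passes to sums, so \ref{itm:slmH_mu_condition} is preserved. Condition \ref{itm:skew_diagram_even} is immediate, because $\#\mathcal{P}-\#\mathcal{Q}$ is additive and a sum of even integers is even. For \ref{itm:thickness_2_bis} the containment is handled as above; for the thickness I would first record the elementary reformulation that, for $\mathcal{Q}\subset\mathcal{P}$, the skew diagram $\mathcal{P}/\mathcal{Q}$ has thickness at most $2$ if and only if $\#_{i+2}\mathcal{P}\le\#_i\mathcal{Q}$ for every $i$ (a column of height $\ge 3$ occurs exactly when some $i$ has $\#_{i+2}\mathcal{P}>\#_i\mathcal{Q}$). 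This is again a family of linear inequalities in the row lengths, hence additive. Treating \ref{itm:thickness_2_bis} before \ref{itm:no_majority_bridge_bis} ensures that the sum, like the two summands, has thickness $\le 2$ and $\mathcal{Q}\subset\mathcal{P}$.

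The main obstacle is \ref{itm:no_majority_bridge_bis}, the no-majority-bridge condition $2b_i\le\sum_j b_j=:B$, precisely because the bridge lengths are \emph{not} additive. Here I would use the closed formula $b_i=\min(\#_i\mathcal{P},\#_{i-1}\mathcal{Q})-\max(\#_{i+1}\mathcal{P},\#_i\mathcal{Q})$ (with $\#_0\mathcal{Q}:=+\infty$), which holds without any positive part: containment together with thickness $\le 2$, already secured for both summands and for the sum, forces each of the four cross-inequalities between the arguments of the $\min$ and the $\max$, hence nonnegativity of the bracket. Writing $b_i^1,b_i^2,b_i$ for the bridges of the summands and of the sum, the crux is the defect $b_i-b_i^1-b_i^2$. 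Using $\min(x,y)+\max(x,y)=x+y$ summandwise and globally, one checks that at each ``corner'' pair the superadditivity defect of the $\min$ equals the subadditivity defect of the $\max$; since the pair $(\#_{i+1}\mathcal{P},\#_i\mathcal{Q})$ appears as the $\max$-corner of $b_i$ and as the $\min$-corner of $b_{i+1}$, this yields the identity $b_i-b_i^1-b_i^2=\mu_i+\mu_{i+1}$, where $\mu_i\ge 0$ denotes the ($\min$-superadditivity) defect at the corner $(\#_i\mathcal{P},\#_{i-1}\mathcal{Q})$.

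Summing the identity gives $B-B^1-B^2=2\sum_i\mu_i$, so that for any fixed index $i_0$,
\[
B-2b_{i_0}=(B^1-2b_{i_0}^1)+(B^2-2b_{i_0}^2)+2\sum_{i\ne i_0,\,i_0+1}\mu_i.
\]
The first two summands are nonnegative because each summand satisfies \ref{itm:no_majority_bridge_bis}, and the last is nonnegative because every $\mu_i\ge 0$; hence $2b_{i_0}\le B$, which is \ref{itm:no_majority_bridge_bis} for the sum. The single delicate point of the whole argument is the corner-defect identity $b_i-b_i^1-b_i^2=\mu_i+\mu_{i+1}$: it converts the genuine non-additivity of individual bridges into a defect that is \emph{shared between neighbouring bridges}, and it is exactly this ``sharing'' that lets the no-majority inequality survive addition even though the map to bridge lengths is not a monoid homomorphism.
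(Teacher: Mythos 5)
Your proof is correct, and while it handles conditions \ref{itm:slmH_mu_condition} through~\ref{itm:thickness_2_bis} exactly as the paper does (everything there is a system of homogeneous linear inequalities in the row lengths; your reformulation of thickness at most~$2$ as $\#_{i+2}\mathcal{P}\le\#_i\mathcal{Q}$ is the paper's $p_i\ge q_i\ge p_{i+2}$), it treats the crux, condition~\ref{itm:no_majority_bridge_bis}, by a genuinely different argument. The paper linearizes: starting from the same bridge formula $b_i=\min(q_{i-1},p_i)-\max(q_i,p_{i+1})$, it rewrites each no-majority inequality in the form $\phi_0(\mathcal{P},\mathcal{Q})-\sum_{j}\left|\phi_j(\mathcal{P},\mathcal{Q})\right|\ge 0$ with all $\phi_j$ linear, and then expands every such inequality into the $2^N$ linear inequalities obtained by choosing signs; additivity of a linear system is then immediate, and as a by-product the set of good pairs is exhibited as the set of lattice points of a polyhedral cone. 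You instead prove closure directly: the identity $\min+\max=(\text{sum of the arguments})$, applied to the two summands and to their sum, shows that the superadditivity defect of the $\min$ equals the subadditivity defect of the $\max$ at each finite corner, whence $b_i-b_i^1-b_i^2=\mu_i+\mu_{i+1}$ with all $\mu_i\ge0$, and therefore
\[
B-2b_{i_0}=\left(B^1-2b^1_{i_0}\right)+\left(B^2-2b^2_{i_0}\right)+2\sum_{i\ne i_0,\,i_0+1}\mu_i\;\ge\; 0 .
\]
This is sound, and you correctly note the one prerequisite: the bridge formula holds without taking positive parts only because containment and thickness~$\le 2$ hold for the summands \emph{and} for the sum, which is why you must (and do) verify \ref{itm:thickness_2_bis} before \ref{itm:no_majority_bridge_bis}. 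What your route buys is that it avoids the exponential sign expansion and isolates the reason the non-additive bridge lengths cooperate: the defect created by addition is shared between the two bridges adjacent to a corner, so it enters $B$ twice but enters any fixed $b_{i_0}$ only through that bridge's own two corners, leaving $B-2b_{i_0}$ superadditive; what the paper's route buys is the stronger structural statement that the whole set of good pairs is cut out by finitely many homogeneous linear inequalities. One small point worth making explicit in your write-up is the boundary corners: at $i=1$ one argument is $\#_0\mathcal{Q}=+\infty$, and at $i=n$ one has $\#_{n-1}\mathcal{Q}=0\le \#_n\mathcal{P}$ (since $\mathcal{Q}$ has order $n-2$), so in both cases the relevant $\min$ or $\max$ degenerates to a single additive row length and the corresponding defect vanishes; thus your identity holds with $\mu_1=\mu_n=0$. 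This is a one-line remark and does not affect correctness.
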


Note that this is (obviously) also true for the condition~\ref{itm:proportional_size}; the discussion preceding this lemma explains why we did not include it.

\begin{proof}
Condition~\ref{itm:skew_diagram_even} is obviously stabe under addition. So is condition~\ref{itm:slmH_mu_condition}, as it is a system of (homogeneous) linear inequalities, \ie a system of the form
\begin{equation}
\label{eq:linear_ineq_syst}
\forall i \in I,\quad \phi_i(\mathcal{P}, \mathcal{Q}) \geq 0,
\end{equation}
where $(\phi_i)_{i \in I}$ is some family of linear forms, \ie linear maps from $\mathscr{M}^{(n)} \oplus \mathscr{M}^{(n-2)}$ to~$\RR$. Condition~\ref{itm:thickness_2_bis} is also of this form: indeed, it is equivalent to the system of inequalities
\[\forall i = 1, \ldots, n-2,\quad p_i \geq q_i \geq p_{i+2},\]
which can also be put into form~\eqref{eq:linear_ineq_syst}.

The slightly nontrivial part is the additivity of condition~\ref{itm:no_majority_bridge_bis}. Let us show that it can, in fact, also be put into form~\eqref{eq:linear_ineq_syst}. Indeed, it is given by the formula~\eqref{eq:b_no_majority_bridge}, that we may also rewrite as
\begin{equation}
\label{eq:b_no_bridge_larger_than_others}
\forall i = 1, \ldots, n-1,\quad \sum_{j=1}^{n-1} (-1)^{\delta_{ij}}b_j \geq 0,
\end{equation}
where $\delta_{ij}$ is the Kronecker delta symbol. We replaced here $n$ by~$n-1$, because the condition that $\mathcal{Q}$ has order $n-2$ forces $b_n = 0$. Furthermore, by using the identity $\#_i \mathcal{P} \geq j \iff \#^j \mathcal{P} \geq i$ (or just by gazing long enough at Figure~\ref{fig:bridges_example}), we can see that the $i$-th bridge length $b_i$ is given by the formula
\begin{equation}
\label{eq:bridge_length_formula}
\forall i = 1, \ldots, n-1,\quad b_i = \min(q_{i-1}, p_i) - \max(q_i, p_{i+1}),
\end{equation}
with the convention that $q_0 = +\infty$ and $q_{n-1} = -\infty$.

Plugging \eqref{eq:bridge_length_formula} into~\eqref{eq:b_no_bridge_larger_than_others}, and rearranging the sum so as to group the terms involving the same row-lengths of $\mathcal{P}$ and~$\mathcal{Q}$, we see that condition~\ref{itm:no_majority_bridge_bis} is equivalent to
\begin{gather}
\begin{flalign}
& \forall i = 1, \ldots, n-1, &
\end{flalign} \\
\begin{multlined}[\displaywidth]
\qquad (-1)^{\delta_{i1}} p_1 \;\;+\;\; \sum_{\smash{j=1}}^{n-1} \left( -(-1)^{\delta_{i,j-1}} \max(q_{j-1}, p_j)\right. +\\ \left. +(-1)^{\delta_{ij}} \min(q_{j-1}, p_j) \right) \;\;-\;\;  (-1)^{\delta_{i,n-1}}p_n \geq 0. \nonumber
\end{multlined}
\end{gather}
Using the identities $\max(x, y) + \min(x, y) = x+y$ and $\max(x, y) - \min(x, y) = |x-y|$, we can once again rephrase condition~\ref{itm:no_majority_bridge_bis} as
\begin{align}
\label{eq:no_majority_bridge}
\forall i = 1, \ldots, n &- 1, \\
&\Bigg( \sum_{\substack{j = 1 \\ j \;\neq\; i, i+1}}^n T_j \Bigg) - \hat{T}_i + \hat{T}_{i+1} \;\geq\; 0,
\tag{\theequation.$i$}
\end{align}
where we set:
\begin{align*}
T_1 &:= p_1 &\text{and}\qquad \hat{T}_1 &:= p_1; \\
\forall j = 2, \ldots, n-1,\quad
T_j &:= - |q_{j-1} - p_j| &\text{and}\qquad
\hat{T}_j &:= q_{j-1} + p_j; \\
T_n &:= -p_n &\text{and}\qquad \hat{T}_n &:= p_n.
\end{align*}
Each of these inequalities~\eqrefWithArgument{eq:no_majority_bridge}{$i$} is \emph{a priori} nonlinear, of the form
\[
\phi_0(\mathcal{P}, \mathcal{Q}) - \sum_{j=1}^N |\phi_j(\mathcal{P}, \mathcal{Q})| \geq 0,
\]
where $\phi_0, \phi_1, \ldots, \phi_N$ are some linear forms (depending on~$i$). But any such inequality can be rewritten as a system of $2^N$ linear inequalities: indeed it is equivalent to
\begin{equation}
\label{eq:absolute_values_expansion}
\forall (\sigma_1, \ldots, \sigma_N) \in \{\pm 1\}^N,\quad \phi_0(\mathcal{P}, \mathcal{Q}) - \sum_{j=1}^N \sigma_j \phi_j(\mathcal{P}, \mathcal{Q}) \geq 0. \qedhere
\end{equation}
\end{proof}

\begin{lemma}
\label{horrible_technical_part}
Let $\mathscr{B}'$ be the basis of the monoid $\mathscr{M}_{\eqref{eq:slmH_lambda_condition}} \cap \mathscr{M}_{2 | \#}$. Then for each Young diagram $\mathcal{P} \in \mathscr{B}'$, there exist two Young diagrams $\mathcal{Q}^+(\mathcal{P})$ and~$\mathcal{Q}^-(\mathcal{P})$ of order $n-2$ with the following properties:
\begin{itemize}
\item Both pairs $(\mathcal{P}, \mathcal{Q}^+)$ and $(\mathcal{P}, \mathcal{Q}^-)$ satisfy conditions \ref{itm:slmH_mu_condition} through~\ref{itm:no_majority_bridge_bis} from Lemma~\ref{induction_step}.
\item The total number of boxes in~$\mathcal{Q}^-$ is the largest even number not exceeding $\frac{n-2}{n} \# \mathcal{P}$, and symmetrically for $\mathcal{Q}^+$:
\begin{equation}
\label{eq:Q_pm_size}
\begin{cases}
\# \mathcal{Q}^- = 2 \left\lfloor \frac{1}{2} \frac{n-2}{n} \# \mathcal{P} \right\rfloor; \\
\# \mathcal{Q}^+ = 2 \left\lceil \frac{1}{2} \frac{n-2}{n} \# \mathcal{P} \right\rceil.
\end{cases}
\end{equation}
\end{itemize}
\end{lemma}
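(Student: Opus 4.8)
The plan is to first determine the basis $\mathscr{B}'$ explicitly, and then to construct $\mathcal{Q}^\pm(\mathcal{P})$ by hand for each of its structurally very simple members. To describe $\mathscr{B}'$, I would pass to the column coordinates of Definition~\ref{Young_monoid_definition}, writing $\mathcal{P} = \sum_{i=1}^n n_i \mathcal{C}_i$ with $n_i \geq 0$, so that $\mathscr{M} \cong \NN^n$ and the monoid $\mathscr{M}_{\eqref{eq:slmH_lambda_condition}} \cap \mathscr{M}_{2 | \#}$ is cut out by the two homogeneous inequalities coming from \eqrefWithArgument{eq:slmH_lambda_condition}{$m$}, call them $L_1 \geq 0$ and $L_2 \leq 0$, together with the congruence $\# \mathcal{P} = \sum_i i\,n_i \equiv 0 \pmod 2$. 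A short computation (using $\#_j \mathcal{C}_i = 1$ for $j \leq i$ and $0$ otherwise) shows that $i \mapsto L_1(\mathcal{C}_i)$ and $i \mapsto L_2(\mathcal{C}_i)$ are ``tent-shaped'', and in particular that $L_1(\mathcal{C}_i) \geq 0$ for every $i \neq 1$ while $L_2(\mathcal{C}_i) \leq 0$ for every $i \neq 2m-1$. Thus only the two columns $\mathcal{C}_1$ and $\mathcal{C}_{2m-1}$ are ``offending'', and $\mathscr{B}'$ consists of the admissible even-height columns together with a short, explicitly computable list of minimal combinations: those cancelling $\mathcal{C}_1$ on the hyperplane $L_1 = 0$, those cancelling $\mathcal{C}_{2m-1}$ on $L_2 = 0$, and the parity repairs obtained by doubling or pairing the odd generators. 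Every element of $\mathscr{B}'$ then has only a few distinct column heights; moreover the involution $\mathcal{C}_i \leftrightarrow \mathcal{C}_{n-i}$ satisfies $L_1 \circ \iota = -L_2$, so it interchanges the two inequalities and can be used to reduce the casework.

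For each $\mathcal{P} \in \mathscr{B}'$, I would build $\mathcal{Q}^\pm$ by removing from $\mathcal{P}$ a skew diagram of thickness at most~$2$ — that is, by lowering each column of $\mathcal{P}$ by $0$, $1$ or~$2$ boxes so as to reach order $n-2$ — spreading the removed boxes as evenly as the target size allows. Condition~\ref{itm:thickness_2_bis} then holds by construction. Since $\mathcal{P}$ has few distinct column heights, the skew diagram $\mathcal{P}/\mathcal{Q}^\pm$ is a union of a bounded number of rectangles, and hence has only a bounded number of bridges (Definition~\ref{bridge_definition}); because the removed boxes are spread evenly, one verifies directly that no single bridge carries a majority of the bridge boxes, giving condition~\ref{itm:no_majority_bridge_bis}. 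The two variants $\mathcal{Q}^-$ and $\mathcal{Q}^+$ differ only in whether the total number of removed boxes is rounded to an even value from below or from above, which produces precisely the floor/ceiling sizes of~\eqref{eq:Q_pm_size}; and since both $\# \mathcal{P}$ and $\# \mathcal{Q}^\pm$ are then even, condition~\ref{itm:skew_diagram_even} is automatic.

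It remains to verify condition~\ref{itm:slmH_mu_condition}, that $\mathcal{Q}^\pm$ satisfies the reduced system \eqrefWithArgument{eq:slmH_lambda_condition}{$m-1$}. As the basis shapes are explicit, I would simply substitute the row lengths of $\mathcal{Q}^\pm$ into the two inequalities of the $(m-1)$-system and check the required signs directly. For the single columns $\mathcal{C}_i$ this is immediate — for instance $\mathcal{C}_{2m} \mapsto \mathcal{C}_{2m-2}$, a full column of the smaller diagram, which lands exactly on the boundary $L_1 = L_2 = 0$ of the $(m-1)$-region — and for the two- or three-column combinations it is a brief case check.

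The main obstacle is exactly condition~\ref{itm:slmH_mu_condition}. The elements of $\mathscr{B}'$ generate the extreme rays of the region defined by \eqrefWithArgument{eq:slmH_lambda_condition}{$m$}, so they lie on its boundary; consequently a naive uniform removal — deleting the bottom two rows, or the two middle rows — generically \emph{decreases} the left-hand side of the first $(m-1)$-inequality and ejects $\mathcal{Q}$ from the reduced region (comparing the $m$- and $(m-1)$-forms after such a deletion, one picks up a term like $p_{m+2} - p_{m+1} \leq 0$). The strip must therefore be tailored to each family of basis shapes so that the $(m-1)$-inequalities~\ref{itm:slmH_mu_condition}, the no-majority-bridge condition~\ref{itm:no_majority_bridge_bis}, and the prescribed even sizes~\eqref{eq:Q_pm_size} all hold simultaneously. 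Organizing this as a case analysis that is uniform in $m$, while separately handling the small and degenerate cases — notably $m = 2$, where $L_1$ and $L_2$ coincide, and the offending columns $\mathcal{C}_1$ and $\mathcal{C}_{2m-1}$ — is where the real work lies.
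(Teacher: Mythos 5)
You follow the same route as the paper---make the Hilbert basis $\mathscr{B}'$ explicit, then produce $\mathcal{Q}^{\pm}(\mathcal{P})$ case by case---but your proposal stops exactly where the content of the lemma begins. Your second paragraph claims an ``even spreading'' removal handles conditions \ref{itm:skew_diagram_even}--\ref{itm:no_majority_bridge_bis} and the sizes~\eqref{eq:Q_pm_size}, and your third paragraph calls condition~\ref{itm:slmH_mu_condition} ``a brief case check''; but your final paragraph then (correctly) observes that uniform removals generically violate condition~\ref{itm:slmH_mu_condition}, so the removed strip must be tailored to each family of basis shapes---and that tailoring is left undone. Since the lemma asserts precisely that diagrams satisfying \ref{itm:slmH_mu_condition} through~\ref{itm:no_majority_bridge_bis} with the exact sizes~\eqref{eq:Q_pm_size} exist, what is missing is not a routine verification but the proof itself. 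For comparison, the paper's construction (Table~\ref{tab:table_of_Ps_and_Qs}) is \emph{not} an even spreading and is genuinely case-dependent: for even $i < 2m$ one takes $\mathcal{Q}^{+}(\mathcal{C}_i) = \mathcal{C}_i$ (nothing removed at all), whereas for $\mathcal{P} = a\mathcal{C}_{2m-1} + \mathcal{C}_i$ with $a = i$ one needs the three-term diagram $\mathcal{C}_{2m-2} + (a-1)\mathcal{C}_{2m-3} + \mathcal{C}_{i-1}$; exhibiting such choices and checking \eqrefWithArgument{eq:slmH_lambda_condition}{$m-1$} for each of them is where the work lies.

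Your description of $\mathscr{B}'$ is also wrong as stated, in a way that would derail the case analysis. In $\mathscr{M}_{\eqref{eq:slmH_lambda_condition}}$, \emph{every} diagram $\mathcal{C}_i + a\mathcal{C}_1$ with $0 < a \leq \min(i-2,\, n-i)$ is primitive, not only the one lying on the hyperplane $L_1 = 0$: in column coordinates $\mathscr{M} \cong \NN^n$, any nontrivial decomposition of such a diagram must have one summand equal to a positive multiple of~$\mathcal{C}_1$, which violates the first inequality of~\eqrefWithArgument{eq:slmH_lambda_condition_rewritten}{$m$}; symmetrically for $\mathcal{C}_i + a\mathcal{C}_{n-1}$. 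Hence $\mathscr{B}'$ contains all these elements with $i+a$ even (this also shows that primitive elements of a monoid of lattice points in a cone need not lie on extreme rays, contrary to the assertion in your last paragraph), and each of them requires its own pair $\mathcal{Q}^{\pm}$---indeed, in the paper's table the answer genuinely depends on whether $a$ is maximal ($a = 2m-i$, resp.\ $a = i$) or not. Both the enumeration of cases and the constructions themselves therefore remain to be completed before this counts as a proof.
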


\begin{proof}
Let $\mathcal{P}$ be an element of~$\mathscr{M}$, and let $x_1, \ldots, x_n$ be its coordinates in the basis $(\mathcal{C}_1, \ldots, \mathcal{C}_n)$ introduced in Definition~\ref{Young_monoid_definition}. Then the inequalities~\eqrefWithArgument{eq:slmH_lambda_condition}{$m$} can be rewritten in terms of the $x_i$ as
\begin{equation}
\labelWithArgument{eq:slmH_lambda_condition_rewritten}{$m$}
\begin{cases}
\displaystyle \sum_{i=1}^{n-1} \min(i-2,\; n-i) x_i \geq 0; \\
\displaystyle \sum_{i=1}^{n-1} \min(i,\; n-i-2) x_i \geq 0
\end{cases}
\end{equation}
(recall that $n = 2m$). For example, \eqrefWithArgument{eq:slmH_lambda_condition_rewritten}{$4$} is
\begin{equation}
\tag{\ref{eq:slmH_lambda_condition_rewritten}.$4$}
\systeme{
  -x_1 + x_3 + 2x_4 + 3x_5 +2x_6 + x_7 \geq 0,
  x_1 + 2x_2 + 3x_3 + 2x_4 + x_5 - x_7 \geq 0.
}
\end{equation}

It is then easy to see that the basis~$\mathscr{B}$ of the monoid~$\mathscr{M}_{\eqref{eq:slmH_lambda_condition}}$ is equal to
\begin{align}
\mathscr{B} = &\setsuch{\vphantom{\Big(}\mathcal{C}_i}{2 \leq i \leq n-2} \cup \{\mathcal{C}_n\} \cup {} \nonumber \\
              &\cup \setsuch{\vphantom{\Big(}\mathcal{C}_i + a\mathcal{C}_1}{0 < a \leq \min(i-2,\; n-i)} \cup \nonumber \\
              &\cup \setsuch{\vphantom{\Big(}\mathcal{C}_i + a\mathcal{C}_{n-1}}{0 < a \leq \min(i,\; n-i-2)}.
\end{align}

Now consider a diagram $\mathcal{P} \in \mathscr{M}_{\eqref{eq:slmH_lambda_condition}} \cap \mathscr{M}_{2|\#}$. Its decomposition as a sum of elements of~$\mathscr{B}$ will then involve an even number of odd-sized diagrams (where by ``size'' we mean the number of boxes). Denoting by $\mathscr{B}_{\text{even}}$ (resp.~$\mathscr{B}_{\text{odd}}$) the subset of~$\mathscr{B}$ comprising the diagrams of even (resp. odd) size, we obtain that the set
\begin{equation*}
\mathscr{B}_{\text{even}} \cup \Big( \mathscr{B}_{\text{odd}} + \mathscr{B}_{\text{odd}} \Big)
\end{equation*}
(where the ``$+$'' sign denotes the Minkowski, or elementwise, sum) generates the monoid $\mathscr{M}_{\eqref{eq:slmH_lambda_condition}} \cap \mathscr{M}_{2|\#}$.

It remains to eliminate the non-primitive elements. Clearly all elements of $\mathscr{B}_{\text{even}}$, being already primitive in~$\mathscr{M}_{\eqref{eq:slmH_lambda_condition}}$, are still primitive in~$\mathscr{M}_{\eqref{eq:slmH_lambda_condition}} \cap \mathscr{M}_{2|\#}$. Now let $\mathcal{P}$ be some element of $\Big( \mathscr{B}_{\text{odd}} + \mathscr{B}_{\text{odd}} \Big)$, \ie a sum of two elements of~$\mathscr{B}_{\text{odd}}$. Then necessarily it is of the form
\[
\mathcal{P} = \mathcal{C}_i + \mathcal{C}_j + a \mathcal{C}_1 + b \mathcal{C}_{n-1}
\]
where $a, b \geq 0$, and $i$ and $j$ satisfy $2 \leq i \leq j \leq n-2$. We claim that, if either $a$ or $b$ is nonzero, then this element is not primitive. Indeed:
\begin{itemize}
\item Suppose that $a > 0$ and $b > 0$. Then we have, possibly up to exchanging $i$ and~$j$:
\[\mathcal{C}_i + a \mathcal{C}_1 \in \mathscr{B}_{\text{odd}}
\quad\text{and}\quad
\mathcal{C}_j + b \mathcal{C}_{n-1} \in \mathscr{B}_{\text{odd}}.\]
From this, we deduce that, in the decomposition
\[
\mathcal{P} = \Big( \mathcal{C}_i + (a-1) \mathcal{C}_1 \Big) + \Big( \mathcal{C}_j + (b-1) \mathcal{C}_{n-1} \Big) + \Big( \mathcal{C}_1 + \mathcal{C}_{n-1} \Big),
\]
all three terms are still in~$\mathscr{B}$ (using the fact that $n \geq 4$), but have even number of boxes; \ie they are in $\mathscr{B}_{\text{even}}$. So $\mathcal{P}$ is not primitive.
\item Suppose that $a > 0$ and $b = 0$. Then necessarily the decomposition of~$\mathcal{P}$ as a sum of two elements of $\mathscr{B}_{\text{odd}}$ is of the form
\[\mathcal{P} = \Big( \mathcal{C}_i + a_i \mathcal{C}_1 \Big) + \Big( \mathcal{C}_j + a_j \mathcal{C}_1 \Big).\]
In particular this means that both of the sums $a_i + i$ and $a_j + j$ are odd. On the other hand, from the assumption $a_i + a_j = a > 0$ we get that at least one of $a_i$ or $a_j$ must be positive.

Exchanging if necessary $i$ and~$j$, assume that $a_i > 0$. Then we can rewrite $\mathcal{P}$ as
\[\mathcal{P} = \Big( \mathcal{C}_i + (a_i - 1) \mathcal{C}_1 \Big) + \Big( \mathcal{C}_j + (a_j + 1) \mathcal{C}_1 \Big).\]
In this new decomposition, clearly both summands have even number of boxes; let us justify that they are both still in~$\mathscr{B}$. For the first summand, this is obvious. As for the second summand, it suffices to see that, since $\min(j-2,n-j)$ always has the same parity as~$j$ but $a_j$ has opposite parity, the inequality $a_j \leq \min(j-2,n-j)$ is in fact necessarily strict. We conclude that $\mathcal{P}$ is not primitive.
\item The case $a = 0$ and $b > 0$ is analogous.
\end{itemize}
On the other hand, it is easy to see that the remaining elements of $\Big( \mathscr{B}_{\text{odd}} + \mathscr{B}_{\text{odd}} \Big)$ are primitive. It follows that
\begin{equation}
\mathscr{B}' = \mathscr{B}_{\text{even}} \cup \setsuch{\mathcal{C}_i + \mathcal{C}_j}{2 \leq i \leq j \leq n-2 \text{ with } i, j \text{ odd}}.
\end{equation}

We now define, for each element $\mathcal{P} \in \mathscr{B}'$, two diagrams $\mathcal{Q}^-(\mathcal{P})$ and $\mathcal{Q}^+(\mathcal{P})$ as given in Table~\ref{tab:table_of_Ps_and_Qs}. It remains only to check that, for each $\mathcal{P}$, both $\mathcal{Q}^-(\mathcal{P})$ and $\mathcal{Q}^+(\mathcal{P})$ satisfy all of the required properties.

\begin{table}
  \caption{\label{tab:table_of_Ps_and_Qs} Table listing the Young diagrams $\mathcal{Q}^-(\mathcal{P})$ and $\mathcal{Q}^+(\mathcal{P})$ claimed to exist in Lemma~\ref{horrible_technical_part}, for each primitive diagram~$\mathcal{P}$ lying in the monoid $\mathscr{M}_{\eqref{eq:slmH_lambda_condition}} \cap \mathscr{M}_{2 | \#}$.}
  \centering\bigskip
  \aboverulesep=0ex
  \belowrulesep=0ex
  \renewcommand{\arraystretch}{2}
  \makebox[\textwidth][c]{
  \begin{tabular}[t]{ll@{}r|c|c|}
    $\mathcal{P}$ & Parameter range & \multicolumn{1}{c}{Subrange} & \multicolumn{1}{c}{$\mathcal{Q}^-(\mathcal{P})$} & \multicolumn{1}{c}{$\mathcal{Q}^+(\mathcal{P})$} \\
    \midrule
    \multirow{2}{*}{$\mathcal{C}_i$} & \multirow{2}{*}{$\begin{cases} 2 \leq i \leq 2m \\ i \text{ even} \end{cases}$} & $i < 2m$ & \faded{as below} & $\mathcal{C}_i$ \\ \cmidrule{3-3}\cmidrule{5-5}
    & & $i = 2m$ & \multicolumn{2}{c|}{$\mathcal{C}_{i-2}$} \\ \midrule
    \multirow{2}{*}{$\mathcal{C}_i + a \mathcal{C}_1$} & \multirow{2}{*}{$\begin{cases} 0 < a \leq \min(i-2,\; 2m-i) \\ i+a \text{ even} \end{cases}$} & $a < 2m-i$ & \faded{as below} & $\mathcal{C}_i + a\mathcal{C}_1$ \\ \cmidrule{3-3}\cmidrule{5-5}
    & & $a = 2m-i$ & \multicolumn{2}{c|}{$\mathcal{C}_{i-1} + (a-1)\mathcal{C}_1$} \\ \midrule
    \multirow{2}{*}{$a \mathcal{C}_{2m-1} + \mathcal{C}_i$} & \multirow{2}{*}{$\begin{cases} 0 < a \leq \min(i,\; 2m-2-i) \\ i+a \text{ even} \end{cases}$} & $a = i$ & \multicolumn{2}{c|}{$\mathcal{C}_{2m-2} + (a-1)\mathcal{C}_{2m-3} + \mathcal{C}_{i-1}$} \\ \cmidrule{3-3}\cmidrule{4-4}
    & & $a < i$ & $a\mathcal{C}_{2m-3} + \mathcal{C}_{i-2}$ & \faded{as above} \\ \midrule
    \multirow{3}{*}{$\mathcal{C}_j + \mathcal{C}_i$} & \multirow{3}{*}{$\begin{cases} 2 \leq i < j \leq 2m-2 \\ i, j \text{ odd} \end{cases}$} & $i+j < 2m$ & \faded{as below} & $\mathcal{C}_j + \mathcal{C}_i$ \\ \cmidrule{3-3}\cmidrule{5-5}
    & & $i+j = 2m$ & \multicolumn{2}{c|}{$\mathcal{C}_{j-1} + \mathcal{C}_{i-1}$} \\ \cmidrule{3-3}\cmidrule{4-4}
    & & $i+j > 2m$ & $\mathcal{C}_{j-2} + \mathcal{C}_{i-2}$ & \faded{as above} \\ \midrule
    \multirow{3}{*}{$2\mathcal{C}_i$} & \multirow{3}{*}{$\begin{cases} 2 \leq i \leq 2m-2 \\ i \text{ odd} \end{cases}$} & $i < m$ & \faded{as below} & $2\mathcal{C}_i$ \\ \cmidrule{3-3}\cmidrule{5-5}
    & & $i = m$ & \multicolumn{2}{c|}{$\mathcal{C}_{i} + \mathcal{C}_{i-2}$} \\ \cmidrule{3-3}\cmidrule{4-4}
    & & $i > m$ & $2\mathcal{C}_{i-2}$ & \faded{as above} \\ \bottomrule
  \end{tabular}
  }
\end{table}

\begin{itemize}
\item Checking that $\mathcal{Q}^-$ and $\mathcal{Q}^+$ have the correct number of boxes, \ie that they satisfy~\eqref{eq:Q_pm_size}, is an immediate computation. In particular $\# \mathcal{Q}^-$ and~$\# \mathcal{Q}^+$ are even; by assumption, so is $\# \mathcal{P}$; this yields condition~\ref{itm:skew_diagram_even}.
\item The fact that both $\mathcal{Q}^-$ and $\mathcal{Q}^+$ are well-defined and of order~$n-2$ (\ie that all the terms $\mathcal{C}_k$ that comprise them satisfy $0 \leq k \leq 2m-2$), that they are contained in~$\mathcal{P}$, and that the skew diagrams $\mathcal{P}/\mathcal{Q}^-$ and~$\mathcal{P}/\mathcal{Q}^+$ have thickness at most~$2$ (condition~\ref{itm:thickness_2_bis}) is apparent by inspection.
\item We also notice that all of these skew diagrams have either no bridges at all, or exactly $2$ bridges of length~$1$. In particular they satisfy condition~\ref{itm:no_majority_bridge_bis}.
\item Finally, for condition~\ref{itm:slmH_mu_condition} \ie~\eqrefWithArgument{eq:slmH_lambda_condition}{$m-1$}, it is helpful to rewrite it  as~\eqrefWithArgument{eq:slmH_lambda_condition_rewritten}{$m-1$}: explicitly, for a diagram $\mathcal{Q} = \sum_{i=1}^{n-2} y_i \mathcal{C}_i$, the system~\eqrefWithArgument{eq:slmH_lambda_condition}{$m-1$} is equivalent to
\begin{equation}
\tag{\ref{eq:slmH_lambda_condition_rewritten}.$m-1$}
\begin{cases}
\displaystyle \sum_{i=1}^{n-3} \min(i-2,\; n-i-2) y_i \geq 0; \\
\displaystyle \sum_{i=1}^{n-3} \min(i,\; n-i-4) y_i \geq 0.
\end{cases}
\end{equation}
Checking this for all the diagrams $\mathcal{Q}^\pm$ is somewhat tedious, but straightforward. \qedhere
\end{itemize}
\end{proof}

We are now ready to conclude this subsubsection.

\begin{proof}[Proof of Lemma~\ref{induction_step}]~
\begin{itemize}
\item Assume first that a diagram $\mathcal{Q}$ satisfying conditions \ref{itm:slmH_mu_condition} through~\ref{itm:proportional_size} exists.

Then condition~\ref{itm:skew_diagram_even} combined with~\ref{itm:proportional_size} implies that $\# \mathcal{P}$ is divisible by~$n$.

Now consider condition~\ref{itm:no_majority_bridge_bis}: we have seen that it is equivalent to the system of inequalities~\eqref{eq:no_majority_bridge}. Consider specifically \eqrefWithArgument{eq:no_majority_bridge}{1}, \ie the first inequality of that system:
\[-p_1 + q_1 + p_2 - \sum_{i=2}^{n-2}|q_i - p_{i+1}| - p_n \geq 0.\]
We may then expand this into a system of the form~\eqref{eq:absolute_values_expansion}. That system contains among others the inequality
\[-p_1 + q_1 + p_2 + \sum_{i=2}^{m} (-q_i + p_{i+1}) + \sum_{i=m+1}^{2m-2} (q_i - p_{i+1}) - p_{2m} \geq 0,\]
that we may rewrite as
\begin{equation}
-p_1 + \sum_{i=2}^{m+1} p_i - \sum_{i=m+2}^{2m} p_i \;\;\geq\;\; -q_1 + \sum_{i=2}^{m} q_i - \sum_{i=m+1}^{2m-2} q_i;
\end{equation}
and the first part of~\eqrefWithArgument{eq:slmH_lambda_condition}{$m$} becomes a consequence of the first part of~\eqrefWithArgument{eq:slmH_lambda_condition}{$m-1$}.

Similarly, by using the inequality~\eqrefWithArgument{eq:no_majority_bridge}{$n-1$}, we deduce the second part of~\eqrefWithArgument{eq:slmH_lambda_condition}{$m$} from the second part of~\eqrefWithArgument{eq:slmH_lambda_condition}{$m-1$}.

\item Conversely, let $\mathcal{P}$ be a Young diagram satisfying the assumptions, \ie let $\mathcal{P}$ be in $\mathscr{M}_{\eqref{eq:slmH_lambda_condition}} \cap \mathscr{M}_{n | \#}$, which (since $n$~is even) is a submonoid of $\mathscr{M}_{\eqref{eq:slmH_lambda_condition}} \cap \mathscr{M}_{2 | \#}$. As announced, we take advantage of additivity by decomposing it as
\[\mathcal{P} = \sum_{l=1}^N \mathcal{P}_{l},\]
with each $\mathcal{P}_{l}$ lying in the basis~$\mathscr{B}'$ of the latter monoid. We then set, for each $k = 0, \ldots, N$:
\begin{equation}
\mathcal{Q}_{k} := \sum_{l=1}^k \mathcal{Q}^-(\mathcal{P}_{l}) + \sum_{l=k+1}^N \mathcal{Q}^+(\mathcal{P}_{l}),
\end{equation}
where $\mathcal{Q}^\pm(\mathcal{P}_l)$ are the diagrams constructed in Lemma~\ref{horrible_technical_part}. By construction of these diagrams and by Lemma~\ref{good_pairs_closed_under_addition}, it follows that each of the pairs $(\mathcal{P}, \mathcal{Q}_{k})$ satisfies conditions \ref{itm:slmH_mu_condition} through~\ref{itm:no_majority_bridge_bis}. On the other hand, also by construction, the numbers
\[\# \mathcal{Q}_{0},\; \# \mathcal{Q}_{1},\; \ldots, \# \mathcal{Q}_{N}\]
are all even, form a nondecreasing sequence with consecutive terms differing by at most~$2$, and satisfy
\[\# \mathcal{Q}_{0} \leq \frac{n-2}{n} \# \mathcal{P} \leq \# \mathcal{Q}_{N}.\]
This implies that for a suitable choice of~$k$, we have
\[\# \mathcal{Q}_{k} = \frac{n-2}{n} \# \mathcal{P}\]
as required. \qedhere
\end{itemize}
\end{proof}

\subsubsection{The case $\lie{sl}_m(\HH)$: conclusion}
\label{sec:slmH_conclusion}

It remains to put everything together.

\begin{proof}[Proof of Proposition~\ref{slmH_combinatorics}]
As announced, we proceed by induction on~$m$.

For $m = 1$, we have $\Theta = \{\alpha_1\} = \Pi$, and \eqrefWithArgument{eq:slmH_lambda_condition}{$m$} reduces to the condition $p_1 = p_2$. The result is then a particular case of Lemma~\ref{rectangle_characterization} (for $k = 2$).

Assume now that $m \geq 2$, and that the result is true for $m-1$. Let $\mathcal{P}$ be a Young diagram of order~$2m$. The result for $\mathcal{P}$ then follows by combining the lemmas proved so far, along the following outline:
\begin{center}
\bgroup
\def\arraystretch{1.5}
\makebox[\textwidth][c]{
\begin{tabular}{@{}l@{}c@{}c@{}l@{}c@{}c@{}l@{}c@{}c@{}l@{}}
 & & & \multirow{2}{*}{$\exists \mathcal{Q}$ of ord. $2m-2$,} & & & $\exists \mathcal{Q}$ of ord. $2m-2$, & & & \\[2mm]
 & & & \multirow{2}{*}{$\mathcal{Q}$ has $\Theta_{m-1}$-cbsf} & \multirow{2}{*}{$\smash{\overset{(B)}{\iff}}$} & \ldelim\{{2}{*} & $\#\mathcal{Q} \in (2m-2)\ZZ$ \rlap{\quad $\longrightarrow$ \quad redundant} & & & \\
\multirow{5}{*}{$\mathcal{P}$ has $\Theta_m$-cbsf\,\,} & \multirow{5}{*}{$\smash{\overset{(A)}{\iff}}$} & & & & & $\mathcal{Q} \vdash$ \eqrefWithArgument{eq:slmH_lambda_condition}{$m-1$} & \rdelim\}{5}{*} & \multirow{5}{*}{$\smash{\overset{(D)}{\iff}}$} & \multirow{5}{*}{$\left\lbrace\aligned &\#\mathcal{P} \in 2m\ZZ \\ &\mathcal{P} \vdash \eqrefWithArgument{eq:slmH_lambda_condition}{$m$} \endaligned\right.\ignorespacesafterend$} \\
 & & & \multirow{3}{*}{$\begin{rcases} \mathcal{Q} \subset \mathcal{P} \\ \text{$\mathcal{P}/\mathcal{Q}$ has $\Theta_1$-cbsf} \end{rcases}$} & \multirow{3}{*}{$\smash{\overset{(C)}{\iff}}$} & \ldelim\{{3}{*} & $\#\mathcal{P}/\mathcal{Q} \in 2\ZZ$ & & & \\
 & & & & & & $\forall j, 0 \leq \#^j \mathcal{P} - \#^j \mathcal{Q} \leq 2$ & & & \\
 & & & & & & $\forall i, b_i \leq \frac{1}{2}\sum_i b_i$ & & & \\
 & & \ldelim\{{-5.5}{*} & $\#\mathcal{Q} = \frac{m-1}{m}\#\mathcal{P}$ & & & $\#\mathcal{Q} = \frac{m-1}{m}\#\mathcal{P}$ & & &
\end{tabular}
}
\egroup
\end{center}
Here ``has $\Theta_m$-cbsf'' is shorthand for ``has a $\{\alpha_1, \alpha_3, \ldots, \alpha_{2m-1}\}$-codominant balanced semistandard $\{1, 2, \ldots, 2m\}$-filling'', and the symbol $\vdash$ is taken to mean ``satisfies''. Naturally, $b_i$ here stands for the length of the $i$-th bridge (see Definition~\ref{bridge_definition}) of the skew-diagram $\mathcal{P}/\mathcal{Q}$. The ingredients of the proof are then as follows:
\begin{itemize}
\item equivalence (A) is the (obvious) ``divide-and-conquer'' Lemma~\ref{dominance_decomposition}, applied to $k = 2m-2$;
\item equivalence (B) is the induction hypothesis;
\item equivalence (C) is Lemma~\ref{conditions_for_nice_filling_of_skew_diagram} (the main result of Subsubsection~\ref{sec:skew_thickness_two});
\item equivalence (D) is Lemma~\ref{induction_step} (the main result of Subsubsection~\ref{sec:induction_step}).
\end{itemize}
Also the condition ``$\#\mathcal{Q} \in (2m-2)\ZZ$'' is marked as redundant, as it follows from $\#\mathcal{P}/\mathcal{Q} \in 2\ZZ$ together with $\#\mathcal{Q} = \frac{m-1}{m}\#\mathcal{P}$.
\end{proof}

\subsection{The case $\lie{g} = A_r$: conclusion}
\label{sec:An_conclusion}

We are now ready to complete the proof of the Main Theorem when $\lie{g}$ is of type $A_r$, where $r = n-1$ is some positive integer. Almost all of the substantial work has been done in the previous subsections; it just remains to put the pieces together.

\begin{proof}[Proof of Main Theorem for $\lie{g}$ of type $A_{r \geq 1}$.]~
\begin{itemize}
\item For the real form $\lie{g}_\RR = \lie{sl}_n(\RR)$ which is split, the Main Theorem follows from Proposition~\ref{basic_results}.\ref{itm:main_split}, as we have already noted in the introduction to this section.

Note that the same argument goes for $\lie{g}_\RR = \lie{su}(p,n-p)$ when $n = 2p$ or $n = 2p+1$: indeed, we then have $\Theta(\lie{g}_\RR) = \emptyset$, which is easily seen to be equivalent to $\lie{g}_\RR$ being quasi-split. But these cases are also covered in the next point.

\item For $\lie{g}_\RR = \lie{su}(p,n-p)$ with arbitrary $p$, the result follows from Proposition~\ref{supq_combinatorics}, applied to $k = n-2p$ if $p < \frac{n}{2}$ or $k = 1$ otherwise. Indeed, given the formula~\eqref{eq:lambda_i_and_P_i} linking $\lambda_i$ and $\#_i \mathcal{P}$, we see that:
\begin{itemize}
\item $\mathcal{P}$ has integer offset if and only if its $\lie{sl}_n$-shape $\lambda$ has integer coordinates $\lambda_i$ (in the basis $(e_1, \ldots, e_n)$), which is equivalent (see Table~\ref{tab:root_lattice_congruences}) to $\lambda \in Q$;
\item $\lambda$ satisfies the inequalities listed in the appropriate line of Table~\ref{tab:conditions_for_classical_algebras} if and only if $\mathcal{P}$ satisfies the inequalities \eqref{eq:su_pq_diagram_inequalities}, for the given value of~$k$.
\end{itemize}
On the other hand, the condition $V_\lambda^\lie{l} \neq 0$ can be translated via Corollary~\ref{characterization_for_sln}.

It remains to explain how we pass from the set $\Theta(\lie{g}_\RR)$ to the set $\Pi_{[1,k-1]}$. If $p = \frac{n}{2}$, then both sets are empty, hence equal. Otherwise, we have $\Theta(\lie{g}_\RR) = \Pi_{[p+1,\; n-p-1]}$ and $\Pi_{[1,k-1]} = \Pi_{[1,\;n-2p-1]}$; and these two sets are mapped to each other by a certain element of the Weyl group (which acts by permutation of the indices $1, \ldots, n$). Hence the corresponding Levi subalgebras, say $\lie{l}_1$, $\lie{l}_2$, are conjugate in $G_\RR = \SU(p, n-p)$, so that the two spaces $V_\lambda^{\lie{l}_{1,2}}$ have the same dimension.

\item For $\lie{g}_\RR = \lie{sl}_m(\HH)$, the result follows from Proposition~\ref{slmH_combinatorics}. Indeed, the condition $V_\lambda^\lie{l} \neq 0$ has simply been translated via Corollary~\ref{characterization_for_sln}. Moreover, as in the previous point, we have $a \in \ZZ \iff \lambda \in Q$. Finally, the inequalities~\eqrefWithArgument{eq:slmH_lambda_condition}{$m$} are simply a homogeneous version of the inequalities appearing in the line $\lie{g}_\RR = \lie{sl}_m(\HH)$ of Table~\ref{tab:conditions_for_classical_algebras}: we substituted $\lambda_i = p_i - a$, and then expanded the $a$~terms in terms of the $p_i$ (instead of passing $a$ to the right-hand side, as we did for $\lie{su}(p,n-p)$). \qedhere
\end{itemize}
\end{proof}

\section{Types $B_r$, $C_r$ and~$D_r$: the setup}
\label{sec:BCD}

The goal of this section is to obtain Corollary~\ref{combinatorial_characterization}, which is a purely combinatorial characterization of the weights $\lambda$ such that $V^\lie{l}_\lambda \neq 0$ in the case when $\lie{g}$ is of type $B_r$, $C_r$ or~$D_r$, analogous to Corollary~\ref{characterization_for_sln} from the previous section. It will allow us, in the next section, to actually classify these weights $\lambda$. 

This criterion relies on so-called ``$\lie{g}$-standard doubled Young tableaux'', which play in types $B$, $C$ and~$D$ the same role as ordinary semistandard Young tableaux in type~$A$. More generally, these tableaux lead to a combinatorial character formula (Proposition~\ref{BCD_character_formula}) in types $B$, $C$ and $D$, analogous to Proposition~\ref{classical_character_formula} in type~$A$, which may be of independent interest. In types $B$ and $C$, this character formula already appears (without proof) in \cite[Appendix~A.2]{Lit90}. In type~$D$, a similar formula appears (also without proof) in \cite[Appendix~A.3]{Lit90}, but our formula constitutes a slight improvement, as discussed in the introduction (Subsubsection~\ref{sec:intro_dYt}).

All of this work is based on the Littelmann path model (that gives a character formula for any semisimple Lie algebra~$\lie{g}$), whose construction we briefly recall in Subsection~\ref{sec:Littelmann}.

In Subsection~\ref{sec:Bruhat}, we explain how to describe the path model based on a long starting path in terms of the path models based on its segments, using the Bruhat order. This part is also essentially due to Littelmann.

Starting from this point, we specialize to the case where $\lie{g}$ is of type $B$, $C$ or $D$. In Subsection~\ref{sec:Young_order}, we present a characterization of the Bruhat order in terms of Young tableaux, given some (reasonable) assumptions. This simple characterization is the key point that allows us to simplify the definition of the ``doubled Young tableaux'' in type~$D$.

In Subsection~\ref{sec:admissible}, we describe the path model on a ``short'' starting path of the form $e_1 + \ldots + e_{k-1} \pm e_k$, in terms of so-called ``admissible pairs'' (a notion due to Lakshmibai-Seshadri and Littelmann); and we give an explicit combinatorial description of these admissible pairs.

Finally, in Subsection~\ref{sec:dYt}, we define a $\lie{g}$-standard doubled Young tableau, and give the announced character formula and Levi branching rule in terms of these tableaux.

\subsection{The Littelmann path model}
\label{sec:Littelmann}

In this subsection, we briefly recall Littelmann's path technique, that provides a character formula for representations of an arbitrary semisimple Lie algebra~$\lie{g}$ (Proposition~\ref{Littelmann_character_formula}), as well as a generalization of the Littlewood-Richardson rule and its multiple avatars, including a branching rule from~$\lie{g}$ to any Levi subalgebra (Proposition~\ref{Vl_in_terms_of_paths}).

\begin{definition}[\cite{Lit95}]
\label{path_notations}
Let $\mathscr{P}$ be the set of continuous piecewise-linear paths in~$\lie{h}_{(\RR)}$ starting at~$0$, \ie maps $\pi: [0,1] \to \lie{h}_{(\RR)}$ such that $\pi(0) = 0$, considered up to reparametrization (by any increasing homeomorphism $[0,1] \to [0,1]$). We denote by $\mathscr{P}^+$ the subset of~$\mathscr{P}$ formed by paths lying entirely within the Weyl chamber $\lie{h}^+$.

For all $\nu \in \lie{h}_{(\RR)}$, we identify~$\nu$ with the linear path
\[\fundef{\nu:}{[0,1]}{\lie{h}_{(\RR)}}{t}{t\nu;}\]
and, given two paths $\pi, \rho \in \mathscr{P}$, we define the concatenated path $\pi * \rho$ by
\[\fundef{\pi * \rho:}{[0,1]}{\lie{h}_{(\RR)}}
{t}{\begin{cases}
\pi(2t)    & \text{ for } t \leq \frac{1}{2} ; \\
\pi(1) + \rho(2t-1) & \text{ for } t \geq \frac{1}{2}. \end{cases}}\]

For every simple root $\alpha \in \Pi$, Littelmann introduces two functions $e_\alpha$ and $f_\alpha$ from $\mathscr{P} \sqcup \{0\}$ to itself; we refer to~\cite[Section 1]{Lit95} for their definition. Here $0$ denotes a special element, that can be considered as the zero of the free $\ZZ$-module generated by~$\mathscr{P}$; it is not to be confused with the constant zero path, about which we will never need to talk.
\end{definition}

\begin{definition}[\cite{Lit95}]
For every $\pi \in \mathscr{P}$, we define the \emph{path model} corresponding to~$\pi$ as the smallest subset $B_{\pi} \subset \mathscr{P}$ containing $\pi$ and such that $B_{\pi} \sqcup \{0\}$ is closed under all the operators $e_\alpha$ and $f_\alpha$, for all simple roots $\alpha \in \Pi$. 
\end{definition}

\begin{proposition}[\cite{Lit95}, Character Formula]
\label{Littelmann_character_formula}
Let $\lambda \in P \cap \lie{h}^+$ be a dominant integral weight of~$\lie{g}$. Choose any starting path $\pi^+ \in \mathscr{P}^+$ having endpoint $\pi^+(1) = \lambda$. Then the endpoints of the paths in $B_{\pi^+}$ describe the character of the representation with highest weight~$\lambda$:
\[\charact(V_\lambda) = \sum_{\pi \in B_{\pi^+}} e^{\pi(1)}.\]
\end{proposition}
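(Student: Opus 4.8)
The plan is to recognize this as Littelmann's Character Formula and to prove it by showing that the set $B_{\pi^+}$, equipped with the operators $e_\alpha$ and $f_\alpha$, is a combinatorial model for the representation $V_\lambda$ — in modern language, a realization of its crystal — so that summing $e^{\pi(1)}$ over $B_{\pi^+}$ reproduces $\charact V_\lambda$. I would first establish the basic structural properties of the operators from \cite[Section 1]{Lit95}: that they preserve $\mathscr{P} \sqcup \{0\}$, that $f_\alpha$ and $e_\alpha$ shift the endpoint of a path by $-\alpha$ and $+\alpha$ respectively, and that along each simple direction $\alpha$ they organize $B_{\pi^+}$ into finite ``$\alpha$-strings'' on which they act as the lowering and raising operators of an $\lie{sl}_2$-representation, the relevant highest weight being the integer $-\min_t \langle \pi(t), \alpha^\vee\rangle$. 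A formal consequence of this string structure is that the formal sum $\sum_{\pi \in B_{\pi^+}} e^{\pi(1)}$ is invariant under each simple reflection $s_\alpha$, hence under the whole Weyl group $W$ of $\lie{g}$.

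Next I would prove the \emph{independence theorem}: for any two starting paths $\pi^+, \tilde\pi^+ \in \mathscr{P}^+$ sharing the endpoint $\lambda$, the weighted sets $B_{\pi^+}$ and $B_{\tilde\pi^+}$ carry equal characters. The mechanism is that a path crystal generated by a single source annihilated by every $e_\alpha$ — and a dominant path is such a source, since it never leaves $\lie{h}^+$ — is determined up to a unique weight-preserving isomorphism by the weight $\lambda$ of its source; this is proved by induction on the rank and on the length of a reduced word, matching the $\alpha$-strings one simple root at a time. Granting this, I am free to compute with whatever dominant path is most convenient, and I would take $\pi^+ = \pi^+_{0, \mathrm{L}}(\lambda) = x_1 \varpi_1 * \cdots * x_r \varpi_r$ from \eqref{eq:Littelmann_starting_path}.

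The heart of the argument, and the step I expect to be the main obstacle, is the \emph{concatenation theorem}: for $\pi_1, \pi_2 \in \mathscr{P}$, the root operators act on concatenations $\eta_1 * \eta_2$ (with $\eta_i \in B_{\pi_i}$) by the tensor-product ``signature rule,'' so that $B_{\pi_1 * \pi_2}$ is isomorphic to the tensor product crystal $B_{\pi_1} \otimes B_{\pi_2}$ and its character is the product of the two characters. Verifying this rule requires a careful analysis of where the function $t \mapsto \langle (\eta_1 * \eta_2)(t), \alpha^\vee\rangle$ attains its minimum, with a case split according to whether that minimum is realized on the $\eta_1$-part or the $\eta_2$-part; this bookkeeping is the genuinely technical core. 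Iterating over the $r$ segments of $\pi^+_{0, \mathrm{L}}(\lambda)$ then exhibits $B_{\pi^+}$ as the connected component generated by the dominant concatenation inside the tensor product of the elementary crystals $B_{x_i \varpi_i}$, whose characters are computed directly (the base case of the induction).

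Finally I would identify the character. By the first step the sum $\sum_{\pi \in B_{\pi^+}} e^{\pi(1)}$ is $W$-invariant; by construction its source endpoint $\lambda$ dominates all other endpoints and occurs with multiplicity one; and every weight that appears is of the form $\lambda - \sum_i n_i \alpha_i$ with $n_i \in \NN$. I would then conclude by downward induction on the dominance order: the concatenation theorem identifies the total character of the tensor product with the product of characters, which equals $\charact(V_{\lambda_1} \otimes \cdots)$ and so decomposes into irreducible characters; matching the connected components of the path crystal with the irreducible summands, the highest component (the one containing $\pi^+$) must correspond to $V_\lambda$, so that $\sum_{\pi \in B_{\pi^+}} e^{\pi(1)} = \charact V_\lambda$. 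The independence theorem of the second step then transports this equality to an arbitrary dominant starting path, completing the proof.
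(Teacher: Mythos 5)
First, a point of order: the paper itself gives \emph{no} proof of this proposition. It is quoted, with attribution, from Littelmann~\cite{Lit95} (as the bracketed citation in the statement indicates), and Subsection~\ref{sec:Littelmann} merely recalls it as the foundation for everything that follows. So your sketch can only be measured against Littelmann's own argument. Your first three steps are accurate descriptions of genuine theorems of the path model: the $\lie{sl}_2$-string structure of the operators $e_\alpha, f_\alpha$ (hence $W$-invariance of $\sum_{\pi \in B_{\pi^+}} e^{\pi(1)}$), the independence/isomorphism theorem for dominant paths with the same endpoint, and the concatenation rule identifying the set of concatenations $\eta_1 * \eta_2$ with a tensor product of crystals, $B_{\pi_1 * \pi_2}$ being the connected component of $\pi_1 * \pi_2$ inside it. Each of these is a substantial theorem (the isomorphism theorem is the technical heart of~\cite{Lit95}), but you state them correctly.

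The genuine gap is in your final identification step, which is circular as written. To equate the character of the tensor-product crystal $B_{x_1 \varpi_1} \otimes \cdots \otimes B_{x_r \varpi_r}$ with $\charact(V_{x_1\varpi_1} \otimes \cdots \otimes V_{x_r\varpi_r})$, you must already know $\charact B_{x_i \varpi_i} = \charact V_{x_i\varpi_i}$, \ie the character formula for the weights $x_i\varpi_i$ --- your ``base case''. But there is no ``direct computation'' of the path model of a straight segment $x_i \varpi_i$ in general: outside type~$A$ the fundamental weights need not be minuscule (think of the adjoint weight, or of any $x_i \geq 2$, or of the fundamental weights of $E_8$), and for those the base case is exactly as hard as the theorem being proved. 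Likewise, ``matching the connected components of the path crystal with the irreducible summands'' presupposes that each connected component has irreducible character --- again the statement to be proved. This does not follow from $W$-invariance, connectedness and uniqueness of the source: abstract crystals with the string property can enjoy all three and still fail to have irreducible character (this is precisely why Stembridge-type axioms, or extra input, are needed), and in Littelmann's development the decomposition (Littlewood--Richardson) rule is a \emph{corollary} of the character formula, not an ingredient of its proof. What actually closes the argument in~\cite{Lit95} is a different mechanism: the string property is used to show that the subsets $B_w \subset B_{\pi^+}$ of paths reachable from $\pi^+$ by lowering operators along a reduced word for $w \in W$ satisfy the Demazure recursion $\charact B_{s_\alpha w} = \Lambda_\alpha \charact B_w$, whence $\charact B_{\pi^+} = \Lambda_{w_0}(e^\lambda)$; the classical identity $\Lambda_{w_0}(e^\lambda) = \charact V_\lambda$ (equivalent to the Weyl character formula) then finishes the proof with no appeal to tensor products. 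Alternatively, your tensor-product scheme can be grounded by importing Kashiwara's crystal-basis theory from quantum groups, but that is a transplant of heavy machinery, not the ``direct'' base-case computation your outline promises.
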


For every set $\Theta \subset \Pi$ of simple roots, let us define the ``$\Theta$-dominant Weyl chamber''
\begin{equation}
\label{eq:Theta_dom_Weyl_chamber}
\lie{h}^{\Theta, +} := \setsuch{X \in \lie{h}_{(\RR)}}{\forall \alpha \in \Theta,\; \alpha(X) \geq 0},
\end{equation}
which is just the dominant Weyl chamber of the reductive algebra~$\lie{l}(\Theta)$. Then we have:

\begin{proposition}[\cite{Lit95}, Restriction Rule]
Let $\lambda$ and $\pi^+$ be as before, and let $\Theta \subset \Pi$ be some set of simple roots. Then the subset $B_{\pi^+}^{\Theta}$ of~$B_{\pi^+}$ formed by paths lying entirely within $\lie{h}^{\Theta, +}$ parametrizes the decomposition of the restriction of the representation $V_\lambda(\lie{g})$ to $\lie{l}(\Theta)$ into irreducibles, in the following way:
\[\restr{V_\lambda(\lie{g})}{\lie{l}(\Theta)} = \bigoplus_{\pi \in B_{\pi^+}^{\Theta}} V_{\pi(1)}(\lie{l}(\Theta)).\]
\end{proposition}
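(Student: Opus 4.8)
The plan is to read off the decomposition from the crystal structure that Littelmann's operators impose on the path model, using the Character Formula (the preceding proposition) as the bookkeeping device. The starting observation is that $B_{\pi^+} \sqcup \{0\}$ is, by its very definition, closed under \emph{all} the operators $e_\alpha$ and $f_\alpha$ with $\alpha \in \Pi$; in particular it is closed under the smaller family indexed by $\alpha \in \Theta$. These latter operators are exactly the root operators attached to the reductive algebra $\lie{l}(\Theta)$, whose simple roots are $\Theta$ and whose Cartan subalgebra is $\lie{h}$. So $B_{\pi^+}$, once we forget the operators indexed outside $\Theta$, becomes a disjoint union of $\lie{l}(\Theta)$-path models.

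First I would partition $B_{\pi^+}$ into its \emph{$\Theta$-connected components}: declare two paths equivalent if one is obtained from the other by a finite sequence of operators $e_\alpha, f_\alpha$ with $\alpha \in \Theta$. Each component $C$ is then the smallest $\Theta$-stable subset of~$\mathscr{P}$ containing any of its members, i.e.\ it is itself a path model for $\lie{l}(\Theta)$, based on the unique \emph{$\Theta$-highest} path $\pi_C \in C$ characterized by $e_\alpha \pi_C = 0$ for all $\alpha \in \Theta$ (the existence and uniqueness of such a path in each finite $\Theta$-crystal component being part of Littelmann's structure theory).

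The key step --- and the one I expect to be the main obstacle, since it rests on the precise definition of the operators $e_\alpha$ that the excerpt deliberately omits --- is the identification
\[e_\alpha \pi = 0 \ \text{for all } \alpha \in \Theta \quad\Longleftrightarrow\quad \pi \ \text{lies entirely in } \lie{h}^{\Theta, +}.\]
The implication ``$\Leftarrow$'' is easy: for $\pi$ inside $\lie{h}^{\Theta, +}$ the function $t \mapsto \alpha(\pi(t))$ is nonnegative for every $\alpha \in \Theta$, so its minimum exceeds $-1$ and $e_\alpha$ annihilates $\pi$ by definition. The converse is the substantive direction, and relies on the integrality of the paths appearing in $B_{\pi^+}$ (when $\pi^+$ has integral endpoint, the relevant local minima of each $\alpha(\pi(t))$ are integers), so that a minimum $> -1$ is forced to be $\geq 0$, i.e.\ $\pi$ never leaves the chamber. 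Granting this, the set of component representatives $\{\pi_C\}$ is exactly $B_{\pi^+}^{\Theta}$.

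Finally I would sum characters. Applying the Character Formula of the previous proposition to each component $C$, now read inside $\lie{l}(\Theta)$ (the formula extends verbatim to the reductive algebra $\lie{l}(\Theta)$, its central torus only shifting weights), gives $\sum_{\pi \in C} e^{\pi(1)} = \charact V_{\pi_C(1)}(\lie{l}(\Theta))$. Since the $\lie{h}$-weights of $\restr{V_\lambda(\lie{g})}{\lie{l}(\Theta)}$ coincide with those of $V_\lambda(\lie{g})$, we obtain
\[\charact \restr{V_\lambda(\lie{g})}{\lie{l}(\Theta)} = \sum_{\pi \in B_{\pi^+}} e^{\pi(1)} = \sum_{C} \ \sum_{\pi \in C} e^{\pi(1)} = \sum_{\pi \in B_{\pi^+}^{\Theta}} \charact V_{\pi(1)}(\lie{l}(\Theta)),\]
and because a finite-dimensional representation of the reductive algebra $\lie{l}(\Theta)$ is determined up to isomorphism by its character, the displayed decomposition into irreducibles follows.
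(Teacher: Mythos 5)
There is nothing in the paper to compare your argument against: the paper does not prove this proposition at all, but imports it wholesale from Littelmann \cite{Lit95} (it is his Restriction Rule), using it only as a black box to deduce the corollary on $\dim V_\lambda^{\lie{l}(\Theta)}$ that follows it. What you have written is, in substance, Littelmann's own proof: split $B_{\pi^+}$ into connected components under the operators $e_\alpha, f_\alpha$ with $\alpha \in \Theta$ (which are literally the root operators of the reductive algebra $\lie{l}(\Theta)$), identify the $\Theta$-highest path of each component with a path lying entirely in $\lie{h}^{\Theta,+}$, and sum characters component by component. Your outline is sound, and you correctly isolate the one substantive point, namely that $e_\alpha \pi = 0$ for all $\alpha \in \Theta$ forces $\pi$ to stay inside $\lie{h}^{\Theta,+}$; this is exactly where the integrality properties of the paths in $B_{\pi^+}$ enter (an integer-valued minimum that is $> -1$ is $\geq 0$). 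Two points you leave implicit deserve flagging. First, the final count requires each component to contain \emph{exactly} one $\Theta$-dominant path, not merely at least one; you do invoke uniqueness, but it is itself a nontrivial piece of the structure theory (it follows from the partial-inverse identities $f_\alpha e_\alpha \pi = \pi$ and $e_\alpha f_\alpha \pi = \pi$, together with the fact that the $\lie{l}(\Theta)$-path model generated by a $\Theta$-dominant path consists of the nonzero $f$-monomials applied to that path). Second, your closing sentence is stated too strongly: a finite-dimensional representation of a reductive Lie algebra is \emph{not} in general determined by its character, since the centre may act non-semisimply; what saves the argument is that $\restr{V_\lambda(\lie{g})}{\lie{l}(\Theta)}$ is semisimple, because the centre of $\lie{l}(\Theta)$ is contained in $\lie{h}$ and therefore acts semisimply on $V_\lambda$. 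With these caveats, your proposal is a correct reconstruction of the cited proof rather than an alternative to anything proved in the paper.
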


As a corollary, this allows us to compute the dimension of $V_\lambda^{\lie{l}(\Theta)}$, which is just the multiplicity of the trivial representation of~$\lie{l}(\Theta)$ in that decomposition:

\begin{corollary}
\label{Vl_in_terms_of_paths}
Let $\lambda$, $\pi^+$ and $\Theta$ be as before. Then we have
\[\dim V_\lambda^{\lie{l}(\Theta)} = \# \setsuch{\pi \in B_{\pi^+}^{\Theta}}{\pi(1) = 0}.\]
\end{corollary}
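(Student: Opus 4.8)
The plan is to read the corollary off directly from the Restriction Rule (the Proposition immediately preceding it), using only the elementary facts that the $\lie{l}(\Theta)$-invariants construction is additive over direct sums and that it detects exactly the trivial summands.

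First I would recall that, by the very definition of $V_\lambda^{\lie{l}(\Theta)}$ as the subspace of vectors killed by all of $\lie{l}(\Theta)$, applying the functor $(-)^{\lie{l}(\Theta)}$ to the decomposition
\[\restr{V_\lambda(\lie{g})}{\lie{l}(\Theta)} = \bigoplus_{\pi \in B_{\pi^+}^{\Theta}} V_{\pi(1)}(\lie{l}(\Theta))\]
yields, since taking invariants commutes with finite direct sums,
\[V_\lambda^{\lie{l}(\Theta)} = \bigoplus_{\pi \in B_{\pi^+}^{\Theta}} V_{\pi(1)}(\lie{l}(\Theta))^{\lie{l}(\Theta)}.\]
Thus the problem reduces to evaluating the dimension of the invariant subspace of each irreducible summand.

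The one step that deserves a sentence of justification is the claim that, for an irreducible representation $V_\mu(\lie{l})$ of the reductive Lie algebra $\lie{l} = \lie{l}(\Theta)$, one has $\dim V_\mu^{\lie{l}} = 1$ if $\mu = 0$ and $\dim V_\mu^{\lie{l}} = 0$ otherwise. For this I would argue by irreducibility: the invariant subspace $V_\mu^{\lie{l}}$ is stable under $\lie{l}$ (indeed $\lie{l}$ acts as zero on it), so it is an $\lie{l}$-subrepresentation; were it nonzero, irreducibility of $V_\mu$ would force it to be the whole space, making $V_\mu$ the trivial representation and hence $\mu = 0$. Conversely the trivial $\lie{l}$-module has highest weight $0$ and is one-dimensional. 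Here I use the convention that $V_{\pi(1)}(\lie{l})$ denotes the irreducible $\lie{l}$-module of highest weight $\pi(1)$, so that the trivial module corresponds precisely to the weight $0$.

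Summing over all $\pi$ then gives $\dim V_\lambda^{\lie{l}(\Theta)} = \# \setsuch{\pi \in B_{\pi^+}^{\Theta}}{\pi(1) = 0}$, as desired. I do not expect any genuine obstacle: the corollary is a formal consequence of the Restriction Rule, and the only content is the trivial-summand bookkeeping above. The sole point to keep straight is the normalization that the highest weight of the trivial $\lie{l}(\Theta)$-module is $0$, so that ``picking out the trivial representation'' translates exactly into the endpoint condition $\pi(1) = 0$.
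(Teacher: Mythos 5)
Your proposal is correct and follows exactly the paper's route: the paper derives this corollary in one line from the Restriction Rule, observing that $\dim V_\lambda^{\lie{l}(\Theta)}$ is the multiplicity of the trivial representation of~$\lie{l}(\Theta)$ in the decomposition, which is precisely the number of paths $\pi \in B_{\pi^+}^{\Theta}$ with $\pi(1) = 0$. Your extra bookkeeping (invariants commute with finite direct sums, and an irreducible $\lie{l}(\Theta)$-module has nonzero invariants if and only if it is trivial, i.e.\ has highest weight $0$) is exactly the content the paper leaves implicit.
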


\subsection{The Bruhat order}
\label{sec:Bruhat}

In this subsection, we give a partial characterization (essentially due to Littelmann) of the path model~$B_{\pi^+}$, given some fairly natural assumptions on the starting path~$\pi^+$ (Proposition~\ref{path_model_loc_int_concat}). These assumptions are in particular satisfied by all starting paths of the form
\begin{equation}
\label{eq:dominant_integral_segments}
\pi^+ = \nu^+_1 * \cdots * \nu^+_N
\end{equation}
where each $\nu^+_i$ is a dominant integral weight; both Littelmann's~\eqref{eq:Littelmann_starting_path} and our~\eqref{eq:basic_path_definition} choice of a starting path for $\lie{g}$ of types $B_r$, $C_r$ and~$D_r$ follow this pattern. For such paths~$\pi^+$, we shall then decompose this result into two subresults.
\begin{itemize}
\item The first part (Corollary~\ref{long_path_model_characterization}) is a characterization of the path model~$B_{\pi^+}$ in terms of the path models~$B_{\nu^+_i}$ corresponding to its segments.
\item The second part (Corollary~\ref{short_path_model_characterization}) will be given only later, in subsection~\ref{sec:admissible}. It consists of a description of each of these path models~$B_{\nu^+_i}$, assuming that $\nu^+_i$ is ``small enough''.
\end{itemize}

The main tool for this characterization is the so-called Bruhat order, whose definition we now recall.

\begin{definition}
The \emph{Bruhat order}~$\preceq_B$ is the partial order on~$W$ defined as the transitive closure of the relations
\[\setsuch{w \preceq s_\alpha w}{w \in W,\; \alpha \in \Delta \text{ such that } \ell(w) < \ell(s_\alpha w)},\]
where $\ell(w)$ stands for the length of~$w$ as a word on the generators $\setsuch{s_\alpha}{\alpha \in \Pi}$.
\end{definition}

The following classical characterization (see \eg Proposition~3.2.14.(4) in~\cite{CS}) of such pairs $(\alpha, w)$ is useful to have in mind:
\begin{lemma}
\label{Bruhat_order_characterization}
For all $w \in W$ and $\alpha \in \Delta^+$, we have $\ell(s_\alpha w) > \ell(w)$ if and only if $\alpha \in w \Delta^+$, or equivalently if and only if
\begin{equation}
\label{eq:Bruhat_order_characterization}
\forall X \in \lie{h}^+,\quad \alpha(w X) \geq 0.
\end{equation}
\end{lemma}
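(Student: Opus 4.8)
The plan is to prove the two asserted equivalences separately. The second one, between $\alpha \in w\Delta^+$ and the displayed positivity condition~\eqref{eq:Bruhat_order_characterization}, is an elementary consequence of the $W$-invariance of the Killing form; the first one, between this and the length inequality $\ell(s_\alpha w) > \ell(w)$, is the classical criterion governing the change of length under left multiplication by a reflection. I would dispose of the (shorter) geometric reformulation first and then tackle the length criterion.

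For the geometric equivalence, recall that $W$ acts orthogonally for the form used to identify $\lie{h}^*$ with $\lie{h}$, so $\alpha(wX) = \langle \alpha, wX\rangle = \langle w^{-1}\alpha, X\rangle$ for every $X$. Thus condition~\eqref{eq:Bruhat_order_characterization} asserts that the root $\beta := w^{-1}\alpha$ pairs nonnegatively with the whole dominant chamber $\lie{h}^+$. Since $\lie{h}^+ = \{X : \alpha_i(X) \geq 0\ \forall i\}$ is exactly the cone dual to the nonnegative span $\sum_i \RR_{\geq 0}\alpha_i$ of the simple roots, a vector pairs nonnegatively with all of $\lie{h}^+$ iff it lies in that span. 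A root $\beta$ is never $0$, and lies in $\sum_i \RR_{\geq 0}\alpha_i$ iff $\beta \in \Delta^+$, whereas every $\beta \in \Delta^-$ pairs nonpositively with $\lie{h}^+$ (strictly negatively on its interior). Hence~\eqref{eq:Bruhat_order_characterization} holds iff $w^{-1}\alpha \in \Delta^+$, that is, iff $\alpha \in w\Delta^+$.

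For the length criterion I would argue geometrically. Identify $\ell(u) = \#\{\beta \in \Delta^+ : u^{-1}\beta \in \Delta^-\}$, the number of reflection hyperplanes $H_\beta$ ($\beta \in \Delta^+$) separating the fundamental chamber $C = \interior(\lie{h}^+)$ from $uC$. By the same dominant-pairing computation as above, $H_\alpha$ separates $C$ from $wC$ exactly when $w^{-1}\alpha \in \Delta^-$, i.e.\ exactly when $\alpha \notin w\Delta^+$. Now $s_\alpha w C$ is the reflection of the chamber $wC$ across $H_\alpha$. If $\alpha \in w\Delta^+$, so that $H_\alpha$ does not already separate $C$ from $wC$, reflecting $wC$ across $H_\alpha$ carries it to the side of $H_\alpha$ opposite $C$ and strictly increases the gallery distance, giving $\ell(s_\alpha w) > \ell(w)$; if $\alpha \notin w\Delta^+$ the symmetric argument gives $\ell(s_\alpha w) < \ell(w)$. (That the two lengths can never coincide also follows at once from $\det(s_\alpha) = -1$ and $\det(u) = (-1)^{\ell(u)}$, which force opposite parities.) Combining the two cases yields the first equivalence.

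The one genuinely nontrivial point is the claim that reflecting a chamber across a wall that does not already separate it from $C$ strictly increases its gallery distance to $C$. Because $\alpha$ need not be a \emph{simple} root, the resulting jump in length is an odd integer that is in general larger than $1$, so the elementary ``length changes by exactly $\pm 1$'' lemma for simple reflections does not suffice here; what is really needed is the folding/convexity property of the Coxeter complex, equivalently an explicit count of how $s_\alpha$ permutes $\Delta^+$ through its own inversion set $\Phi(s_\alpha) = \{\beta \in \Delta^+ : s_\alpha \beta \in \Delta^-\}$. This is exactly the substance of the cited Proposition~3.2.14.(4) of~\cite{CS}, which I would invoke to close the argument rather than redevelop the folding lemma from scratch.
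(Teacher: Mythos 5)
Your proof is correct, and it lines up with how the paper treats this lemma: the paper gives no proof at all, simply citing Proposition~3.2.14.(4) of~\cite{CS} as a classical fact. Your cone-duality argument for the equivalence between $\alpha \in w\Delta^+$ and \eqref{eq:Bruhat_order_characterization} is sound (it is the part the paper leaves implicit), and for the length criterion you invoke exactly the reference the paper itself relies on, correctly identifying the folding/convexity property as the genuinely nontrivial core that the citation supplies.
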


We now use the Bruhat order to define the notion of a ``Bruhat-nondecreasing'' tuple of elements on~$\lie{h}_{(\RR)}$. Note however that such a tuple can not, in general, be thought of as a sequence that is nondecreasing for some partial order on~$\lie{h}_{(\RR)}$: indeed, the relationship of forming a Bruhat-nondecreasing pair is not transitive (see Example~\ref{Bruhat_not_transitive}).

\begin{definition}
\label{Bruhat_tuple_definition}
Let $\nu_1, \ldots, \nu_N \in \lie{h}_{(\RR)}$ be some weights. We say that the tuple $(\nu_1, \ldots, \nu_N)$ is \emph{Bruhat-nondecreasing} (once again, this is \emph{not} a transitive relation, see Example~\ref{Bruhat_not_transitive}) if there exist some elements $w_i \in W$ such that:
\begin{itemize}
\item for every $i = 1, \ldots, N$, the weight~$\nu_i$ lies in the Weyl chamber $w_i \lie{h}^+$;
\item we have $w_1 \preceq_B \ldots \preceq_B w_N$.
\end{itemize}
We say that a path $\pi$ is \emph{Bruhat-nonincreasing} if the segments $(\nu_N, \ldots, \nu_1)$ of its subdivision $\pi = \nu_1 * \cdots * \nu_N$ into linear segments form a Bruhat-nondecreasing tuple.
\end{definition}
(The order inversion between tuples and paths is explained by the order inversion that we will see in Definition~\ref{BCD_tableaux_and_weights}, more precisely in~\eqref{eq:tableau_to_path}.)

\begin{example}
\label{Bruhat_not_transitive}
For $\lie{g} = \lie{so}_6(\CC)$, take $\nu_1 = \frac{1}{2}(- e_1 - e_2 + e_3)$, $\nu_2 = \frac{1}{2}(e_1 - e_2 + e_3)$ and $\nu_3 = e_1$. Then:
\begin{itemize}
\item the pair $(\nu_1, \nu_2)$ is Bruhat-nondecreasing: take for example $w_1 = w_2 = w$ with $w: (e_1, e_2, e_3) \mapsto (e_3, -e_2, -e_1)$;
\item the pair $(\nu_2, \nu_3)$ is Bruhat-nondecreasing: take for example $w_2 = w_3 = w'$ with $w': (e_1, e_2, e_3) \mapsto (e_1, e_3, e_2)$;
\item the pair $(\nu_1, \nu_3)$, and \emph{a fortiori} the triple $(\nu_1, \nu_2, \nu_3)$, is \emph{not} Bruhat-nondecreasing. We will be easily able to check this once we obtain Proposition~\ref{Bruhat_vs_Young}.
\end{itemize}
\end{example}

We also need one final short definition.

\begin{definition}
Let $\pi$ be a path, $\pi = \nu_1 * \cdots * \nu_N$ its subdivision into linear segments. We define the \emph{multishape} of~$\pi$ to be the path
$\pi^+ = \nu^+_1 * \cdots * \nu^+_N$,
where, for each~$i$, $\nu^+_i$ is the unique dominant element of the Weyl orbit of~$\nu_i$:
\[\{\nu^+_i\} := W \nu_i \cap \lie{h}^+.\]
\end{definition}

Here is now the announced result.

\begin{proposition}
\label{path_model_loc_int_concat}
Let $\nu^+_1, \ldots, \nu^+_N$ be some dominant weights such that the path $\pi^+ = \nu^+_1 * \cdots * \nu^+_N$ is a locally integral concatenation (see \cite[Definition~5.3]{Lit97}). Then a path $\pi$ lies in $B_{\pi^+}$ if and only if it is a locally integral concatenation and has multishape $\pi^+$.
\end{proposition}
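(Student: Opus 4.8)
The plan is to prove the two inclusions separately, in both cases exploiting the two structural features of Littelmann's operators that underlie the whole theory (see \cite{Lit95,Lit97}): first, that from any path one reaches, by applying the raising operators $e_\alpha$ repeatedly, a \emph{dominant} path (one lying in $\lie{h}^+$, killed by every $e_\alpha$), which is the unique such path in the connected component of the graph generated by the $e_\alpha, f_\alpha$; and second, that $f_\alpha e_\alpha \pi = \pi$ whenever $e_\alpha \pi \neq 0$, so that the whole component is recovered from its dominant source by lowering operators. In this language $B_{\pi^+}$ is precisely the component with source $\pi^+$. The heart of the matter is then to show that the two stated properties --- being a locally integral concatenation and having multishape $\pi^+$ --- are (a) invariant under $e_\alpha$ and $f_\alpha$, and (b) enjoyed by exactly one dominant path, namely $\pi^+$ itself.

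For the forward inclusion I would check that both properties survive the operators. Invariance of local integrality is essentially Littelmann's, being built into the way \cite[Definition~5.3]{Lit97} interacts with the root operators, so I would cite it. For the multishape, the point is that $e_\alpha$ and $f_\alpha$ act by applying $s_\alpha \in W$ to certain sub-segments, possibly inserting one new breakpoint: reflecting a segment by $s_\alpha$ leaves the Weyl orbit of its direction unchanged, hence leaves its dominant representative unchanged; and where a segment of direction $\nu$ is split into a piece of direction $\nu$ and a reflected piece of direction $s_\alpha\nu$, the two associated multishape segments share the dominant representative $\nu^+$, are therefore collinear, and recombine into one. Since the multishape is an equality of \emph{paths} and not of labelled subdivisions, it is genuinely preserved. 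As $\pi^+$ is a locally integral concatenation by hypothesis and trivially has multishape $\pi^+$ (each $\nu^+_i$ being already dominant), every element of $B_{\pi^+}$ inherits both properties.

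For the reverse inclusion, let $\pi$ be a locally integral concatenation with multishape $\pi^+$ and raise it by the $e_\alpha$'s to a dominant path $\eta$. By the forward-direction invariance, $\eta$ is again a dominant locally integral concatenation with multishape $\pi^+$, so it remains to prove the uniqueness statement: \emph{the only dominant locally integral concatenation of multishape $\pi^+$ is $\pi^+$.} Here I use that a locally integral concatenation is in particular Bruhat-nonincreasing (Definition~\ref{Bruhat_tuple_definition}): writing the maximal linear subdivision of $\eta$ as $\mu_1 * \cdots * \mu_M$, and taking the witnesses $w_j$ to be the minimal-length elements with $\mu_j \in w_j \lie{h}^+$ (so that the chain $w_1 \succeq_B \cdots \succeq_B w_M$ persists, Bruhat order being compatible with passage to minimal parabolic coset representatives), dominance of $\eta$ forces the initial ray $\mu_1$ into the cone $\lie{h}^+$, whence $\mu_1 = \mu_1^+$ and $w_1 = e$. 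Since the identity is the minimum of the Bruhat order, $w_2 \preceq_B e$ gives $w_2 = e$, and inductively every $w_j = e$, so every $\mu_j$ is dominant. Then $\eta$ equals its own multishape $\mu_1^+ * \cdots * \mu_M^+ = \pi^+$. Hence $\eta = \pi^+$, and as $\pi$ is obtained from $\eta = \pi^+$ by the lowering operators inverse to the raisings used, $\pi \in B_{\pi^+}$.

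I expect the main obstacle to be the bookkeeping in the forward direction: verifying cleanly that the class of locally integral concatenations is stable under $e_\alpha, f_\alpha$ (the part I would quote from \cite{Lit97}) and that the multishape, read off the maximal linear subdivision that the operators keep modifying, is truly preserved \emph{as a path}. By contrast the uniqueness lemma is short once one knows that local integrality entails the Bruhat-nonincreasing condition, since it then reduces to the minimality of $e$ in the Bruhat order, the only delicate point being the (standard) choice of minimal coset representatives so as to handle segments lying on a wall of $\lie{h}^+$.
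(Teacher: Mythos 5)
Your proposal has the same skeleton as the paper's proof: both properties are invariant under the root operators (local integrality by Littelmann, the multishape for the reason you give), hence the set $\hat{B}_{\pi^+}$ of locally integral concatenations of multishape~$\pi^+$ is a union of path models based at its dominant members, and everything reduces to the uniqueness claim that $\pi^+$ is the only dominant path in~$\hat{B}_{\pi^+}$. The gap is in your proof of this uniqueness claim. You invoke dominance of~$\eta$ only at the very first segment, and then propagate using the assertion that the Bruhat chain ``persists'' when arbitrary witnesses are replaced by the minimal-length elements of the cosets $\setsuch{w \in W}{\mu_j \in w\lie{h}^+}$. That assertion is false: the compatibility of the Bruhat order with minimal coset representatives (Deodhar) holds for a \emph{fixed} parabolic quotient $W/W_J$, whereas here the parabolics $\Stab(\mu_j^+)$ vary with~$j$. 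Concretely, in type $A_2$, write $s_i := s_{\alpha_i}$, take $\mu_1 = \varpi_2$ (so that $\Stab(\mu_1) = \{e, s_1\}$) and $\mu_2 = s_1 \nu^+$ with $\nu^+$ regular dominant integral. The path $\mu_1 * \mu_2$ is Bruhat-nonincreasing --- the witnesses $(w_1, w_2) = (s_1, s_1)$ satisfy $w_1 \succeq_B w_2$ --- and it is a locally integral concatenation whose first segment is dominant; yet the minimal-length witnesses are $(e, s_1)$, and $s_1 \not\preceq_B e$. So from ``$w_1 = e$'' you cannot deduce ``$w_2 \preceq_B e$'', and your induction never gets started.

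The repair requires using dominance of~$\eta$ at \emph{every} segment, which is exactly what the paper's induction does: keeping arbitrary witnesses $w_1 \succeq_B \cdots \succeq_B w_N$, dominance at the origin gives $w_1 \nu_1^+ = \nu_1^+$; then, assuming $w_j \nu_l^+ = \nu_l^+$ for all $l \leq j < k$, the chain condition together with Lemma~\ref{Bruhat_order_characterization} (equivalently, the subword property of the Bruhat order) places $w_k$ in the standard parabolic subgroup fixing $\nu_1^+, \ldots, \nu_{k-1}^+$, and dominance of~$\eta$ at the point $\nu_1^+ + \cdots + \nu_{k-1}^+$ --- which constrains precisely the simple roots vanishing there, i.e.\ those generating this parabolic --- then forces $w_k \nu_k^+ = \nu_k^+$. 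Note that the $A_2$ path above is consistent with this: it fails dominance exactly at its second segment, which is the check your argument omits. The remainder of your proposal (the forward inclusion and the reduction to uniqueness via raising operators) matches the paper and is fine.
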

\begin{proof}
By Proposition~5.9 in~\cite{Lit97}, local integrality is preserved by the root operators. The multishape is obviously invariant by the root operators. Denoting by $\hat{B}_{\pi^+}$ the set of locally integral concatenations with multishape~$\pi^+$, it then follows from Lemma~6.11 in~\cite{Lit97} that
\[\hat{B}_{\pi^+} = \bigcup_{\pi \in \hat{B}_{\pi^+} \cap \mathscr{P}^+} B_{\pi}.\]
It remains to check that the only path with multishape~$\pi^+$ that is a locally integral concatenation and lies entirely within~$\lie{h}^+$ is $\pi^+$ itself. Indeed, let $\pi$~be such a path; replacing if necessary $(\nu^+_1, \ldots, \nu^+_N)$ by a finer subdivision (whose concatenation then remains locally integral), we may assume that $\pi$ is of the form
\[\pi = w_1 \nu^+_1 * \cdots * w_N \nu^+_N,\]
with $w_1 \succeq_B \cdots \succeq_B w_N$. It is then easy to verify, by induction on~$k$ (and using Lemma~\ref{Bruhat_order_characterization}), that we have $w_k \nu^+_l = \nu^+_l$ for all $k, l$ such that $k \geq l$.
%
%
\end{proof}

And here, as announced, is the interpretation of this result as a ``divide-and-conquer'' strategy.

\begin{corollary}
\label{long_path_model_characterization}
Let $\nu^+_1, \ldots, \nu^+_N \in P \cap \lie{h}^+$ be some dominant integral weights, and let $\pi^+ = \nu^+_1 * \cdots * \nu^+_N$. Then a path $\pi$ lies in $B_{\pi^+}$ if and only if it is Bruhat-nonincreasing and of the form $\pi = \pi_1 * \cdots * \pi_N$, with $\pi_k \in B_{\nu^+_k}$ for each~$k$.
\end{corollary}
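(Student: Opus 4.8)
The plan is to deduce the corollary directly from Proposition~\ref{path_model_loc_int_concat}, treating it as the engine and doing only bookkeeping on top. That proposition already identifies $B_{\pi^+}$ with the set of locally integral concatenations of multishape $\pi^+$. Since each $\nu^+_k$ is a single dominant integral weight, the $N=1$ instance of the same proposition identifies $B_{\nu^+_k}$ with the set of locally integral paths of multishape $\nu^+_k$, i.e. the Lakshmibai--Seshadri paths of shape $\nu^+_k$. The whole task is then to match the description "locally integral with multishape $\nu^+_1 * \cdots * \nu^+_N$" against the description "Bruhat-nonincreasing and blockwise in the $B_{\nu^+_k}$."

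First I would analyze the multishape condition alone, as it fixes the cut points. Because the multishape is computed on the finest linear subdivision with parallel consecutive segments merged, and because every segment of a path in $B_{\nu^+_k}$ points in a direction $w\nu^+_k$ lying in the Weyl orbit of $\nu^+_k$ (hence has dominant representative a nonnegative multiple of $\nu^+_k$), each block of a path contributes exactly the single merged segment $\nu^+_k$ to the multishape. Thus a path has multishape $\pi^+$ exactly when it admits a cut $\pi = \pi_1 * \cdots * \pi_N$ in which $\pi_k$ has multishape $\nu^+_k$: grouping the linear segments of $\pi$ according to which $\nu^+_k$ their dominant representatives are parallel to produces the cut in the forward direction, and concatenating multishapes gives it in the reverse direction.

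Next I would treat local integrality, which is imposed at every breakpoint of the linear subdivision, by separating the breakpoints interior to a block from those at a block boundary. Interior to block~$k$, local integrality of $\pi$ is precisely local integrality of $\pi_k$ as a path of multishape $\nu^+_k$, i.e. the statement $\pi_k \in B_{\nu^+_k}$; in particular this already forces the chamber labels within each block to be Bruhat-decreasing, since that is part of being a Lakshmibai--Seshadri path. At a block boundary the cumulative position of $\pi$ is the endpoint of the integral-shape path $\pi_1 * \cdots * \pi_k$, hence an integral weight, so the integrality part of the condition of \cite[Definition~5.3]{Lit97} is automatic there; the only surviving requirement is that the incoming chamber label (last segment of $\pi_k$) dominate, in the Bruhat order, the outgoing chamber label (first segment of $\pi_{k+1}$). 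Assembling the within-block chains with these boundary comparabilities is exactly the assertion that a single Bruhat chain $w_1 \succeq_B \cdots \succeq_B w_M$ can be selected over all linear segments of $\pi$, which by definition means $\pi$ is Bruhat-nonincreasing. Both implications of the corollary then follow by recombining the interior data (membership $\pi_k \in B_{\nu^+_k}$) with the boundary data (global Bruhat-nonincreasingness).

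The main obstacle is this boundary analysis, and I would isolate it as a short lemma. Concretely, I must justify that at a breakpoint sitting at an integral weight the locally integral condition collapses to mere Bruhat-comparability of the two adjacent chamber labels, and that the freedom in labelling segments whose direction lies on a wall can be exercised consistently across all boundaries to yield one global chain $w_1 \succeq_B \cdots \succeq_B w_M$; here Lemma~\ref{Bruhat_order_characterization} is the tool for translating between the chamber-label formulation and the inequalities $\alpha(wX)\geq 0$. Everything else is routine unwinding of the definitions of multishape and of Bruhat-nonincreasing, so the verification against Littelmann's local integrality at integral points is where the real content of the proof lies.
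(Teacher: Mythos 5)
Your overall skeleton agrees with the paper's: both directions are routed through Proposition~\ref{path_model_loc_int_concat}, and your ``if'' direction is essentially the paper's own argument --- the conditions of \cite[Definition~5.3]{Lit97} other than Bruhat-nonincreasingness hold automatically at a junction sitting at an integral weight, so blocks $\pi_k \in B_{\nu^+_k}$ glued at integral endpoints form a locally integral concatenation of multishape $\pi^+$, hence lie in $B_{\pi^+}$. The gap is in the ``only if'' direction. The paper does not re-derive the splitting at all: it cites Lemma~6.12 and Theorem~6.13 of \cite{Lit97}, which supply exactly the statement that a locally integral concatenation decomposes, Bruhat-nonincreasingly, into paths of the segment shapes. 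You propose to reprove this by reading Definition~5.3 as a list of per-junction conditions and then ``assembling'' the within-block chains and the adjacent-label comparabilities into one global chain $w_1 \succeq_B \cdots \succeq_B w_M$, which is what Definition~\ref{Bruhat_tuple_definition} actually demands. Your claim that this local data ``is exactly the assertion'' of a global chain is false as stated: by the paper's own Example~\ref{Bruhat_not_transitive}, pairwise Bruhat-comparability of consecutive directions does \emph{not} yield a global chain, precisely because a direction lying on a wall admits several lifts to $W$. Making the lift choices consistent across all junctions is the genuinely delicate point --- it is the content of the Littelmann results the paper cites --- and deferring it to a ``short lemma'' invoking only Lemma~\ref{Bruhat_order_characterization}, with no argument, leaves the hard half of the corollary unproven.

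There is also a circularity in your bookkeeping for that same direction, though it is repairable. You justify the integrality of the cut-point positions by saying they are endpoints of ``the integral-shape path $\pi_1 * \cdots * \pi_k$''; but integrality of $\pi_j(1)$ follows from $\pi_j \in B_{\nu^+_j}$ (via Proposition~\ref{Littelmann_character_formula}), which is precisely what you are in the middle of proving, and you already need the offsets to be integral in order to identify the interior conditions on block $k$ with the standalone conditions on $\pi_k$. The fix is an induction on blocks from left to right: block $1$ has offset $0$, so its restricted conditions are its standalone ones, giving $\pi_1 \in B_{\nu^+_1}$ and hence $\pi_1(1) \in P$; this makes the offset of block $2$ integral, and so on. With that induction made explicit, and with the global-chain assembly either actually proved or replaced by the citation of \cite{Lit97} as in the paper, your plan would close up into a correct proof.
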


\begin{proof}
The ``only if'' part is an immediate consequence of the previous proposition (Proposition~\ref{path_model_loc_int_concat}) and of the combination of Lemma~6.12 and Theorem~6.13 from~\cite{Lit97}.

For the ``if'' part, we only need to remark that if each of the paths $\pi_1, \ldots, \pi_N$ is a weakly locally integral concatenation and ends at an integral weight, then their concatenation is automatically weakly locally integral. Here by ``weakly locally integral concatenation'' we mean a path that satisfies all of the conditions from Definition~5.3 in~\cite{Lit97}, except possibly Bruhat-nonincreasingness.
\end{proof}

\subsection{The Young order}
\label{sec:Young_order}

\begin{center}
\fbox{\begin{minipage}{0.87\textwidth}
For the remainder of the paper, we assume that $\lie{g}$ is either of type $B_r$ for some $r \geq 1$, or of type $C_r$ for some $r \geq 1$, or of type $D_r$ for some $r \geq 3$.

In this setting, we have $n = r$, so we drop the notation~$n$.
\end{minipage}}
\end{center}

For these values of $\lie{g}$, we will consider Littelmann paths whose segments lie (up to occasional $\frac{1}{2}$ factors) in the set
\begin{equation}
\label{eq:infinity_norm_one}
\mathcal{X} := \{-1,0,1\}^r \setminus \{0\}
\end{equation}
of vectors with integer coordinates (in the basis $(e_1, \ldots, e_r)$) that have $\|\cdot\|_\infty$-norm $1$. We will encode these vectors as ``strongly-standard'' columns (\ie Young tableaux of width~$1$) on a certain alphabet: this is the object of Definitions \ref{alphabet}, \ref{strong_stand_col} and~\ref{cols_vs_weights}.

We then introduce (Definition~\ref{Young_order}) a ``Young order'' on the set~$\mathscr{C}$ of such columns, with an additional parity condition when $\lie{g}$ is of type $D_r$. This leads us to the central result of this subsection: Proposition~\ref{Bruhat_vs_Young}, which says that, for a sequence of elements of~$\mathcal{X}$ whose Weyl orbits are ordered in some natural way, being Bruhat-nondecreasing is equivalent to being nondecreasing for the Young order (or Young order with parity). In other terms, this assumption on the ordering of the Weyl orbits gets rid of the nontransitivity issues outlined in Example~\ref{Bruhat_not_transitive}.

\begin{definition}[The alphabet]
\label{alphabet}
We introduce the alphabet
\[\mathcal{A}_r := \{1, \ldots, r, \overline{r}, \ldots, \overline{1}\};\]
we also set $\mathcal{A} := \bigcup_{r \in \NN} \mathcal{A}_r = \NN \cup \overline{\NN}$. We adopt the convention $\overline{\overline{s}} = s$, and we define an \emph{absolute value} function $|\cdot|: \mathcal{A} \to \NN$ and a \emph{sign} function $\sgn: \mathcal{A} \to \{\pm 1\}$ by identifying $\overline{s}$ with $-s$.

We introduce on~$\mathcal{A}_r$ two very similar orders:
\begin{itemize}
\item the total order~$\preceq_{\mathcal{A}}$ given by
\begin{equation}
1 \prec \cdots \prec r \prec \overline{r} \prec \cdots \prec \overline{1};
\end{equation}
\item the not quite total order~$\preceq'_{\mathcal{A}}$ given by
\begin{equation}
1 \prec' \cdots \prec' r-1  \prec' r, \overline{r} \prec' \overline{r-1}  \prec' \cdots \prec' \overline{1},
\end{equation}
\ie $r$ and~$\overline{r}$ are both larger than $r-1$ and both smaller than~$\overline{r-1}$ for this order, but neither is larger than the other.
\end{itemize}
The order that we shall use will depend on~$\lie{g}$: more precisely, we set
\[\preceq^{\lie{g}}_{\mathcal{A}} := \begin{cases}
\preceq_{\mathcal{A}} & \text{ if $\lie{g}$ is of type $B_r$ or $C_r$;} \\
\preceq'_{\mathcal{A}} & \text{ if $\lie{g}$ is of type $D_r$.}
\end{cases}\]
However the total order $\preceq_{\mathcal{A}}$ will occasionally be useful even in type~$D_r$ (see Remark~\ref{semistandard_with_parity_is_semistandard}).
\end{definition}

\begin{definition}[Strongly standard columns]
\label{strong_stand_col}
Let~$\mathcal{C}$ be a column (\ie a Young tableau of width~$1$) filled with this alphabet. We say that~$\mathcal{C}$ is \emph{strongly standard} if, for each~$s$, it contains at most one of the symbols $s$ and~$\overline{s}$, and the symbols read from top to bottom form a strictly increasing sequence for the order~$\prec_{\mathcal{A}}$ (or equivalently for the order~$\prec'_{\mathcal{A}}$):
\[\forall i < i',\quad
\begin{cases}
\overline{\threeind{}{}{i} \mathcal{C}} \neq \threeind{}{}{i'} \mathcal{C}; \\
\threeind{}{}{i} \mathcal{C} \prec_{\mathcal{A}} \threeind{}{}{i'} \mathcal{C}.
\end{cases}\]
We denote by $\mathscr{C}$ the set of all strongly standard columns.
\end{definition}

\begin{definition}[Identification of columns and weights]
\label{cols_vs_weights}
We define the \emph{weight} of a strongly standard column~$\mathcal{C}$ to be the vector
\begin{equation}
\nu(\mathcal{C}) := \sum_{i=1}^{\# \mathcal{C}} \nu\left( \threeind{}{}{i} \mathcal{C} \right),
\end{equation}
with the function $\nu$ defined on~$\mathcal{A}_r$ by
\begin{equation}
\forall s = 1, \ldots, r,\quad
\begin{cases}
\nu(s) := e_s; \\
\nu(\overline{s}) := -e_s.
\end{cases}
\end{equation}
This map $\nu$ then induces a bijection between $\mathscr{C}$ and the set $\mathcal{X}$ introduced in \eqref{eq:infinity_norm_one}, that we shall henceforth identify with~$\mathscr{C}$.
\end{definition}

We now introduce an order on the set~$\mathscr{C}$ of strongly standard columns. In types $B_r$ and $C_r$, we use the ``Young order'', which simply encodes the notion of a semistandard Young tableau (a Young tableau is semistandard if and only if its columns form a nondecreasing sequence for this order); in type $D_r$, we use the Young order with an additional parity condition.

\begin{definition}
\label{Young_order}
We endow the set~$\mathscr{C}$ (and, using the identification~$\nu$, also the set~$\mathcal{X}$) with an order~$\preceq^{\lie{g}}_Y$, that once again depends on~$\lie{g}$: we set
\[\preceq^{\lie{g}}_Y := \begin{cases}
\preceq_Y & \text{ if $\lie{g}$ is of type $B_r$ or $C_r$,} \\
\preceq'_Y & \text{ if $\lie{g}$ is of type $D_r$;}
\end{cases}\]
it remains to explain what $\preceq_Y$ and $\preceq'_Y$ are.
\begin{enumerate}
\item We define the \emph{Young order}~$\preceq_Y$ by saying that $\mathcal{C} \preceq_Y \mathcal{C}'$ if and only if the two columns set next to each other form a semistandard Young tableau for the order~$\preceq_{\mathcal{A}}$. Formally:
\begin{equation}
\label{eq:Young_order_definition}
\mathcal{C} \preceq_Y \mathcal{C}' :\iff
\begin{cases}
\# \mathcal{C} \geq \# \mathcal{C}'; \\
\forall i = 1, \ldots, \# \mathcal{C}',\quad \threeind{}{}{i} \mathcal{C} \preceq_A \threeind{}{}{i} \mathcal{C}'.
\end{cases}
\end{equation}

\item We define the \emph{Young order with parity}~$\preceq'_Y$ by saying that $\mathcal{C} \preceq'_Y \mathcal{C}'$ if and only if the two columns set next to each other form a semistandard Young tableau for the order~$\preceq'_{\mathcal{A}}$, and this tableau satisfies the following additional condition: whenever it contains a rectangle of width~$2$ and height~$k$ that contains only symbols with absolute value greater than~$r-k$, the total number of symbols in that rectangle that are in~$\NN$ (\ie are without bars) must be even:
\begin{gather}
\text{if } \exists i_0, k \text{ with } 1 \leq i_0 \leq i_0 + k-1 \leq \# \mathcal{C}' \text{ such that } \nonumber \\
\label{eq:parity_condition} \left\{ \left| \threeind{}{}{i_0}\mathcal{C} \right|,\; \ldots,\; \left| \threeind{}{}{i_0+k-1}\mathcal{C} \right| \right\} = \left\{ \left| \threeind{}{}{i_0}\mathcal{C}' \right|,\; \ldots,\; \left| \threeind{}{}{i_0+k-1}\mathcal{C}' \right| \right\} = \{r-k+1,\; \ldots,\; r\}, \\
\text{then } \#\threeind{}{\NN}{}\mathcal{C} \equiv \#\threeind{}{\NN}{}\mathcal{C}' \pmod{2}. \nonumber
\end{gather}
One easily checks that this relation is transitive.
\end{enumerate}
\end{definition}
\begin{remark}
\label{semistandard_with_parity_is_semistandard}
Note that the case $k = 1$ of the condition \eqref{eq:parity_condition} tells us that in a tableau whose columns form a $\preceq'_Y$-nondecreasing sequence, $r$ and~$\overline{r}$ can never occur next to each other. So such a tableau will in particular be semistandard, not only for the partial order~$\preceq'_{\mathcal{A}}$, but also for the total order~$\preceq_{\mathcal{A}}$; and, for that matter, also for the total order~$\preceq''_{\mathcal{A}}$ in which $r$ and~$\overline{r}$ are swapped:
\begin{equation}
1 \prec'' \cdots \prec'' r-1 \prec'' \overline{r} \prec'' r \prec'' \overline{r-1} \prec'' \cdots \prec'' \overline{1}.
\end{equation}
\end{remark}

Finally, as announced, we explain how the Young order (in types $B_r$ and~$C_r$) or the Young order with parity (in type~$D_r$) is related to the Bruhat order. The remainder of this subsection is dedicated to proving the following proposition.

\begin{proposition}
\label{Bruhat_vs_Young}
Let $\mathcal{C}_1, \ldots, \mathcal{C}_N \in \mathscr{C}$, and let $\nu_i := \nu(\mathcal{C}_i)$ be the corresponding weights.
Then:
\begin{hypothenum}
\item for $\lie{g}$ of type $B_r$ or~$C_r$, the sequence $(\mathcal{C}_1, \ldots, \mathcal{C}_N)$ is Young-nondecreasing if and only if:
\[\begin{cases}
(\nu_1, \ldots, \nu_N) \text{ is Bruhat-nondecreasing}; \\
\|\nu_1\|^2 \geq \cdots \geq \|\nu_N\|^2.
\end{cases}\]
\item for $\lie{g}$ of type $D_r$, the sequence $(\mathcal{C}_1, \ldots, \mathcal{C}_N)$ is Young-nondecreasing with parity if and only if:
\[\begin{cases}
(\nu_1, \ldots, \nu_N) \text{ is Bruhat-nondecreasing}; \\
\|\nu_1\|^2 \geq \cdots \geq \|\nu_N\|^2; \\
\text{all $\nu_i$ such that $\|\nu_i\|^2 = r$ lie in the same $W$-orbit.}
\end{cases}\]
\end{hypothenum}
\end{proposition}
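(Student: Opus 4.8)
The plan is to translate the statement into the combinatorics of signed permutations and then reduce from tuples to consecutive pairs. I would realize $W$ as the group of signed permutations of the coordinates $e_1, \ldots, e_r$ (all sign changes in types $B_r$ and $C_r$, only even sign changes in type $D_r$). For a column $\mathcal{C} \in \mathscr{C}$ of height $k = \# \mathcal{C} = \|\nu(\mathcal{C})\|^2$, the dominant representative of $\nu(\mathcal{C})$ is $c_k$ (and, when $\lie{g}$ is of type $D_r$ with $k = r$, it is $c_r^{\pm}$ according to the parity of the number of barred entries). To each $\mathcal{C}$ I would attach the minimal-length element $w_{\mathcal{C}} \in W$ with $w_{\mathcal{C}}^{-1} \nu(\mathcal{C})$ dominant, so that $\nu(\mathcal{C}) \in w_{\mathcal{C}} \lie{h}^+$ by construction. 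Since the Bruhat order on $W$ is a genuine (transitive) partial order, and the Young order (resp. the Young order with parity) is transitive, both implications reduce to the case $N = 2$. For ``Young $\Rightarrow$ Bruhat'' one sets $w_i := w_{\mathcal{C}_i}$ and chains the pairwise relations $w_{\mathcal{C}_i} \preceq_B w_{\mathcal{C}_{i+1}}$ by transitivity of $\preceq_B$, thereby producing a single global chain; for ``Bruhat $\Rightarrow$ Young'' one restricts a given chain $w_1 \preceq_B \cdots \preceq_B w_N$ to each consecutive pair and chains the resulting pairwise Young relations. The norm hypothesis is inherited pairwise and is exactly the inequality $\# \mathcal{C}_i \geq \# \mathcal{C}_{i+1}$ built into the definition of $\preceq_Y$.

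Everything then rests on a single pair lemma: for $\mathcal{C}, \mathcal{C}' \in \mathscr{C}$ with $\# \mathcal{C} \geq \# \mathcal{C}'$ (and, in type $D_r$ with both columns full, lying in the same $W$-orbit), one has $\mathcal{C} \preceq_Y \mathcal{C}'$ (resp. $\preceq'_Y$) if and only if $w_{\mathcal{C}} \preceq_B w_{\mathcal{C}'}$; and moreover \emph{any} pair of representatives witnessing that $(\nu(\mathcal{C}), \nu(\mathcal{C}'))$ is Bruhat-nondecreasing already forces $\mathcal{C} \preceq_Y \mathcal{C}'$. For the implication $\mathcal{C} \preceq_Y \mathcal{C}' \Rightarrow w_{\mathcal{C}} \preceq_B w_{\mathcal{C}'}$, I would argue along covering relations of the Young order: a minimal $\preceq_Y$-increase either raises one entry by one step of $\prec_{\mathcal{A}}$ or shortens the column, and in each case I would exhibit $w_{\mathcal{C}'}$ as $s_\alpha w_{\mathcal{C}}$ (or a short product of such) with $\ell$ strictly increasing, so that $w_{\mathcal{C}} \preceq_B w_{\mathcal{C}'}$ follows from Lemma~\ref{Bruhat_order_characterization}; composing along a saturated chain gives the general inequality.

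For the converse at the pair level I would invoke the combinatorial (tableau) criterion for the Bruhat order of signed permutations, which states that $w_{\mathcal{C}} \preceq_B w_{\mathcal{C}'}$ is equivalent to a row-wise domination of the sorted coordinate prefixes of the two elements. Under the identification of columns with minimal coset representatives, this domination is exactly the family of inequalities $\threeind{}{}{i}\mathcal{C} \preceq_{\mathcal{A}} \threeind{}{}{i}\mathcal{C}'$ for $i = 1, \ldots, \# \mathcal{C}'$ that defines $\preceq_Y$; the norm hypothesis guarantees $\# \mathcal{C} \geq \# \mathcal{C}'$ so that these inequalities range over all the relevant rows. Alternatively one may read the same inequalities directly off the geometric criterion~\eqref{eq:Bruhat_order_characterization}. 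A technical point to be handled with care here is that the pair $(\nu, \nu')$ involves two generally distinct parabolic stabilizers $\Stab(\nu^+)$ and $\Stab(\nu'^+)$, so one must check that passing to minimal coset representatives of these two different parabolics preserves the Bruhat relation supplied by the hypothesis.

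The type $D_r$ case requires genuine extra care, and I expect it to be the main obstacle. Here $r$ and $\overline{r}$ are $\prec'_{\mathcal{A}}$-incomparable precisely because the last coordinate of a dominant weight may carry either sign, and the two full-support $W$-orbits (those of $c_r$ and $c_r^-$) are distinguished by the parity of the number of sign changes. The parity clause~\eqref{eq:parity_condition} in the definition of $\preceq'_Y$ is engineered to track exactly this invariant: applied with $k = r$ it forces two consecutive full columns to have the same number of barred symbols modulo $2$, hence to lie in the same orbit, which by transitivity yields the third condition in part~\ref{itm:main_complex} of the statement; conversely, that orbit condition is what permits the Bruhat chain to be realized by \emph{even} signed permutations. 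The delicate verification is that the clause~\eqref{eq:parity_condition} for \emph{all} $k$, not merely $k = r$, matches the Bruhat comparabilities that survive the restriction from the full hyperoctahedral group to $W(D_r)$ — that is, that restricting to even signed permutations neither creates nor destroys comparabilities beyond those recorded by~\eqref{eq:parity_condition}. Matching these covering relations, with Example~\ref{Bruhat_not_transitive} as the guiding illustration of why the naive pairwise Bruhat relation fails to be transitive and hence cannot by itself encode the Young order, is where the bulk of the work lies.
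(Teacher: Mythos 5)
Your overall architecture is in fact the paper's: reduce both implications to a single, globally consistent assignment $\mathcal{C} \mapsto w_{\mathcal{C}} \in W$ (the paper's section $\xi : \mathcal{X} \to W$) and verify order-preservation along covering relations. But your concrete choice of section --- the \emph{minimal-length} element of the coset $\setsuch{w \in W}{\nu(\mathcal{C}) \in w\lie{h}^+}$ --- makes your key pair lemma false, so the chain $w_{\mathcal{C}_1} \preceq_B \cdots \preceq_B w_{\mathcal{C}_N}$ you need for ``Young $\Rightarrow$ Bruhat'' cannot be produced this way. Concretely, in type $B_2$ (simple roots $\alpha_1 = e_1 - e_2$, $\alpha_2 = e_2$), let $\mathcal{C}$ be the column of height $2$ with entries $2, \overline{1}$ read top to bottom (so $\nu = e_2 - e_1$) and $\mathcal{C}'$ the column of height $1$ with entry $\overline{2}$ (so $\nu' = -e_2$). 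Then $\mathcal{C} \preceq_Y \mathcal{C}'$, since $\# \mathcal{C} = 2 \geq 1 = \# \mathcal{C}'$ and $2 \prec_{\mathcal{A}} \overline{2}$. However the two relevant cosets are $\{s_1 s_2,\; s_1 s_2 s_1\}$ and $\{s_2 s_1,\; s_2 s_1 s_2\}$, and their minimal-length representatives $s_1 s_2$ and $s_2 s_1$ are distinct elements of equal length, hence Bruhat-incomparable. (The pair $(\nu,\nu')$ \emph{is} Bruhat-nondecreasing, but only through the witnesses $s_1 s_2 \preceq_B s_2 s_1 s_2$, i.e., a non-minimal representative on the second factor.) The same example refutes your auxiliary claim in the converse direction that passing to minimal coset representatives of the two (different) parabolic stabilizers preserves the Bruhat relation: projection onto a parabolic quotient is order-preserving only for a \emph{fixed} parabolic, and here $s_1 s_2 \preceq_B s_2 s_1 s_2$ while $s_1 s_2 \not\preceq_B s_2 s_1$. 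Your tableau-criterion argument is sound when the two columns have equal height (same parabolic), which is exactly why the cross-height case you flag as ``a technical point'' is not technical at all: it is where the statement you rely on fails.

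The paper's resolution is precisely a different choice of section: $\xi(\nu)$ agrees with the column entries on positions $i \leq \# \mathcal{C}$, and on the tail $i > \# \mathcal{C}$ it takes \emph{negative} values with increasing absolute values. This is neither the minimal nor the maximal coset representative (the maximal-maximal choice $s_1 s_2 s_1$ vs.\ $s_2 s_1 s_2$ also fails in the example above, being again two distinct elements of equal length), and it is exactly this negative tail that makes $\xi$ order-preserving along every edge of the Hasse diagram of $\preceq^{\lie{g}}_Y$ described in Lemma~\ref{hasse_edges_description}: in the example one gets $\xi(\nu) = s_1 s_2$ and $\xi(\nu') = s_2 s_1 s_2$, which are comparable as required. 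Your reduction to consecutive pairs, your handling of the norm condition, and your reading of the parity clause in type $D_r$ are all fine; but without the correct section, the central lemma of your plan is false, and the proof does not go through as written.
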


In order to prove this proposition, we need some preliminary work. Recall that a partially ordered set $(X, \preceq)$ can be characterized by its \emph{Hasse diagram}, \ie the oriented graph whose vertices are the elements of~$X$, with two vertices $x, y$ connected by an edge if and only if $y$ ``covers'' $x$, \ie
\[x \preceq y \text{ and } \setsuch{z}{x \preceq z \preceq y} = \{x, y\}.\]
We then have the following description of the Hasse diagram of the order $\preceq^{\lie{g}}_Y$, for both $\lie{g} = B_r$ or~$C_r$ and $\lie{g} = D_r$.

\begin{lemma}
\label{hasse_edges_description}
Let $\mathcal{C}, \mathcal{C}'$ be two strongly standard columns. Then the pair $(\mathcal{C}, \mathcal{C}')$ is an edge of the Hasse diagram for $\preceq^{\lie{g}}_Y$ if and only if it has one of the following forms:
\begin{itemize}
\item $\mathcal{C}$ and~$\mathcal{C}'$ have the same height, and differ in only one box:
\begin{subequations}
\label{eq:hasse_edges_description}
\begin{equation}
\label{eq:hasse_skip}
(\mathcal{C}, \mathcal{C}') = \left(\,
  \diagramfontsize
  \Yboxdim{12pt}
  \begin{tikzpicture}[baseline={([yshift=-.5ex]current bounding box.center)}]
  \tgyoung(0cm,0cm,\vdts,s,\vdts)
  \end{tikzpicture}
  \normalsize
\, ,\;
  \diagramfontsize
  \Yboxdim{12pt}
  \begin{tikzpicture}[baseline={([yshift=-.5ex]current bounding box.center)}]
  \tgyoung(0cm,0cm,\vdts,t,\vdts)
  \end{tikzpicture}
  \normalsize
\,\right)
\end{equation}
with $s \prec_\mathcal{A}^{\lie{g}} t$ and such that for every symbol $x$ satisfying $s \prec_\mathcal{A}^{\lie{g}} x \prec_\mathcal{A}^{\lie{g}} t$, the value $\overline{x}$ is contained in some box of $\mathcal{C}$ (and of $\mathcal{C}'$).
\item $\mathcal{C}$ and~$\mathcal{C}'$ have the same height, and differ in only two boxes:
\begin{equation}
\label{eq:hasse_swap}
(\mathcal{C}, \mathcal{C}') = \left(\,
  \diagramfontsize
  \Yboxdim{12pt}
  \begin{tikzpicture}[baseline={([yshift=-.5ex]current bounding box.center)}]
  \tgyoung(0cm,0cm,\vdts,s,\vdts,<\overline{t}>,\vdts)
  \end{tikzpicture}
  \normalsize
\, ,\;
  \diagramfontsize
  \Yboxdim{12pt}
  \begin{tikzpicture}[baseline={([yshift=-.5ex]current bounding box.center)}]
  \tgyoung(0cm,0cm,\vdts,t,\vdts,<\overline{s}>,\vdts)
  \end{tikzpicture}
  \normalsize
\,\right)
\end{equation}
with $s$ and~$t$ such that $(s, t)$ is an edge of the Hasse diagram for the order $\preceq_\mathcal{A}^{\lie{g}}$.
\item $\mathcal{C}'$ is equal to $\mathcal{C}$ with the last box removed:
\begin{equation}
\label{eq:hasse_truncate}
(\mathcal{C}, \mathcal{C}') = \left(\,
  \diagramfontsize
  \Yboxdim{12pt}
  \begin{tikzpicture}[baseline={([yshift=-.5ex]current bounding box.center)}]
  \tgyoung(0cm,0cm,\vdts,s)
  \end{tikzpicture}
  \normalsize
\, ,\;
  \diagramfontsize
  \Yboxdim{12pt}
  \begin{tikzpicture}[baseline={([yshift=-.5ex]current bounding box.center)}]
  \tgyoung(0cm,0cm,\vdts,:~)
  \end{tikzpicture}
  \normalsize
\,\right)
\end{equation}
\end{subequations}
with $s$ a maximal element (for the order $\preceq_\mathcal{A}^{\lie{g}}$) among the symbols that do not occur in $\mathcal{C}'$.
\end{itemize}
\end{lemma}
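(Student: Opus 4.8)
The plan is to characterize the covering relations of $\preceq^{\lie{g}}_Y$ directly, exploiting the fact that the only way two boxes of a strongly standard column interact is through the prohibition of having both a symbol~$x$ and its opposite~$\overline x$ present. Throughout I recall that $\mathcal{C} \preceq^{\lie{g}}_Y \mathcal{C}'$ forces $\# \mathcal{C} \geq \# \mathcal{C}'$ (so the \emph{taller} column is the smaller one) together with the entrywise comparison $\threeind{}{}{i}\mathcal{C} \preceq^{\lie{g}}_{\mathcal{A}} \threeind{}{}{i}\mathcal{C}'$ on the common range of indices, and that strong standardness amounts to strict increase plus the exclusion of any pair $\{x,\overline x\}$.

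First I would prove the easy (``forward'') direction, that each of the three displayed shapes \eqref{eq:hasse_skip}, \eqref{eq:hasse_swap}, \eqref{eq:hasse_truncate} is genuinely a covering pair. In each case checking $\mathcal{C} \prec^{\lie{g}}_Y \mathcal{C}'$ is immediate from the entrywise description (for \eqref{eq:hasse_swap} one uses that $s \prec^{\lie{g}}_{\mathcal{A}} t$ forces $\overline t \prec^{\lie{g}}_{\mathcal{A}} \overline s$). The substance is the absence of a strict intermediate~$\mathcal{D}$. For \eqref{eq:hasse_skip}, such a $\mathcal{D}$ must agree with $\mathcal{C}$ and $\mathcal{C}'$ outside the single modified box and carry there a symbol $x$ with $s \prec^{\lie{g}}_{\mathcal{A}} x \prec^{\lie{g}}_{\mathcal{A}} t$; the hypothesis that $\overline x$ already occurs in the column rules this out. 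For \eqref{eq:hasse_swap}, an intermediate could only alter the two boxes, to values in $\{s,t\}$ and $\{\overline t,\overline s\}$ respectively (nothing lies strictly between $s$ and $t$, since $(s,t)$ is a Hasse edge of $\preceq^{\lie{g}}_{\mathcal{A}}$); the two ``mixed'' choices $(s,\overline s)$ and $(t,\overline t)$ are forbidden by strong standardness, leaving only $\mathcal{C}$ and $\mathcal{C}'$. For \eqref{eq:hasse_truncate}, the only candidate intermediate of the same height as $\mathcal{C}$ is obtained by appending to $\mathcal{C}'$ a box with a symbol $d \succ^{\lie{g}}_{\mathcal{A}} s$, which is a legal column exactly when neither $d$ nor $\overline d$ occurs in $\mathcal{C}'$; the maximality clause on~$s$ (read as: $s$ is largest among the symbols whose $\pm$-pair is absent from~$\mathcal{C}'$) says precisely that no such $d$ exists.

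For the converse I would take an arbitrary cover $(\mathcal{C},\mathcal{C}')$ and dichotomize on heights. If $\# \mathcal{C} > \# \mathcal{C}'$, a truncation-and-exchange argument forces the truncate shape: removing one bottom box of~$\mathcal{C}$ produces a column strictly between $\mathcal{C}$ and~$\mathcal{C}'$ unless $\# \mathcal{C} = \# \mathcal{C}' + 1$, and then comparing $\mathcal{C}'$ with $\mathcal{C}$ stripped of its last box forces $\mathcal{C}'$ to be the top $\# \mathcal{C}'$ boxes of~$\mathcal{C}$; the maximality of the removed symbol is then exactly the intermediate-freeness established above. If $\# \mathcal{C} = \# \mathcal{C}'$, let $D$ be the set of boxes where they differ. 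When $|D| = 1$ I get the skip shape, and the ``$\overline x$ occurs'' condition is forced, since a symbol $x$ strictly between $s$ and $t$ cannot itself lie in $\mathcal{C}$ (it would break strict monotonicity against~$t$ in $\mathcal{C}'$), so the only obstruction to placing $x$ is $\overline x \in \mathcal{C}$. When $|D| \geq 2$ I must show $|D| = 2$ and that the two boxes form a conjugate pair $s/\overline t \to t/\overline s$: here I would argue that unless the change in the lowest differing box is forced to be accompanied by the change in exactly one other box through a shared $\{x,\overline x\}$ exclusion, one of the single-box modifications already yields a strict intermediate, and minimality then pins $(s,t)$ to a Hasse edge of $\preceq^{\lie{g}}_{\mathcal{A}}$.

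The parts I expect to be most delicate are the $|D| \geq 2$ analysis and, in type~$D_r$, the bookkeeping of the parity condition \eqref{eq:parity_condition}. In type $D_r$ the order $\preceq'_{\mathcal{A}}$ is not total --- it has a ``diamond'' at $\{r,\overline r\}$ --- so its Hasse diagram carries the extra edges feeding \eqref{eq:hasse_swap}, and the parity clause alters which same-height modifications near absolute value~$r$ are comparable and cover-minimal for $\preceq'_Y$. I would handle this by using Remark~\ref{semistandard_with_parity_is_semistandard} (so that $r$ and $\overline r$ never sit side by side and any $\preceq'_Y$-chain is automatically $\preceq_{\mathcal{A}}$-semistandard) to confine the parity obstruction to exactly the $\{r,\overline r\}$ swaps and truncations, after which the three shapes are seen to be both complete and minimal. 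This case-checking, rather than any conceptual difficulty, is where the real work lies.
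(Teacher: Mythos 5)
The paper itself offers no proof to compare against: it states that the proof of Lemma~\ref{hasse_edges_description} is ``a somewhat tedious but elementary exercise in combinatorics'' and omits it. So the only question is whether your sketch actually closes that exercise, and it does not. Your forward direction, your unequal-heights analysis, and your one-differing-box analysis (in types $B_r$, $C_r$) are sound. The genuine gap is the same-height case with at least two differing boxes, where everything rests on the dichotomy: \emph{either} some single-box modification yields a strict intermediate, \emph{or} the changes are linked through a shared $\{x,\overline{x}\}$ exclusion and the pair is then of swap form \eqref{eq:hasse_swap}. This dichotomy is false. Take $r=2$ and the columns (written top to bottom) $\mathcal{C}=(1,2)$, $\mathcal{C}'=(2,\overline{1})$. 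Both all-or-nothing single-box modifications, $(2,2)$ and $(1,\overline{1})$, are illegal, and the change in the bottom box is indeed ``linked'' to the top box through the $\{1,\overline{1}\}$ exclusion; yet the pair is not of swap form (the entry $2$ of $\mathcal{C}$ and the entry $2$ of $\mathcal{C}'$ are not a conjugate pair), and it is not a cover either, since $(1,\overline{2})$ is a strict intermediate. Detecting that intermediate requires a \emph{partial} move --- replacing $2$ by $\overline{2}$, one step of $\preceq_{\mathcal{A}}$, rather than the full replacement $2\mapsto\overline{1}$ --- which your scheme never considers. So your argument cannot rule out covers that are ``linked but not swaps'', and ruling those out is exactly what the converse direction must do.

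The type-$D_r$ parity bookkeeping is a second genuine gap, not routine case-checking. Condition \eqref{eq:parity_condition} involves rectangles of every height $k$ with absolute values $\{r-k+1,\ldots,r\}$, so it destroys comparabilities between columns whose differing boxes have absolute value $<r$: in $D_3$, the columns $(2,3)$ and $(3,\overline{2})$ are entrywise $\preceq'_{\mathcal{A}}$-comparable but parity-incomparable. Hence the obstruction is \emph{not} confined to ``$\{r,\overline{r}\}$ swaps and truncations'', as you claim. What must actually be proved is that parity, while deleting comparabilities, never \emph{creates} covers outside the three listed shapes. For example, in the one-differing-box case one has to check that the only entrywise intermediates that parity can kill are the replacements of the modified entry by $\overline{s}$ or by $\overline{t}$ (a common rectangle through the modified row forces equal absolute values there), and that whenever $\overline{s}$ lies strictly between $s$ and $t$, the interval also contains $r$ and $\overline{r}$, neither of which can occur elsewhere in $\mathcal{C}$, so that replacing the box by $r$ or by $\overline{r}$ always survives as a genuine, parity-respecting intermediate; analogous verifications are needed in the truncation and swap cases. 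None of this appears in your proposal. The skeleton you set up is workable, but these two missing arguments are where the lemma actually lives.
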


We omit the proof, which is a somewhat tedious but elementary exercise in combinatorics.

Before proving Proposition~\ref{Bruhat_vs_Young}, we also need to decompose the $W$-invariant set~$\mathcal{X}$ into $W$-orbits, which is equivalent to describing the set $\mathcal{X}^+ := \mathcal{X} \cap \lie{h}^+$, as we have $\mathcal{X} = \bigcup_{v \in \mathcal{X}^+} W v$. Setting, for all~$k = 1, \ldots, r$,
\begin{equation}
\label{eq:model_weight_definitions}
c_k := \sum_{i=1}^k e_i;\quad c^+_r := c_r;\quad c^-_r := c_{r-1} - e_r,
\end{equation}
we have
\begin{equation}
\label{eq:model_weights}
\mathcal{X}^+ = \begin{cases}
\{c_1, \ldots, c_r\} &\text{ if $\lie{g}$~is of type $B_r$ or~$C_r$}; \\
\{c_1, \ldots, c_{r-1}, c_r^+, c_r^-\} &\text{ if $\lie{g}$~is of type $D_r$}.
\end{cases}
\end{equation}
This set is almost totally ordered by the restriction of the order~$\preceq^{\lie{g}}_Y$, except in type~$D_r$ where $c_r^+$ and~$c_r^-$ are incomparable. More precisely, we have:
\begin{equation}
\forall v, v' \in \mathcal{X}^+,\quad v \preceq^{\lie{g}}_Y v' \quad:\iff\quad v = v' \text{ or } \|v\|^2 > \|v'\|^2.
\end{equation}

We are now ready for the proof.

\begin{proof}[Proof of Proposition~\ref{Bruhat_vs_Young}.]~
\begin{itemize}
\item The ``if'' part is now equivalent to proving that the application map
\[\fundef{\pi:}{W \times \mathcal{X}^+}{\mathcal{X}}
{(w, v)}{w v}\]
is order-preserving, where $W \times \mathcal{X}^+$ is endowed with the product order $\preceq_B \times \preceq^{\lie{g}}_Y$ and $\mathcal{X}$~is endowed with the order $\preceq^{\lie{g}}_Y$. This further reduces to proving the two identities:
\begin{align}
&\forall (w, v) \in W \times \mathcal{X}^+,\;\; \forall \alpha \in \Delta^+, &&\ell(w) < \ell(s_\alpha w) \implies w v \preceq^{\lie{g}}_Y s_\alpha w v; \\
&\forall w \in W,\;\; \forall k = 1, \ldots, r-1, &&w c^\pm_{k+1} \preceq^{\lie{g}}_Y w c_k.
\end{align}
The first identity easily follows from the characterization~\eqref{eq:Bruhat_order_characterization}; and the second identity is straightforward.

\item Conversely, assume now that the sequence $(\nu_1, \ldots, \nu_N)$ is nondecreasing for the order $\preceq^{\lie{g}}_Y$. Then clearly the Young ordering ensures that the heights of the columns $\|\nu_i\|^2 = \# \mathcal{C}_i$ form a nonincreasing sequence, and (in type~$D_r$) the parity condition ensures that all the columns of height~$r$ lie in the same Weyl orbit. It remains to prove that the sequence $(\nu_1, \ldots, \nu_N)$ is Bruhat-nondecreasing.

This can be proved by exhibiting a section
\[\xi: \mathcal{X} \to W,\]
\ie a map such that every vector $\nu \in \mathcal{X}$ lies in the Weyl chamber $\xi(\nu) \lie{h}^+$, which is also order-preserving.

We construct $\xi$ as follows. Let $\nu \in \mathcal{X}$, and let $\mathcal{C}$ be the corresponding strongly standard column; let $k = \# \mathcal{C} = \|\nu\|^2$. We define $\xi(\nu)$ as the unique element of~$W$ whose action on~$\{\pm e_1, \ldots, \pm e_r\}$, that we identify with~$\mathcal{A}_r$ as usual, satisfies:
\begin{itemize}
\item for all $i \leq k$ (except possibly $i = r$ if $\lie{g}$ is of type $D_r$), we have $\xi(\nu) \cdot i = \threeind{}{}{i} \mathcal{C}$;
\item $|\xi(\nu) \cdot (k+1)| < |\xi(\nu) \cdot (k+2)| < \cdots < |\xi(\nu) \cdot r|$;
\item for all $i > k$ (except possibly $i = r$ if $\lie{g}$ is of type $D_r$), we have $\sgn(\xi(\nu) \cdot i) = -1$.
\end{itemize}

It is then straightforward to verify that $\xi$ is indeed a section. As for the fact that it is order-preserving, it suffices to check it on the edges of the Hasse diagram, which we have described in Lemma~\ref{hasse_edges_description}. Let $(\mathcal{C}, \mathcal{C}')$ be such a pair. We then easily check that:
\begin{itemize}
\item If $(\mathcal{C}, \mathcal{C}')$ is of the form~\eqref{eq:hasse_skip} or~\eqref{eq:hasse_swap}, then we have
\begin{equation}
\label{eq:hasse_step_further_description}
\xi(\mathcal{C}') = s_\alpha \circ \xi(\mathcal{C}) \quad\text{with}\quad \alpha = e_s - e_t,
\end{equation}
with the obvious convention $e_s := -e_{\overline{s}}$ if $s \in \overline{\mathbb{N}}$. By the characterization~\eqref{eq:Bruhat_order_characterization}, we then immediately get that $\xi(\mathcal{C}) \preceq_B \xi(\mathcal{C}')$ as desired.
\item If $(\mathcal{C}, \mathcal{C}')$ is of the form~\eqref{eq:hasse_truncate}, then we simply have $\xi(\mathcal{C}') = \xi(\mathcal{C})$. \qedhere
\end{itemize}
\end{itemize}
\end{proof}

\subsection{Admissible pairs}
\label{sec:admissible}

%

We now give a reformulation of Proposition~\ref{path_model_loc_int_concat} when applied to ``short'' starting paths: this is Corollary~\ref{short_path_model_characterization} (in its notations, ``short'' means that $k$ is small).  We then further specify it to starting paths that lie in the set~$\mathcal{X}^+$ introduced~\eqref{eq:model_weights} in the previous section.

\begin{corollary}
\label{short_path_model_characterization}
Let $\nu^+ \in P \cap \lie{h}^+$ be a dominant integral weight, and consider the integer $k := \max_{\alpha \in \Delta} | \langle \nu^+, \alpha^\vee \rangle |$ (recall the notation $\alpha^\vee := \frac{2\alpha}{\|\alpha\|}$). Then:
\begin{hypothenum}
\item \label{itm:minuscule} If $k = 1$ (\ie $\nu^+$ is minuscule), then $B_{\nu^+}$ is just the $W$-orbit of~$\nu^+$.
\item \label{itm:seconscule} If $k \leq 2$, then $B_{\nu^+}$ is the set of paths $\pi$ of the form
\[\pi = \textstyle (\frac{1}{2} \nu_1) * (\frac{1}{2} \nu_2)\]
with $\nu_1, \nu_2$ two elements of the $W$-orbit of $\nu^+$ that form an admissible pair, in the sense of \cite{Lit90} (definition given in Remark~3.4, and originally due to \cite[Definition~2.4]{LS86}).
\end{hypothenum}
\end{corollary}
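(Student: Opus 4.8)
The plan is to apply Proposition~\ref{path_model_loc_int_concat} in the special case $N = 1$, with $\nu^+_1 = \nu^+$, so that $B_{\nu^+}$ consists exactly of the locally integral concatenations whose multishape is the straight path~$\nu^+$. The first thing to observe is that such a path~$\pi$ has all of its linear segments pointing in directions lying in the single Weyl orbit $W\nu^+$. Indeed, if $\pi = \mu_1 * \cdots * \mu_m$ is its subdivision into maximal linear segments, then its multishape $\mu^+_1 * \cdots * \mu^+_m$ is a concatenation of segments inside the convex cone $\lie{h}^+$, and this can coincide with the straight path~$\nu^+$ only if each $\mu^+_i$ is a positive multiple $d_i \nu^+$ of~$\nu^+$, with $\sum_i d_i = 1$. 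Hence $\pi$ is encoded by a sequence of directions $\nu_1, \ldots, \nu_s \in W\nu^+$, say $\nu_j = \sigma_j \nu^+$, together with breakpoints $0 = a_0 < a_1 < \cdots < a_s = 1$, the $j$-th segment being the displacement $(a_j - a_{j-1})\nu_j$. In these terms, local integrality (Definition~5.3 of~\cite{Lit97}) becomes the usual Lakshmibai--Seshadri chain condition: consecutive directions are Bruhat-comparable (using Lemma~\ref{Bruhat_order_characterization}), $\sigma_1 \succeq_B \cdots \succeq_B \sigma_s$, and for each interior breakpoint $a_i$ there is a chain $\sigma_i = \kappa_0 \succ \kappa_1 \succ \cdots \succ \kappa_l = \sigma_{i+1}$ with $\kappa_j = s_{\beta_j}\kappa_{j-1}$ (covering relations) and $a_i \langle \kappa_{j-1}\nu^+, \beta_j^\vee\rangle \in \ZZ$ for every~$j$.

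The key point is then a denominator bound coming straight from the definition of~$k$. Along such a chain, each pairing $\langle \kappa_{j-1}\nu^+, \beta_j^\vee\rangle = \langle \nu^+, (\kappa_{j-1}^{-1}\beta_j)^\vee\rangle$ is a positive integer whose absolute value is at most $k = \max_{\alpha\in\Delta}|\langle\nu^+,\alpha^\vee\rangle|$, since $\kappa_{j-1}^{-1}\beta_j$ is again a root. So the integrality requirement $a_i \langle \kappa_{j-1}\nu^+, \beta_j^\vee\rangle \in \ZZ$ forces the denominator of every interior breakpoint~$a_i$ (in lowest terms) to divide one of these pairings, hence to be at most~$k$.

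For part~\ref{itm:minuscule}, $k = 1$ makes all these denominators equal to~$1$, so each interior $a_i \in \ZZ \cap (0,1) = \emptyset$; there are no interior breakpoints, $s = 1$, and $\pi$ is the straight segment $t \mapsto t\nu$ with $\nu \in W\nu^+$ (each of which is trivially locally integral). Identifying a straight segment with its endpoint gives $B_{\nu^+} = W\nu^+$. For part~\ref{itm:seconscule}, $k \le 2$ forces each interior $a_i \in \tfrac12\ZZ \cap (0,1) = \{\tfrac12\}$; as the $a_i$ are strictly increasing there is at most one of them, so $s \le 2$. If $s = 1$, then $\pi$ is again a straight segment, which we may write as $(\tfrac12\nu)*(\tfrac12\nu)$, corresponding to the diagonal pair $\nu_1 = \nu_2 = \nu$ (trivially admissible). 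If $s = 2$, then $a_1 = \tfrac12$ and $\pi = (\tfrac12\nu_1)*(\tfrac12\nu_2)$; here the chain condition at $a_1 = \tfrac12$ reads $\tfrac12\langle\kappa_{j-1}\nu^+,\beta_j^\vee\rangle \in \ZZ$, i.e.\ the pairings along the chain from $\sigma_1$ to $\sigma_2$ are all even, which is precisely the defining condition for $(\nu_1,\nu_2)$ to be an admissible pair in the sense of~\cite[Remark~3.4]{Lit90} (originally \cite[Definition~2.4]{LS86}). This yields the claimed description.

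The step I expect to be most delicate is the translation of Littelmann's ``locally integral concatenation'' into the explicit Lakshmibai--Seshadri chain condition used above, and, in part~\ref{itm:seconscule}, verifying that this chain condition with $a_1 = \tfrac12$ matches verbatim the definition of an admissible pair (including the bookkeeping of cosets modulo the stabilizer $W_{\nu^+}$, which is what makes the directions $\nu_j$ well-defined). Once this dictionary is in place, the multishape analysis and the denominator bound by~$k$ are entirely elementary.
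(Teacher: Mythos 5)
Your proposal is correct and follows exactly the route the paper intends: the paper states this corollary without proof, presenting it as a direct reformulation of Proposition~\ref{path_model_loc_int_concat} applied with $N=1$, which is precisely your starting point. Your explicit dictionary (multishape analysis, the Lakshmibai--Seshadri chain condition for local integrality, and the denominator bound by~$k$ forcing no interior breakpoint when $k=1$ and a single breakpoint at $\tfrac12$ when $k\leq 2$, where evenness of the pairings is verbatim the admissible-pair condition of \cite{LS86}) supplies the details the paper leaves to the reader, with conventions consistent with the paper's notion of Bruhat-nonincreasing paths.
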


Note that this result is actually true for arbitrary~$\lie{g}$; and similar statements can be obtained for $k \leq 3$ (using the notion of an ``admissible quadruple'', see \cite[Definition~3.4]{Lit90}), $k \leq 4$ (leading to some notion of ``admissible sextuple'') and higher values of~$k$. But we go back to our assumption that $\lie{g}$ is of type $B_r$, $C_r$ or $D_r$, for which these cases do not occur. 

More specifically, we now consider the case where $\nu^+$ lies in the set~$\mathcal{X}^+$: clearly, all of its elements satisfy $k \leq 2$. In order to characterize the path model for these starting paths, it remains to give an explicit combinatorial characterization of admissible pairs (in terms of strongly standard columns). This is the subject of the following proposition, whose proof is the main goal of this subsection.

\begin{proposition}
\label{admissible_pair_characterization}
Let $\mathcal{C}$ and $\mathcal{C}'$ be two strongly standard columns. Let $0 \leq a_1 < b_1 < \ldots < a_k < b_k \leq r$ be integers such that
\begin{equation}
\setsuch{\left| \threeind{}{}{i} \mathcal{C} \right|}{i = 1, \ldots, \# \mathcal{C}} = \bigcup_{i=1}^k \{a_i+1, a_i+2, \ldots, b_i\};
\end{equation}
and, for every $i$, let $x_i$ denote the number of symbols in~$\mathcal{C}$ whose absolute value lies in the interval $\{a_i+1, \ldots, b_i\}$ and that have sign~$+1$. Define similarly integers $a'_1 < b'_1 < \ldots < a'_{k'} < b'_{k'}$ and $x'_i$ for $\mathcal{C}'$.

Then the pair of weights $(\nu(\mathcal{C}), \nu(\mathcal{C}'))$ is admissible if and only if all of the following conditions are satisfied:
\begin{enumerate}[label=(A\arabic*)]
\item \label{itm:adm_Young} $\mathcal{C} \succeq^{\lie{g}}_Y \mathcal{C}'$;
\item \label{itm:adm_same_height} $\# \mathcal{C} = \# \mathcal{C}'$;
\item \label{itm:adm_same_decomp} $k = k'$ and, for all $i = 1, \ldots, k$, $a_i = a'_i$ and $b_i = b'_i$;
\item \label{itm:adm_x_condition} for all $i = 1, \ldots, k$, the integers $x_i$ and~$x'_i$ satisfy the following condition:
\[\begin{cases}
\text{no restriction} &\text{if $b_i = r$ and $\lie{g}$ is of type $B_r$}; \\
x_i \equiv x'_i \pmod{2} &\text{if $b_i = r$ and $\lie{g}$ is of type $D_r$}; \\
x_i = x'_i &\text{otherwise.}
\end{cases}\]
\end{enumerate}
\end{proposition}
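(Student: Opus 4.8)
The plan is to unwind admissibility into a statement about chains in the Bruhat order and then transport everything to strongly standard columns via Proposition~\ref{Bruhat_vs_Young}. By Corollary~\ref{short_path_model_characterization} the relevant paths are of the form $(\tfrac12\nu_1)*(\tfrac12\nu_2)$, so the pair $(\nu(\mathcal{C}),\nu(\mathcal{C}'))$ is admissible (\cite[Definition~2.4]{LS86}) with parameter $a=\tfrac12$: this means that $\nu(\mathcal{C})$ and $\nu(\mathcal{C}')$ lie in a common orbit $W\nu^+$ with $\nu(\mathcal{C})\succeq_B\nu(\mathcal{C}')$, and that there is a chain $\nu(\mathcal{C})=\kappa_0\succ_B\kappa_1\succ_B\cdots\succ_B\kappa_s=\nu(\mathcal{C}')$ of Bruhat coverings, each of the form $\kappa_{l-1}=s_{\beta_l}\kappa_l$ with $\beta_l\in\Delta^+$, such that $\langle\kappa_l,\beta_l^\vee\rangle$ is even. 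Since the whole orbit $W\nu^+$ sits inside $\mathcal{X}$, every $\kappa_l$ is a strongly standard column; and by Proposition~\ref{Bruhat_vs_Young} the Bruhat order on $W\nu^+$ coincides with the Young order $\preceq^{\lie{g}}_Y$ on columns of fixed height (and fixed $W$-orbit in type $D_r$), so its covering relations are exactly the Hasse edges of Lemma~\ref{hasse_edges_description}. An admissible chain is thus precisely a $\preceq^{\lie{g}}_Y$-decreasing chain of Hasse edges, each carrying an even coroot pairing.

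First I would carry out the central computation: classify the Hasse edges according to the parity of $\langle\kappa,\beta^\vee\rangle$, using $\beta=e_s-e_t$ from \eqref{eq:hasse_step_further_description} (with $e_{\overline x}=-e_x$). I expect the following. A ``swap'' edge \eqref{eq:hasse_swap} coming from an adjacent unbarred pair $(i,i+1)$ (or its barred analogue) exchanges a $(+,-)$ pattern for $(-,+)$ on two coordinates of a single run; here $\beta=e_i-e_{i+1}$ and $\langle\kappa,\beta^\vee\rangle=\kappa_i-\kappa_{i+1}=2$, so such edges are always admissible and preserve both the set of absolute values and every count~$x_i$. An edge \eqref{eq:hasse_skip} with $|s|\neq|t|$ changes the set of absolute values, and there the pairing is $\pm1$, hence odd and never admissible; this is what will force \ref{itm:adm_same_decomp}. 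The sign flip \eqref{eq:hasse_skip} with $s=p,\ t=\overline p$ can occur only for $p=r$: its reflection is $s_{e_r}$, whose pairing $\langle\cdot,e_r^\vee\rangle=2(\cdot)_r$ is automatically even in type $B_r$, whereas in type $C_r$ the relevant root is $2e_r$ with $\langle\cdot,(2e_r)^\vee\rangle=(\cdot)_r$ odd, and in type $D_r$ no such edge exists because $r,\overline r$ are $\preceq'_{\mathcal{A}}$-incomparable. Finally, in type $D_r$ \emph{only}, the extra adjacencies $r-1\sim\overline r$ and $r\sim\overline{r-1}$ produce ``same-sign'' edges \eqref{eq:hasse_swap} with $\beta=e_{r-1}+e_r$ and pairing $2$, which flip both signs on the run ending at $r$ and so change its count by $\pm2$.

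Granting this classification, the forward direction is immediate. Every admissible covering preserves the set of absolute values, yielding \ref{itm:adm_same_decomp} (and with it \ref{itm:adm_same_height}), and preserves each count $x_i$, with the sole exception that the run ending at $r$ may have its count altered by $\pm1$ in type $B_r$, by $\pm2$ in type $D_r$, and not at all in type $C_r$; reading this off the endpoints gives exactly \ref{itm:adm_x_condition}. Since the chain is $\preceq^{\lie{g}}_Y$-decreasing, we also obtain \ref{itm:adm_Young}.

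For the converse I would fix the common absolute-value set (forced by \ref{itm:adm_same_decomp}) and work inside the finite sub-poset of strongly standard columns carrying that set, on which $\preceq^{\lie{g}}_Y$ restricts to a product, over the runs $(a_i,b_i]$, of ``sign posets''. Here the admissible coverings are the count-preserving swaps within each run together with the count-changing moves on the run ending at $r$; assuming \ref{itm:adm_Young} and \ref{itm:adm_x_condition}, I would build a $\preceq^{\lie{g}}_Y$-decreasing path from $\mathcal{C}$ to $\mathcal{C}'$ using only these moves, first adjusting the top-run count (when $b_k=r$ in type $B_r$ or $D_r$) and then sorting signs run by run by adjacent swaps. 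The main obstacle is precisely this assembly: an admissible pair demands an honestly \emph{decreasing} chain, not merely an undirected sequence of moves, so I must verify that within this sub-poset the admissible coverings already generate all covering relations compatible with the count data, and that no detour through a larger or $\preceq^{\lie{g}}_Y$-incomparable column is forced. Checking this monotonicity, together with the full case analysis of the coroot pairings above (in particular the type-$D_r$ edges arising from the incomparability of $r$ and $\overline r$), is where essentially all the work lies.
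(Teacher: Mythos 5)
Your proposal follows essentially the same route as the paper's own proof: unpack the Lakshmibai--Seshadri definition into chains of reflections with even coroot pairings, identify these steps (via Proposition~\ref{Bruhat_vs_Young} and Lemma~\ref{hasse_edges_description}) with Hasse edges of $\preceq^{\lie{g}}_Y$ inside a fixed $W$-orbit, classify the edges by parity, and then get the forward implication by stepwise invariance of (A1)--(A4) and the converse by exhibiting a chain of admissible moves. Your parity classification (swaps $e_i-e_{i+1}$ always even and count-preserving; skips with $|s|\neq|t|$ odd, forcing (A3); the sign flip at $r$ even in type $B_r$, odd in type $C_r$, nonexistent in type $D_r$; the extra $D_r$-swaps along $e_{r-1}+e_r$ even, changing the top-run count by $\pm 2$) agrees exactly with the paper's list of admissible simple-root operations $s_{e_i-e_{i+1}}$, $s_{e_{r-1}+e_r}$, $s_{e_r}$, $s_{2e_r}$.

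The one place where you locate ``essentially all the work'' is not actually an obstacle. Every admissible move is a Hasse edge of $\preceq^{\lie{g}}_Y$ traversed upward, hence a \emph{strict} increase for that order; consequently, any sequence of such moves carrying $\mathcal{C}'$ to $\mathcal{C}$ is automatically an honestly decreasing chain from $\nu(\mathcal{C})$ to $\nu(\mathcal{C}')$, and by transitivity every intermediate column lies between $\mathcal{C}'$ and $\mathcal{C}$. So there is no separate monotonicity or ``no forced detour'' condition to verify: once the moves reach $\mathcal{C}$, the chain is valid. What remains is only the combinatorial assembly you already describe --- adjust the count of the run ending at $r$ (flips $r \mapsto \overline{r}$ in type $B_r$, the $\{r-1,r\}\mapsto\{\overline{r},\overline{r-1}\}$ swaps in type $D_r$, nothing allowed or needed in type $C_r$), then sort each run by adjacent swaps --- and this is precisely the content, stated equally tersely, of the paper's converse (``find a path from $\mathcal{C}'$ to $\mathcal{C}$ in the Hasse diagram and check that each step is an admissible operation''). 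So your proof is correct and, once the monotonicity worry is discharged as above, complete to the same standard as the paper's.
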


\begin{remark}
\label{admissible_pairs_of_height_n}
An important particular case of this proposition is the case of two columns of height~$r$. For a column~$\mathcal{C}$ of height~$r$, we necessarily have $k = 1$, $(a_1, b_1) = (0, r)$, and $x_1 = \# \threeind{}{\NN}{} \mathcal{C}$. Now consider a pair $(\mathcal{C}, \mathcal{C}')$ of columns of height~$r$, and let us additionally assume that they satisfy the condition~\ref{itm:adm_Young}. Then conditions~\ref{itm:adm_same_height} and~\ref{itm:adm_same_decomp} are automatically true; as for condition~\ref{itm:adm_x_condition}:
\begin{itemize}
\item if $\lie{g}$ is of type $B_r$, it is also automatically true.
\item if $\lie{g}$ is of type $D_r$, it reduces to $\# \threeind{}{\NN}{} \mathcal{C} \equiv \# \threeind{}{\NN}{} \mathcal{C}' \pmod{2}$, which is a consequence of the parity condition~\eqref{eq:parity_condition}.
\item if $\lie{g}$ is of type $C_r$, it reduces to $\# \threeind{}{\NN}{} \mathcal{C} = \# \threeind{}{\NN}{} \mathcal{C}'$, which together with \ref{itm:adm_Young} forces $\mathcal{C} = \mathcal{C}'$.
\end{itemize}
To summarize, in types $B_r$ and~$D_r$, any pair of columns of height~$r$ that satisfies \ref{itm:adm_Young} is admissible; whereas in type~$C_r$, the only admissible pairs of columns of height~$r$ are of the form $(\mathcal{C}, \mathcal{C})$.
\end{remark}

\begin{proof}
Unpacking the definition (and taking into account Lemma~\ref{Bruhat_order_characterization}), we see that $\nu(\mathcal{C})$ and~$\nu(\mathcal{C}')$ form an admissible pair if and only if one can pass from $\mathcal{C}'$ to~$\mathcal{C}$ by a series of steps of the form $s_\alpha$ for some $\alpha \in \Pi$, where:
\begin{itemize}
\item An operation of the form $s_{e_i - e_{i+1}}$ (for $1 \leq i \leq r-1$) is admissible only if both of the symbols $i$ and~$\overline{i+1}$ occur somewhere; it then replaces them by $i+1$ and~$\overline{i}$ respectively.
\item The operation $s_{e_{r-1} + e_r}$ is admissible only if both of the symbols $r-1$ and~$r$ occur somewhere; it then replaces them by $\overline{r}$ and $\overline{r-1}$ respectively.
\item The operation $s_{e_r}$ is admissible only if the symbol $r$ occurs somewhere; it then replaces it by~$\overline{r}$.
\item The operation $s_{2e_r}$ is never admissible.
\end{itemize}
Clearly each of these operations satisfies the conditions \ref{itm:adm_Young} through \ref{itm:adm_x_condition}, which are transitive; this proves the ``only if'' part.

Conversely, suppose that a pair of columns ($\mathcal{C}$, $\mathcal{C}'$) satisfies these four conditions. Let us then find a path going from $\mathcal{C}'$ to~$\mathcal{C}$ in the Hasse diagram of the order~$\preceq^{\lie{g}}_Y$. Using Lemma~\ref{hasse_edges_description} and the description~\eqref{eq:hasse_step_further_description}, we then easily check that each step of this path is an admissible operation (as described above).
\end{proof}

\subsection{Doubled Young tableaux}
\label{sec:dYt}

We are now ready to define a $\lie{g}$-standard doubled Young tableau, and to show (Proposition~\ref{BCD_path_model_description}) that these tableaux describe the path model with starting path $\pi^+_0(\lambda)$ as given in \eqref{eq:basic_path_definition}. This yields the announced character formula (Proposition~\ref{BCD_character_formula}). We also give a combinatorial characterization (Corollary~\ref{combinatorial_characterization}) of representations $V_\lambda$ satisfying $V^{\lie{l}(\Theta)}_\lambda \neq 0$ for any Levi subalgebra $\lie{l}(\Theta)$ (where $\Theta \subset \Pi$), accompanied by a slightly modified version of this result (Corollary~\ref{combinatorial_characterization_without_sign}) that exploits the outer automorphism of~$D_r$, and will save us some work in the next section.

As we already mentioned in the introduction (Subsubsection~\ref{sec:intro_dYt}), the following object is similar to the object defined in the appendix of \cite{Lit90}, but is not identical: in type~$D_r$, we replace the complicated condition~(3) by the simpler condition~\ref{itm:dsYt_Young}.

\begin{definition}
\label{tableau_definition}
A \emph{$\lie{g}$-standard doubled Young tableau} is a Young tableau $\mathcal{T}$ on the alphabet~$\mathcal{A}_r$ with the following properties:
\begin{enumerate}[label=(H\arabic*)]
\item \label{itm:dsYt_stst} All columns of~$\mathcal{T}$ are strongly standard:
\[\forall j = 1, \ldots, \#_1 \mathcal{T},\quad \threeind{j}{}{} \mathcal{T} \in \mathscr{C}.\]
\item \label{itm:dsYt_Young} The sequence formed by the columns of~$\mathcal{T}$ is Young-nondecreasing if $\lie{g}$ is of type $B_r$ or~$C_r$, Young-nondecreasing with parity if $\lie{g}$ is of type $D_r$:
\[\forall j = 2, \ldots, \#_1 \mathcal{T},\quad \threeind{j-1}{}{} \mathcal{T} \preceq^{\lie{g}}_Y \threeind{j}{}{} \mathcal{T}.\]
\item \label{itm:dsYt_adm} The columns form admissible pairs when grouped two by two starting from the right, \ie for all $j$ such that $1 < j \leq \#_1 \mathcal{T}$ and $j \equiv \#_1 \mathcal{T} \pmod{2}$, the pair $(\threeind{j}{}{} \mathcal{T}, \threeind{j-1}{}{} \mathcal{T})$ is admissible.
\end{enumerate}
In order to verify~\ref{itm:dsYt_adm}, in practice, it suffices to check that every such pair satisfies conditions~\ref{itm:adm_same_decomp} (which implies~\ref{itm:adm_same_height}) and~\ref{itm:adm_x_condition} from Proposition~\ref{admissible_pair_characterization}, since condition~\ref{itm:adm_Young} is already covered by~\ref{itm:dsYt_Young}. Note that the two columns are taken here ``in the wrong order'', because the order of the columns in the Young tableau is backwards compared to the direction of the corresponding path (see~\eqref{eq:tableau_to_path} below).
\end{definition}

\begin{remark}
\label{why_doubled}
It is possible to extend this definition, and all the work done in the previous two subsections, also to the case where $\lie{g} = A_r$, so that Proposition~\ref{BCD_path_model_description} (suitably modified) remains true. However in this case, it turns out that two columns form an admissible pair only if they are equal (essentially because all the fundamental weights of~$A_r$ are minuscule). So an $A_r$-standard doubled Young tableau is just an ordinary semistandard Young tableau, with every column repeated twice. We would then recover the character formula of Proposition~\ref{classical_character_formula} as a particular case of Proposition~\ref{BCD_character_formula}.
\end{remark}

\begin{definition}[Passing from doubled diagrams and tableaux to weights, in types $B_r$, $C_r$ and $D_r$]
\label{BCD_tableaux_and_weights}
Given a $\lie{g}$-standard doubled Young tableau~$\mathcal{T}$, we define:
\begin{hypothenum}
\item the \emph{corresponding path}
\begin{equation}
\label{eq:tableau_to_path}
\pi(\mathcal{T}) := \textstyle \left(\frac{1}{2} \nu(\threeind{N}{}{} \mathcal{T})\right) * \cdots * \left(\frac{1}{2} \nu(\threeind{1}{}{} \mathcal{T})\right)
\end{equation}
(where $N = \#_1 \mathcal{T}$), whose segments are the weights corresponding to the columns of~$\mathcal{T}$ scaled by~$\frac{1}{2}$ and taken in the reverse order.
\item the \emph{total weight} of~$\mathcal{T}$ as
\begin{equation}
\label{eq:BCD_total_weight_definition}
\nu(\mathcal{T}) := \pi(\mathcal{T})(1) = \frac{1}{2} \sum_{j=1}^{\#_1 \mathcal{T}} \nu(\threeind{j}{}{} \mathcal{T}) = \frac{1}{2} \sum_{i, j} \nu(\threeind{j}{}{i} \mathcal{T}).
\end{equation}
Note that, in comparison with Definition~\ref{sln_tableau_concepts}.\ref{itm:total_weight}, there is an extra factor~$\frac{1}{2}$: in fact, it is reasonable to think of doubled Young tableaux as having columns ``of width~$\frac{1}{2}$''.

We say that $\mathcal{T}$ is \emph{null} if $\nu(\mathcal{T}) = 0$.
\item the \emph{sign} $\epsilon$ of~$\mathcal{T}$ as follows:
\begin{itemize}
\item If $\lie{g}$ is of type $B_r$ or $C_r$, we adopt the convention that $\epsilon$ is always equal to~$+1$.
\item If $\lie{g}$ is of type $D_r$ and $\#_r \mathcal{T} > 0$, we take $\epsilon = (-1)^x$, where $x$ is the number of symbols with bars in any column of height~$r$. (Note that the parity condition~\eqref{eq:parity_condition}, whose prerequisite is automatically satisfied with $(i_0, k) = (1, r)$ for columns of height~$r$, ensures that $\epsilon$ does not depend on the choice of the column).
\item If $\lie{g}$ is of type $D_r$ but $\#_r \mathcal{T} = 0$, we adopt the convention that $\epsilon = 0$.
\end{itemize}
\item for $\alpha \in \Pi$ or $\Theta \subset \Pi$, we define \emph{$\alpha$-(co)dominance} and \emph{$\Theta$-(co)dominance} for doubled Young tableaux in the same way as for ordinary Young tableaux (see Definition~\ref{sln_tableau_concepts}.\ref{itm:dominance}). The factor~$\frac{1}{2}$ does not change anything here, as this definition only involves signs of total weights. Clearly a doubled Young tableau $\mathcal{T}$ is $\Theta$-dominant if and only if the path~$\pi(\mathcal{T})$ lies entirely within the $\Theta$-dominant Weyl chamber~$\lie{h}^{\Theta, +}$, as defined in~\eqref{eq:Theta_dom_Weyl_chamber}.
\end{hypothenum}

\begin{remark}
\label{dominance_explicit}
We can of course rephrase this last property in purely Young-tableau-theoretic terms. For example for $\alpha = e_i - e_{i+1}$, a doubled Young tableau is $\alpha$-codominant if and only if it satisfies
\begin{equation}
\label{eq:codominance_BCD}
\forall j \geq 0,\quad \# \threeind{[1,j]}{\overline{i}}{} \mathcal{T} + \threeind{[1,j]}{i+1}{} \mathcal{T} \geq \# \threeind{[1,j]}{\overline{i+1}}{} \mathcal{T} + \threeind{[1,j]}{i}{} \mathcal{T}
\end{equation}
(compare this with \eqref{eq:codominance_Ar}), and similar formulas exist for $\alpha = e_r$, $2e_r$ or $e_{r-1} + e_r$.
\end{remark}

Finally, we introduce a correspondence $\Psi$ between the set~$P \cap \lie{h}^+$ of dominant integral weights $\lambda$ and the set~$\mathscr{M}^{(r)}$ of Young diagrams of height~$r$. It is given by the formula
\begin{equation}
\label{eq:psi_definition}
\left(\#_1 \Psi(\lambda), \ldots, \#_r \Psi(\lambda)\right) := (2\lambda_1, \ldots, 2\lambda_{r-1}, 2|\lambda_r|),
\end{equation}
where, as usual, we decompose $\lambda = \sum_{i=1}^r \lambda_i e_i$.
\end{definition}

\begin{proposition}
\label{BCD_path_model_description}
Let $\lambda \in P \cap \lie{h}^+$ be a dominant integral weight of~$\lie{g}$. Then the set of paths $\pi(\mathcal{T})$, where $\mathcal{T}$ runs over all $\lie{g}$-standard doubled Young tableaux of shape~$\Psi(\lambda)$ with the same sign as~$\lambda_r$, is equal to the path model $B_{\pi^+_0(\lambda)}$.
\end{proposition}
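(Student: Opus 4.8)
The plan is to apply the "divide-and-conquer" characterization of path models (Corollary~\ref{long_path_model_characterization}) to the specific starting path $\pi^+_0(\lambda)$ from~\eqref{eq:basic_path_definition}, and then to translate both the Bruhat-nonincreasingness condition and the segment-wise membership condition into the combinatorial language of doubled Young tableaux. First I would observe that $\pi^+_0(\lambda)$ is by construction a concatenation of segments, each of the form $x_i c_i$ (or $|\lambda_r| c_r^{\sgn(\lambda_r)}$ for the last one), where the relevant scalars are the coordinates $\lambda_i - \lambda_{i-1}$; these segments are positive multiples of the dominant weights $c_k \in \mathcal{X}^+$ listed in~\eqref{eq:model_weights}. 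Thus $\pi^+_0(\lambda)$ is a concatenation of dominant integral segments, and Corollary~\ref{long_path_model_characterization} applies: a path $\pi$ lies in $B_{\pi^+_0(\lambda)}$ if and only if it is Bruhat-nonincreasing and decomposes as a concatenation of pieces, each lying in the path model $B_{\nu^+_k}$ of the corresponding dominant segment.

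**Translating the two conditions into tableau language.**

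Next I would unfold each $B_{c_k}$ using Corollary~\ref{short_path_model_characterization}\ref{itm:seconscule}: since every element of $\mathcal{X}^+$ satisfies $k \leq 2$, each $B_{c_k}$ consists precisely of paths of the form $(\tfrac12\nu_1)*(\tfrac12\nu_2)$ where $(\nu_1,\nu_2)$ is an \emph{admissible pair} in the $W$-orbit of $c_k$, characterized combinatorially by Proposition~\ref{admissible_pair_characterization}. This is exactly the ``doubling'' phenomenon: each segment of $\pi^+_0(\lambda)$ of ``width'' $x_i$ splits into $2x_i$ half-width pieces, which I would encode as $2x_i$ strongly standard columns of a fixed height, paired off two-by-two into admissible pairs---matching condition~\ref{itm:dsYt_adm} of Definition~\ref{tableau_definition}. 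The correspondence $\Psi$ in~\eqref{eq:psi_definition} is calibrated so that a tableau of shape $\Psi(\lambda)$ has exactly the right column heights: the number of columns of height $\geq k$ equals $2\lambda_{k-1} - 2\lambda_k$ (roughly), matching the segment structure of $\pi^+_0(\lambda)$. Then I would invoke Proposition~\ref{Bruhat_vs_Young}---the key technical input---to replace the Bruhat-nonincreasingness of the whole path by the \emph{Young-nondecreasingness} (with parity, in type $D_r$) of the column sequence, which is condition~\ref{itm:dsYt_Young}. The bijection $\pi \leftrightarrow \mathcal{T}$ is then given by~\eqref{eq:tableau_to_path}, reading the columns of $\mathcal{T}$ in reverse order as the successive segments of $\pi$.

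**The main obstacle and the sign bookkeeping.**

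The hard part will be the careful verification that the segment decomposition of $\pi^+_0(\lambda)$ matches the shape $\Psi(\lambda)$ exactly, together with the handling of the sign $\epsilon$ in type $D_r$. The sign issue is genuinely delicate: the two choices $c_r^+ = c_r$ and $c_r^- = c_{r-1} - e_r$ correspond to the two $W$-orbits of norm-$r$ vectors, and by the last clause of Proposition~\ref{Bruhat_vs_Young}\ref{itm:main_complex} (the type-$D_r$ case), all columns of full height $r$ must lie in a single $W$-orbit---forcing a consistent sign across the tableau. I would need to check that the sign of $\lambda_r$ (encoded in the last segment $|\lambda_r| c_r^{\sgn(\lambda_r)}$ of $\pi^+_0(\lambda)$) pins down precisely which orbit, i.e.\ that $\epsilon(\mathcal{T}) = \sgn(\lambda_r)$ is the correct constraint; this is why the statement restricts to tableaux ``with the same sign as~$\lambda_r$''. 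I would also need to confirm that the hypotheses of Corollary~\ref{long_path_model_characterization} are met, in particular that $\pi^+_0(\lambda)$ is itself a valid (locally integral) concatenation of \emph{dominant integral} weights---the factor $\tfrac12$ in~\eqref{eq:tableau_to_path} means the individual half-columns need not be integral, but the paired segments $x_i c_i$ are, so the framework applies to $\pi^+_0(\lambda)$ while the finer half-width subdivision is absorbed into the description of each $B_{c_k}$ via admissible pairs. Assembling these three ingredients---Corollaries~\ref{long_path_model_characterization} and~\ref{short_path_model_characterization}, together with Proposition~\ref{Bruhat_vs_Young}---yields the claimed equality $\{\pi(\mathcal{T})\} = B_{\pi^+_0(\lambda)}$.
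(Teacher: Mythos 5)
Your overall strategy coincides with the paper's own proof: apply Corollary~\ref{long_path_model_characterization} to the concatenation $\pi^+_0(\lambda)$, describe the resulting factors via Corollary~\ref{short_path_model_characterization}, and use Proposition~\ref{Bruhat_vs_Young} to convert Bruhat-nonincreasingness into condition~\ref{itm:dsYt_Young}, with the shape-and-sign condition on~$\mathcal{T}$ matching the multishape of~$\pi^+_0(\lambda)$. However, as written there are two gaps, both located at the passage from the segments of $\pi^+_0(\lambda)$ to half-width columns.

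First, you apply Corollary~\ref{long_path_model_characterization} to the coarse decomposition of $\pi^+_0(\lambda)$ into the scaled segments $x_i c_i$, which yields factors lying in the path models $B_{x_i c_i}$ --- and then you immediately ``unfold each $B_{c_k}$'' by Corollary~\ref{short_path_model_characterization}.\ref{itm:seconscule}. That corollary does not apply to $B_{x_i c_i}$: its hypothesis requires $\max_{\alpha \in \Delta} |\langle \nu^+, \alpha^\vee \rangle| \leq 2$, and this quantity scales linearly in~$x_i$, so it fails for all but small values of~$x_i$. The missing step --- and it is exactly the move made in the paper --- is to refine the concatenation \emph{before} invoking Corollary~\ref{long_path_model_characterization}: write $\pi^+_0(\lambda)$ as a concatenation of $\lfloor \lambda_1 \rfloor$ unit segments taken from $\mathcal{X}^+$, possibly followed by one segment $\frac{1}{2} c_r^{\pm}$, and apply the corollary to this finer decomposition, whose pieces do satisfy the hypothesis of Corollary~\ref{short_path_model_characterization}. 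Second, your assertion that the half-width subdivision is entirely ``absorbed \ldots{} via admissible pairs'' fails for spin weights: when $\lambda$ has half-integer coordinates (possible in types $B_r$ and~$D_r$), the refined concatenation ends with the genuinely half-width segment $\frac{1}{2} c_r^{\pm}$, and correspondingly the tableau has odd width $2\lambda_1$, so that its leftmost (tallest) column is left unpaired by condition~\ref{itm:dsYt_adm}. That segment is not governed by admissible pairs at all; it is handled by the minuscule case~\ref{itm:minuscule} of Corollary~\ref{short_path_model_characterization}, since $\frac{1}{2} c_r$ (type $B_r$) and $\frac{1}{2} c_r^{\pm}$ (type $D_r$) are minuscule --- a case your proposal never invokes. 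Both repairs are precisely the paper's; with them, your argument closes.
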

Recall \eqref{eq:basic_path_definition} that the starting path $\pi^+_0(\lambda)$ we are using here is defined as
\[\pi^+_0(\lambda) := (\lambda_2 - \lambda_1) c_1 * \cdots * (\lambda_r - \lambda_{r-1}) c_{r-1} * |\lambda_r| c_r^{\sgn(\lambda_r)}.\]
\begin{proof}
Let $\mathcal{T}$ be any doubled Young tableau, $\pi(\mathcal{T})$ the corresponding path. Our goal is to apply Corollary~\ref{long_path_model_characterization}. Note that:
\begin{itemize}
\item It is straightforward to check that $\mathcal{T}$ has shape~$\Psi(\lambda)$ and sign~$\sgn(\lambda_r)$ if and only if $\pi(\mathcal{T})$ has multishape $\pi^+_0(\lambda)$. 

\item Assuming that this is the case, it follows from Proposition~\ref{Bruhat_vs_Young} that the columns of~$\mathcal{T}$ form a nondecreasing sequence for the order~$\preceq_Y^{\lie{g}}$ if and only if the path $\pi(\mathcal{T})$ is Bruhat-nonincreasing.
\end{itemize}
On the other hand, from the integrality of~$\lambda$, it follows that all the coefficients in the decomposition~\eqref{eq:basic_path_definition} are integer, except possibly $|\lambda_r|$ which can be half-integer when $\lie{g}$ is of type $B_r$ or $D_r$. So let us decompose $\pi^+_0(\lambda)$ into a concatenation that first involves $\lfloor \lambda_1 \rfloor$~segments chosen among elements of the set~$\mathcal{X}^+ = \mathcal{X} \cap \lie{h}^+$ (recall~\eqref{eq:model_weights}) and then possibly ends with a segment equal to $\frac{1}{2} c_r^\pm$; and then apply Corollary~\ref{long_path_model_characterization} to this decomposition.

We now conclude by Corollary~\ref{short_path_model_characterization}. Indeed, we have already remarked that all weights $\nu \in \mathcal{X}^+$ satisfy $\max_{\alpha \in \Delta} |\langle \nu, \alpha^\vee \rangle| \leq 2$; as for the weights $\frac{1}{2} c_r$ (if $\lie{g}$ is of type $B_r$) and $\frac{1}{2} c_r^\pm$ (if $\lie{g}$ is of type $D_r$), they are minuscule.
\end{proof}

\begin{remark}
\begin{enumerate}
\item If $\lie{g}$ is of type $B_r$ or $D_r$, then the way we have cut the subpath $|\lambda_r| c_r^{\pm}$ into segments is somewhat arbitrary: we could have just as well decomposed it into any other combination of segments $\frac{1}{2} c_r^{\pm}$ and $c_r^{\pm}$, or, for that matter, exclusively into segments $\frac{1}{2} c_r^{\pm}$. We could have used any of these decompositions to write an alternative definition of a $\lie{g}$-standard doubled Young tableau. But all of these definitions would have been equivalent, thanks to Remark~\ref{admissible_pairs_of_height_n}: indeed, for pairs of columns of height~$r$ (no matter the parity), the condition~\ref{itm:dsYt_adm} automatically follows from the first two conditions.
\item If $\lie{g}$ is of type $C_r$, then the weight~$c_r$ is in fact minuscule. This is also consistent with Remark~\ref{admissible_pairs_of_height_n}: in type~$C_r$, two columns of height~$r$ form an admissible pair if and only if they coincide. 
\end{enumerate}
\end{remark}

\begin{remark}
\label{no_unpaired_column}
From the characterization of~$P$ given in Table~\ref{tab:root_lattice_congruences}, one can easily see that \emph{every} $\lie{g}$-standard doubled Young tableau~$\mathcal{T}$ is in fact of shape $\Psi(\lambda)$ for some integral weight $\lambda \in P \cap \lie{h}^+$. In particular its weight $\nu(\mathcal{T})$ is then one of the weights of the representation~$V_\lambda$, hence the difference $\lambda - \nu(\mathcal{T})$ lies in the root lattice~$Q$.

If the tableau~$\mathcal{T}$ is null (which will always be the case in the sequel), this condition reduces to $\lambda \in Q$, which implies (we refer once again to Table~\ref{tab:root_lattice_congruences}) that all the $\lambda_i$ are integer. Hence all the row lengths of~$\mathcal{T}$ are even; and in condition~\ref{itm:dsYt_adm}, the admissible pairs form a partition of all columns of~$\mathcal{T}$, without any unpaired column.
\end{remark}

By combining this with the Littelmann character formula (Proposition~\ref{Littelmann_character_formula}), we obtain the following, purely combinatorial character formula.

\begin{proposition}[Character formula with $B_r$, $C_r$, $D_r$-standard doubled Young tableaux]
\label{BCD_character_formula}
Let $\lambda \in P \cap \lie{h}^+$ be a dominant integral weight of~$\lie{g}$. Then the character of the representation with highest weight~$\lambda$ is given by:
\[\charact(V_\lambda) = \sum_{\mathcal{T}} e^{\nu(\mathcal{T})},\]
where $\mathcal{T}$ runs over all $\lie{g}$-standard doubled Young tableaux of shape~$\Psi(\lambda)$ that have the same sign as~$\lambda_r$. \end{proposition}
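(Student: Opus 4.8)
The plan is to deduce this character formula simply by feeding the path-model description of Proposition~\ref{BCD_path_model_description} into Littelmann's character formula (Proposition~\ref{Littelmann_character_formula}). The only content beyond combining those two results is a bookkeeping check that the assignment $\mathcal{T} \mapsto \pi(\mathcal{T})$ is a bijection, so that a sum indexed by the paths of $B_{\pi^+_0(\lambda)}$ may legitimately be reindexed as a sum over tableaux.

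First I would check that $\pi^+_0(\lambda)$ is an admissible starting path for Littelmann's formula, i.e.\ that it lies in $\mathscr{P}^+$ and has endpoint $\lambda$. Looking at the segments in \eqref{eq:basic_path_definition}, they are non-negative multiples of the dominant weights $c_1, \ldots, c_{r-1}$ and $c_r^{\pm}$ (the dominance of $\lambda$, namely $\lambda_1 \geq \cdots \geq \lambda_{r-1} \geq |\lambda_r|$, is what makes the coefficients non-negative); since $\lie{h}^+$ is a convex cone, every partial sum stays in $\lie{h}^+$, so the whole path lies in $\lie{h}^+$. A short telescoping computation with the $c_k$ then shows $\pi^+_0(\lambda)(1) = \lambda$. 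With this in hand, Proposition~\ref{Littelmann_character_formula} applied to $\pi^+ = \pi^+_0(\lambda)$ yields $\charact(V_\lambda) = \sum_{\pi \in B_{\pi^+_0(\lambda)}} e^{\pi(1)}$.

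Next I would invoke Proposition~\ref{BCD_path_model_description}, which identifies $B_{\pi^+_0(\lambda)}$ with the set of paths $\pi(\mathcal{T})$, as $\mathcal{T}$ ranges over the $\lie{g}$-standard doubled Young tableaux of shape $\Psi(\lambda)$ whose sign agrees with that of $\lambda_r$; this gives surjectivity of $\mathcal{T} \mapsto \pi(\mathcal{T})$ onto $B_{\pi^+_0(\lambda)}$. To turn the sum over paths into the sum over tableaux, it remains to establish injectivity, i.e.\ that the segment decomposition \eqref{eq:tableau_to_path} can be recovered from the geometric path. Here the key observation is that each segment $\frac{1}{2}\nu(\threeind{j}{}{}\mathcal{T})$ has $\nu(\threeind{j}{}{}\mathcal{T}) \in \mathcal{X}$, hence advances the coordinate of largest absolute value by exactly $\frac{1}{2}$; so along any maximal straight portion of the path with total displacement $D$, the constituent segments all share one direction $D/\|D\|_\infty$ and their number is forced to be $2\|D\|_\infty$. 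Thus the ordered list of column weights is determined, and via the bijection $\nu \colon \mathscr{C} \to \mathcal{X}$ of Definition~\ref{cols_vs_weights} (reading the columns in reverse order) so is the tableau $\mathcal{T}$. Finally $\pi(\mathcal{T})(1) = \nu(\mathcal{T})$ holds by the very definition of the total weight (Definition~\ref{BCD_tableaux_and_weights}), so reindexing the sum gives $\charact(V_\lambda) = \sum_{\mathcal{T}} e^{\nu(\mathcal{T})}$, as claimed.

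I expect no serious obstacle, since the substantive work has already been absorbed into Proposition~\ref{BCD_path_model_description} and into Littelmann's theory. The one place demanding care is the injectivity step: one must ensure that collinear consecutive segments (which occur precisely when $\mathcal{T}$ has repeated columns) do not cause a loss of information when the path is viewed up to reparametrization — and this is exactly what the $\|\cdot\|_\infty$-normalization built into $\mathcal{X}$ guarantees.
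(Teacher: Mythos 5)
Your proposal is correct and is essentially the paper's own argument: the paper obtains Proposition~\ref{BCD_character_formula} precisely by feeding Proposition~\ref{BCD_path_model_description} into Littelmann's character formula (Proposition~\ref{Littelmann_character_formula}), offering no further detail. Your extra bookkeeping --- verifying that $\pi^+_0(\lambda)$ is a dominant path with endpoint~$\lambda$, and that $\mathcal{T} \mapsto \pi(\mathcal{T})$ is injective because every segment lies in $\frac{1}{2}\mathcal{X}$ and so the segment decomposition is recoverable from the path --- is exactly the (sound) justification the paper leaves implicit.
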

This is of course very similar to Proposition~\ref{classical_character_formula}; keep in mind, though, that the definition of~$\nu(\mathcal{T})$ has now slightly changed \eqref{eq:BCD_total_weight_definition}.

We can also combine this with Corollary~\ref{Vl_in_terms_of_paths} to obtain a purely combinatorial characterization of representations having $\lie{l}$-invariant vectors:
\begin{corollary}
\label{combinatorial_characterization}
Let $\lambda \in P \cap \lie{h}^+$ and $\Theta \subset \Pi$. Then $V_\lambda^{\lie{l}(\Theta)} \neq 0$ if and only if there exists a doubled Young tableau~$\mathcal{T}$ satisfying the following seven conditions:
\begin{enumerate}[label = (H\arabic*), start=4]
\item[\ref{itm:dsYt_stst}] through \ref{itm:dsYt_adm} as in Definition~\ref{tableau_definition}, \ie the tableau~$\mathcal{T}$ is $\lie{g}$-standard.
\item \label{itm:null_total_weight} The tableau is null: $\nu(\mathcal{T}) = 0$.
\item \label{itm:shape} The tableau has shape $\Psi(\lambda) = (2\lambda_1, \ldots, 2\lambda_{r-1}, 2|\lambda_r|)$.
\item \label{itm:sign} The tableau has the same sign as~$\lambda_r$ (if $\lie{g}$ is of type~$D_r$).
\item \label{itm:l-codominant} The tableau is $\Theta$-codominant (recall that, for null tableaux, codominance is equivalent to dominance).
\end{enumerate}
\end{corollary}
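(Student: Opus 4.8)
The plan is to assemble the corollary by chaining together three earlier results: the branching rule in terms of paths (Corollary~\ref{Vl_in_terms_of_paths}), the explicit description of the path model $B_{\pi^+_0(\lambda)}$ via doubled Young tableaux (Proposition~\ref{BCD_path_model_description}), and the dictionary between $\Theta$-codominance of a tableau and the property that the corresponding path lies in $\lie{h}^{\Theta,+}$ (Definition~\ref{BCD_tableaux_and_weights}.(iv)). First I would invoke Corollary~\ref{Vl_in_terms_of_paths} with $\pi^+ = \pi^+_0(\lambda)$: this gives
\[\dim V_\lambda^{\lie{l}(\Theta)} = \#\setsuch{\pi \in B_{\pi^+_0(\lambda)}^{\Theta}}{\pi(1) = 0},\]
so that $V_\lambda^{\lie{l}(\Theta)} \neq 0$ if and only if there exists a path $\pi \in B_{\pi^+_0(\lambda)}$ that lies entirely within $\lie{h}^{\Theta,+}$ and satisfies $\pi(1) = 0$.

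The next step is to translate each of the three conditions on $\pi$ into conditions on a tableau. By Proposition~\ref{BCD_path_model_description}, the paths in $B_{\pi^+_0(\lambda)}$ are exactly the paths $\pi(\mathcal{T})$ for $\mathcal{T}$ ranging over $\lie{g}$-standard doubled Young tableaux of shape $\Psi(\lambda)$ having the same sign as $\lambda_r$; this accounts precisely for conditions \ref{itm:dsYt_stst}--\ref{itm:dsYt_adm} together with \ref{itm:shape} and \ref{itm:sign}. The endpoint condition $\pi(\mathcal{T})(1) = 0$ is, by the very definition~\eqref{eq:BCD_total_weight_definition} of the total weight, the statement $\nu(\mathcal{T}) = 0$, i.e.\ that $\mathcal{T}$ is null; this is condition~\ref{itm:null_total_weight}. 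Finally, the condition that $\pi(\mathcal{T})$ lies entirely within $\lie{h}^{\Theta,+}$ is, by the last sentence of Definition~\ref{BCD_tableaux_and_weights}.(iv), equivalent to $\Theta$-dominance of $\mathcal{T}$; and since $\mathcal{T}$ is null, dominance and codominance coincide (as already noted there), so this is exactly condition~\ref{itm:l-codominant}.

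Putting these equivalences together yields the claimed biconditional: $V_\lambda^{\lie{l}(\Theta)} \neq 0$ precisely when a tableau $\mathcal{T}$ satisfying \ref{itm:dsYt_stst} through \ref{itm:l-codominant} exists. I expect this corollary to be almost purely a matter of bookkeeping, with no genuine obstacle — all the substantive content has already been discharged in the proofs of Corollary~\ref{Vl_in_terms_of_paths} and, especially, Proposition~\ref{BCD_path_model_description} (which itself rests on the hard Proposition~\ref{Bruhat_vs_Young}). The only point deserving a word of care is the passage between dominance and codominance: one must remember to observe that the null condition~\ref{itm:null_total_weight} forces total weight $0$, which is exactly what makes the two notions interchangeable, so that it is legitimate to phrase condition~\ref{itm:l-codominant} using codominance (the more convenient notion for the combinatorics of the next section) rather than the dominance that arises naturally from ``$\pi$ lies in $\lie{h}^{\Theta,+}$''.
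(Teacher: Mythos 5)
Your proposal is correct and follows exactly the route the paper takes: the paper states this corollary as an immediate combination of Corollary~\ref{Vl_in_terms_of_paths} (applied to the starting path $\pi^+_0(\lambda)$) with Proposition~\ref{BCD_path_model_description}, using the dictionary of Definition~\ref{BCD_tableaux_and_weights} to translate ``endpoint $0$'' into nullity and ``lies in $\lie{h}^{\Theta,+}$'' into $\Theta$-(co)dominance. Your closing remark on the dominance/codominance interchange for null tableaux is precisely the observation the paper relies on as well.
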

We end this section with an additional small simplification: we can in fact get rid of condition~\ref{itm:sign}. Indeed, when we will study the properties satisfied by such tableaux, we will simply make no use of this property. When we will try to construct such tableaux, we will get around having to check this condition by way of the following (obvious) remark. Let $\sigma$ denote the outer automorphism of~$D_r$: it acts on~$\lie{h}$ by
\[\sigma(e_i) = \begin{cases}
e_i &\text{if } i < r; \\
-e_r &\text{if } i = r,
\end{cases}\]
and correspondingly on~$\mathcal{A}_r$ by exchanging $r$ and~$\overline{r}$.
\begin{remark}
\label{Dn_automorphism}
The set of $D_r$-standard doubled Young tableaux is invariant by~$\sigma$. If $\mathcal{T}$ is such a tableau, then:
\begin{itemize}
\item $\sigma(\mathcal{T})$ has the same shape, but opposite sign compared to $\mathcal{T}$;
\item $\sigma(\mathcal{T})$ is null if and only if $\mathcal{T}$ is null;
\item for all $\alpha \in \Pi$, $\sigma(\mathcal{T})$ is $\alpha$-(co)dominant if and only if $\mathcal{T}$ is $\sigma(\alpha)$-(co)dominant.
\end{itemize}
%
\end{remark}

This gives us the following variation on the ``if'' part of Corollary~\ref{combinatorial_characterization}, with condition~\ref{itm:sign} gone, at the expense of replacing $\Theta$ by a slightly larger set.
\newcommand{\labellcodominantsym}{(H7')}
\begin{corollary}
\label{combinatorial_characterization_without_sign}
Let $\lambda \in P \cap \lie{h}^+$ and $\Theta \subset \Pi$. Suppose that there exists a doubled Young tableau~$\mathcal{T}$ satisfying the conditions:
\begin{enumerate}
\item[\ref{itm:dsYt_stst}] through~\ref{itm:shape} as in Corollary~\ref{combinatorial_characterization}.
\item[\labellcodominantsym] \label{itm:l-codominant-sym} The tableau $\mathcal{T}$ is $(\Theta \cup \sigma(\Theta))$-codominant (with the convention $\sigma = \Id$ if $\lie{g}$ is of type $B_r$ or~$C_r$).
\end{enumerate}
Then we have both $V_\lambda^{\lie{l}(\Theta)} \neq 0$ and $V_{\sigma{\lambda}}^{\lie{l}(\Theta)} \neq 0$.
\end{corollary}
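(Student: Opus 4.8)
The plan is to derive Corollary~\ref{combinatorial_characterization_without_sign} directly from Corollary~\ref{combinatorial_characterization} together with the symmetry properties collected in Remark~\ref{Dn_automorphism}. The idea is that the hypothesis $\text{\labellcodominantsym}$ is engineered precisely so that, starting from the single tableau~$\mathcal{T}$, one of the two tableaux $\mathcal{T}$ and $\sigma(\mathcal{T})$ will witness $V_\lambda^{\lie{l}(\Theta)} \neq 0$ and the other will witness $V_{\sigma\lambda}^{\lie{l}(\Theta)} \neq 0$, thereby handling both conclusions at once without ever having to check condition~\ref{itm:sign}.

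First I would dispose of the easy case: if $\lie{g}$ is of type $B_r$ or~$C_r$, then by convention $\sigma = \Id$ and $\sigma\lambda = \lambda$, so condition~\ref{itm:sign} is vacuous and condition~$\text{\labellcodominantsym}$ is literally condition~\ref{itm:l-codominant}. Hence $\mathcal{T}$ already satisfies all seven conditions of Corollary~\ref{combinatorial_characterization}, giving $V_\lambda^{\lie{l}(\Theta)} \neq 0$, and the two claimed conclusions coincide. So I would then restrict attention to type~$D_r$.

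In type~$D_r$, the tableau $\mathcal{T}$ satisfies \ref{itm:dsYt_stst}--\ref{itm:shape}, that is, it is $\lie{g}$-standard, null, and of shape~$\Psi(\lambda)$, but its sign need not match $\lambda_r$. I would split into two subcases according to the sign $\epsilon$ of~$\mathcal{T}$. Suppose first $\epsilon = \sgn(\lambda_r)$ (where I allow $\epsilon = 0$ when $\#_r \mathcal{T} = 0$, which forces $\lambda_r = 0$ by the shape condition, so the sign condition is automatic). Then $\mathcal{T}$ itself satisfies condition~\ref{itm:sign} for~$\lambda$, and since $\Theta \subseteq \Theta \cup \sigma(\Theta)$ implies $\mathcal{T}$ is in particular $\Theta$-codominant, all seven conditions of Corollary~\ref{combinatorial_characterization} hold, yielding $V_\lambda^{\lie{l}(\Theta)} \neq 0$. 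I would then apply Remark~\ref{Dn_automorphism} to $\sigma(\mathcal{T})$: it has the same shape, is still null and $\lie{g}$-standard, has sign~$-\epsilon = \sgn(-\lambda_r) = \sgn((\sigma\lambda)_r)$ (noting $(\sigma\lambda)_r = -\lambda_r$), so it satisfies condition~\ref{itm:sign} for~$\sigma\lambda$, which has shape $\Psi(\sigma\lambda) = \Psi(\lambda)$. Finally, by the last bullet of Remark~\ref{Dn_automorphism}, $\sigma(\mathcal{T})$ is $\alpha$-codominant for each $\alpha \in \sigma(\Theta)$ precisely because $\mathcal{T}$ is $\sigma(\alpha)$-codominant for each such $\alpha$ (i.e.\ $\sigma^2 = \Id$ applied), and the hypothesis $\text{\labellcodominantsym}$ guarantees $\mathcal{T}$ is codominant with respect to all of $\Theta \cup \sigma(\Theta)$, hence in particular with respect to $\sigma(\Theta)$; thus $\sigma(\mathcal{T})$ is $\Theta$-codominant. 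Applying Corollary~\ref{combinatorial_characterization} to $\sigma(\mathcal{T})$ and $\sigma\lambda$ gives $V_{\sigma\lambda}^{\lie{l}(\Theta)} \neq 0$. The complementary subcase $\epsilon = -\sgn(\lambda_r)$ is handled by the identical argument with the roles of $\mathcal{T}$ and $\sigma(\mathcal{T})$ interchanged.

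The step I expect to require the most care is the bookkeeping around the codominance transfer: one must verify that the condition ``$\mathcal{T}$ is $(\Theta \cup \sigma(\Theta))$-codominant'' splits exactly into what is needed so that $\mathcal{T}$ covers $\Theta$-codominance directly while $\sigma(\mathcal{T})$ covers $\Theta$-codominance via the automorphism identity $\sigma(\mathcal{T}) \text{ is } \alpha\text{-codom.} \iff \mathcal{T} \text{ is } \sigma(\alpha)\text{-codom.}$; in other words, one needs both $\Theta$ and $\sigma(\Theta)$ in the hypothesis precisely because one tableau consumes $\Theta$ and the other consumes $\sigma(\Theta)$. Everything else is a direct substitution into the already-established Corollary~\ref{combinatorial_characterization} and the invariance statements of Remark~\ref{Dn_automorphism}, so no genuine combinatorial obstacle arises.
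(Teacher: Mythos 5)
Your proof is correct and follows exactly the route the paper intends: the corollary is stated as an immediate consequence of Remark~\ref{Dn_automorphism} combined with Corollary~\ref{combinatorial_characterization}, with one of $\mathcal{T}$, $\sigma(\mathcal{T})$ (whichever has the sign of~$\lambda_r$) witnessing $V_\lambda^{\lie{l}(\Theta)} \neq 0$ and the other witnessing $V_{\sigma\lambda}^{\lie{l}(\Theta)} \neq 0$, the hypothesis \labellcodominantsym{} supplying $\Theta$-codominance for both via the identity relating codominance of $\sigma(\mathcal{T})$ to $\sigma(\Theta)$-codominance of $\mathcal{T}$. Your case analysis (types $B_r$/$C_r$ trivial, the $\epsilon = 0 \iff \lambda_r = 0$ bookkeeping, and the two subcases on $\epsilon$ versus $\sgn(\lambda_r)$) fills in precisely the details the paper leaves as obvious.
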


\section{Types $B_r$, $C_r$ and~$D_r$: the proof}
\label{sec:BCD_proof}

In this section, we prove the Main Theorem, \ie the equality $\mathcal{M}_{\linv} = \mathcal{M}_{\theor}$, for $\lie{g}$ of types $B_r$ ($r \geq 1$), $C_r$ ($r \geq 1$) and $D_r$ ($r \geq 3$). Thanks to the work done in the previous section, it suffices, in order to do this, to prove the following two things:
\begin{itemize}
\item that every doubled Young tableau $\mathcal{T}$ that satisfies conditions \ref{itm:dsYt_stst}--\ref{itm:shape} and \ref{itm:l-codominant} above has a shape that satisfies the conditions from Table~\ref{tab:conditions_for_classical_algebras};
\item that every doubled Young diagram whose shape satisfies these conditions admits a filling that satisfies conditions \ref{itm:dsYt_stst}--\ref{itm:shape} and \labellcodominantsym{} above.
\end{itemize}
In Subsection~\ref{sec:linv_in_theor} we accomplish the first task. In subsection~\ref{sec:theor_in_linv}, we accomplish the second task for diagrams corresponding to primitive elements of~$\mathcal{M}_{\theor}$; and we conclude by additivity.

\subsection{The inclusion $\mathcal{M}_{\linv} \subset \mathcal{M}_{\theor}$}
\label{sec:linv_in_theor}

Let $\lie{g}_\RR$ be some real form of~$\lie{g} = B_r$, $C_r$ or $D_r$, and let $\lambda$ be a dominant integral weight such that the doubled Young diagram~$\Psi(\lambda)$ admits a $\Theta(\lie{g}_\RR)$-codominant null $\lie{g}$-standard filling. We must prove that $\lambda$ satisfies the corresponding condition from Table~\ref{tab:conditions_for_classical_algebras}.

From \cite{OV90}, Reference Chapter, Table~9, we obtain the values of the sets $\Theta(\lie{g}_\RR)$ for all such real forms $\lie{g}_\RR$; for quicker reference, we have reproduced them here in Table~\ref{tab:Theta_classical}.

Note that each of these sets contains the ``tail'' $\Pi_{[x+1, r]}$ (for some value of~$x$) of the Dynkin diagram. It turns out that the other simple roots contained in~$\Theta(\lie{g}_\RR)$ will not matter (except for two low-rank cases, that we treat by using exceptional isomorphisms). The bulk of this subsection is thus devoted to establishing a few inequalities (Proposition~\ref{height_restrictions} and Corollary~\ref{height_limitation}) satisfied by $\Pi_{[x+1, r]}$-codominant null $\lie{g}$-standard doubled Young tableaux. At the end of this subsection, we put the pieces together.

\begin{table}[h]
  \caption{\label{tab:Theta_classical}Values of $\Theta(\lie{g}_\RR)$ for all real forms of simple Lie algebras of types $B_r$, $C_r$ and~$D_r$. The (hopefully transparent) notations $\Pi_{[x,y]}$ and $\Pi_{\operatorname{odd}}$ are defined in \eqref{eq:Theta_interval_definition} and~\eqref{eq:Theta_odd_definition}.}
  \centering\bigskip
  \begin{tabular}[t]{llll}
    $\lie{g}$ & $\lie{g}_\RR$ & Parameter range & $\Theta(\lie{g}_\RR)$ \\
    \midrule
    $\underset{r \geq 1}{B_r}$
    & $\lie{so}(p,2r+1-p)$ & $0 \leq p \leq r$ & $\Pi_{[p+1,r]}$ \\
    \midrule
    \multirow{2}{*}{$\underset{r \geq 1}{C_r}$}
    & $\lie{sp}_\twice (p, r-p)$ & $0 \leq p \leq \frac{r}{2}$ & $\Pi_{\operatorname{odd}} \cup \Pi_{[2p+1,r]}$ \\
    & $\lie{sp}_{\twice r}(\RR)$ & & $\emptyset$ \\
    \midrule
    \multirow{3}{*}{$\underset{r \geq 3}{D_r}$}
    & $\lie{so}(p,2r-p)$ & $\begin{cases}0 \leq p \leq r \\ p \neq r-1\end{cases}$ & $\Pi_{[p+1,r]}$ \\
    & $\lie{so}(r-1,r+1)$ & & $\emptyset$ \\
    & $\lie{so}^*(2r)$ & & $\Pi_{\operatorname{odd}} \setminus \{\alpha_r\}$ \\
    \bottomrule
  \end{tabular}
\end{table}

\begin{proposition}
\label{height_restrictions}
Suppose that $\lie{g}$ is of type $B_r$, $C_r$ or~$D_r$; let $\mathcal{T}$ be any $\lie{g}$-standard doubled Young tableau. Let $h := \#^1 \mathcal{T}$~be the height of~$\mathcal{T}$, and let $t := \max_{i, j} \left| \threeind{j}{}{i} \mathcal{T} \right|$~be the largest number such that either $t$ or $\overline{t}$ appears somewhere in~$\mathcal{T}$; these numbers satisfy
\begin{equation}
\label{eq:height_upper_bound}
h \leq t.
\end{equation}
Moreover, for every integer~$x$ satisfying $0 \leq x \leq r$, we have the following inequalities.
\begin{hypothenum}
\item \label{itm:general_case} If $\mathcal{T}$ is $\Pi_{[x+1,r]}$-codominant and null, then we have:
\begin{equation}
\label{eq:height_lower_bound}
h \geq 2(t-x).
\end{equation}
\item \label{itm:odd_size} If moreover the (automatically integer) number $\frac{1}{2} \# \mathcal{T}$ is odd, then necessarily $\lie{g}$ is of type~$B_r$, $t=r$, and the inequality~\eqref{eq:height_lower_bound} becomes strict, \ie
\begin{equation}
\label{eq:height_better_lower_bound}
h \geq 2(r-x)+1.
\end{equation}
\end{hypothenum}
\end{proposition}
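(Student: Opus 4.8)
The plan is to prove the three inequalities in order of increasing difficulty, exploiting throughout that for a \emph{null} tableau codominance and dominance coincide, so that $\Pi_{[x+1,r]}$-codominance is equivalent to the statement that every column-prefix weight $\nu(\threeind{[1,j]}{}{}\mathcal{T})$ is made nonpositive by every $\alpha\in\Pi_{[x+1,r]}$. For the upper bound \eqref{eq:height_upper_bound} no codominance is needed: the shape $\square\mathcal{T}$ is a genuine Young diagram, so its column heights are weakly decreasing and $h=\#^1\mathcal{T}$ is the height of the \emph{tallest} column. That column is strongly standard, hence its $h$ entries carry $h$ pairwise distinct absolute values drawn from $\{1,\dots,r\}$; the largest of these is therefore $\geq h$, and it is by definition $\leq t$, giving $h\leq t$.

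For part~\ref{itm:general_case} I would first unwind the hypothesis. Writing $\mu=\nu(\threeind{[1,j]}{}{}\mathcal{T})$ and $\mu_l$ for its $e_l$-coordinate, the condition for $\alpha_i=e_i-e_{i+1}$ reads $\mu_i\leq\mu_{i+1}$, and for $\alpha_r$ it reads $\mu_r\leq 0$ (types $B_r$, $C_r$) or $\mu_{r-1}+\mu_r\leq 0$ (type $D_r$); chaining these along the tail $\Pi_{[x+1,r]}$ yields, for types $B_r$ and $C_r$, that every prefix satisfies $\mu_l\leq 0$ for all $l>x$, i.e. in the first $j$ columns the symbol $\overline{l}$ occurs at least as often as $l$ (the type-$D_r$ case I would normalize via the automorphism $\sigma$ of Remark~\ref{Dn_automorphism}, so that $\lambda_r\geq0$ and the same conclusion holds). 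Two consequences drive the argument: the leftmost column contains no unbarred symbol of absolute value $>x$ (take $j=1$); and, since $t$ is the maximal absolute value, $t$ and $\overline{t}$ are $\preceq_{\mathcal{A}}$-adjacent among the symbols that actually occur and, being strongly standard, never share a column. Now I would use that the rows of $\mathcal{T}$ are $\preceq_{\mathcal{A}}$-nondecreasing — a genuine semistandardness valid even in type $D_r$ by Remark~\ref{semistandard_with_parity_is_semistandard} — so that in each row the $t$'s precede the $\overline{t}$'s. This collides with prefix-codominance, which wants $\overline{t}$ before $t$ column-wise: the collision can only be resolved by having an $\overline{t}$ sit \emph{below} a block of unbarred entries in a column strictly to the left of an occurrence of $t$. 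Iterating this crossing downward through the absolute values $t,t-1,\dots,x+1$, each governed by its own prefix inequality $\mu_l\leq0$, should force a single column (necessarily the leftmost, as it is tallest) to carry a run of at least $t-x$ unbarred entries on top of a run of at least $t-x$ barred entries, whence $h\geq 2(t-x)$.

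The main obstacle is precisely this iteration: turning the one-step crossing into a clean staircase that accumulates length $t-x$ in each of the two runs \emph{within a common column}, with the additive slack of size $x$ accounted for by the ``unbarred absolute value $\leq x$'' phenomenon. I expect to set this up as a descending induction on $l$ from $t$ to $x+1$, maintaining an invariant that controls the row at which $\overline{l}$ is first permitted to appear; the bookkeeping, rather than any single idea, is the hard part. (Note that combined with \eqref{eq:height_upper_bound} the bound forces $t\leq 2x$, so for $t>2x$ there is simply no such tableau and the inequality is vacuously safe.)

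Finally, for part~\ref{itm:odd_size} I would switch to the admissible-pair structure. For a null tableau all row lengths are even (Remark~\ref{no_unpaired_column}), so $\tfrac12\#\mathcal{T}=\sum_{l=1}^{r}\#\threeind{}{l}{}\mathcal{T}$ is the integer counting all unbarred symbols, and the columns partition into admissible pairs. For a pair $(\mathcal{C},\mathcal{C}')$, conditions~\ref{itm:adm_same_decomp}--\ref{itm:adm_x_condition} of Proposition~\ref{admissible_pair_characterization} force the numbers of unbarred entries to agree on every block with $b_i<r$, and on a block with $b_i=r$ to be equal in type $C_r$, equal modulo $2$ in type $D_r$, but unconstrained in type $B_r$. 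Hence every pair contributes an even number of unbarred symbols \emph{except} possibly a type-$B_r$ pair whose blocks reach $r$; so $\tfrac12\#\mathcal{T}$ odd forces $\lie{g}$ of type $B_r$ and $t=r$. For the strict inequality I would re-run the crossing argument of part~\ref{itm:general_case} while tracking the parity of the number of unbarred $r$'s: an odd total pins down a pair whose $r$-block carries an odd imbalance of $r$ versus $\overline{r}$, and this extra, unpaired occurrence cannot be absorbed into the $2(r-x)$ rows produced generically, forcing one additional row and hence $h\geq 2(r-x)+1$.
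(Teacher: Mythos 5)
Your proposal contains genuine gaps at exactly the two places where the real work lies. In part~\ref{itm:general_case}, the step you defer ("the main obstacle is precisely this iteration\dots the bookkeeping\dots is the hard part") is the entire proof: nothing in your sketch actually produces the $2(t-x)$ rows. Moreover, you aim at a stronger claim than necessary --- a \emph{single} column carrying a run of $t-x$ unbarred entries above $t-x$ barred ones --- which is harder to extract and not what the paper does. The paper's route (Lemma~\ref{mincol_maxcol_inequalities}) uses Remark~\ref{first_and_last_column} in a descending induction to get chains of \emph{column} indices $j_{\overline{x+1}} \leq \cdots \leq j_{\overline{t}}$ and $j_t \leq \cdots \leq j_{x+1}$ (where $j_{\overline{s}} = \operatorname{mincol}_{\mathcal{T}}(\overline{s})$, $j_s = \operatorname{maxcol}_{\mathcal{T}}(s)$), and then converts them, via semistandardness, into $2(t-x)$ pairwise distinct \emph{row} indices spread over several columns; since every row index is at most $h$, the bound follows with no need to concentrate anything in one column. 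Your type-$D_r$ reduction is also flawed: $\Pi_{[x+1,r]}$-codominance in type $D_r$ only yields $\mu_l \leq 0$ for $l \leq r-1$ (from $\alpha_{r-1}$ and $\alpha_r$ one gets $2\mu_{r-1} \leq 0$, but $\mu_r$ may be positive), and applying $\sigma$ of Remark~\ref{Dn_automorphism} flips the sign of $\mu_r$ for \emph{all} prefixes simultaneously, so if two prefixes carry opposite signs of $\mu_r$ neither $\mathcal{T}$ nor $\sigma(\mathcal{T})$ satisfies your normalized hypothesis. The paper instead keeps both cases ($j_{\overline{r}} \leq j_r$ or $j_r \leq j_{\overline{r}}$), producing either \eqref{eq:distinct_rows} or the switched chain \eqref{eq:distinct_rows_switched}, the latter using the alternative total order of Remark~\ref{semistandard_with_parity_is_semistandard}.

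In part~\ref{itm:odd_size}, your first half (parity of unbarred symbols across admissible pairs, via condition~\ref{itm:adm_x_condition} of Proposition~\ref{admissible_pair_characterization}, forcing type $B_r$ and $t=r$) is correct and coincides with the paper's argument. But the strict inequality is again asserted rather than proved: "this extra, unpaired occurrence cannot be absorbed\dots forcing one additional row" is not an argument. The paper's mechanism is concrete: the unbalanced pair sits in columns $2j_0'-1, 2j_0'$ with $j_{\overline{r}} \leq 2j_0'-1 < 2j_0' \leq j_r$; since $j \mapsto \# \threeind{j}{\NN}{} \mathcal{T}$ is nonincreasing and drops strictly across that pair, and since column-standardness pins $i_{\overline{r}} = \# \threeind{j_{\overline{r}}}{\NN}{} \mathcal{T} + 1$ and $i_r = \# \threeind{j_r}{\NN}{} \mathcal{T}$, one gets $i_{\overline{r}} > i_{\overline{r}} - 1 > i_r$, i.e.\ one extra distinct row wedged into the chain \eqref{eq:distinct_rows}, which upgrades \eqref{eq:height_lower_bound} to \eqref{eq:height_better_lower_bound}. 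Without some such quantitative link between the parity defect and the row positions of $r$ and $\overline{r}$, your sketch does not yield the strict bound.
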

By rearranging \eqref{eq:height_lower_bound} as $t \leq \frac{h}{2} + x$ and combining it with \eqref{eq:height_upper_bound}, we also obtain the following consequence.
\begin{corollary}
\label{height_limitation}
Under the same assumptions on $\lie{g}$ and~$x$, every $\Pi_{[x+1,r]}$-codominant, null, $\lie{g}$-standard doubled Young tableau $\mathcal{T}$ has height at most~$2x$.
\end{corollary}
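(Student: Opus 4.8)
The plan is to derive the corollary directly from Proposition~\ref{height_restrictions}, whose two displayed inequalities already contain all the substance; the only thing left is to chain them. Let $\mathcal{T}$ be a $\Pi_{[x+1,r]}$-codominant, null, $\lie{g}$-standard doubled Young tableau, and keep the notation $h := \#^1 \mathcal{T}$ for its height and $t := \max_{i,j} \left| \threeind{j}{}{i} \mathcal{T} \right|$ for the largest absolute value occurring, exactly as in the proposition.

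First I would invoke the lower bound~\eqref{eq:height_lower_bound}, which applies precisely because $\mathcal{T}$ is $\Pi_{[x+1,r]}$-codominant and null: this gives $h \geq 2(t-x)$. Rearranging (add $2x$ and halve) turns this into $t \leq \tfrac{h}{2} + x$. Next I would invoke the unconditional height bound~\eqref{eq:height_upper_bound}, valid for \emph{any} $\lie{g}$-standard doubled Young tableau with no further hypotheses, namely $h \leq t$. Chaining the two,
\[
h \;\leq\; t \;\leq\; \tfrac{h}{2} + x,
\]
whence $\tfrac{h}{2} \leq x$, that is, $h \leq 2x$, which is exactly the asserted bound.

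At this level there is no genuine obstacle: the corollary is an immediate algebraic consequence once Proposition~\ref{height_restrictions} is available. All of the real difficulty is absorbed into the proposition itself, and in particular into the lower bound~\eqref{eq:height_lower_bound}, whose proof must exploit both codominance and nullity of $\mathcal{T}$ to force enough distinct symbols into the top $h$ rows. The sharper odd-size estimate~\eqref{eq:height_better_lower_bound} plays no role here and can be ignored for this consequence.
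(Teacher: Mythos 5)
Your proof is correct and is exactly the paper's own argument: the paper derives the corollary by rearranging \eqref{eq:height_lower_bound} as $t \leq \tfrac{h}{2} + x$ and combining it with \eqref{eq:height_upper_bound}, precisely the chaining $h \leq t \leq \tfrac{h}{2}+x$ that you wrote. Nothing is missing; as you note, all the substance lives in Proposition~\ref{height_restrictions} itself.
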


The proof of Proposition~\ref{height_restrictions} relies on the following lemma; in order to formulate it, we first need to introduce a notation.
\begin{definition}
Given a Young tableau~$\mathcal{T}$ and a symbol $s$, we define the numbers
\[
\operatorname{mincol}_{\mathcal{T}}(s) := \min \setsuch{j}{\exists i,\quad \threeind{j}{}{i} \mathcal{T} = s}
\]
and
\[
\operatorname{maxcol}_{\mathcal{T}}(s) := \max \setsuch{j}{\exists i,\quad \threeind{j}{}{i} \mathcal{T} = s},
\]
with the usual conventions $\min \emptyset = +\infty$ and $\max \emptyset = -\infty$.
\end{definition}

\begin{lemma}
\label{mincol_maxcol_inequalities}
Let $\lie{g}$, $x$ and $\mathcal{T}$ be as in Proposition~\ref{height_restrictions}.\ref{itm:general_case}. Then the inequalities
{\refstepcounter{equation}\label{eq:mincol_min}}
{\def\tagargument{$s$} 
 \usetagform{argumenttag}
\begin{align}
\label{eq:mincol_plat} &\leq \operatorname{mincol}_{\mathcal{T}}(\overline{s+1}) \tag{\theequation{}a} \\
\label{eq:mincol_croise} \operatorname{mincol}_{\mathcal{T}}(\overline{s})
  \smash{\left.\begin{aligned}\vphantom{\overline{s}} \\ \vphantom{s+1} \\ \vphantom{s}\end{aligned}\right\{}
&\leq \operatorname{mincol}_{\mathcal{T}}(s+1) \tag{\theequation{}b} \\
\label{eq:mincol_embrasse} &\leq \operatorname{mincol}_{\mathcal{T}}(s) \tag{\theequation{}c} 
\end{align}
and
{\refstepcounter{equation}\label{eq:maxcol_max}}
\begin{align}
\label{eq:maxcol_embrasse} \operatorname{maxcol}_{\mathcal{T}}(\overline{s}) \leq& \tag{\theequation{}a} \\
\label{eq:maxcol_croise} \operatorname{maxcol}_{\mathcal{T}}(\overline{s+1}) \leq&
\smash{\left.\begin{aligned}\vphantom{\overline{s}} \\ \vphantom{s+1} \\ \vphantom{s}\end{aligned}\right\}}
  \operatorname{maxcol}_{\mathcal{T}}(s). \tag{\theequation{}b} \\
\label{eq:maxcol_plat} \operatorname{maxcol}_{\mathcal{T}}(s+1) \leq& \tag{\theequation{}c}
\end{align}}
hold for every $s$ such that $x < s \leq r - \mathbbm{1}_D$, where we set
\[
\mathbbm{1}_D := \begin{cases}
0 &\text{ if } \lie{g} = B_r \text{ or } C_r; \\
1 &\text{ if } \lie{g} = D_r.
\end{cases}
\]

\end{lemma}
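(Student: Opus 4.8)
The plan is to translate both the codominance/nullness hypotheses and plain semistandardness into inequalities between the prefix column-counts $\#\threeind{[1,j]}{s}{}\mathcal{T}$, and to exploit these two sources of monotonicity separately. First I would reduce to the total order: by Remark~\ref{semistandard_with_parity_is_semistandard}, $\mathcal{T}$ is semistandard for $\preceq_{\mathcal{A}}$, so each symbol occupies a horizontal strip and every row is weakly increasing; in particular the four symbols involved satisfy the $\preceq_{\mathcal{A}}$-chain $s \prec s{+}1 \prec \overline{s{+}1} \prec \overline{s}$. I would also record the reverse-and-bar duality: reversing the order of the columns and barring every symbol preserves strong standardness, nullness, and (for null tableaux) codominance, while exchanging $\operatorname{mincol}_{\mathcal{T}} \leftrightarrow \operatorname{maxcol}_{\mathcal{T}}$ and $s \leftrightarrow \overline{s}$. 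Under this involution \eqref{eq:mincol_embrasse} and \eqref{eq:maxcol_embrasse} are interchanged, as are \eqref{eq:mincol_croise} and \eqref{eq:maxcol_croise}, and \eqref{eq:mincol_plat} and \eqref{eq:maxcol_plat} (the French names precisely encode the pairing), so it suffices to prove the three mincol inequalities.

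The central object is the prefix defect $f_s(j) := \#\threeind{[1,j]}{\overline{s}}{}\mathcal{T} - \#\threeind{[1,j]}{s}{}\mathcal{T}$. By Remark~\ref{dominance_explicit}, the $\alpha_s$-codominance \eqref{eq:codominance_BCD} for $x < s < r$ reads exactly $f_s(j) \ge f_{s+1}(j)$ for all $j$, while nullness gives $f_s(N) = 0$ where $N = \#_1 \mathcal{T}$. For the bottom of the range I would invoke the top root: in types $B_r$ and $C_r$ codominance of $\alpha_r$ gives $f_r(j) \ge 0$ directly, whereas in type $D_r$ adding the codominance relations for $\alpha_{r-1}$ and $\alpha_r = e_{r-1}+e_r$ yields $2 f_{r-1}(j) \ge 0$. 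Propagating $f_s \ge f_{s+1}$ downwards then establishes \textbf{Claim~B}: $f_s(j) \ge 0$ for every $j$ and every $s$ in the stated range (this is exactly what forces the cutoff $s \le r - \mathbbm{1}_D$ in type $D_r$). Claim~B yields \eqref{eq:mincol_embrasse} at once, since evaluating at $j = \operatorname{mincol}_{\mathcal{T}}(s)$ forces an $\overline{s}$ within the first $\operatorname{mincol}_{\mathcal{T}}(s)$ columns; and, via the involution above (or equivalently by subtracting from $f_s(N) = 0$), it yields \eqref{eq:maxcol_embrasse}.

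For the remaining pair I must prove the complementary monotonicity \textbf{(E)}: $\#\threeind{[1,j]}{s}{}\mathcal{T} \ge \#\threeind{[1,j]}{s+1}{}\mathcal{T}$ for all $j$. Granting (E), subtracting it from $f_s \ge f_{s+1}$ gives $\#\threeind{[1,j]}{\overline{s}}{}\mathcal{T} \ge \#\threeind{[1,j]}{\overline{s+1}}{}\mathcal{T}$, and evaluating at $j = \operatorname{mincol}_{\mathcal{T}}(\overline{s+1})$ produces \eqref{eq:mincol_plat}; then \eqref{eq:mincol_croise} follows by transitivity from \eqref{eq:mincol_plat} and from \eqref{eq:mincol_embrasse} applied to $s+1$, the few boundary indices being checked by hand. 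Unlike Claim~B, (E) genuinely mixes semistandardness with the hypotheses: a single height-one pair of columns filled with $s{+}1$ and then $\overline{s{+}1}$ is null and semistandard yet violates (E), so codominance must intervene. The argument I would give combines the horizontal-strip description of the value-$s$ and value-$(s+1)$ boxes with the structural consequence of Claim~B that in every prefix the barred symbol $\overline{s}$ never lags behind $s$; since strong standardness forbids $s$ and $\overline{s}$ from sharing a column, this prevents $s+1$ from outrunning $s$.

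I expect (E) to be the main obstacle. The defect inequalities and Claim~B form a clean downward induction from codominance alone, and the entire maxcol half is then free by duality; but (E) is precisely where the ``reversed'' behaviour — a larger symbol appearing no earlier and no farther right than a smaller one — is forced, and it requires coupling the Gessel--Viennot-type interleaving of two consecutive horizontal strips with the prefix control supplied by Claim~B. The degenerate boundary cases, namely $s = r$ in types $B_r$ and $C_r$ where the symbols $s+1$ and $\overline{s+1}$ are absent and the inequalities mentioning them are vacuous, I would simply verify directly.
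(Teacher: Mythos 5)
Your Claim~B is correct and does prove part of the lemma: the chain $f_s \geq f_{s+1} \geq \cdots \geq f_{r-\mathbbm{1}_D} \geq 0$ follows from codominance of $\alpha_s, \ldots, \alpha_r$ exactly as you say, and together with nullness (for the suffix version) it yields \eqrefWithArgument{eq:mincol_embrasse}{$s$} and \eqrefWithArgument{eq:maxcol_embrasse}{$s$}. But the four remaining inequalities rest entirely on your claim (E), which you never prove --- and which cannot be proved from the ingredients your sketch invokes. Indeed, every one of those ingredients (Claim~B, semistandardness, strong standardness, horizontal strips) is satisfied by the two-column $B_3$-standard doubled Young tableau whose columns are $(1, \overline{3}, \overline{2})$ and $(3, \overline{2}, \overline{1})$, taken with $x = 1$ and $s = 2$: all its prefixes satisfy the codominance inequalities for $\alpha_2$ and $\alpha_3$ (so Claim~B holds for it), yet it contains one $3$ and no $2$, violating (E). The only hypothesis it fails is nullness of the \emph{whole} tableau. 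So any correct proof of (E) must exploit global nullness in an essential way (matching each $\overline{s}$ with a later $s$ lying strictly above it and chasing rows across the entire tableau), and working this out is at least as hard as the lemma itself; you acknowledge this step as ``the main obstacle'' but leave it unproved, which is a genuine gap. It is precisely here that the paper takes a different and much lighter route: a \emph{descending induction on $s$}, in which the three mincol inequalities are restated as ``the first column containing one of $s$, $s+1$, $\overline{s+1}$, $\overline{s}$ contains $\overline{s}$''; the induction hypothesis \eqrefWithArgument{eq:mincol_embrasse}{$s+1$}, combined with the fact that a strongly standard column cannot contain both $s+1$ and $\overline{s+1}$, rules out the only problematic case, and the base case $s = r - \mathbbm{1}_D$ is handled by the last simple root(s) via Remark~\ref{first_and_last_column}. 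No counting statement like (E) is needed anywhere.

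A second, independent hole: even granting (E), your transitivity derivation of \eqrefWithArgument{eq:mincol_croise}{$s$} breaks at the top of the range in type $D_r$. For $s = r-1$ it would require \eqrefWithArgument{eq:mincol_embrasse}{$r$}, i.e.\ $\operatorname{mincol}_{\mathcal{T}}(\overline{r}) \leq \operatorname{mincol}_{\mathcal{T}}(r)$; Claim~B stops at $f_{r-1}$ in type $D_r$, and this inequality is in fact false in general, since the outer automorphism $\sigma$ exchanges $r$ and $\overline{r}$ while preserving all hypotheses, so the inequality would imply its own mirror image and force $r$ and $\overline{r}$ into the same column. Your remark about boundary cases covers only the vacuous case $s = r$ in types $B_r$, $C_r$; the non-vacuous case $s = r-1$ in type $D_r$ needs either the paper's argument using both roots $\alpha_{r-1}$, $\alpha_r$ together with strong standardness, or a $\sigma$-twisted version of (E), neither of which appears in your plan. (Finally, your ``reverse-and-bar'' involution does not literally produce a Young tableau --- column heights would increase from left to right --- but the suffix-sum formulation you mention parenthetically is adequate for the purely counting-based statements, so this point is repairable.)
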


The proof of this lemma relies on the following obvious remark.

\begin{remark}
\label{first_and_last_column}
In a null $\Theta$-codominant $\lie{g}$-standard doubled Young tableau~$\mathcal{T}$, the following statements hold for each $\alpha \in \Theta$:
\begin{hypothenum}
\item \label{itm:first_nonzero_is_positive} the first column of~$\mathcal{T}$ with a nonzero total $\alpha$-height has negative total $\alpha$-height;
\item \label{itm:last_nonzero_is_negative} the last column of~$\mathcal{T}$ with a nonzero total $\alpha$-height has positive total $\alpha$-height,
\end{hypothenum}
where we define the \emph{$\alpha$-height} of a column~$\mathcal{C}$ as the number $\langle \nu(\mathcal{C}), \alpha^\vee \rangle$.
\end{remark}

\begin{proof}[Proof of Lemma~\ref{mincol_maxcol_inequalities}]
We start by proving the inequalities \eqrefWithArgument{eq:mincol_min}{$s$}, for all $s$ within the given bounds. Assume that $\mathcal{T}$ contains at least one of the symbols $s$, $s+1$, $\overline{s+1}$ or~$\overline{s}$ (otherwise the inequalities are vacuously true), and let $j$ be the index of the first column where one of these four symbols occurs. Then the inequalities~\eqrefWithArgument{eq:mincol_min}{$s$} are equivalent to the statement that the $j$-th column of~$\mathcal{T}$ contains the symbol~$\overline{s}$.

We shall prove it by descending induction on~$s$.
\begin{itemize}
\item Let us first prove it for $s = r - \mathbbm{1}_D$. We distinguish two cases:
\begin{itemize}
\item Assume first that $\lie{g}$ is of type $B_r$ or~$C_r$, so that $s = r$, and $\alpha_r$~is equal to (possibly the double of)~$e_r$. In particular the symbols $r+1$ and~$\overline{r+1}$ do not occur anywhere in~$\mathcal{T}$, and we may ignore them. By Remark~\ref{first_and_last_column}.\ref{itm:first_nonzero_is_positive} applied to~$\alpha_r$, it then follows that the $j$-th column of~$\mathcal{T}$ contains~$\overline{r}$, as required.
\item Assume now that $\lie{g}$ is of type $D_r$, so that $s = r-1$. Both $\alpha_{r-1} = e_{r-1} - e_r$ and $\alpha_r = e_{r-1} + e_r$ lie in~$\Pi_{[x+1,r]}$; by applying Remark~\ref{first_and_last_column}.\ref{itm:first_nonzero_is_positive} to these two roots, we respectively obtain that:
\begin{itemize}
\item the $j$-th column must contain either $\overline{r-1}$ or~$r$;
\item the $j$-th column must contain either $\overline{r-1}$ or~$\overline{r}$.
\end{itemize}
However the $j$-th column is strongly standard: it cannot contain $r$ and~$\overline{r}$ simultaneously. Hence it contains $\overline{r-1}$ as required.
\end{itemize}
\item Now let $s$ be such that $x < s < r - \mathbbm{1}_D$, and assume that the three inequalities \eqrefWithArgument{eq:mincol_min}{$s+1$} are true. Since $s < r - \mathbbm{1}_D$, we have $\alpha_s = e_s - e_{s+1}$; so Remark~\ref{first_and_last_column}.\ref{itm:first_nonzero_is_positive} tells us that the $j$-th column contains either $s+1$ or~$\overline{s}$. If it contained $s+1$, then \eqrefWithArgument{eq:mincol_embrasse}{$s+1$} would force it to also contain $\overline{s+1}$, which is a contradiction; so it has to contain~$\overline{s}$ as required.
\end{itemize}

Similarly, the inequalities \eqrefWithArgument{eq:maxcol_max}{$s$} are equivalent to the statement that the last column of~$\mathcal{T}$ that contains one of the symbols $s$, $s+1$, $\overline{s+1}$ or~$\overline{s}$ must contain the symbol~$s$; and we can similarly prove them by descending induction on~$s$, using now Remark~\ref{first_and_last_column}.\ref{itm:last_nonzero_is_negative}.
\end{proof}

We are now ready to prove Proposition~\ref{height_restrictions}.

\begin{proof}[Proof of Proposition~\ref{height_restrictions}]
First of all, the inequality~\eqref{eq:height_upper_bound} immediately follows from the fact that the symbols occurring in the first column of~$\mathcal{T}$, which has height~$h$, must have pairwise distinct absolute values.
\begin{hypothenum}
\item Assume now that $\mathcal{T}$ is $\Pi_{[x+1,r]}$-codominant and null. Clearly \eqref{eq:height_lower_bound} holds if $t \leq x$; so assume that $t > x$. We introduce the numbers
\begin{equation}
\label{eq:js_definition}
\forall s = x+1,\; \ldots,\; t,\quad
\begin{cases}
j_{\overline{s}} := \operatorname{mincol}_{\mathcal{T}}(\overline{s}); \\
j_s := \operatorname{maxcol}_{\mathcal{T}}(s).
\end{cases}
\end{equation}
Since $\mathcal{T}$~is null, in fact, both $t$ and~$\overline{t}$ must appear somewhere in~$\mathcal{T}$. Now we apply Lemma~\ref{mincol_maxcol_inequalities}: from the inequalities~\eqrefWithArgument{eq:mincol_plat}{$s$} for $s$ running from $x+1$ to~$t-1$, it follows that
\begin{equation}
\label{eq:first_half_increasing}
j_{\overline{x+1}} \leq \cdots \leq j_{\overline{t-1}} \leq j_{\overline{t}} < +\infty;
\end{equation}
and from the inequalities~\eqrefWithArgument{eq:maxcol_plat}{$s$} for $s$ running from $x+1$ to~$t-1$, it follows that
\begin{equation}
\label{eq:second_half_increasing}
-\infty < j_t \leq j_{t-1} \leq \cdots \leq j_{x+1}.
\end{equation}
In particular, for each $s = x+1, \ldots, t$, the value $j_s$ (resp.~$j_{\overline{s}}$) is finite, \ie is the index of an actual column of~$\mathcal{T}$ that contains the symbol $s$ (resp.~$\overline{s}$). So let $i_s$ (resp.~$i_{\overline{s}}$) denote the (unique) index such that $\threeind{j_s}{}{i_s}\mathcal{T} = s$ (resp. $\threeind{j_{\overline{s}}}{}{i_{\overline{s}}}\mathcal{T} = {\overline{s}}$), for every such~$s$.

We will now establish some inequalities concerning the numbers $i_s$: either \eqref{eq:distinct_rows} or~\eqref{eq:distinct_rows_switched}, depending on the order between $j_{\overline{t}}$ and~$j_t$.
\begin{itemize}
\item Assume first that $t \leq r - \mathbbm{1}_D$. Then we have:
\begin{align*}
j_{\overline{t}} &= \operatorname{mincol}_{\mathcal{T}}(\overline{t}) \\
                 &\leq \operatorname{mincol}_{\mathcal{T}}(t) &\text{ by \eqrefWithArgument{eq:mincol_embrasse}{$t$}} \\
                 &\leq \operatorname{maxcol}_{\mathcal{T}}(t) &\text{ since $t$ actually occurs in $\mathcal{T}$} \\
                 &= j_t,
\end{align*}
so that we can combine \eqref{eq:first_half_increasing} and~\eqref{eq:second_half_increasing} into
\[
j_{\overline{x+1}} \leq \cdots \leq j_{\overline{t}} \leq j_t \leq \cdots \leq j_{x+1}.
\]
Now whenever we have two pairs of indexes $(i,j)$ and $(i',j')$ such that $j \leq j'$ but $\threeind{j}{}{i} \mathcal{T} \succ_\mathcal{A} \threeind{j'}{}{i'} \mathcal{T}$, we must necessarily have $i > i'$ (else this would contradict $\preceq_\mathcal{A}$-semistandardness of~$\mathcal{T}$, which holds by assumption when $\lie{g}$ is of type $B_r$ or~$C_r$ and by Remark~\ref{semistandard_with_parity_is_semistandard} when $\lie{g}$ is of type $D_r$). We conclude that
\begin{equation}
\label{eq:distinct_rows}
i_{\overline{x+1}} > \cdots > i_{\overline{t}} > i_t > \cdots > i_{x+1}
\end{equation}
as desired.
\item Assume now that $t > r - \mathbbm{1}_D$, which means that $\lie{g}$ is of type $D_r$ and $t = r$. If we still have $j_{\overline{r}} \leq j_r$, then the same proof works, and \eqref{eq:distinct_rows} still holds. So assume that $j_r \leq j_{\overline{r}}$; we then have
\[
j_{\overline{x+1}} \leq \cdots \leq j_{\overline{r-1}} \leq j_r \leq j_{\overline{r}} \leq j_{r-1} \leq \cdots \leq j_{x+1}
\]
(using now \eqrefWithArgument{eq:maxcol_croise}{$r-1$} and \eqrefWithArgument{eq:mincol_croise}{$r-1$} in addition to the chains of inequalities \eqref{eq:first_half_increasing} and~\eqref{eq:second_half_increasing}). We then claim that we have
\begin{equation}
\label{eq:distinct_rows_switched}
i_{\overline{x+1}} > \cdots > i_{\overline{r-1}} > i_r > i_{\overline{r}} > i_{r-1} > \cdots > i_{x+1}.
\end{equation}
Indeed, all of the inequalities except for the middle one once again follow from the semistandardness of~$\mathcal{T}$; and the inequality $i_r > i_{\overline{r}}$ follows from the semistandardness of~$\mathcal{T}$ for the alternative order~$\preceq''_{\mathcal{A}}$, as given in Remark~\ref{semistandard_with_parity_is_semistandard}.
\end{itemize}
No matter which one of \eqref{eq:distinct_rows} or~\eqref{eq:distinct_rows_switched} is true, we obtain that the integers $i_s$ are all distinct. Since there are $2(t-x)$ of them, and (being row numbers) they all lie between $1$ and~$h$, the inequality~\eqref{eq:height_lower_bound} follows.

\item Assume now that additionally $\frac{1}{2} \# \mathcal{T}$ is odd.
\begin{itemize}
\item Since $\mathcal{T}$ is null, we have $\# \threeind{}{\NN}{} \mathcal{T} = \# \threeind{}{\overline{\NN}}{} \mathcal{T}$, and (recall Remark~\ref{no_unpaired_column}) the width $\#_1 \mathcal{T} = 2\lambda_1$ of $\mathcal{T}$ is even. Hence
\begin{align*}
\frac{1}{2} \# \mathcal{T}
  &= \# \threeind{}{\NN}{} \mathcal{T} \\
  &= \sum_{j=1}^{2\lambda_1} \# \threeind{j}{\NN}{} \mathcal{T} \\
  &= \sum_{j'=1}^{\lambda_1} \left( \# \threeind{2j'-1}{\NN}{} \mathcal{T} + \# \threeind{2j'}{\NN}{} \mathcal{T} \right),
\end{align*}
so this last sum is odd. This means that there exists at least one index, let us call it~$j'_0$, such that
\begin{equation}
\label{eq:uneven_twins}
\# \threeind{2j'_0-1}{\NN}{} \mathcal{T} \not\equiv \# \threeind{2j'_0}{\NN}{} \mathcal{T} \pmod{2}.
\end{equation}

Now by assumption, we know that the $(2j'_0-1, 2j'_0)$-th pair of columns is admissible; in particular it satisfies condition~\ref{itm:adm_x_condition} from Proposition~\ref{admissible_pair_characterization}. Observe that, in the notations of that proposition, we have
\[\# \threeind{}{\NN}{} \mathcal{C} = \sum_{i=1}^k x_i\]
for every strongly standard column~$\mathcal{C}$. It follows that the inequality~\eqref{eq:uneven_twins} can only happen if $\lie{g}$~is of type~$B_r$ and both of the relevant columns have $b_i = r$ for some~$i$, \ie both of them contain a symbol with absolute value~$r$. In particular this means that $t = r$.

\item It remains to prove~\eqref{eq:height_better_lower_bound}. If $x = r$, then we only need to prove that $h \geq 1$ \ie that $\mathcal{T}$ is nonempty, which is obviously true (formally we can use \eqref{eq:uneven_twins} to say that $h \geq \# \threeind{2j'_0-1}{}{} \mathcal{T} \geq \# \threeind{2j'_0-1}{\NN}{} \mathcal{T} > \# \threeind{2j'_0}{\NN}{} \mathcal{T} \geq 0$). So assume that $x < r$, which also means that $x < t$, so that the inequalities \eqref{eq:first_half_increasing}, \eqref{eq:second_half_increasing} and~\eqref{eq:distinct_rows} from part~\ref{itm:general_case} still hold.

We have already observed that both the $2j'_0-1$-th column and the $2j'_0$-th column contain either $r$ or $\overline{r}$; in other terms
\[j_{\overline{r}} \leq 2j'_0-1 < 2j'_0 \leq j_r.\]
Now consider the function $j \mapsto \# \threeind{j}{\NN}{} \mathcal{T}$. Since $\mathcal{T}$ is a semistandard Young tableau, this function is nonincreasing; hence we have
\begin{equation}
\label{eq:barless_heights_inequality}
     \# \threeind{j_{\overline{r}}}{\NN}{} \mathcal{T}
\geq \# \threeind{2j'_0-1}{\NN}{} \mathcal{T}
>    \# \threeind{2j'_0}{\NN}{} \mathcal{T}
\geq \# \threeind{j_r}{\NN}{} \mathcal{T}
\end{equation}
(the middle inequality is strict because of \eqref{eq:uneven_twins}). On the other hand, by construction, we know that $\threeind{j_r}{}{i_r} \mathcal{T}$ (resp. $\threeind{j_{\overline{r}}}{}{i_{\overline{r}}} \mathcal{T}$) is equal to~$r$ (resp. to~$\overline{r}$), which, for the order~$\preceq_\mathcal{A}$, is the last symbol without a bar (resp. the first symbol with a bar). Hence we have, by column-standardness:
\begin{equation}
\label{eq:barless_heights_and_in}
\begin{cases}
i_{\overline{r}} = \# \threeind{j_{\overline{r}}}{\NN}{} \mathcal{T} + 1; \\
i_r = \# \threeind{j_r}{\NN}{} \mathcal{T}.
\end{cases}
\end{equation}
Plugging these identities into~\eqref{eq:barless_heights_inequality}, we obtain
\begin{equation}
i_{\overline{r}} > i_{\overline{r}}-1 > i_r.
\end{equation}
This allows us to insert an extra step in the middle of the chain of inequalities~\eqref{eq:distinct_rows} (remember that $t = r$), and thus to improve~\eqref{eq:height_lower_bound} to~\eqref{eq:height_better_lower_bound}. \qedhere
\end{itemize}
\end{hypothenum}
\end{proof}

\begin{proof}[Proof that $\mathcal{M}_{\linv} \subset \mathcal{M}_{\theor}$ for $\lie{g}$ of types $B_{r \geq 1}$, $C_{r \geq 1}$ or $D_{r \geq 3}$.]
Let $\lambda \in \mathcal{M}_{\linv} \subset P \cap \lie{h}^+$. Then by Proposition~\ref{basic_results}~\ref{itm:nontriv_implies_radical}, we get that $\lambda \in Q$. For $\lie{g}_\RR = \lie{sp}_{2\cdot r}(\RR)$ (for any rank~$r$) and $\lie{g}_\RR = \lie{so}^*(2r)$ with $r \geq 4$, this is all there is to check.

In the remaining cases, Corollary~\ref{combinatorial_characterization} tells us that the doubled Young diagram $\Psi(\lambda)$ admits a $\Theta(\lie{g}_\RR)$-codominant null $\lie{g}$-standard filling, with $\Theta(\lie{g}_\RR)$ given in Table~\ref{tab:Theta_classical}. To satisfy Table~\ref{tab:conditions_for_classical_algebras}, we need to check the following conditions.
\begin{itemize}
\item The condition $\lambda_{2p+1} = 0$, or equivalently $\#^1 \Psi(\lambda) \leq 2p$, for $\lie{g}_\RR = \lie{so}(p,q)$, no matter the parity of $p+q$. When $q \neq p+2$, we have $\Theta(\lie{g}_\RR) = \Pi_{[p+1,r]}$ and this condition follows from Corollary~\ref{height_limitation}. For $\lie{g}_\RR = \lie{so}(p,p+2)$, this condition is tautologically true, since $2p+1 = 2r-1 > r$.
\item The condition $\lambda_{4p+1} = 0$, or equivalently $\#^1 \Psi(\lambda) \leq 4p$, for $\lie{g}_\RR = \lie{sp}_{2\cdot}(p,q)$ (a real form of $C_{p+q}$). We then have $\Theta(\lie{g}_\RR) \supset \Pi_{[2p+1,r]}$, and similarly this follows from Corollary~\ref{height_limitation}. (Note that for $\lie{sp}_{2\cdot}(1,1)$ this condition still holds, but was omitted from Table~\ref{tab:conditions_for_classical_algebras} since it is trivial.)
\item The inequality $\lambda_{2r-2p+1} > 0$, or equivalently $\#^1 \Psi(\lambda) \geq 2r-2p+1$, if the sum $\sum_{i=1}^r \lambda_i = \frac{1}{2} \# \Psi(\lambda)$ is odd, for $\lie{g}_\RR = \lie{so}(p,q)$ with $p+q$ odd. This is given by Proposition~\ref{height_restrictions}~\ref{itm:odd_size}.
\item The congruence $\lambda_1 \in 2\ZZ$ for $\lie{g}_\RR = \lie{sp}_{2\cdot}(1,1)$. We prove this by noting that this algebra is isomorphic to $\lie{so}(1,4)$, which we have just treated. (It is also fairly easy to find a direct combinatorial proof for this.)
\item The inequality $|\lambda_3| \leq \lambda_1 - \lambda_2$ for $\lie{g}_\RR = \lie{so}^*(6)$. We prove this by noting that this algebra is isomorphic to $\lie{su}(1,3)$, which we have already treated in Section~\ref{sec:Ar}. (A direct combinatorial proof probably also exists, but seems fairly tedious on first approach.) \qedhere
\end{itemize}
\end{proof}

\subsection{The inclusion $\mathcal{M}_{\theor} \subset \mathcal{M}_{\linv}$}
\label{sec:theor_in_linv}

In this subsection, we prove that, conversely, all elements $\lambda \in \mathcal{M}_{\theor}$ satisfy $V^\lie{l}_\lambda \neq 0$. We rely for this on Proposition~\ref{closed_under_addition}, that reduces the problem to the basis of the monoid $\mathcal{M}_{\theor}$ (which, in contrast to the $A_r$ case, can be easily described). For each $\lambda$ lying in this basis, thanks to the work done in Section~\ref{sec:BCD}, our goal is to construct a doubled Young tableau of shape $\Psi(\lambda)$ satisfying conditions \ref{itm:dsYt_stst}--\ref{itm:shape} and \labellcodominantsym{} from Corollary~\ref{combinatorial_characterization_without_sign}.

We start by presenting (Definition~\ref{example_tableaux_definition}) nine infinite families of doubled Young tableaux, and checking their properties (Proposition~\ref{prop_example_tableaux_properties}). All the required doubled Young tableaux will then be picked from this pool, sometimes with the symbols all shifted by some constant~$x$. This ``shift'' operation will be rigorously defined in Definition~\ref{shift_definition}.

\begin{definition}
\label{example_tableaux_definition}
We introduce the doubled Young tableaux $\mathcal{T}_K$ and $\mathcal{T}'_K$ (of shape $2\mathcal{C}_K$), $\mathcal{T}_{K,L}$ and $\mathcal{T}'_{K,L}$ (of shape $2\mathcal{C}_K + 2\mathcal{C}_L$), $\mathcal{S}_{K,K}$ and~$\mathcal{S}'_{K,K}$ (of shape $4\mathcal{C}_K$) for some values of the parameters $K$ and~$L$, as given in Figure~\ref{fig:example_tableaux}.
\end{definition}

\newcommand\moins\overline
\newcommand\half{0.5}
\Yboxdimy{18pt}
\newlength{\boxwidth}

\begin{figure}[p]
\DeclareRobustCommand\minidiagramXtoY{\diagramfontsize
  \Yboxdim{12pt}
  \begin{tikzpicture}[baseline={([yshift=-.5ex]current bounding box.center)}]
  \Yfillcolor{black!20}
  \tgyoung(0cm,0cm,)
  \Yfillopacity{0}
  \Ylinecolor{black!30}
  \tgyoung(0cm,0cm,x,\vdts,y)
  \Ylinecolor{black}
  \tgyoung(0cm,0cm,|3)
  \end{tikzpicture}
  \normalsize}
\DeclareRobustCommand\minidiagramXtoXmo{\diagramfontsize
  \Yboxdimy{12pt}
  \settowidth{\boxwidth}{$\;x-1\;$}
  \Yboxdimx{\boxwidth}
  \begin{tikzpicture}[baseline={([yshift=-.5ex]current bounding box.center)}]
  \Yfillcolor{black!20}
  \tgyoung(0cm,0cm,)
  \Yfillopacity{0}
  \Ylinecolor{black!30}
  \tgyoung(0cm,0cm,x,\vdts,<x-1>)
  \Ylinecolor{black}
  \tgyoung(0cm,0cm,|3)
  \end{tikzpicture}
  \normalsize}
\DeclareRobustCommand\minidiagramXXmo{\diagramfontsize
  \Yboxdimy{12pt}
  \settowidth{\boxwidth}{$\;x-1\;$}
  \Yboxdimx{\boxwidth}
  \begin{tikzpicture}[baseline={([yshift=-.5ex]current bounding box.center)}]
  \Yfillcolor{black!20}
  \tgyoung(0cm,0cm,)
  \Yfillopacity{0}
  \Ylinecolor{black!30}
  \tgyoung(0cm,0cm,)
  \Ylinecolor{black}
  \tgyoung(0cm,0cm,x,<x-1>)
  \end{tikzpicture}
  \normalsize}
\DeclareRobustCommand\minidiagramYbartoXbar{\diagramfontsize
  \Yboxdim{12pt}
  \begin{tikzpicture}[baseline={([yshift=-.5ex]current bounding box.center)}]
  \Yfillcolor{black!20}
  \tgyoung(0cm,0cm,|3)
  \Yfillopacity{0}
  \Ylinecolor{black!30}
  \tgyoung(0cm,0cm,<\overline{y}>,\vdts,<\overline{x}>)
  \Ylinecolor{black}
  \tgyoung(0cm,0cm,|3)
  \end{tikzpicture}
  \normalsize}
\caption{\label{fig:example_tableaux} Doubled Young tableaux introduced in Definition~\ref{example_tableaux_definition}. Here it is understood that each block of the form \minidiagramXtoY is filled with consecutive symbols from $x$ to $y$ in increasing order, so that it has height~$y-x+1$. Of course we keep this convention even if $y = x-1$, so that \minidiagramXtoXmo represents the zero-height block (and not the block \smash{\minidiagramXXmo} with height~$2$). Similarly, each block of the form~\smash[t]{\minidiagramYbartoXbar} is filled with symbols from $\overline{y}$ to~$\overline{x}$ in increasing $\prec$ order, which corresponds to the decreasing order of their absolute values. (Boxes containing symbols with bars are shaded for better legibility.) Also note that in these pictures, whenever two boxes seem to be at the same height, they actually are at the same height, regardless of the values of $k$ and~$l$.} 
\centering\bigskip\bigskip
\addtocounter{figure}{-1} 
\begin{subfigure}{0.3\textwidth}
\centering
\diagramfontsize
\settowidth{\boxwidth}{$\;2k+1\;$}
\Yboxdimx{\boxwidth}
\begin{tikzpicture} 
\Yfillcolor{black!20}
\tgyoung(0cm,0cm,,,,:|4:,|3,:~)
\Yfillopacity{0}
\Ylinecolor{black!30}
\tgyoung(0cm,0cm,1<k+2>:,|2\vdts<\vdts>:,:;<2k+1>:,<k+1><\moins{k+1}>:,<\moins{2k+1}>|2\vdts:,<\vdts>:,<\moins{k+2}><\moins{1}>)
\Ylinecolor{black}
\tgyoung(0cm,0cm,|4|3,:|4:,|3)
\end{tikzpicture}
\caption{$\mathcal{T}_{2k+1}$, \\ for $k \geq 0$.}
\label{fig:T2k+1}
\end{subfigure}
~
\begin{subfigure}{0.3\textwidth}
\centering
\diagramfontsize
\settowidth{\boxwidth}{$\;k+1\;$}
\Yboxdimx{\boxwidth}
\begin{tikzpicture} 
\Yfillcolor{black!20}
\tgyoung(0cm,0cm,,,,,|4|4)
\Yfillopacity{0}
\Ylinecolor{black!30}
\tgyoung(0cm,0cm,1<k+1>:,|2\vdts|2\vdts,<k><2k>:,<\moins{2k}><\moins{k}>:,|2\vdts|2\vdts,<\moins{k+1}><\moins{1}>)
\Ylinecolor{black}
\tgyoung(0cm,0cm,|4|4,|4|4)
\end{tikzpicture}
\caption{$\mathcal{T}_{2k}$, \\ for $k \geq 1$.}
\label{fig:T2k}
\end{subfigure}
~
\begin{subfigure}{0.3\textwidth}
\centering
\diagramfontsize
\settowidth{\boxwidth}{$\;k+1\;$}
\Yboxdimx{\boxwidth}
\begin{tikzpicture} 
\Yfillcolor{black!20}
\tgyoung(0cm,0cm,,,,,|4|4)
\Yfillopacity{0}
\Ylinecolor{black!30}
\tgyoung(0cm,0cm,1k,\vdts<k+2>,<k-1>\vdts,<k+1><2k>,<\moins{2k}><\moins{k+1}>,\vdts<\moins{k-1}>,<\moins{k+2}>\vdts,<\moins{k}><\moins{1}>)
\Ylinecolor{black}
\tgyoung(0cm,0cm,|3|1,:|3:,:,|1,|3|1,:|3:,:,|1)
\end{tikzpicture}
\caption{$\mathcal{T}_{2k}'$, \\ for $k \geq 2$.}
\label{fig:T2k'}
\end{subfigure}
\end{figure}

\begin{figure}\ContinuedFloat
\centering
\sbox0{
\begin{subfigure}{0.48\textwidth}
\centering
\diagramfontsize
\settowidth{\boxwidth}{$\;k+l+2\;$}
\Yboxdimx{\boxwidth}
\begin{tikzpicture} 
\newcommand\tailheight{9.5}
\newcommand\vdtsheight{7.5}
\Yfillcolor{black!20}
\tgyoung(0cm,0cm,,,,,::|\tailheight|\tailheight:,,,,|7|7)
\Yfillopacity{0}
\Ylinecolor{black!30}
\tgyoung(0cm,0cm,1<k-l+1><k+2><k+2>,|6\vdts|3\vdts|2\vdts|2\vdts,::;<k+l+1><k+l+1>,:;<k+1><\moins{k+1}><\moins{k+1}>,:;<k+l+2>|\vdtsheight\vdts|\vdtsheight\vdts:,:;\vdts,<k+1><2k+1>,<\moins{2k+1}><\moins{k+l+1}>,|5\vdts|2\vdts,:;<\moins{k+2}>,:;<\moins{k-l}>/\half,::;<\moins{k-l+1}><\moins{k-l+1}>/\half,:;\vdts,<\moins{k+2}><\moins{1}>)
\Ylinecolor{black}
\tgyoung(0cm,0cm,|8|5|4|4,::|\tailheight|\tailheight:,:|3,|7|4,:|3)
\end{tikzpicture}
\caption{$\mathcal{T}_{2k+1,2l+1}$, \\ for $k \geq l \geq 0$.}
\label{fig:T2k+1_2l+1}
\end{subfigure}
}
\sbox1{
\begin{subfigure}{0.48\textwidth}
\centering
\diagramfontsize
\settowidth{\boxwidth}{$\;k+l+2\;$}
\Yboxdimx{\boxwidth}
\begin{tikzpicture} 
\newcommand\tailheight{10.5}
\newcommand\vdtsheight{6.5}
\Yfillcolor{black!20}
\tgyoung(0cm,0cm,,,,::|\tailheight|\tailheight:,,,,,,|6|6)
\Yfillopacity{0}
\Ylinecolor{black!30}
\tgyoung(0cm,0cm,1<k-l+1><k+3><k+3>,|7\vdts|2\vdts\vdts\vdts,::;<k+l+1><k+l+1>,:;<k><\moins{k+2}><\moins{k+2}>,:;<k+1><\moins{k+1}><\moins{k+1}>,:;<k+2><\moins{k}><\moins{k}>,:;<k+l+2>|\vdtsheight\vdts|\vdtsheight\vdts:,:;\vdts,<k+2><2k+1>,<\moins{2k+1}><\moins{k+l+1}>,|4\vdts\vdts,:;<\moins{k+3}>,:;<\moins{k-l}>/\half,::;<\moins{k-l+1}><\moins{k-l+1}>/\half,:;\vdts,<\moins{k+3}><\moins{1}>)
\Ylinecolor{black}
\tgyoung(0cm,0cm,|9|6|3|3,::|\tailheight|\tailheight:,,,:|3,|6|3,:|3)
\end{tikzpicture}
\caption{$\mathcal{T}_{2k+1,2l+1}'$, \\ for $k \geq l > 0$.}
\label{fig:T2k+1_2l+1'}
\end{subfigure}
}
\sbox2{
\begin{subfigure}{0.48\textwidth}
\centering
\diagramfontsize
\settowidth{\boxwidth}{$\;2k+1\;$}
\Yboxdimx{\boxwidth}
\begin{tikzpicture} 
\Yfillcolor{black!20}
\tgyoung(0cm,0cm,::|1|1,,,,,|4|4)
\Yfillopacity{0}
\Ylinecolor{black!30}
\tgyoung(0cm,0cm,1<k+1><\moins{k+2}><\moins{k+2}>,|2\vdts|3\vdts:,,k,<k+2><2k+1>,<\moins{2k+1}><\moins{k}>,\vdts|2\vdts:,<\moins{k+3}>,<\moins{k+1}><\moins{1}>)
\Ylinecolor{black}
\tgyoung(0cm,0cm,|4|5|1|1,,,,|1,|3|4:,,,|1)
\end{tikzpicture}
\caption{$\mathcal{T}_{2k+1,1}'$, \\ for $k \geq 1$.}
\label{fig:T2k+1_1'}
\end{subfigure}
}
\sbox3{
\begin{subfigure}{0.48\textwidth}
\centering
\diagramfontsize
\settowidth{\boxwidth}{$\;k+l+2\;$}
\Yboxdimx{\boxwidth}
\begin{tikzpicture} 
\newcommand\tailheight{9.5}
\newcommand\vdtsheight{7.5}
\Yfillcolor{black!20}
\tgyoung(0cm,0cm,,,,::|\tailheight|\tailheight:,,,,:|7:,|6)
\Yfillopacity{0}
\Ylinecolor{black!30}
\tgyoung(0cm,0cm,1<k-l+1><k+2><k+2>,|6\vdts|2\vdts\vdts\vdts,::;<k+l+1><k+l+1>,:;<k+1><\moins{k+1}><\moins{k+1}>,:;<k+l+2>|\vdtsheight\vdts|\vdtsheight\vdts:,::\vdts,:;<2k>,<k+1><\moins{k+l+1}>,<\moins{2k}>\vdts,|4\vdts<\moins{k+2}>,:;<\moins{k-l}>,:|2\vdts/\half,::;<\moins{k-l+1}><\moins{k-l+1}>/\half,,<\moins{k+2}><\moins{1}>)
\Ylinecolor{black}
\tgyoung(0cm,0cm,|8|4|3|3,::|\tailheight|\tailheight:,:|3,:|3:,|6:,,:|4)
\end{tikzpicture}
\caption{$\mathcal{T}_{2k,2l+1}$, \\ for $k > l \geq 0$.}
\label{fig:T2k_2l+1}
\end{subfigure}
}
\usebox0\usebox1

\vspace{\baselineskip}

\adjustbox{valign=T}{\usebox3}\adjustbox{valign=T}{\usebox2}
\end{figure}

\begin{figure}\ContinuedFloat
\centering
\begin{subfigure}{0.4\textwidth}
\centering
\diagramfontsize
\settowidth{\boxwidth}{$\;2k+1\;$}
\Yboxdimx{\boxwidth}
\begin{tikzpicture} 
\Yfillcolor{black!20}
\tgyoung(0cm,0cm,,,,,:|5|5|5:,,|3)
\Yfillopacity{0}
\Ylinecolor{black!30}
\tgyoung(0cm,0cm,11<k+1><k+2>,|3\vdts|2\vdts|2\vdts|2\vdts,:;k<2k><2k+1>,<k+1><\moins{2k+1}><\moins{2k+1}><\moins{k+1}>,<2k+1>|3\vdts<\moins{k}>|3\vdts:,<\moins{2k}>:|2\vdts:,\vdts,<\moins{k+2}><\moins{k+1}><\moins{1}><\moins{1}>)
\Ylinecolor{black}
\tgyoung(0cm,0cm,|5|4|4|4,:|5|1|5:,|1:|4:,|3)
\end{tikzpicture}
\caption{$\mathcal{S}_{2k+1,2k+1}$, \\ for $k \geq 1$.}
\label{fig:S2k+1_2k+1}
\end{subfigure}
\begin{subfigure}{0.4\textwidth}
\centering
\diagramfontsize
\settowidth{\boxwidth}{$\;2k+1\;$}
\Yboxdimx{\boxwidth}
\begin{tikzpicture} 
\Yfillcolor{black!20}
\tgyoung(0cm,0cm,,::|8|8:,,,,,,:|2)
\Yfillopacity{0}
\Ylinecolor{black!30}
\tgyoung(0cm,0cm,11<2k-1><2k+1>,|4\vdts|4\vdts<\moins{2k+1}><\moins{2k}>,::;<\moins{2k}><\moins{2k-1}>,::;<\moins{2k-2}><\moins{2k-2}>,::|4\vdts|4\vdts:,<2k-2><2k-2>,<2k-1><2k>,<2k><\moins{2k+1}>,<2k+1><\moins{2k-1}><\moins{1}><\moins{1}>)
\Ylinecolor{black}
\tgyoung(0cm,0cm,|6|6|1|1,::|2|8:,,::|6:,,,|3|1,:|1,:|1)
\end{tikzpicture}
\caption{$\mathcal{S}_{2k+1,2k+1}'$, \\ for $k \geq 2$.}
\end{subfigure}
\end{figure}

\begin{table}
\caption{Properties of the tableaux introduced in Figure~\ref{fig:example_tableaux}.}
\label{tab:example_tableaux_properties}
\centering
\renewcommand{\arraystretch}{2}
\makebox[\textwidth][c]{
\begin{tabular}{lr|l|l} 
Tableau~$\mathcal{T}$ && $\mathcal{T}$ is $\lie{g}$-standard for $\lie{g} = \ldots$ & \makecell[l]{$\mathcal{T}$ is $\alpha$-codominant for\\ all $\alpha$ except...} \\ \hline
$\mathcal{T}_{2k+1}$ &
 & $B_{2k+1}$
 & $e_{k+1}-e_{k+2}$ or $e_{k+1}$ \\ \hline
$\mathcal{T}_{2k}$ &
 & $B_r$, $C_r$, $D_r$ for $r \geq 2k$
 & $e_k-e_{k+1}$ \\ \hline
$\mathcal{T}_{2k}'$ &for $k \neq 1$ 
 & $C_r$, $D_r$ for $r \geq 2k$
 & $\begin{cases} e_{k-1}-e_k; \\ e_{k+1}-e_{k+2}\end{cases}$ \\ \hline
$\mathcal{T}_{2k+1,2l+1}$ &
 & \makecell[l]{$B_r$, $C_r$, $D_r$ for $r \geq 2k+1$,\\ \quad except $D_{2k+1}$ if $l=k$}
 & $\begin{cases}e_{k+1}-e_{k+2} \text{ or } e_{k+1} \mathrlap{\text{ or } 2e_{k+1};} \\ e_2+e_3 \quad \text{ if } k=1, l=0\end{cases}$ \\ \hline
$\mathcal{T}_{2k+1,2l+1}'$ &for $l \neq 0$ 
 & \makecell[l]{$C_r$, $D_r$ for $r \geq 2k+1$,\\ \quad except $D_{2k+1}$ if $l=k$}
 & $\begin{cases}e_{k+2}-e_{k+3} \text{ or } 2e_{k+2}; \\ e_4+e_5 \quad \text{ if } k=2, l=1\end{cases}$ \\ \hline
$\mathcal{T}_{2k+1,1}'$ &for $k \neq 0$ 
 & $C_r$, $D_r$ for $r \geq 2k+1$
 & $\begin{cases}e_k-e_{k+1}; \\ e_{k+2}-e_{k+3} \text{ or } 2e_{k+2}; \\ e_2+e_3; \\ e_4+e_5\end{cases}$ \\ \hline
$\mathcal{T}_{2k,2l+1}$ &
 & $B_{2k}$
 & $e_{k+1}-e_{k+2}$ or $e_{k+1}$ \\ \hline
$\mathcal{S}_{2k+1,2k+1}$ &for $k \neq 0$ 
 & $D_{2k+1}$
 & $\begin{cases}e_k-e_{k+1}; \\ e_{k+1}-e_{k+2}; \\ e_2+e_3\end{cases}$ \\ \hline
$\mathcal{S}_{2k+1,2k+1}'$ &for $k \neq 0,1$ 
 & $D_{2k+1}$
 & $\begin{cases}e_{2k-2}-e_{2k-1}; \\ e_{2k}-e_{2k+1} \text{ and } e_{2k}+e_{2k\mathrlap{{}+1}}\end{cases}$
\end{tabular}
}
\end{table}

\begin{proposition}
\label{prop_example_tableaux_properties}
Let $\mathcal{T}$ be any Young tableau from this list, and let $\lie{g}$ be any of the Lie algebras listed in the second column of Table~\ref{tab:example_tableaux_properties}. Then $\mathcal{T}$ is a $\lie{g}$-standard doubled Young tableau, is null, and is $\alpha$-codominant for all simple roots $\alpha$ of~$\lie{g}$ that do \emph{not} belong to the list given in the third column of Table~\ref{tab:example_tableaux_properties}. 
\end{proposition}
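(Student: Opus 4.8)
The plan is to verify each asserted property directly against the explicit tableaux of Figure~\ref{fig:example_tableaux}, taking advantage of their uniform block structure: in every family each column consists of a few increasing runs of consecutive unbarred symbols on top, followed by a few runs of barred symbols listed in increasing $\preceq_{\mathcal{A}}$ order (equivalently, decreasing absolute value). First I would dispose of condition~\ref{itm:dsYt_stst}: within any such column the absolute values that occur are pairwise distinct, and reading from top to bottom the entries strictly increase for $\preceq_{\mathcal{A}}$, so every column is strongly standard. Nullness is just as immediate, since in each tableau the multiset of all entries is visibly invariant under the involution $s \mapsto \overline{s}$; hence $\# \threeind{}{s}{}\mathcal{T} = \# \threeind{}{\overline{s}}{}\mathcal{T}$ for every~$s$ and $\nu(\mathcal{T}) = 0$.

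Next I would check the Young condition~\ref{itm:dsYt_Young}. Each tableau uses only a bounded number of distinct column shapes, so it suffices to compare a bounded number of adjacent pairs $\threeind{j-1}{}{}\mathcal{T} \preceq^{\lie{g}}_Y \threeind{j}{}{}\mathcal{T}$. For $\lie{g}$ of type $B_r$ or~$C_r$ this is the entrywise comparison of~\eqref{eq:Young_order_definition}; the heights are weakly decreasing to the right by construction, and the remaining inequalities hold run by run. For type~$D_r$ one works instead with $\preceq'_{\mathcal{A}}$ and must in addition verify the parity condition~\eqref{eq:parity_condition}. Here the only width-$2$ rectangles whose entries all have absolute value exceeding $r-k$ are those formed by the bottom portions of the height-$r$ columns, so the condition reduces to a single parity count; this is exactly why certain families are restricted to $D_{2k+1}$ and why the ``primed'' variants are needed. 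Condition~\ref{itm:dsYt_adm} then follows from the reduction recorded after Definition~\ref{tableau_definition}: condition~\ref{itm:adm_Young} is already supplied by~\ref{itm:dsYt_Young}, so I only need \ref{itm:adm_same_decomp} and~\ref{itm:adm_x_condition} from Proposition~\ref{admissible_pair_characterization} for each pair grouped from the right. The two columns of such a pair carry the same multiset of absolute values, so the intervals $[a_i+1,b_i]$ coincide, and the counts $x_i$ of unbarred symbols either coincide exactly or (for $D_r$ with $b_i = r$) agree modulo~$2$, matching the parity just discussed.

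The main obstacle is the codominance assertion, which I would treat last. For each simple root~$\alpha$ I would form the running total $\alpha\!\left(\nu(\threeind{[1,j]}{}{}\mathcal{T})\right)$ of the $\alpha$-heights of the first $j$ columns, using the explicit formula~\eqref{eq:codominance_BCD} for $\alpha = e_i - e_{i+1}$ and its analogues for $\alpha = e_r$, $2e_r$ and $e_{r-1}+e_r$, and verify that this quantity never becomes positive. The block structure makes the partial sum change only at the few column boundaries, so the check reduces to evaluating it there. For every $\alpha$ absent from the exception column of Table~\ref{tab:example_tableaux_properties}, the columns of positive $\alpha$-height are placed late enough to be compensated by earlier columns of negative $\alpha$-height, so the prefix sums stay $\leq 0$; for each listed exception, by contrast, a column of strictly positive $\alpha$-height sits at or near the left end (for instance, an unbarred symbol contributing positively occurs in an early column while the compensating barred symbol appears only further to the right), so the prefix sum is momentarily positive and codominance indeed fails. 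I expect this final step to be the most laborious, as it must be run family by family and, within each family, root by root; the primed families $\mathcal{T}'_{K}$, $\mathcal{T}'_{K,L}$ and the self-paired families $\mathcal{S}_{K,K}$, $\mathcal{S}'_{K,K}$, whose columns have the most intricate internal subdivision, will demand the most bookkeeping.
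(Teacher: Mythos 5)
Your proposal is correct and follows essentially the same route as the paper's proof: the paper likewise disposes of \ref{itm:dsYt_stst} and nullness by inspection, checks $\preceq_{\mathcal{A}}$-semistandardness pair by pair, isolates the observation that every column carries consecutive absolute values (its ``property (*)'') to reduce the type-$D$ parity condition and the admissibility conditions \ref{itm:adm_same_decomp} and \ref{itm:adm_x_condition} to simple counts of unbarred symbols, and settles codominance by the same tedious root-by-root prefix-sum verification. The only slip is a motivational aside that does not affect your plan: the parity condition~\eqref{eq:parity_condition} is what forces the exclusions ``except $D_{2k+1}$ if $l=k$'' and motivates the tableaux $\mathcal{S}_{2k+1,2k+1}$ and $\mathcal{S}'_{2k+1,2k+1}$, whereas the primed families $\mathcal{T}'_{K}$ and $\mathcal{T}'_{K,L}$ are needed for codominance (syndrome) reasons, not parity.
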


Beware that the value of~$r$ is now allowed to vary, while the tableaux are fixed.

\begin{proof}
Observe that all of these tableaux~$\mathcal{T}$ happen to satisfy the following condition:
\begin{itemize}
\item[(*)] Each column of~$\mathcal{T}$ is filled with symbols with consecutive absolute values, \ie $k = 1$ in the notations of Proposition~\ref{admissible_pair_characterization}.
\end{itemize}
Also recall (Definition~\ref{tableau_definition}) that $\lie{g}$-standardness involves the three conditions \ref{itm:dsYt_stst} (strong semistandardness of columns), \ref{itm:dsYt_Young} (that the columns form a $\preceq^{\lie{g}}_Y$-nondecreasing sequence) and \ref{itm:dsYt_adm} (that some pairs of consecutive columns are admissible). 
\begin{itemize}
\item Strong semistandardness of columns \ref{itm:dsYt_stst}, and the fact that these tableaux are all null, are completely straightforward to check.
\item Condition~\ref{itm:dsYt_Young} can be seen as encompassing two properties. First of all, we can check that these tableaux are all $\preceq_\mathcal{A}$-semistandard. This is fairly tedious, but straightforward. 
\item When $\lie{g} = B_r$ or $C_r$, this is all; but when $\lie{g} = D_r$, condition~\ref{itm:dsYt_Young} also involves the parity condition~\eqref{eq:parity_condition}. Given the property~(*), this parity condition can be rephrased as follows: whenever we have, for some $j$,
\begin{equation}
\label{eq:parity_precondition}
b_1 \left( \threeind{j}{}{} \mathcal{T} \right) = b_1 \left( \threeind{j+1}{}{} \mathcal{T} \right) = r,
\end{equation}
(where $b_1$, as per the notations of Proposition~\ref{admissible_pair_characterization}, stands for the largest absolute value of a symbol in the column), we need to have
\begin{equation}
\label{eq:parity_reformulated}
\# \threeind{j}{\NN}{} \mathcal{T} \equiv \# \threeind{j+1}{\NN}{} \mathcal{T} \pmod{2}.
\end{equation}
We observe moreover that $b_1$ never exceeds the height of the column, which is at most~$K$ (\ie $2k$ or~$2k+1$); so the parity condition is vacuously true for $r > K$. When $r = K$, the prerequisite~\eqref{eq:parity_precondition} is satisfied:
\begin{itemize}
\item For the first two columns only in the tableaux $\mathcal{T}_{2k}$, $\mathcal{T}_{2k}'$ and $\mathcal{T}_{2k+1,1}'$, and for all columns in the tableaux $\mathcal{S}_{2k+1,2k+1}$ and $\mathcal{S}_{2k+1,2k+1}'$. Condition~\eqref{eq:parity_reformulated} is then easily checked.
\item For the first two columns only in the tableaux $\mathcal{T}_{2k+1,2l+1}$ and $\mathcal{T}_{2k+1,2l+1}'$, as long as $k > l$ (and condition~\eqref{eq:parity_reformulated} is then easily checked). However when $k = l$, the prerequisite~\eqref{eq:parity_precondition} becomes satisfied also for the last two columns; but condition~\eqref{eq:parity_reformulated} fails between the 2nd and the 3rd column. This is the reason why we have to explicitly exclude these cases (and introduce the tableaux $\mathcal{S}$ and $\mathcal{S}'$ to replace them).
\end{itemize}
\item Condition~\ref{itm:dsYt_adm} reduces, by Proposition~\ref{admissible_pair_characterization}, to checking the four properties \ref{itm:adm_Young} through \ref{itm:adm_x_condition} for the first and last pair of columns. In fact, condition~\ref{itm:adm_Young} is already part of~\ref{itm:dsYt_Young}; condition~\ref{itm:adm_same_height} is immediate by inspection; as for conditions \ref{itm:adm_same_decomp} and~\ref{itm:adm_x_condition}, they become immediate by inspection once we take into account property~(*).
\item Finally, verification of $\alpha$-codominance for all $\alpha$ except the listed exceptional values is very tedious, but straightforward. 
\qedhere
\end{itemize}
\end{proof}

\begin{definition}
\label{shift_definition}
Given a doubled Young tableau~$\mathcal{T}$ and an integer~$x \geq 0$, we define the \emph{shifted tableau} $\threeind{}{x+}{} \mathcal{T}$ to be the tableau with the same shape, with every symbol~$s$ replaced by $s+x$ and every symbol~$\overline{s}$ replaced by $\overline{s+x}$. Thus, formally, it is given by:
\[\forall i, j,\quad
\begin{cases}
\left| \threeind{j}{x+}{i} \mathcal{T} \right| &:= x + \left| \threeind{j}{}{i} \mathcal{T} \right|; \\
\sgn \left( \threeind{j}{x+}{i} \mathcal{T} \right) &:= \sgn \left( \threeind{j}{}{i} \mathcal{T} \right).
\end{cases}\]
\end{definition}
The following statement is then obvious:
\begin{lemma}
\label{shift_properties}
Keeping the same setup, let us also fix some integer $r \geq 1$ (resp. $r \geq 1$, $r \geq 3$);
and let $\lie{g} = B_r$ (resp. $C_r$, $D_r$) and $\lie{g}' = B_{r+x}$ (resp. $C_{r+x}$, $D_{r+x}$). For all $s$ within the appropriate bounds, we denote by $\alpha_s$ (resp. $\alpha'_s$) the $s$-th simple root of~$\lie{g}$ (resp. of~$\lie{g}'$) in the usual Bourbaki ordering. Then:
\begin{hypothenum}
\item $\threeind{}{x+}{} \mathcal{T}$ is $\lie{g}'$-standard if and only if $\mathcal{T}$ is $\lie{g}$-standard.
\item $\threeind{}{x+}{} \mathcal{T}$ is null if and only if $\mathcal{T}$ is null.
\item For $s < x$, $\threeind{}{x+}{} \mathcal{T}$ is always $\alpha'_s$-codominant.
\item For $s = x$, $\threeind{}{x+}{} \mathcal{T}$ is always $\alpha'_x$-codominant, as soon as $\mathcal{T}$ is semistandard for the $\preceq_\mathcal{A}$~order.
\item For $s > x$, $\threeind{}{x+}{} \mathcal{T}$ is $\alpha'_s$-codominant if and only if $\mathcal{T}$ is $\alpha_{s-x}$-codominant.
\end{hypothenum}
\end{lemma}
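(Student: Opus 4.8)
The plan is to treat the shift $\mathcal{T}\mapsto\threeind{}{x+}{}\mathcal{T}$ as the combinatorial shadow of the obvious embedding of the rank-$r$ root datum into the rank-$(r+x)$ one, and to check that every defining condition transfers through it. The key observation is that $s\mapsto s+x$, $\overline{s}\mapsto\overline{s+x}$ is an isomorphism of ordered sets from $(\mathcal{A}_r,\preceq^{\lie{g}}_{\mathcal{A}})$ onto the sub-interval $\{x+1,\ldots,x+r,\overline{x+r},\ldots,\overline{x+1}\}$ of $(\mathcal{A}_{r+x},\preceq^{\lie{g}'}_{\mathcal{A}})$; it carries $\nu(s)=e_s$ to $e_{s+x}=\nu(s+x)$, hence identifies $\lie{h}_{(\RR)}$ for $\lie{g}$ with $\langle e_{x+1},\ldots,e_{x+r}\rangle$ inside $\lie{h}_{(\RR)}$ for $\lie{g}'$. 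I expect almost every assertion to reduce to this single fact; the one point needing a genuine argument is the codominance statement at the interface index $s=x$ (part (iv)).

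For part (i) I would verify conditions \ref{itm:dsYt_stst}, \ref{itm:dsYt_Young}, \ref{itm:dsYt_adm} one at a time. Strong standardness \ref{itm:dsYt_stst} is preserved because the shift is an order-isomorphism respecting $s\leftrightarrow\overline{s}$. For \ref{itm:dsYt_Young}, the Young (resp. Young-with-parity) order is defined purely through $\preceq^{\lie{g}}_{\mathcal{A}}$ together, in type $D$, with the parity condition \eqref{eq:parity_condition}; since the shift sends the top block $\{r-k+1,\ldots,r\}$ to $\{(r+x)-k+1,\ldots,r+x\}$ and preserves the number of barless entries, \eqref{eq:parity_condition} for $\lie{g}$ becomes exactly \eqref{eq:parity_condition} for $\lie{g}'$. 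For \ref{itm:dsYt_adm} I invoke Proposition~\ref{admissible_pair_characterization}: the shift sends the interval decomposition $\bigcup_i\{a_i+1,\ldots,b_i\}$ to $\bigcup_i\{a_i+1+x,\ldots,b_i+x\}$, preserves each count $x_i$, and turns the boundary case $b_i=r$ into $b_i+x=r+x$, so that \ref{itm:adm_same_decomp} and \ref{itm:adm_x_condition} transfer verbatim. Part (ii) is immediate: nullness says that for each absolute value the numbers of barless and barred occurrences agree, and the shift is a bijection on absolute values (sending $i$ to $i+x$ and realizing no value $\le x$), so $\nu(\threeind{}{x+}{}\mathcal{T})=0$ iff $\nu(\mathcal{T})=0$.

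For the codominance statements I use that $\threeind{}{x+}{}\mathcal{T}$ contains no symbol of absolute value $\le x$. In part (iii), $s<x$ gives $\alpha'_s=e_s-e_{s+1}$ with $s,s+1\le x$, so all four symbols in \eqref{eq:codominance_BCD} are absent and the inequality holds with every term zero. In part (v), $s>x$: whether $\alpha'_s$ equals $e_s-e_{s+1}$, $e_{r+x}$, $2e_{r+x}$ or $e_{r+x-1}+e_{r+x}$, the corresponding codominance inequality involves only absolute values $>x$, and under $e_s\leftrightarrow e_{s-x}$ it is literally the codominance inequality for the root $\alpha_{s-x}$ of $\lie{g}$ applied to $\mathcal{T}$; so the two codominances are equivalent. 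Part (iv), the interface root $\alpha'_x=e_x-e_{x+1}$, is the crux. Here the $\alpha'_x$-height of a column is $+1$ if it contains $\overline{x+1}$, $-1$ if it contains $x+1$, and $0$ otherwise (it cannot contain both, by \ref{itm:dsYt_stst}). Since $x+1$ is the $\preceq_{\mathcal{A}}$-minimal symbol present, $\preceq_{\mathcal{A}}$-semistandardness forces every $x+1$ into the first row, hence the $(x+1)$-columns to be a left prefix $\{1,\ldots,a\}$ with $a=\#\threeind{}{x+1}{}\mathcal{T}$; by strong standardness these carry no $\overline{x+1}$, so all $\overline{x+1}$-columns lie in $\{a+1,\ldots,N\}$ with $N:=\#_1\mathcal{T}$. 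Thus the partial sums of $\alpha'_x$-heights equal $-j$ for $j\le a$, and for $j>a$ equal $-a$ plus the number of $\overline{x+1}$-columns among $\{a+1,\ldots,j\}$; they therefore never exceed $-a+\#\threeind{}{\overline{x+1}}{}\mathcal{T}$, which is $\le 0$ precisely because the total counts of $x+1$ and $\overline{x+1}$ balance. In type $D_r$, where columns are a priori only $\preceq'_{\mathcal{A}}$-semistandard, the hypothesis $\preceq_{\mathcal{A}}$-semistandardness is what makes this argument legitimate, and it holds in applications by Remark~\ref{semistandard_with_parity_is_semistandard}.

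The main obstacle is exactly part (iv): it is the only assertion that is not a formal transfer of definitions, and the only one that genuinely uses more than $\preceq_{\mathcal{A}}$-semistandardness. The extra input is the balancing of the total numbers of $x+1$ and $\overline{x+1}$; I supply this from nullness, so in writing the proof I would read part (iv) together with part (ii) — which is exactly the situation in every application, where $\mathcal{T}$ is one of the null tableaux of Figure~\ref{fig:example_tableaux}.
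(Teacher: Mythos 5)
Your proof is correct, but there is nothing in the paper to measure it against: the author states Lemma~\ref{shift_properties} without proof, introducing it with the words ``The following statement is then obvious.'' Your verification of parts (i), (ii), (iii) and (v) is exactly the formal transfer the author presumably had in mind: the shift is an order-isomorphism of alphabets carrying $e_s$ to $e_{s+x}$, the parity condition~\eqref{eq:parity_condition} and the admissibility criteria of Proposition~\ref{admissible_pair_characterization} match up under the index shift (note that any ``top block'' realized in $\threeind{}{x+}{}\mathcal{T}$ has size at most $r$, since no absolute value $\leq x$ occurs, so the correspondence of parity prerequisites goes both ways), and codominance at indices $s>x$ is a literal relabeling.

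Your treatment of part (iv) deserves emphasis, because you have in fact caught an inaccuracy in the statement. As written --- with $\preceq_{\mathcal{A}}$-semistandardness as the only hypothesis --- part (iv) is false: take $\mathcal{T}$ to be a single column consisting of one box filled with $\overline{1}$ (or two such columns side by side, which form an admissible pair, if one wants an example of even width). Then $\threeind{}{x+}{}\mathcal{T}$ is the box $\overline{x+1}$, whose one-column prefix has weight $-\tfrac{1}{2}e_{x+1}$, on which $\alpha'_x=e_x-e_{x+1}$ takes the value $+\tfrac{1}{2}>0$; so the shifted tableau is not $\alpha'_x$-codominant. Your analysis isolates exactly what is needed: given strong standardness and $\preceq_{\mathcal{A}}$-semistandardness, the symbols $x+1$ occupy an initial segment of the first row and the symbols $\overline{x+1}$ live in the remaining columns, so $\alpha'_x$-codominance of $\threeind{}{x+}{}\mathcal{T}$ is equivalent to the \emph{total} number of $\overline{1}$'s in $\mathcal{T}$ not exceeding the total number of $1$'s --- a balancing hypothesis that does not follow from semistandardness and that you correctly supply from nullness (part (ii)). Since every invocation of the lemma in the paper (Table~\ref{tab:tableaux_for_monoid_basis} and the low-rank cases in Subsection~\ref{sec:theor_in_linv}) shifts a null tableau from Figure~\ref{fig:example_tableaux}, the paper's conclusions are unaffected; but the hypothesis ``$\mathcal{T}$ is null'' (or ``$\#\threeind{}{1}{}\mathcal{T}\geq\#\threeind{}{\overline{1}}{}\mathcal{T}$'') should be added to part (iv), exactly as your proof does.
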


\begin{proof}[Proof that $\mathcal{M}_{\theor} \subset \mathcal{M}_{\linv}$ for $\lie{g}$ of types $B_{r \geq 1}$, $C_{r \geq 1}$ or $D_{r \geq 3}$.]
By Proposition~\ref{closed_under_addition}, it suffices to prove that, for every $\lambda$ lying in the basis of the monoid~$\mathcal{M}_{\theor}$, we have $V_\lambda^\lie{l} \neq 0$. By Corollary~\ref{combinatorial_characterization_without_sign}, it suffices to find, for every such~$\lambda$, a $(\Theta \cup \sigma \Theta)$-codominant null $\lie{g}$-standard filling of the doubled Young diagram $\Psi(\lambda)$.

\begin{table}
  \caption{\label{tab:tableaux_for_monoid_basis} Images by $\Psi$ of the primitive elements of $\mathcal{M}_{\theor}$ for most real Lie algebras of types $B_r$, $C_r$ and~$D_r$, and their $(\Theta \cup \sigma \Theta)$-codominant null $\lie{g}$-standard fillings. The symbol "$\triangle$" stands for symmetric difference.
  }
  \centering
  \aboverulesep=.2ex
  \belowrulesep=.2ex
  \renewcommand{\arraystretch}{1.5}
  \makebox[\textwidth][c]{
  \begin{tabular}{ccll@{}rcl@{}l}
    \multicolumn{2}{c}{$\lie{g}_\RR$} & $\Psi(\lambda)$ & Parameter range & Subrange & $\mathcal{T}$ & Syndrome & ~ \\
    \midrule
    \multirow{5}{*}{\rotatebox[origin=c]{90}{$\underset{0 \leq p \leq r}{\lie{so}(p,2r+1-p)}$}}
    & \multirow{5}{*}{\rotatebox[origin=c]{90}{$\Theta = \Pi_{[p+1,r]}, \sigma = \Id$}}
    & $2\mathcal{C}_{2k}$ & $\begin{cases}1 \leq k \leq p \\ 2k \leq r\end{cases}$ & & $\mathcal{T}_{2k}$ & $\alpha_k$ \\ \cmidrule{3-7}
    & & $2(\mathcal{C}_{2k+1} + \mathcal{C}_{2l+1})$ & $\begin{cases}0 \leq l \leq k < p \\ 2k+1 \leq r\end{cases}$ & & $\mathcal{T}_{2k+1,2l+1}$ & $\alpha_{k+1}$ \\ \cmidrule{3-7}
    & & $2\mathcal{C}_{2k+1}$ & $r-p \leq k \leq \frac{r-1}{2}$ & & $\mathclap{\threeind{}{r-2k-1+}{} \mathcal{T}_{2k+1}}$ & $\alpha_{r-k}$ \\ \cmidrule{3-7}
    & & $2(\mathcal{C}_{2k} + \mathcal{C}_{2l+1})$ & $0 \leq l < r-p \mathrlap{{} < k \leq \frac{r}{2}}$ & & $\hspace{10.5mm}\mathclap{\threeind{}{r-2k+}{} \mathcal{T}_{2k, 2l+1}}\hspace{10.5mm}$ & $\alpha_{r-k+1}$ \\
    \midrule
    \multirow{7}{*}{\rotatebox[origin=c]{90}{$\underset{r \geq 3,\; 0 \leq p \leq \frac{r}{2}}{\lie{sp}_{2\cdot}(p,r-p)}$}}
    & \multirow{7}{*}{\rotatebox[origin=c]{90}{$\Theta = \Pi_{\operatorname{odd}} \cup \Pi_{[2p+1,r]}, \sigma = \Id$}}
    & \multirow{3}{*}{$2\mathcal{C}_{2k}$} & \multirow{3}{*}{$\begin{cases}1 \leq k \leq 2p \\ 2k \leq r\end{cases}$} & $k$ even & $\mathcal{T}_{2k}$ & $\alpha_k$ \\
    & & & & $k > 1$ odd & $\mathcal{T}_{2k}'$ & $\alpha_{k-1}, \alpha_{k+1}$ \\
    & & & & $k = 1$ & $\threeind{}{1+}{} \mathcal{T}_2$ & $\alpha_2$ \\ \cmidrule{3-7}
    & & \multirow{4}{*}{$2(\mathcal{C}_{2k+1} + \mathcal{C}_{2l+1})$} & \multirow{4}{*}{$\begin{cases}0 \leq l \leq k < 2p \\ 2k+1 \leq r\end{cases}$} & $k$ odd & $\mathcal{T}_{2k+1,2l+1}$ & $\alpha_{k+1}$ \\
    & & & & $k$ even, $l > 0$ & $\mathcal{T}_{2k+1,2l+1}'$ & $\alpha_{k+2}$ \\
    & & & & $k > 0$ even, $l = 0$ & $\mathcal{T}_{2k+1,1}'$ & $\alpha_k, \alpha_{k+2}$ \\
    & & & & $k = l = 0$ & $\threeind{}{1+}{} \mathcal{T}_{1,1}$ & $\alpha_2$ \\
    \midrule
    \multirow{4}{*}{\rotatebox[origin=c]{90}{$\underset{r \geq 3,\; 0 \leq p \leq r}{\lie{so}(p,2r-p)}$}}
    & \multirow{3}{*}{\rotatebox[origin=c]{90}{$\Theta = \sigma \Theta \subset \Pi_{[p+1,r]}$}}
    & $2\mathcal{C}_{2k}$ & $\begin{cases}1 \leq k \leq p \\ 2k \leq r\end{cases}$ & & $\mathcal{T}_{2k}$ & $\alpha_k$ \\ \cmidrule{3-7}
    & & \multirow{3}{*}{$2(\mathcal{C}_{2k+1} + \mathcal{C}_{2l+1})$} & \multirow{3}{*}{$\begin{cases}0 \leq l \leq k < p \\ 2k+1 \leq r\end{cases}$} & $2l+1 < r$ & $\mathcal{T}_{2k+1,2l+1}$ & \multicolumn{2}{l}{$\begin{cases} \alpha_{k+1} \\ \alpha_r \text{ for } \mathcal{T}_{3,1} \text{ if } r\mathrlap{{}=3}\end{cases}$} \\
    & & & & $\mathllap{2l+1 = {}} 2k+1 = r$ & $\mathcal{S}_{2k+1,2k+1}$ & \multicolumn{2}{l}{$\begin{cases} \alpha_k, \alpha_{k+1} \\ \alpha_r \text{ if } r\mathrlap{=3}\end{cases}$} \\
    \midrule
    \multirow{9}{*}{\rotatebox[origin=c]{90}{$\underset{r \geq 4}{\lie{so}^*(2r)}$}}
    & \multirow{8.5}{*}{\rotatebox[origin=c]{90}{$\Theta \cup \sigma \Theta = \Pi_{\operatorname{odd}} \,\triangle\, \{\alpha_r\}$}}
    & \multirow{3}{*}{$2\mathcal{C}_{2k}$} & \multirow{3}{*}{$1 \leq k \leq \frac{r}{2}$} & $k$ even & $\mathcal{T}_{2k}$ & $\alpha_k$ \\
    & & & & $k > 1$ odd & $\mathcal{T}_{2k}'$ & $\alpha_{k-1}, \alpha_{k+1}$ \\
    & & & & $k = 1$ & $\threeind{}{1+}{} \mathcal{T}_2$ & $\alpha_2$ \\ \cmidrule{3-7}
    & & \multirow{6}{*}{$2(\mathcal{C}_{2k+1} + \mathcal{C}_{2l+1})$} & \multirow{6}{*}{$0 \leq l \leq k \mathrlap{{} \leq \frac{r-1}{2}}$} & $k$ odd, $2l+1 < r$ & $\mathcal{T}_{2k+1,2l+1}$ & $\alpha_{k+1}$ \\
    & & & & \llap{$k > 0$ even,} $1 < 2l+1 < r$ & $\mathcal{T}_{2k+1,2l+1}'$ & \multicolumn{2}{l}{$\begin{cases} \alpha_{k+2} \\ \alpha_r \text{ for } \mathcal{T}_{5,3}' \text{ if } r\mathrlap{{}=5}\end{cases}$} \\
    & & & & $k > 0$ even, $l = 0$ & $\mathcal{T}_{2k+1,1}'$ & \multicolumn{2}{l}{$\begin{cases} \alpha_k, \alpha_{k+2} \\ \alpha_r \text{ for } \mathcal{T}_{5,1}' \text{ if } r\mathrlap{{}=5}\end{cases}$} \\
    & & & & $k = l = 0$ & $\threeind{}{1+}{} \mathcal{T}_{1,1}$ & $\alpha_2$ \\
    & & & & $\mathllap{2l+1 = {}} 2k+1 = r$ & $\mathcal{S}'_{2k+1,2k+1}$ & \multicolumn{2}{l}{$\alpha_{2k-2}, \alpha_{2k}, \alpha_{2k+1}$} \\ \bottomrule
  \end{tabular}
  }
\end{table}

For most values of $\lie{g}_\RR$, this is done in Table~\ref{tab:tableaux_for_monoid_basis}. First of all, it is straightforward to verify that, for each of the $\lie{g}_\RR$ mentioned in that table, the image of the basis of the monoid $\mathcal{M}_{\theor}$ by the map~$\Psi$ is as listed in the second and third column. (Recall from Definition~\ref{Young_monoid_definition} that $\mathcal{C}_i$ denotes the Young diagram comprising a single column of height~$i$: thus we have, for every~$i$,
\[2\mathcal{C}_i = \Psi(c^\pm_i),\]
where $c^\pm_i$ is as defined in~\eqref{eq:model_weight_definitions}.)

Moreover, for every such diagram $\Psi(\lambda)$, we deduce as a particular case of Proposition~\ref{example_tableaux_definition}, possibly using Lemma~\ref{shift_properties} when a shift is involved, that its filling~$\mathcal{T}$ listed in the fifth column has the following properties. 
\begin{itemize}
\item $\mathcal{T}$ is null and $\lie{g}$-standard. This can be checked by a simple lookup in Table~\ref{tab:example_tableaux_properties}, and applying a shift as needed.
\item The set of simple roots~$\alpha \in \Pi(\lie{g})$ for which $\mathcal{T}$ is \emph{not} codominant --- let us call it the \emph{syndrome} of~$\mathcal{T}$ --- is as listed in the sixth column. Indeed the syndrome is obtained by taking the list given in the third column of Table~\ref{tab:example_tableaux_properties}, shifting it if needed, and intersecting it with $\Pi(\lie{g})$.
\item This syndrome is disjoint from $(\Theta \cup \sigma \Theta)$, whose value we have reminded in the first column. This usually easily follows from the inequalities and parity conditions on~$k$. It is maybe worth explaining why the root~$\alpha_r = \alpha_3$, which sometimes occurs in the syndrome of~$\mathcal{T}$ when $\lie{g}_\RR = \lie{so}(p,q)$ with $p+q = 6$, never lies in $\Theta \cup \sigma \Theta$. Indeed this happens only when $\mathcal{T}$ has height $2k+1 = 3$; since we always have $k < p$, this means that $p = 2$ or $3$. But in both cases, we actually have $\Theta = \emptyset$ (in other terms $\lie{g}_\RR$~is quasi-split).
\end{itemize}

It remains to take care of the remaining values of $\lie{g}_\RR$. Specifically:
\begin{itemize}
\item The algebras $\lie{g}_\RR = \lie{sp}_{2\cdot}(p,r-p)$ with $1 \leq r \leq 2$. There are only three of them. For the compact real forms $\lie{sp}_{2\cdot}(1)$ and~$\lie{sp}_{2\cdot}(2)$, we have $\mathcal{M}_{\theor} = \{0\}$ and the statement is trivial.

For $\lie{sp}_{2\cdot}(1,1)$, we have $\Theta = \{\alpha_1\}$ (and $\sigma$ is by convention the identity), and we easily check that the basis of $\mathcal{M}_{\theor}$ maps by $\Psi$ to $\{4\mathcal{C}_1, 4\mathcal{C}_2\}$. The respective fillings
\diagramfontsize
  \Yboxdim{12pt}
  \begin{tikzpicture}[baseline={([yshift=-.5ex]current bounding box.center)}]
  \Yfillcolor{black!20}
  \tgyoung(0cm,0cm,::_2)
  \Yfillopacity{0}
  \Ylinecolor{black!30}
  \tgyoung(0cm,0cm,22<\moins{2}><\moins{2}>)
  \Ylinecolor{black}
  \tgyoung(0cm,0cm,_2_2)
  \end{tikzpicture}
\normalsize
(also known as $\threeind{}{1+}{}\mathcal{T}_{1,1}$) and
\diagramfontsize
  \Yboxdim{12pt}
  \begin{tikzpicture}[baseline={([yshift=-.5ex]current bounding box.center)}]
  \Yfillcolor{black!20}
  \tgyoung(0cm,0cm,::`22)
  \Yfillopacity{0}
  \Ylinecolor{black!30}
  \tgyoung(0cm,0cm,11<\moins{2}><\moins{2}>,22<\moins{1}><\moins{1}>)
  \Ylinecolor{black}
  \tgyoung(0cm,0cm,`22`22)
  \end{tikzpicture}
\normalsize
of these two diagrams are then both $\lie{g}$-standard, null, and have syndrome $\{\alpha_2\}$. (Alternatively, we may of course simply invoke the exceptional isomoprhism $\lie{sp}_{2\cdot}(1,1) \simeq \lie{so}(1,4)$, like we did in the previous section.)
\item The algebras $\lie{sp}_{2\cdot r}(\RR)$ are split, so we conclude by Proposition~\ref{basic_results}~\ref{itm:main_split}. (Alternatively, we can of course use the same doubled Young tableaux as for $\lie{sp}_{2\cdot}(p,r-p)$ for any $p \geq \frac{r}{4}$.)
\item For $\lie{g}_\RR = \lie{so}^*(6)$, we have $\Theta = \sigma \Theta = \{\alpha_1\}$, and we easily check that the basis of $\mathcal{M}_{\theor}$ maps by $\Psi$ to $\{4\mathcal{C}_1, 2\mathcal{C}_2, 2\mathcal{C}_3 + 2\mathcal{C}_1\}$. In fact this almost follows the general pattern for $\lie{so}^*(2r)$ with $r \geq 4$, with only $4\mathcal{C}_3$ missing (which corresponds to the last line in Table~\ref{tab:tableaux_for_monoid_basis}). The respective fillings $\threeind{}{1+}{}\mathcal{T}_{1,1}$, $\threeind{}{1+}{}\mathcal{T}_2$ and $\mathcal{T}_{3,1}$ of these tableaux are then $\lie{g}$-standard, null, and have respective syndromes $\{\alpha_2\}$, $\{\alpha_2\}$ and $\{\alpha_2, \alpha_3\}$, all disjoint from $\Theta$ and from $\sigma \Theta$. (Alternatively, we may of course simply invoke the exceptional isomorphism $\lie{so}^*(6) \simeq \lie{su}(1,3)$, like we did in the previous section.) \qedhere
\end{itemize}
\end{proof}

\bibliographystyle{alpha}
\bibliography{/home/ilia/Documents/Travaux_mathematiques/mybibliography}
\end{document}